\theoremstyle{plain}
\newtheorem{theorem}{Theorem}
\newtheorem{lemma}{Lemma}
\newtheorem{remark}{Remark}
\newtheorem{definition}{Definition}
\newtheorem{claim}{Claim}
\newcommand{\mas}{\operatorname{Mas}}
\newcommand{\mor}{\operatorname{Mor}}
\newcommand{\dom}{\operatorname{dom}}
\newcommand{\ran}{\operatorname{ran}}
\newcommand{\colspan}{\operatorname{colspan}}
\newcommand{\loc}{\operatorname{loc}}
\newcommand{\AC}{\operatorname{AC}}
\newcommand{\Span}{\operatorname{Span}}
\newcommand{\rank}{\operatorname{rank}}
\newcommand{\ess}{\operatorname{ess}}
\numberwithin{equation}{section}
\numberwithin{lemma}{section}
\numberwithin{theorem}{section}
\numberwithin{remark}{section}
\numberwithin{claim}{section}
\numberwithin{corollary}{section}
\numberwithin{proposition}{section}
\numberwithin{definition}{section}
\numberwithin{condition}{section}
\numberwithin{figure}{section}
\title{Renormalized Oscillation Theory for Singular Linear 
Hamiltonian Systems}
\author{Peter Howard and Alim Sukhtayev}
\begin{document}

\maketitle

\begin{abstract} 
Working with a general class of linear Hamiltonian systems 
with at least one singular boundary condition, we show that 
renormalized oscillation results can be obtained in a 
natural way through consideration of the Maslov index 
associated with appropriately chosen paths of Lagrangian 
subspaces of $\mathbb{C}^{2n}$. This extends previous work
by the authors for regular linear Hamiltonian systems.     
\end{abstract}

\section{Introduction}\label{introduction}

We consider linear Hamiltonian systems  
\begin{equation} \label{linear-hammy}
J y' = (B_0 (x) + \lambda B_1 (x)) y; \quad y (x; \lambda) \in \mathbb{C}^{2n},
\quad n \in \{1, 2, \dots \},
\end{equation}
where $J$ denotes the standard symplectic matrix 
\begin{equation*}
J 
=
\begin{pmatrix}
0_n & - I_n \\
I_n & 0_n
\end{pmatrix}.
\end{equation*}
We specify (\ref{linear-hammy}) on intervals $(a, b)$, with 
$-\infty \le a < b \le +\infty$, and we assume throughout that 
$B_0, B_1 \in L^1_{\loc} ((a, b), \mathbb{C}^{2n \times 2n})$,
and additionally that $B_0 (x)$ and $B_1 (x)$ are both self-adjoint
for a.e. $x \in (a, b)$. For convenient reference, we refer to these 
assumptions as Assumptions {\bf (A)}. In addition, we make the following 
Atkinson-type positivity assumption.

\medskip
{\bf (B)} If $y(\cdot; \lambda) \in \AC_{\loc} ((a, b), \mathbb{C}^{2n})$ 
is any non-trivial solution of (\ref{linear-hammy}), then 
\begin{equation*}
\int_c^d (B_1 (x) y(x; \lambda), y(x; \lambda)) dx > 0,
\end{equation*}
for all $[c, d] \subset (a, b)$. (Here, $\AC_{\loc}$ denotes local absolute
continuity, and $(\cdot, \cdot)$ denotes the usual inner product
on $\mathbb{C}^{2n}$.)
\medskip

Our goal is to associate (\ref{linear-hammy}) with one or more
self-adjoint operators $\mathcal{L}$ (see Lemma 
\ref{self-adjoint-operator-lemma} below), and to use renormalized 
oscillation theory to count the number of eigenvalues 
$\mathcal{N} ([\lambda_1, \lambda_2))$ that each 
such operator has on a given interval $[\lambda_1, \lambda_2) 
\subset \mathbb{R}$ for which the closure $[\lambda_1, \lambda_2]$
has empty intersection with the essential spectrum of 
the operator.
We will formulate our results for two cases: (1) when $x = a$ 
is a regular boundary point for (\ref{linear-hammy}); and 
(2) when $x = a$ is a singular boundary point for 
(\ref{linear-hammy}). (We take (\ref{linear-hammy}) to 
be singular at $x = b$ in both cases; the case in which 
(\ref{linear-hammy}) is regular at both endpoints has 
been analyzed in \cite{HS2}.) The case in which 
(\ref{linear-hammy}) is regular at $x = a$ corresponds 
with the following additional assumption. 

\medskip
{\bf (A)$^\prime$} The value $a$ is finite, and for any 
$c \in (a, b)$, we have $B_0, B_1 \in L^1 ((a, c), \mathbb{C}^{2n \times 2n})$.
\medskip
 
Our starting point will be to specify an appropriate Hilbert 
space to work in, and for this we follow \cite{Krall2002}.
We denote by $\tilde{L}^2_{B_1} ((a, b), \mathbb{C}^{2n})$
the set of all Lebesgue measureable functions $f$ defined on $(a, b)$
so that 
\begin{equation*}
\|f\|_{B_1} := \Big(\int_a^b (B_1 (x) f(x), f(x)) dx\Big)^{1/2} 
< \infty.
\end{equation*}
Correspondingly, we denote by $\mathcal{Z}_{B_1}$ the subset of 
$\tilde{L}^2_{B_1} ((a, b), \mathbb{C}^{2n})$ comprising 
elements $f \in \tilde{L}^2_{B_1} ((a, b), \mathbb{C}^{2n})$ so that 
$\|f\|_{B_1} = 0$. Our Hilbert space will be the quotient
space, 
\begin{equation*}
L^2_{B_1} ((a, b), \mathbb{C}^{2n})
= \tilde{L}^2_{B_1} ((a, b), \mathbb{C}^{2n})/\mathcal{Z}_{B_1}.
\end{equation*}
I.e., two functions $f, g \in L^2_{B_1} ((a, b), \mathbb{C}^{2n})$ 
are equivalent if and only if $\|f - g\|_{B_1} = 0$. With 
this specification, $\| \cdot \|_{B_1}$ is a norm on 
$L^2_{B_1} ((a, b), \mathbb{C}^{2n})$. We equip 
$L^2_{B_1} ((a, b), \mathbb{C}^{2n})$ with the inner product
\begin{equation*}
\langle f, g \rangle_{B_1} := \int_a^b (B_1 (x) f(x), g(x)) dx.
\end{equation*}
In all of these specifications, we emphasize that $B_1 (x)$
need not be an invertible matrix. 

We now introduce a maximal operator associated with 
(\ref{linear-hammy}).

\begin{definition} \label{maximal-operator}
(i) We denote by $\mathcal{D}_M$ the collection of 
all 
\begin{equation*}
    y \in \AC_{\loc} ((a, b), \mathbb{C}^{2n}) \cap L^2_{B_1} ((a, b), \mathbb{C}^{2n})
\end{equation*} 
for which there exists some $f \in L^2_{B_1} ((a, b), \mathbb{C}^{2n})$ so that 
\begin{equation*}
Jy' - B_0 (x) y
= B_1 (x) f,
\end{equation*}
for a.e. $x \in (a, b)$. We will refer to $\mathcal{D}_M$ 
as the maximal domain, and we note that $f$ is uniquely determined 
in $L^2_{B_1} ((a, b), \mathbb{C}^{2n})$. (If $f$ and $g$ are two 
functions associated with the same $y \in \mathbf{\mathcal{D}}_M$, then 
$B_1 (x) (f - g) = 0$ for a.e. $x \in (a, b)$,
so that $f = g$ in $L^2_{B_1} ((a, b), \mathbb{C}^{2n})$.)

(ii) We define the maximal operator 
$\mathcal{L}_{M}: L^2_{B_1} ((a, b), \mathbb{C}^{2n}) 
\to L^2_{B_1} ((a, b), \mathbb{C}^{2n})$ as the map taking 
a given $y \in \mathcal{D}_M$ to the unique 
$f \in L^2_{B_1} ((a, b), \mathbb{C}^{2n})$ guaranteed by 
the definition of $\mathcal{D}_M$. We note 
particularly that $y (\cdot; \lambda) \in \mathcal{D}_M$
solves (\ref{linear-hammy}) iff and only if 
$\mathcal{L}_M y = \lambda y$ a.e. in $(a, b)$. 
\end{definition}

The following terminology will be convenient for the discussion.

\begin{definition} \label{left-right-definition}
We say that a solution $y (\cdot;\lambda) \in \AC_{\loc} ((a,b),\mathbb{C}^{2n})$
of (\ref{linear-hammy}) {\it lies left} in $(a, b)$ if for any $c \in (a,b)$, 
the restriction of $y(\cdot; \lambda)$ to $(a,c)$ is in 
$L^2_{B_1} ((a, c), \mathbb{C}^{2n})$. Likewise, 
we say that a solution $y (\cdot;\lambda) \in \AC_{\loc} ((a,b),\mathbb{C}^{2n})$
of (\ref{linear-hammy}) {\it lies right} in $(a, b)$ if for any $c \in (a,b)$, 
the restriction of $y(\cdot; \lambda)$ to $(c,b)$ is in 
$L^2_{B_1} ((c,b), \mathbb{C}^{2n})$. For each fixed
$\lambda \in \mathbb{C}$ we will denote by 
$m_a (\lambda)$ the dimension of the space of solutions to 
(\ref{linear-hammy}) that lie left in $(a, b)$, and 
we will denote by $m_b (\lambda)$ the dimension of the 
space of solutions to (\ref{linear-hammy}) that lie right 
in $(a, b)$.  
\end{definition}

We will show in Section \ref{operator-section} that if Assumptions {\bf (A)} and 
{\bf (B)} hold, then for any $\lambda \in \mathbb{C} \backslash \mathbb{R}$, 
(\ref{linear-hammy}) admits at least $n$ solutions that lie
left in $(a, b)$ and at least $n$ solutions that lie right in 
$(a, b)$. According to Theorem V.2.2 in \cite{Krall2002}, $m_a (\lambda)$
and $m_b (\lambda)$ are both constant for all $\lambda$ 
with $\textrm{Im}\,\lambda > 0$, and the same statement is 
true for $\textrm{Im}\,\lambda < 0$. In the event that 
$B_0 (x)$ and $B_1 (x)$ have real-valued entries for a.e. $x \in (a, b)$,
it is furthermore the case that $m_a (\lambda)$
and $m_b (\lambda)$ are both constant for all 
$\lambda \in \mathbb{C} \backslash \mathbb{R}$. (See our 
Remark \ref{assumption-c-remark}.) We will
allow $B_0 (x)$ and $B_1 (x)$ to have complex-valued entries, but 
we will make the following consistency assumption: 

\medskip
{\bf (C)} The values $m_a (\lambda)$ and $m_b (\lambda)$ are 
both constant for all $\lambda \in \mathbb{C} \backslash \mathbb{R}$.
We denote these common values $m_a$ and $m_b$. 
\medskip

In the event that Assumption {\bf (A)$^\prime$} also holds, it's 
clear that $m_a (\lambda) = 2n$ for all $\lambda \in \mathbb{C}$.
In the terminology of our next definition, this means that under
Assumption {\bf (A)$^\prime$}, (\ref{linear-hammy}) 
is in the limit circle case at $x = a$. In this case, Assumption 
{\bf (C)} holds immediately for $x = a$, with $m_a = 2n$.

\begin{definition} If $m_a = n$, we say that (\ref{linear-hammy}) 
is in the limit point case at $x = a$, and if $m_a = 2n$, we say 
that (\ref{linear-hammy}) is in the limit circle case at $x = a$.
If $m_a \in (n, 2n)$, we say that (\ref{linear-hammy}) 
is in the limit-$m_a$ case at $x = a$. Analogous specifications
are made at $x = b$.
\end{definition}

Under Assumptions {\bf (A)}, {\bf (B)}, and {\bf (C)},
we will show that by taking an appropriate
selection of solutions that lie left in $(a, b)$, 
$\{u^a_j (x; \lambda)\}_{j=1}^n$, and an appropriate 
selection of solutions that lie right in $(a, b)$,
$\{u^b_j (x; \lambda)\}_{j=1}^n$, we can specify the domain
of a self-adjoint restriction of $\mathcal{L}_M$, which we will 
denote $\mathcal{L}$. For the purposes of this introduction, we
will sum this development up in the following lemma, for which 
we denote by $U^a (x; \lambda)$ the matrix comprising the 
vector functions $\{u^a_j (x; \lambda)\}_{j=1}^n$ as its columns, 
and by $U^b (x; \lambda)$ the matrix comprising the 
vector functions $\{u^b_j (x; \lambda)\}_{j=1}^n$ as its columns.
The selection process is described in detail in 
Section \ref{operator-section}; see especially the summary
in Remark \ref{selection-remark}. 

\begin{lemma} \label{self-adjoint-operator-lemma}
(i) Let Assumptions {\bf (A)}, {\bf (B)}, and {\bf (C)} hold, and let 
$\lambda_0 \in \mathbb{C} \backslash \mathbb{R}$ be fixed. Then there
exists a selection of solutions $\{u^a_j (x; \lambda_0)\}_{j=1}^n$
to (\ref{linear-hammy}) that lie left in $(a, b)$, along with 
a selection of solutions $\{u^b_j (x; \lambda_0)\}_{j=1}^n$
to (\ref{linear-hammy}) that lie right in $(a, b)$ so 
that the restriction of $\mathcal{L}_M$ to the domain 
\begin{equation*}
    \mathcal{D} := \{y \in \mathcal{D}_M: \lim_{x \to a^+} U^a (x; \lambda_0)^* J y(x) = 0,
    \quad \lim_{x \to b^-} U^b (x; \lambda_0)^* J y(x) = 0\}
\end{equation*}
is a self-adjoint operator. We will denote this operator $\mathcal{L}$.

\medskip
(ii) Let Assumptions {\bf (A)}, {\bf (A)$^\prime$}, {\bf (B)}, and 
{\bf (C)} hold, and let 
$\lambda_0 \in \mathbb{C} \backslash \mathbb{R}$ be fixed. In addition, 
let $\alpha \in \mathbb{C}^{n \times 2n}$ denote any fixed matrix
satisfying $\rank \alpha = n$ and $\alpha J \alpha^* = 0$. Then there
exists a selection of solutions $\{u^b_j (x; \lambda_0)\}_{j=1}^n$
to (\ref{linear-hammy}) that lie right in $(a, b)$ so 
that the restriction of $\mathcal{L}_M$ to the domain 
\begin{equation*}
    \mathcal{D}^{\alpha} := \{y \in \mathcal{D}_M: \alpha y(a) = 0,
    \quad \lim_{x \to b^-} U^b (x; \lambda_0)^* J y(x) = 0\}
\end{equation*}
is a self-adjoint operator. We will denote this operator $\mathcal{L}^{\alpha}$.
\end{lemma}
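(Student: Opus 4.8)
The plan is to exhibit the domain $\mathcal{D}$ as a maximal isotropic (Lagrangian) condition for the boundary form carried by $\mathcal{D}_M$, and then to read off self-adjointness. For $y, z \in \mathcal{D}_M$ write $[z,y]_x := z(x)^* J y(x)$. Using $J^* = -J$ together with the self-adjointness of $B_0$ and $B_1$, a direct computation gives, on every $[c,d] \subset (a,b)$,
\begin{equation*}
\int_c^d \big( (B_1 \mathcal{L}_M y, z) - (y, B_1 \mathcal{L}_M z) \big)\, dx = [z,y]_d - [z,y]_c .
\end{equation*}
Because $y, z, \mathcal{L}_M y, \mathcal{L}_M z \in L^2_{B_1}((a,b),\mathbb{C}^{2n})$, the left-hand side converges as $c \to a^+$ and $d \to b^-$; I would first record that consequently the limits $[z,y]_a := \lim_{x \to a^+} [z,y]_x$ and $[z,y]_b := \lim_{x \to b^-} [z,y]_x$ exist for all $y, z \in \mathcal{D}_M$, and that
\begin{equation*}
\langle \mathcal{L}_M y, z \rangle_{B_1} - \langle y, \mathcal{L}_M z \rangle_{B_1} = [z,y]_b - [z,y]_a .
\end{equation*}

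Next I would construct $U^a$ and $U^b$. Let $W_a$ and $W_b$ denote the spaces of solutions of (\ref{linear-hammy}) at $\lambda_0$ that lie left, respectively right, in $(a,b)$; by Assumption {\bf (C)} and the Weyl theory of \cite{Krall2002} these have dimensions $m_a$ and $m_b$. The form $[\cdot,\cdot]_a$ restricts to a skew-Hermitian form on $W_a$, and the technical core of the argument is to show, using the Atkinson positivity Assumption {\bf (B)}, that its radical $R_a$ has dimension $2n - m_a$; equivalently the induced nondegenerate form on $\mathcal{B}_a := W_a/R_a$ has even dimension $2(m_a - n)$, and the analogous statement holds at $b$. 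Granting this, a maximal isotropic subspace of $(W_a, [\cdot,\cdot]_a)$ has dimension $(2n - m_a) + (m_a - n) = n$; I would take the columns $\{u^a_j(\cdot;\lambda_0)\}_{j=1}^n$ of $U^a$ to be a basis of such a subspace, and likewise for $U^b$, so that $U^a{}^* J U^a \to 0$ at $a$ and $U^b{}^* J U^b \to 0$ at $b$. (When $m_a = n$ the radical is all of $W_a$ and the condition at $a$ is vacuous; when $m_a = 2n$ it imposes $n$ genuine conditions.)

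Self-adjointness then follows in two steps. For symmetry, if $y, z \in \mathcal{D}$ then $U^a{}^* J y \to 0$ and $U^a{}^* J z \to 0$ force the boundary values of $y$ and $z$ at $a$ into the Lagrangian subspace of $\mathcal{B}_a$ determined by the columns of $U^a$; since this subspace is isotropic, $[z,y]_a = 0$, and likewise $[z,y]_b = 0$, so the identity of the first paragraph yields $\langle \mathcal{L} y, z\rangle_{B_1} = \langle y, \mathcal{L} z\rangle_{B_1}$, i.e. $\mathcal{L} \subseteq \mathcal{L}^*$. For the reverse inclusion, $\mathcal{L}$ contains the minimal operator (the closure of $\mathcal{L}_M$ restricted to elements compactly supported in $(a,b)$), whose adjoint is $\mathcal{L}_M$; hence $\mathcal{D}(\mathcal{L}^*) \subseteq \mathcal{D}_M$, and for $z \in \mathcal{D}(\mathcal{L}^*)$ the identity forces $[z,y]_b - [z,y]_a = 0$ for every $y \in \mathcal{D}$. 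Using a decoupling lemma---that the boundary-value map carries $\mathcal{D}_M$ onto $\mathcal{B}_a \times \mathcal{B}_b$ and that within $\mathcal{D}$ one may prescribe arbitrary data in the chosen Lagrangian at either endpoint while imposing zero data at the other---I would treat the two endpoints separately and conclude that the boundary value of $z$ at $a$ is annihilated, with respect to $[\cdot,\cdot]_a$, by the whole chosen Lagrangian; as that Lagrangian equals its own annihilator, the boundary value of $z$ lies in it, so $U^a{}^* J z \to 0$, and similarly $U^b{}^* J z \to 0$. Thus $z \in \mathcal{D}$ and $\mathcal{L}^* \subseteq \mathcal{L}$.

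Part (ii) is the same argument with the regular endpoint handled concretely. Under Assumption {\bf (A)$^\prime$} every solution lies left, so $m_a = 2n$ and each $y \in \mathcal{D}_M$ has a genuine limit $y(a)$, with $[z,y]_a = z(a)^* J y(a)$ the standard symplectic form on $\mathbb{C}^{2n}$. The hypotheses $\rank \alpha = n$ and $\alpha J \alpha^* = 0$ say precisely that $\ker \alpha$ is a Lagrangian subspace of $(\mathbb{C}^{2n}, J)$, so the pointwise condition $\alpha y(a) = 0$ is the maximal isotropic condition at $a$ and plays the role of $U^a$; the construction of $U^b$ and the verification at $b$ are unchanged. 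I expect the main obstacle to be the dimension count for the radical of $[\cdot,\cdot]_a$ on $W_a$---that the singular boundary spaces have dimensions $2(m_a - n)$ and $2(m_b - n)$---since this is where Assumption {\bf (B)} enters decisively; a secondary difficulty is formulating the decoupling lemma so that data at $a$ and $b$ can be prescribed independently within $\mathcal{D}$.
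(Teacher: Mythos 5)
Your route is genuinely different from the paper's: you set up a GKN-type argument in which the boundary conditions are read off from a maximal isotropic subspace of the skew-Hermitian form $[\cdot,\cdot]_a$ on the space $W_a$ of left-lying solutions at $\lambda_0$ (and similarly at $b$), whereas the paper constructs the columns of $U^a$, $U^b$ explicitly as Niessen elements from the spectral data of $\mathcal{A}(x;\lambda)=\frac{1}{2\mathrm{Im}\,\lambda}\Phi(x;\lambda)^*(J/i)\Phi(x;\lambda)$ (Section \ref{niessen-section}), and then proves self-adjointness by showing that the cutoff pre-operator $\mathcal{L}_{\lambda_0}$ on $\mathcal{D}_c+\mathrm{Span}\,\{\tilde{u}^a_j,\tilde{u}^b_j\}$ is essentially self-adjoint and identifying the domain of its closure. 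Your Green's identity, the symmetry step, and the part (ii) reduction (at a regular endpoint $\ker\alpha$ is Lagrangian for the pointwise symplectic form) all match the paper in substance, and your deferred ``decoupling lemma'' is harmless: it is exactly the paper's cutoff construction with $\rho_a$, $\rho_b$.

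The genuine gap is the inference you make after ``Granting this.'' For a complex \emph{sesquilinear} skew-Hermitian form, nondegeneracy on an even-dimensional space does not yield an isotropic subspace of half that dimension: $\tfrac{1}{i}[\cdot,\cdot]_a$ is Hermitian on $\mathcal{B}_a=W_a/R_a$, totally isotropic subspaces have dimension at most $\min(p,q)$ where $(p,q)$ is its signature, and if the signature were $(2(m_a-n),0)$ there would be no nonzero isotropic vectors at all---hence no admissible $U^a$. (The real symplectic intuition, where nondegenerate antisymmetric forms are automatically split, fails here.) So the whole lemma hinges on proving the signature is split, $(m_a-n,\,m_a-n)$, and that is precisely what the paper's conjugate-pairing analysis accomplishes: Lemma \ref{cal-A-eigs} matches each eigenvalue limit of $\mathcal{A}(x;\lambda_0)$ with one of $\mathcal{A}(x;\bar{\lambda}_0)$, and under Assumption {\bf (C)} each doubly square-integrable Niessen plane carries signature $(1,1)$, which is what makes the isotropic combinations $u^b_j=y^b_j+\beta^b_j z^b_j$ with $|\beta^b_j|^2=-\mu^b_j/\mu^b_{n+j}>0$ possible (Lemma \ref{krall-niessen-lemma}, Claim \ref{niessen-claim1}). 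Note also that you attribute the radical count to Assumption {\bf (B)} alone; in fact the radical of $[\cdot,\cdot]_b$ on $W_b(\lambda_0)$ has dimension $2n-m_b(\bar{\lambda}_0)$---the zero eigenvalue-limits of $\mathcal{A}(\cdot;\lambda_0)$ correspond to eigenvalues of $\mathcal{A}(\cdot;\bar{\lambda}_0)$ that diverge---so both the radical count and the split signature are statements about the pair $(\lambda_0,\bar{\lambda}_0)$ and rest decisively on Assumption {\bf (C)}, not on {\bf (B)}. Without supplying this analysis (the content of the paper's Section \ref{niessen-section}), the existence of the claimed $n$-dimensional isotropic subspaces, and with it the lemma, is not established.
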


In order to set some notation and terminology for this discussion, we make the following standard
definitions.

\begin{definition} \label{spectrum-definition}
We denote by $\rho (\mathcal{L})$ the usual resolvent set 
\begin{equation*}
    \begin{aligned}
    \rho (\mathcal{L}) &:= 
    \{\lambda \in \mathbb{C}: (\mathcal{L} - \lambda I)^{-1}: 
    L^2_{B_1} ((a, b), \mathbb{C}^{n}) \to L^2_{B_1} ((a, b), \mathbb{C}^{n}) \\
    & \quad \quad \quad \textrm{ is a bounded linear operator}\},
    \end{aligned}
\end{equation*}
and we denote by $\sigma (\mathcal{L})$ the spectrum of $\mathcal{L}$, 
$\sigma (\mathcal{L}) := \mathbb{C} \backslash \rho (\mathcal{L})$. In addition, 
we define
the point spectrum of $\mathcal{L}$ to the be collection of eigenvalues,
\begin{equation*}
    \sigma_p (\mathcal{L}) := \{\lambda \in \mathbb{C}: \mathcal{L}y = \lambda y 
    \textrm{ for some } y \in \mathcal{D} \backslash \{0\}\},
\end{equation*}
and we define the essential spectrum of $\mathcal{L}$, denoted 
$\sigma_{\ess} (\mathcal{L})$ to be the collection of all $\lambda \in \mathbb{C}$
so that $\lambda \notin \rho(\mathcal{L})$ and $\lambda$ is not an isolated
eigenvalue of $\mathcal{L}$ with finite multiplicity. Finally, we 
define the discrete spectrum of $\mathcal{L}$ to 
be $\sigma_{\rm discrete} (\mathcal{L}) = \sigma (\mathcal{L}) \backslash \sigma_{\ess} (\mathcal{L})$.
We will use precisely 
the same definitions for $\mathcal{L}^{\alpha}$, with $\mathcal{D}$ replaced
by $\mathcal{D}^{\alpha}$. 
\end{definition}

Our primary tool for this analysis will be the 
Maslov index, and as a starting point for a discussion of this object, we 
define what we will mean by a Lagrangian subspace of $\mathbb{C}^{2n}$. 

\begin{definition} \label{lagrangian_subspace}
We say $\ell \subset \mathbb{C}^{2n}$ is a Lagrangian subspace of $\mathbb{C}^{2n}$
if $\ell$ has dimension $n$ and
\begin{equation} 
(J u, v) = 0, 
\end{equation} 
for all $u, v \in \ell$. In addition, we denote by 
$\Lambda (n)$ the collection of all Lagrangian subspaces of $\mathbb{C}^{2n}$, 
and we will refer to this as the {\it Lagrangian Grassmannian}. 
\end{definition}

\begin{remark} Following the convention of Arnol'd's foundational 
paper \cite{Arnold67}, the notation $\Lambda (n)$ is often used to denote the 
Lagrangian Grassmannian associated with $\mathbb{R}^{2n}$. Our expectation 
is that it can be used in the current setting of $\mathbb{C}^{2n}$ without
confusion. We note that the Lagrangian Grassmannian associated with 
$\mathbb{C}^{2n}$ has been considered by a number of authors, including
(ordered by publication date)
Bott \cite{Bott56}, Kostrykin and Schrader \cite{KS99}, Arnol'd
\cite{Arnold00}, and Schulz-Baldes \cite{S-B07, S-B12}. It is shown
in all of these references that $\Lambda (n)$ is 
homeomorphic to the set of $n \times n$ unitary matrices $U (n)$,
and in \cite{S-B07, S-B12} the relationship is shown to be 
diffeomorphic. It is also shown in \cite{S-B07} that the fundamental
group of $\Lambda (n)$ is isomorphic to the integers $\mathbb{Z}$.
\end{remark}

Any Lagrangian subspace of $\mathbb{C}^{2n}$ can be
spanned by a choice of $n$ linearly independent vectors in 
$\mathbb{C}^{2n}$. We will generally find it convenient to collect
these $n$ vectors as the columns of a $2n \times n$ matrix $\mathbf{X}$, 
which we will refer to as a {\it frame} for $\ell$. Moreover, we will 
often coordinatize our frames as $\mathbf{X} = {X \choose Y}$, where $X$ and $Y$ are 
$n \times n$ matrices. Following \cite{F} (p. 274), we specify 
a metric on $\Lambda (n)$ in terms of appropriate orthogonal projections. 
Precisely, let $\mathcal{P}_i$ 
denote the orthogonal projection matrix onto $\ell_i \in \Lambda (n)$
for $i = 1,2$. I.e., if $\mathbf{X}_i$ denotes a frame for $\ell_i$,
then $\mathcal{P}_i = \mathbf{X}_i (\mathbf{X}_i^* \mathbf{X}_i)^{-1} \mathbf{X}_i^*$.
We take our metric $d$ on $\Lambda (n)$ to be defined 
by 
\begin{equation*}
d (\ell_1, \ell_2) := \|\mathcal{P}_1 - \mathcal{P}_2 \|,
\end{equation*} 
where $\| \cdot \|$ can denote any matrix norm. We will say 
that a path of Lagrangian subspaces 
$\ell: \mathcal{I} \to \Lambda (n)$ is continuous provided it is 
continuous under the metric $d$. 

Suppose $\ell_1 (\cdot), \ell_2 (\cdot)$ denote continuous paths of Lagrangian 
subspaces $\ell_i: \mathcal{I} \to \Lambda (n)$, $i = 1,2$, for some parameter interval 
$\mathcal{I}$ (not necessarily closed and bounded). 
The Maslov index associated with these paths, which we will 
denote $\mas (\ell_1, \ell_2; \mathcal{I})$, is a count of the number of times
the paths $\ell_1 (\cdot)$ and $\ell_2 (\cdot)$ intersect, counted
with both multiplicity and direction. (In this setting, if we let 
$t_*$ denote the point of intersection (often referred to as a 
{\it conjugate point}), then multiplicity corresponds with the dimension 
of the intersection $\ell_1 (t_*) \cap \ell_2 (t_*)$; a precise definition of what we 
mean in this context by {\it direction} will be
given in Section \ref{maslov-section}.) 

In order to formulate our results for the case in which (\ref{linear-hammy}) 
is regular at $x = a$, we introduce the $2n \times n$ matrix solution
$\mathbf{X}_{\alpha} (x; \lambda)$ to the initial value problem
\begin{equation} \label{frame-alpha}
    \begin{aligned}
    J \mathbf{X}_{\alpha}' &= (B_0 (x) + \lambda B_1 (x)) \mathbf{X}_{\alpha} \\
    \mathbf{X}_{\alpha} &(a; \lambda) = J \alpha^*.
    \end{aligned}
\end{equation}
Under our assumptions {\bf (A), (A)$^\prime$}, 
we can conclude that for each $\lambda \in \mathbb{C}$, 
$\mathbf{X}_{\alpha} (\cdot; \lambda) \in AC_{\loc} ([a,b), \mathbb{C}^{2n \times n})$. 
In addition, $\mathbf{X}_{\alpha} \in C([a,b) \times \mathbb{C}, \mathbb{C}^{2n \times n})$,
and $\mathbf{X}_{\alpha} (x; \cdot)$ is analytic in 
$\lambda$. (See, for example, \cite{Weidmann1987}.)
As shown in \cite{HJK}, for each pair $(x, \lambda) \in [a,b) \times \mathbb{R}$,
$\mathbf{X}_{\alpha} (x; \lambda)$ is the frame for a Lagrangian subspace
of $\mathbb{C}^{2n}$, which we will denote $\ell_{\alpha} (x; \lambda)$. 
(In \cite{HJK}, the authors make slightly 
stronger assumptions on $B_0 (x)$ and $B_1 (x)$, but their proof
carries over immediately into our setting.)

For the frame associated with the right endpoint, 
we let $[\lambda_1, \lambda_2]$, $\lambda_1 < \lambda_2$,
be such that $[\lambda_1, \lambda_2] \cap \sigma_{\ess} (\mathcal{L}^{\alpha}) = \emptyset$.
In Section \ref{operator-section}, we will show that 
for each $\lambda \in [\lambda_1, \lambda_2]$, there exists a
$2n \times n$ matrix solution $\mathbf{X}_b (x; \lambda)$
to the ODE 
\begin{equation} \label{frame-b}
\begin{aligned}
J \mathbf{X}_b' =& (B_0 (x) + \lambda B_1 (x)) \mathbf{X}_b \\
\lim_{x \to b^-} &U^b (x; \lambda_0)^* J \mathbf{X}_b (x; \lambda) = 0,  
\end{aligned}
\end{equation}
where the matrix $U^b (x; \lambda_0)$ is described in 
Lemma \ref{self-adjoint-operator-lemma} (and the paragraph
leading into that lemma).
In addition, we will check that for each pair $(x, \lambda) \in [a,b) \times [\lambda_1, \lambda_2]$,
$\mathbf{X}_{b} (x; \lambda)$ is the frame for a Lagrangian subspace
of $\mathbb{C}^{2n}$, which we will denote $\ell_b (x; \lambda)$, 
and we will also check that 
$\ell_b \in C([a, b) \times [\lambda_1, \lambda_2], \Lambda (n))$.

In Section \ref{theorems-section}, we will establish the following 
theorem. 

\begin{theorem} \label{regular-singular-theorem}
Let Assumptions {\bf (A)}, {\bf (A)$^\prime$}, {\bf (B)}, 
and {\bf (C)} hold,
and assume that for some pair $\lambda_1, \lambda_2 \in \mathbb{R}$,
$\lambda_1 < \lambda_2$, we have 
$\sigma_{\ess} (\mathcal{L}^{\alpha}) \cap [\lambda_1, \lambda_2] = \emptyset$.
If $\ell_{\alpha} (\cdot; \lambda_1)$ and $\ell_b (\cdot; \lambda_2)$
denote the paths of Lagrangian subspaces of $\mathbb{C}^{2n}$ constructed just above, 
and $\mathcal{N}^{\alpha} ([\lambda_1, \lambda_2))$
denotes a count of the number of eigenvalues $\mathcal{L}^{\alpha}$
has on the interval $[\lambda_1, \lambda_2)$, then 
\begin{equation} \label{regular-singular-theorem-inequality}
  \mathcal{N}^{\alpha} ([\lambda_1, \lambda_2))
  \ge \mas (\ell_{\alpha} (\cdot; \lambda_1), \ell_b (\cdot; \lambda_2); [a, b)).
\end{equation}
If additionally $\lambda_1, \lambda_2 \notin \sigma_p (\mathcal{L}^{\alpha})$,
then we have equality in (\ref{regular-singular-theorem-inequality}).
\end{theorem}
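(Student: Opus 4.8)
The plan is to realize both sides of (\ref{regular-singular-theorem-inequality}) as crossing counts of a single two-parameter family of Lagrangian subspaces, and then to extract the relation from the homotopy invariance of the Maslov index on the boundary of a rectangle in the $(x,\lambda)$ parameter plane. First I would record the dictionary between crossings and eigenvalues. For fixed $\lambda\in[\lambda_1,\lambda_2]$, a nonzero vector $w\in\ell_{\alpha}(x_0;\lambda)\cap\ell_b(x_0;\lambda)$ is the value at $x_0$ of a solution $y$ of (\ref{linear-hammy}) that both satisfies $\alpha y(a)=0$ (since $y(x)=\mathbf{X}_{\alpha}(x;\lambda)c$ gives $\alpha y(a)=\alpha J\alpha^* c=0$ by (\ref{frame-alpha}) and the rank condition) and satisfies the singular right condition $\lim_{x\to b^-}U^b(x;\lambda_0)^*Jy=0$ (since $w$ lies on the frame $\mathbf{X}_b$). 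Hence such a $w$ exists iff $\lambda\in\sigma_p(\mathcal{L}^{\alpha})$, and because the two frames are transported by the same symplectic fundamental solution of (\ref{linear-hammy}), the intersection dimension $\dim(\ell_{\alpha}(x;\lambda)\cap\ell_b(x;\lambda))$ is independent of $x$ and equals the geometric multiplicity of $\lambda$. The hypothesis $[\lambda_1,\lambda_2]\cap\sigma_{\ess}(\mathcal{L}^{\alpha})=\emptyset$ guarantees there are only finitely many such $\lambda$, each of finite multiplicity, so $\mathcal{N}^{\alpha}([\lambda_1,\lambda_2))$ is finite.

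Next I would set up the homotopy. Fix $\beta\in(a,b)$ and consider, on the rectangle $[a,\beta]\times[\lambda_1,\lambda_2]$, the continuous family $(x,\nu)\mapsto(\ell_{\alpha}(x;\lambda_1),\ell_b(x;\nu))$; joint continuity is exactly the stated regularity of $\ell_{\alpha}$ together with $\ell_b\in C([a,b)\times[\lambda_1,\lambda_2],\Lambda(n))$. It is convenient to pass to the doubled Lagrangian $\ell_{\alpha}\oplus\ell_b\subset\mathbb{C}^{4n}$, whose crossings with the diagonal record exactly the intersections $\ell_{\alpha}\cap\ell_b$, so that the two-path index $\mas(\ell_{\alpha},\ell_b;\cdot)$ inherits homotopy invariance and path additivity from the ordinary Maslov index. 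Applying homotopy invariance to the contractible boundary loop then gives
\begin{equation*}
\mas(\text{bottom})+\mas(\text{right})-\mas(\text{top})-\mas(\text{left})=0,
\end{equation*}
where bottom and top are the $x$-segments at $\nu=\lambda_1$ and $\nu=\lambda_2$, and left and right are the $\nu$-segments at $x=a$ and $x=\beta$ (conventions for crossings at segment endpoints needing care).

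Then I would identify the four sides. The top segment is $\mas(\ell_{\alpha}(\cdot;\lambda_1),\ell_b(\cdot;\lambda_2);[a,\beta])$, which converges to the right-hand side of (\ref{regular-singular-theorem-inequality}) as $\beta\to b^-$. On the bottom segment both frames carry the energy $\lambda_1$, so by the first step the intersection dimension is constant in $x$; there are no interior conjugate points, and this side contributes only a possible endpoint term at $x=a$ equal to the multiplicity of $\lambda_1$. On the left segment $x=a$ the first subspace equals the fixed boundary space $\{v:\alpha v=0\}$ while $\ell_b(a;\nu)$ moves with $\nu$; its crossings occur exactly at the eigenvalues in $[\lambda_1,\lambda_2]$, and the Atkinson positivity {\bf (B)} forces the crossing form to be sign-definite there, so all crossings have the same direction and the left segment contributes, with a fixed sign, the eigenvalue count on $[\lambda_1,\lambda_2]$ under a definite endpoint convention. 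Monotonicity from {\bf (B)} is thus the mechanism that turns the signed Maslov count into an honest eigenvalue count and that fixes the direction of the inequality.

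Finally, the right segment $x=\beta$ is the term that must be understood as $\beta\to b^-$, and this is where the main difficulty lies and where the extension beyond the regular case of \cite{HS2} genuinely occurs. Using the absence of essential spectrum on $[\lambda_1,\lambda_2]$, I would argue that solutions on the frame $\ell_b$ become transverse to $\ell_{\alpha}(\cdot;\lambda_1)$ for $x$ near $b$, so that conjugate points do not accumulate at the singular endpoint, the count over $[a,b)$ is finite, and the right-segment contribution has a definite sign in general and vanishes in the limit when $\lambda_2\notin\sigma_p(\mathcal{L}^{\alpha})$. Assembling the four terms and letting $\beta\to b^-$ then expresses the right-hand side of (\ref{regular-singular-theorem-inequality}) as $\mathcal{N}^{\alpha}([\lambda_1,\lambda_2))$ minus the nonnegative corrections associated with $\lambda_1$, $\lambda_2$, and the endpoint $b$, which yields the inequality; when in addition $\lambda_1,\lambda_2\notin\sigma_p(\mathcal{L}^{\alpha})$ all corrections vanish and equality follows. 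I expect the delicate points to be the uniform-in-$\nu$ control of the right segment near the singular endpoint and the careful bookkeeping of the Maslov endpoint conventions, so that the half-open count $[\lambda_1,\lambda_2)$ emerges with the correct sign.
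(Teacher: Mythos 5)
Your argument splits into two parts of very different status, so let me address them separately.

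For the inequality, your route is correct in outline and genuinely different from the paper's. The paper's Maslov box uses the pair $(\ell_{\alpha}(x;\lambda), \ell_b(x;\lambda_2))$, so its top shelf counts eigenvalues of a \emph{truncated} operator $\mathcal{L}^{\alpha}_{a,c}$ on $(a,c)$; relating that count to $\mathcal{N}^{\alpha}$ then requires an extra homotopy argument (the triangular-path claim, Claim \ref{triangle-claim-alpha}) together with monotonicity of $\mas (\ell_{\alpha} (c; \lambda_1), \ell_b (c; \cdot); [\lambda_1, \lambda_2])$. Your box instead freezes $\ell_{\alpha}$ at $\lambda_1$ and varies only the spectral parameter of $\ell_b$, which places the eigenvalue count on the segment $x=a$: since $\ell_{\alpha}(a;\nu)=\ker\alpha$ is $\nu$-independent, intersections $\ell_{\alpha}(a;\lambda_1)\cap\ell_b(a;\nu)$ are exactly matched-energy intersections, hence eigenvalues of $\mathcal{L}^{\alpha}$ with the correct multiplicity. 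This bypasses the truncated operator and the triangle claim, which is a real economy. Be aware, though, that the monotonicity you invoke on the segments $x=a$ and $x=\beta$ is not free: it requires $\mathbf{X}_b(x;\nu)^*J\partial_{\nu}\mathbf{X}_b(x;\nu)$ to be sign-definite, which in turn requires knowing that this quantity vanishes in the limit $x\to b^-$ (so that it equals $-\int_x^b \mathbf{X}_b^*B_1\mathbf{X}_b\,d\xi$); the paper gets this, and the finiteness of conjugate points needed for the limit $\beta\to b^-$, from Claims 4.1 and 4.2 of \cite{HS2} adapted to $(a,b)$, together with the piecewise-analytic-in-$\lambda$ construction of the frames $\mathbf{X}_b$ (Lemma \ref{continuation-lemma}). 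Your proposal uses these facts implicitly; they need to be cited or proved. Also, under the paper's endpoint conventions your bottom segment contributes exactly $0$ (a constant intersection neither arrives at nor departs from $-1$), not an endpoint term equal to the multiplicity of $\lambda_1$.

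The equality part has a genuine gap. You correctly isolate the needed statement — that the right segment vanishes for $\beta$ near $b$, i.e. $\ell_{\alpha}(\beta;\lambda_1)\cap\ell_b(\beta;\nu)=\{0\}$ for all $\nu\in[\lambda_1,\lambda_2)$, which is precisely the paper's Remark \ref{regular-singular-remark} — but your justification (``using the absence of essential spectrum \dots\ solutions on the frame $\ell_b$ become transverse to $\ell_{\alpha}(\cdot;\lambda_1)$ for $x$ near $b$'') is an assertion, not an argument. A crossing on the right segment pairs Lagrangian planes at two \emph{different} energies ($\lambda_1$ and $\nu$), so it corresponds to no spectral data of $\mathcal{L}^{\alpha}$, and no soft compactness or transversality argument rules such crossings out near the singular endpoint; a priori the truncated problems can carry spurious crossings that persist as $\beta\to b^-$. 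What the paper actually does here is the substantive functional-analytic core of the proof: it constructs the Green's function $G^{\alpha}(x,\xi;\lambda_2)$ (Section \ref{green-section}), identifies the truncated resolvent with the compression $\mathcal{P}_{a,c}\mathcal{R}(\mathcal{L}^{\alpha};\lambda_2)\mathcal{P}_{a,c}$, proves strong convergence of these compressions as $c\to b^-$ (Claim \ref{strong-convergence-claim}), and then applies the eigenvalue-counting semicontinuity lemma of \cite{GST1996} (as in \cite{GZ2017}) plus a direct-sum spectrum argument (Claim \ref{resolvent-spectrum-claim}) to conclude $\mathcal{N}^{\alpha}((\lambda_1,\lambda_2))\le\liminf_{c\to b^-}\mathcal{N}^{\alpha}_{a,c}((\lambda_1,\lambda_2))$, which is what forces the right-segment contribution to vanish. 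Your proposal contains no equivalent of this machinery, so as written the second assertion of the theorem is not proved.
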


In the case that {\bf (A)$^\prime$} doesn't hold, so that (\ref{linear-hammy})
is singular at $x = a$, we let $[\lambda_1, \lambda_2]$, $\lambda_1 < \lambda_2$,
be such that $[\lambda_1, \lambda_2] \cap \sigma_{\ess} (\mathcal{L}) = \emptyset$.
We will show in Section \ref{operator-section} 
that for each $\lambda \in [\lambda_1, \lambda_2]$ there exists a
$2n \times n$ matrix solution $\mathbf{X}_a (x; \lambda)$
to the ODE 
\begin{equation} \label{frame-a}
\begin{aligned}
J \mathbf{X}_a' =& (B_0 (x) + \lambda B_1 (x)) \mathbf{X}_a \\
\lim_{x \to a^+} &U^a (x; \lambda_0)^* J \mathbf{X}_a (x; \lambda) = 0,  
\end{aligned}
\end{equation}
where the matrix $U^a (x; \lambda_0)$ is described in 
Lemma \ref{self-adjoint-operator-lemma} (and the paragraph
leading into that lemma). In addition, we will check that for each 
pair $(x, \lambda) \in [a,b) \times [\lambda_1, \lambda_2]$,
$\mathbf{X}_a (x; \lambda)$ is the frame for a Lagrangian subspace
of $\mathbb{C}^{2n}$, which we will denote $\ell_a (x; \lambda)$, 
and that 
$\ell_a \in C((a, b) \times [\lambda_1, \lambda_2], \Lambda (n))$.

In Section \ref{theorems-section}, we will establish the following 
theorem. 

\begin{theorem} \label{singular-theorem}
Let Assumptions {\bf (A)}, {\bf (B)}, and {\bf (C)} hold,
and assume that for some pair $\lambda_1, \lambda_2 \in \mathbb{R}$,
$\lambda_1 < \lambda_2$, we have $\sigma_{\ess} (\mathcal{L}) \cap [\lambda_1, \lambda_2] = \emptyset$.
If $\ell_a (\cdot; \lambda_1)$ and $\ell_b (\cdot; \lambda_2)$
denote the paths of Lagrangian subspaces of $\mathbb{C}^{2n}$ constructed just above, 
and $\mathcal{N} ([\lambda_1, \lambda_2))$
denotes a count of the number of eigenvalues $\mathcal{L}$
has on the interval $[\lambda_1, \lambda_2)$, then 
\begin{equation} \label{singular-theorem-inequality}
  \mathcal{N} ([\lambda_1, \lambda_2))
  \ge \mas (\ell_a (\cdot; \lambda_1), \ell_b (\cdot; \lambda_2); (a, b)).
\end{equation}
If additionally $\lambda_1, \lambda_2 \notin \sigma_p (\mathcal{L})$,
then we have equality in (\ref{singular-theorem-inequality}).
\end{theorem}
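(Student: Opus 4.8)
The plan is to combine three ingredients: the monotonicity in the spectral parameter forced by Assumption \textbf{(B)}, a Maslov-box (homotopy-invariance) argument on compactly contained subintervals, and a limiting argument that lets the truncated interval exhaust $(a,b)$.

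First I would establish that the right-hand side of (\ref{singular-theorem-inequality}) is a well-defined finite integer. The key structural fact, which I would extract from Assumption \textbf{(B)} exactly as in the regular case, is that for a fixed interior point $x_0 \in (a,b)$ the crossing form of the path $\lambda \mapsto \ell_a(x_0;\lambda)$ (respectively $\lambda \mapsto \ell_b(x_0;\lambda)$) is sign-definite, being a multiple of $\int (B_1 y, y)\,dx$ over the appropriate subinterval. Together with the hypothesis $\sigma_{\ess}(\mathcal{L}) \cap [\lambda_1, \lambda_2] = \emptyset$ --- which guarantees that $\mathcal{L}$ has only finitely many eigenvalues, each of finite multiplicity, in $[\lambda_1,\lambda_2]$ --- this monotonicity prevents conjugate points of $\ell_a(\cdot;\lambda_1)$ and $\ell_b(\cdot;\lambda_2)$ from accumulating at either singular endpoint, so $\mas(\ell_a(\cdot;\lambda_1), \ell_b(\cdot;\lambda_2); (a,b))$ is finite and, by additivity of the Maslov index under concatenation, equals $\lim_{a' \to a^+,\, b' \to b^-} \mas(\ell_a(\cdot;\lambda_1), \ell_b(\cdot;\lambda_2); [a',b'])$.

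Second, I would identify crossings with eigenvalues and run the homotopy. For fixed interior $x_0$, a nontrivial $v \in \ell_a(x_0;\lambda) \cap \ell_b(x_0;\lambda)$ is the common value at $x_0$ of a solution lying left and a solution lying right, hence (by uniqueness of solutions to (\ref{linear-hammy})) an eigenfunction of $\mathcal{L}$; thus crossings of $\lambda \mapsto (\ell_a(x_0;\lambda), \ell_b(x_0;\lambda))$ are precisely the eigenvalues of $\mathcal{L}$ with their multiplicities, and by the sign-definiteness above the Maslov index of this $\lambda$-path is a signed count of those eigenvalues. On the rectangle $[a',b'] \times [\lambda_1,\lambda_2]$ I would then apply homotopy invariance of the Maslov index around the (contractible) boundary loop to the two-parameter family obtained by \emph{decoupling} the spectral parameters of the two frames, deforming the renormalized comparison $(\ell_a(\cdot;\lambda_1), \ell_b(\cdot;\lambda_2))$ toward the diagonal, same-energy comparison. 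Since a same-energy horizontal edge contributes nothing unless its energy lies in $\sigma_p(\mathcal{L})$, this expresses $\mas(\ell_a(\cdot;\lambda_1),\ell_b(\cdot;\lambda_2);[a',b'])$ as a difference of fixed-$x$ spectral Maslov indices at $x_0 = a'$ and $x_0 = b'$, up to degenerate contributions supported at $\lambda \in \{\lambda_1,\lambda_2\} \cap \sigma_p(\mathcal{L})$.

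Third, I would pass to the singular limit $a' \to a^+$, $b' \to b^-$, where the boundary terms must converge to the eigenvalue count $\mathcal{N}([\lambda_1,\lambda_2))$ of the singular operator $\mathcal{L}$; this is where the self-adjoint boundary conditions encoded in the limiting behavior of $U^a(\cdot;\lambda_0)$ and $U^b(\cdot;\lambda_0)$ must be shown to reproduce, in the limit, exactly the spectral data of $\mathcal{L}$. A convenient way to organize this is to interpret each truncation as a regular-at-$a'$ problem and invoke Theorem \ref{regular-singular-theorem} for an operator $\mathcal{L}^{\alpha'}$ on $(a',b)$ whose boundary matrix $\alpha'$ is chosen so that $\ell_{\alpha'}(\cdot;\lambda_1) \to \ell_a(\cdot;\lambda_1)$ and $\sigma(\mathcal{L}^{\alpha'}) \to \sigma(\mathcal{L})$ as $a' \to a$; the inequality in (\ref{regular-singular-theorem-inequality}) then passes to (\ref{singular-theorem-inequality}) in the limit, and the equality statement follows once $\lambda_1,\lambda_2 \notin \sigma_p(\mathcal{L})$ removes the degenerate endpoint contributions and forces $\lambda_1,\lambda_2 \notin \sigma_p(\mathcal{L}^{\alpha'})$ for $a'$ near $a$. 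The main obstacle I anticipate is precisely this last step: controlling the double singular limit so that no eigenvalue is created or lost and no conjugate point escapes into the endpoints. Concretely, one must show that the approximating regular conditions at $a'$ (and the truncation at $b'$) converge to the singular self-adjoint conditions of $\mathcal{L}$ in a strong-resolvent sense on the window $[\lambda_1,\lambda_2]$, that the associated frames converge uniformly on compact subsets of the interior, and that the monotonicity from \textbf{(B)} together with the essential-spectrum gap yields the stability needed to interchange the limit with the integer-valued Maslov count; the limit-point versus limit-circle dichotomy at each endpoint enters in matching the surviving boundary parameters to $\mathcal{N}([\lambda_1,\lambda_2))$.
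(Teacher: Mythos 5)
Your first two ingredients coincide with the paper's own argument: monotonicity in $\lambda$ from Assumption \textbf{(B)}, finiteness of the conjugate-point set (the paper's Claim \ref{crossing-claim}), and the Maslov box on $[c_1,c_2]\times[\lambda_1,\lambda_2]$ with the decoupling (triangle) homotopy, which together produce exactly the paper's identity (\ref{full-count-equation}),
\begin{equation*}
\mathcal{N}([\lambda_1,\lambda_2))
= \mas (\ell_a (\cdot; \lambda_1), \ell_b (\cdot; \lambda_2); [c_1, c_2])
- \mas (\ell_a (c_1; \cdot), \ell_b (c_1; \lambda_2); [\lambda_1, \lambda_2])
- \mas (\ell_a (c_2; \lambda_1), \ell_b (c_2; \cdot); [\lambda_1, \lambda_2]),
\end{equation*}
and hence the inequality, since both corner terms are nonpositive by monotonicity. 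Up to that point your outline is sound and matches the paper.

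The genuine gap is your third step, the equality assertion, and you flag it yourself as the unresolved obstacle. You propose boundary matrices $\alpha'$ with $\ell_{\alpha'}(\cdot;\lambda_1)\to\ell_a(\cdot;\lambda_1)$ and $\sigma(\mathcal{L}^{\alpha'})\to\sigma(\mathcal{L})$ as $a'\to a^+$, and then a double singular limit controlled in a strong-resolvent sense so that ``no eigenvalue is created or lost.'' Nothing in your sketch establishes such spectral convergence, and it is genuinely delicate: as the interval and boundary condition vary, eigenvalues can in general emerge from or be absorbed into a singular endpoint, which is precisely the phenomenon renormalized oscillation theory is designed to sidestep. Moreover, the convergence you ask for is not actually what the argument needs; what is needed is the vanishing of the two corner terms in the display above for $c_1$ near $a$ and $c_2$ near $b$, i.e., the intersection conditions (\ref{intersection-condition-a}) and (\ref{intersection-condition-b}).

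The paper obtains these by an exact, fixed-$c$ reduction rather than a limiting one. For each fixed $c\in(a,b)$ it forms the operator $\mathcal{L}_{c,b}$ on $(c,b)$ with boundary matrix $\alpha=\mathbf{X}_a(c;\lambda_1)^* J$ at the now-regular endpoint $c$ (and the $U^b$ condition at $b$). Since $J\alpha^* = \mathbf{X}_a(c;\lambda_1)$, the path $\ell_{\alpha}(\cdot;\lambda_1)$ of this half-interval problem coincides identically with $\ell_a(\cdot;\lambda_1)$ on $[c,b)$ --- there is no approximation, so no convergence of operators, spectra, or frames is ever required. The equality criterion already established for the regular-at-one-end case (Remark \ref{regular-singular-remark}, which rests on the resolvent/Green's function argument in the proof of Theorem \ref{regular-singular-theorem}) then yields (\ref{intersection-condition-b}); the mirrored operator $\mathcal{L}_{a,c}$ with condition $\mathbf{X}_b(c;\lambda_2)^* J y(c)=0$ at $c$ yields (\ref{intersection-condition-a}); and equality follows from the identity above. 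If you want to salvage your version, the repair is exactly this choice: take $\alpha' = \mathbf{X}_a(a';\lambda_1)^* J$, so the approximation becomes exact and the resolvent-convergence machinery you were worried about is never invoked.
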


In order to relate our results to previous work on renormalized
oscillation theory, we observe that in some cases the Maslov
index can be expressed as a sum of nullities for certain 
evolving matrix Wronskians. To understand this, we first 
specify the following terminology: for two paths of Lagrangian subspaces 
$\ell_1, \ell_2: \mathcal{I} \to \Lambda (n)$, we say 
that the evolution of the pair $\ell_1, \ell_2$
is {\it monotonic} provided all intersections occur in the same
direction. If the intersections all correspond with the 
positive direction, then we can compute 
\begin{equation*}
    \mas (\ell_1, \ell_2; \mathcal{I})
    = \sum_{t \in \mathcal{I}} \dim (\ell_1 (t) \cap \ell_2 (t)).
\end{equation*}
Suppose $\mathbf{X}_1 (t) = {X_1 (t) \choose Y_1 (t)}$ and 
$\mathbf{X}_2 (t) = {X_2 (t) \choose Y_2 (t)}$ respectively 
denote frames for Lagrangian subspaces of $\mathbb{C}^{2n}$,
$\ell_1 (t)$ and $\ell_2 (t)$. Then we can express
this last relation as 
\begin{equation*}
\mas (\ell_1, \ell_2; \mathcal{I}) 
= \sum_{t \in \mathcal{I}} \dim \ker (\mathbf{X}_1 (t)^* J \mathbf{X}_2 (t)).
\end{equation*}
(See Lemma 2.2 of \cite{HS2}.)

In the current setting, the necessary monotonicity follows 
from Claims 4.1 and 4.2 of \cite{HS2} (with $(0, 1)$ replaced
by $(a, b)$). With this observation, we obtain the following 
theorem.

\begin{theorem} \label{nullity-theorem}
Under the assumptions of Theorem \ref{regular-singular-theorem} (without 
the requirement $\lambda_1, \lambda_2 \notin \sigma_p (\mathcal{L}^{\alpha})$), 
we can write 
\begin{equation*}
    \mas (\ell_{\alpha} (\cdot; \lambda_1), \ell_b (\cdot; \lambda_2); [a, b))
    = \sum_{x \in [a, b)} \dim \ker \mathbf{X}_{\alpha} (x; \lambda_1)^* J \mathbf{X}_b (x; \lambda_2),
\end{equation*}
and under the assumptions of Theorem \ref{singular-theorem} (without 
the requirement $\lambda_1, \lambda_2 \notin \sigma_p (\mathcal{L})$), 
we can write 
\begin{equation*}
    \mas (\ell_a (\cdot; \lambda_1), \ell_b (\cdot; \lambda_2); (a, b))
    = \sum_{x \in (a, b)} \dim \ker \mathbf{X}_a (x; \lambda_1)^* J \mathbf{X}_b (x; \lambda_2).
\end{equation*}
\end{theorem}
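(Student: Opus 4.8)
The plan is to prove Theorem \ref{nullity-theorem} by reducing it to the monotonicity machinery already developed in \cite{HS2}, using the general principle recalled just before the theorem statement: if the evolution of a pair of Lagrangian paths is monotonic with all crossings in the positive direction, then the Maslov index equals the sum over the parameter interval of the dimensions of the crossing spaces, and these dimensions are in turn computed as $\dim\ker(\mathbf{X}_1^* J \mathbf{X}_2)$. Thus the entire content of the theorem is the verification that, for the specific paths $x \mapsto \ell_\alpha(x;\lambda_1)$ paired against the fixed target $\ell_b(x;\lambda_2)$ (and similarly $\ell_a(x;\lambda_1)$ against $\ell_b(x;\lambda_2)$), the crossings are monotonic in the positive direction as $x$ increases.

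First I would set up the crossing form at a conjugate point $x_*$. At such a point one chooses frames $\mathbf{X}_\alpha(x;\lambda_1)$ and $\mathbf{X}_b(x;\lambda_2)$ and forms the Wronskian-type quantity $W(x) := \mathbf{X}_\alpha(x;\lambda_1)^* J \mathbf{X}_b(x;\lambda_2)$; the intersection $\ell_\alpha(x_*;\lambda_1)\cap \ell_b(x_*;\lambda_2)$ is identified with $\ker W(x_*)$, so that $\dim\ker W(x_*)$ is exactly the multiplicity recorded in the sum. The direction of the crossing is governed by the sign of the associated crossing form, which is computed by differentiating $W(x)$ in $x$ and restricting the resulting Hermitian form to the kernel. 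Using the Hamiltonian equations \eqref{frame-alpha} and \eqref{frame-b}, together with the self-adjointness of $J$'s symplectic structure and the relation $J^* = -J = J^{-1}$, the derivative $W'(x_*)$ reduces to a quantity controlled by $(\lambda_2 - \lambda_1)$ times an integral involving $B_1$, at which point Assumption {\bf (B)} forces the relevant Hermitian form to be positive definite on the intersection. This is precisely the computation carried out in Claims 4.1 and 4.2 of \cite{HS2}; I would invoke those claims directly, noting (as the excerpt already flags) that the only change needed is to replace the interval $(0,1)$ by $(a,b)$, which does not affect the pointwise algebra of the crossing form.

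The main obstacle I anticipate is not the crossing-form sign computation itself—that is essentially quoted from \cite{HS2}—but rather justifying that the sum over the \emph{open} or \emph{half-open} interval is well defined and finite, i.e. that there are only finitely many conjugate points, and that no mass escapes at the singular endpoints $x \to a^+$ or $x \to b^-$. For the regular case (Theorem \ref{regular-singular-theorem}) the left endpoint $a$ is included and $\mathbf{X}_\alpha(\cdot;\lambda_1)$ extends continuously to $x = a$ with a definite initial condition, so the only delicate end is $x \to b^-$; here one must argue that the spectral gap assumption $[\lambda_1,\lambda_2]\cap\sigma_{\ess}(\mathcal{L}^\alpha) = \emptyset$ prevents accumulation of crossings. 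For the fully singular case (Theorem \ref{singular-theorem}) both endpoints require this control, since $\ell_a$ is only asserted to be continuous on the open interval $(a,b)$. I would handle finiteness by appealing to the fact that the Maslov index on a compact subinterval $[c,d]\subset(a,b)$ is a well-defined integer (finitely many crossings), and that monotonicity means the partial sums are nondecreasing and bounded above by the total Maslov index over $[a,b)$ or $(a,b)$, which is finite precisely because the essential-spectrum gap guarantees the homotopy-invariant index is finite.

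With these pieces in place the argument closes quickly: since every crossing contributes with the same positive sign, the signed count defining $\mas(\ell_\alpha(\cdot;\lambda_1),\ell_b(\cdot;\lambda_2);[a,b))$ coincides with the unsigned sum of crossing multiplicities, and each multiplicity equals $\dim\ker\big(\mathbf{X}_\alpha(x;\lambda_1)^* J \mathbf{X}_b(x;\lambda_2)\big)$ by the frame identification above. The identical reasoning, now on $(a,b)$ and with $\mathbf{X}_a$ in place of $\mathbf{X}_\alpha$, yields the second displayed identity. I would therefore structure the written proof as: (1) recall the monotonicity formula and the kernel identification; (2) cite Claims 4.1 and 4.2 of \cite{HS2} to establish positivity of the crossing form in $x$; (3) note finiteness of the crossing set via the essential-spectrum gap; and (4) assemble the two displayed equalities, remarking that the singular-endpoint analysis needed to make the sums rigorous is exactly the content developed for the Maslov-index constructions in Section \ref{operator-section}.
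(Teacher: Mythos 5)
Your proposal follows essentially the same route as the paper: the paper's entire proof of Theorem \ref{nullity-theorem} consists of combining the kernel identification $\dim(\ell_1(t)\cap\ell_2(t))=\dim\ker(\mathbf{X}_1(t)^*J\mathbf{X}_2(t))$ (Lemma 2.2 of \cite{HS2}) with the positive-direction monotonicity of crossings in $x$ supplied by Claims 4.1 and 4.2 of \cite{HS2}, with $(0,1)$ replaced by $(a,b)$, exactly as you do. Your supplementary points---the crossing form being governed by $(\lambda_2-\lambda_1)B_1$ under Assumption {\bf (B)}, and finiteness of the conjugate set from the essential-spectrum gap---coincide with what the paper establishes via Claim \ref{crossing-claim} and the bound of the truncated Maslov index by $\mathcal{N}^{\alpha}([\lambda_1,\lambda_2))$.
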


In the remainder of this section, we briefly review the origins of 
renormalized oscillation theory, placing our result in the broader
context, and we also set out a plan for the paper and summarize
our notational conventions. 
For the first, renormalized oscillation theory was introduced 
in \cite{GST1996} in the context of single Sturm-Liouville
equations, and subsequently it was developed in 
\cite{Teschl1996, Teschl1998} for Jacobi operators and 
Dirac operators. Most recently, Gesztesy and Zinchenko
have extended these early results to the setting of 
(\ref{linear-hammy}) in the limit point case \cite{GZ2017}, 
though with a set-up and approach substantially different from 
the ones employed in the current analysis. See also \cite{Simon2005} 
for an expository discussion. 

In order to understand the motivation behind this approach, 
we can contrast it with standard oscillation theory, 
exemplified by Sturm's oscillation theorem for Sturm-Liouville
operators \cite{Sturm}. As a specific point of comparison, 
we will use a (standard) oscillation result that the authors
have obtained for 
Sturm-Liouville equations on the half-line, $(a, b) = (0, \infty)$,
where $x = 0$ is a regular boundary point
(see \cite{HS3}). If we focus on the case of Dirichlet boundary conditions 
at $x = 0$ (i.e., $\alpha = (I \,\,\, 0)$), then Theorem 1.1 of 
\cite{HS3} asserts (under fairly strong assumptions 
on the coefficient matrices associated with the Sturm-Liouville
operator), that the number of eigenvalues that the Sturm-Liouville
operator has below some $\lambda_* \in \mathbb{R}$ can be expressed as 
\begin{equation} \label{morse-dirichlet}
\mor (\mathcal{L}; \lambda_*) = \sum_{x > 0} \dim \ker X_b (x; \lambda_*),
\end{equation}
where $X_b$ denotes 
the first $n \times n$ coordinate in the frame 
$\mathbf{X}_b$. We see immediately, that the 
number of eigenvalues between $\lambda_1$ and $\lambda_2$
can be computed in this case as 
\begin{equation} \label{dirichlet-difference}
\mathcal{N} ([\lambda_1, \lambda_2))
= \sum_{x > 0} \dim \ker X_b (x; \lambda_2)
- \sum_{x > 0} \dim \ker X_b (x; \lambda_1).
\end{equation}
The difficulty with this approach is twofold. First, 
for conditions other than Dirichlet, the right-hand
side of (\ref{morse-dirichlet}) becomes a count of {\it signed}
intersections between $\ell_b (x; \lambda_*)$ and 
$\ell_{\alpha} (0; \lambda_*)$, and so cannot be expressed 
as a sum of nullities; and second, if the strong coefficient
conditions of \cite{HS3} are dropped, the right-hand side 
of (\ref{morse-dirichlet}) can become infinite, even in the 
Dirichlet case. Consequently, (\ref{dirichlet-difference})
can take the form $\infty - \infty$, even in cases 
for which $\mathcal{N} ([\lambda_1, \lambda_2))$ is finite. 
Indeed, this latter observation seems to have been the 
primary motivation for the approach \cite{GST1996, Simon2005}.
(See Section \ref{applications-section} for a specific 
implementation of our theory in this setting.) 

{\it Plan of the paper}. In Section \ref{operator-section},
we prove Lemma \ref{self-adjoint-operator-lemma}, establishing
the existence and nature of the family of self-adjoint operators
$\mathcal{L}$ and $\mathcal{L}^{\alpha}$ that will be the 
objects of our study. In Section \ref{maslov-section}, we 
provide some background on the Maslov index, along with 
some results we'll need for the subsequent analysis. In 
Section \ref{theorems-section}, we prove Theorems 
\ref{regular-singular-theorem} and \ref{singular-theorem},
and in Section \ref{applications-section}
we conclude with two specific illustrative applications.

{\it Notational conventions}. Throughout the analysis, we 
will use the notation $\|\cdot\|_{B_1}$ and 
$\langle \cdot, \cdot \rangle_{B_1}$ respectively for our 
weighted norm and inner product. In the case that 
(\ref{linear-hammy}) is regular at $x = a$, we will denote 
the associated map of Lagrangian subspaces by $\ell_{\alpha}$,
and we will denote by $\mathbf{X}_{\alpha}$ a specific
corresponding map of frames. Likewise, if (\ref{linear-hammy})
is singular at $x = a$, we will use $\ell_a$ and $\mathbf{X}_a$,
and for $x = b$ (always assumed singular), we will use 
$\ell_b$ and $\mathbf{X}_b$. In order to accommodate limits
associated with our bilinear form, we will adopt the notation 
\begin{equation*}
    (Jy,z)_a := \lim_{x \to a^+} (J y(x), z(x)); 
    \quad (Jy,z)_b := \lim_{x \to b^-} (J y(x), z(x)),
\end{equation*}
along with 
\begin{equation*}
 (Jy, z)_a^b := (Jy,z)_b - (Jy,z)_a.   
\end{equation*}
Here and throughout, we use 
$(\cdot, \cdot)$ to denote the usual inner product 
in $\mathbb{C}^{2n}$.

\section{The Self-Adjoint Operators $\mathcal{L}$ and $\mathcal{L}^{\alpha}$} 
\label{operator-section}

In this section, we adapt the approach of \cite{Niessen70, Niessen71, Niessen72}
(as developed in Chapter VI of \cite{Krall2002}) to the setting 
of (\ref{linear-hammy}). 
\subsection{Niessen Spaces} 
\label{niessen-section} 

We begin by fixing some $c \in (a, b)$, and letting 
$\Phi (x; \lambda)$ denote the fundamental matrix specified 
by
\begin{equation} \label{phi-specified}
J \Phi' = (B_0 (x) + \lambda B_1 (x)) \Phi; \quad \Phi (c; \lambda) = I_{2n}.
\end{equation}
We define
\begin{equation*}
\mathcal{A} (x;\lambda) := \frac{1}{2 \rm{Im }\lambda} 
\Phi (x; \lambda)^* (J/i) \Phi (x; \lambda),
\end{equation*}
on $(a,b) \times \mathbb{C} \backslash \mathbb{R}$. It's clear from 
this definition that for each $\lambda \in \mathbb{C} \backslash \mathbb{R}$, we have
$\mathcal{A} (\cdot; \lambda) \in \AC_{\loc} ((a, b), \mathbb{C}^{2n \times 2n})$,  
with $\mathcal{A} (x;\lambda)$ self-adjoint 
for all $(x, \lambda) \in (a,b) \times \mathbb{C} \backslash \mathbb{R}$.
It follows that the eigenvalues $\{\mu_j (x; \lambda)\}_{j=1}^{2n}$ 
of $\mathcal{A} (x;\lambda)$ can be 
ordered so that $\mu_j (x; \lambda) \le \mu_{j+1} (x; \lambda)$
for all $j \in \{1, 2, \dots, 2n-1\}$.

Since $\mathcal{A} (c; \lambda) = \frac{1}{2 \rm{Im }\lambda} (J/i)$,
we see that $\mathcal{A} (c; \lambda)$ has an eigenvalue with multiplicity
$n$ at $- \frac{1}{2 |\rm{Im }\lambda|}$ and an eigenvalue with 
multiplicity $n$ at $+ \frac{1}{2 |\rm{Im }\lambda|}$. According to 
Theorem II.5.4 in \cite{Kato}, we can understand the motion of 
the eigenvalues $\{\mu_j (x; \lambda)\}_{j=1}^{2n}$
as $x$ increases by evaluating the matrix $\mathcal{A}' (x;\lambda)$,
where prime denotes differentiation with respect to $x$. To this 
end, we find by direct calculation that 
\begin{equation} \label{A-prime}
\mathcal{A}' (x;\lambda)
= \Phi (x; \lambda)^* B_1 (x) \Phi (x; \lambda) 
\end{equation}  
for all $(x, \lambda) \in (a,b) \times \mathbb{C} \backslash \mathbb{R}$.
We can conclude from Assumption {\bf (B)} that each eigenvalue
$\mu_j (x; \lambda)$ must be continuous and non-decreasing as a function of 
$x$. In addition, since the fundamental matrix $\Phi (x; \lambda)$
is invertible for all $(x, \lambda) \in (a, b) \times \mathbb{C} \backslash \mathbb{R}$,
we see that $\mathcal{A} (x; \lambda)$ is likewise invertible, and 
so none of its eigenvalues can cross 0 for any $x \in (a, b)$.
We conclude that for all $(x, \lambda) \in (a,b) \times \mathbb{C} \backslash \mathbb{R}$,
we have the ordering 
\begin{equation} \label{eig-A-order}
\mu_1 (x; \lambda) \le \mu_2 (x; \lambda) \le \dots \le \mu_n (x; \lambda)
< 0 < \mu_{n+1} (x; \lambda) \le \mu_{n+2} (x; \lambda) \le ... \le \mu_{2n} (x; \lambda).
\end{equation}

As $x$ decreases toward $x = a$, these eigenvalues are all non-increasing,
and so in particular the limits 
\begin{equation*}
\mu_j^a (\lambda) := \lim_{x \to a^+} \mu_j (x; \lambda)
\end{equation*}
exist for each $j \in \{n+1, n+2, \dots, 2n\}$. Moreover, for each 
$j \in \{1, 2, \dots, n\}$, these same limits either exist or diverge
to $- \infty$. Likewise, as $x$ increases toward $x = b$, the 
eigenvalues $\{\mu_j (x; \lambda)\}_{j=1}^{2n}$ are all non-decreasing,
and so in particular the limits 
\begin{equation*}
\mu_j^b (\lambda) := \lim_{x \to b^-} \mu_j (x; \lambda)
\end{equation*}
exist for each $j \in \{1, 2, \dots, n\}$. Moreover, for each 
$j \in \{n+1, n+2, \dots, 2n\}$, these same limits either exist or diverge
to $+ \infty$. 

\begin{lemma} \label{subspace-dimensions-lemma}
Let Assumptions {\bf (A)} and {\bf (B)} hold,
and let $\lambda \in \mathbb{C} \backslash \mathbb{R}$ be fixed.
Then the dimension $m_a (\lambda)$ of the subspace of solutions to 
(\ref{linear-hammy}) that lie left in $(a, b)$ is precisely 
the number of eigenvalues $\mu_j (x; \lambda) \in \sigma (\mathcal{A} (x; \lambda))$
that approach a finite limit as $x \to a^+$. Likewise, 
the dimension $m_b (\lambda)$ of the subspace of solutions to 
(\ref{linear-hammy}) that lie right in $(a, b)$ is precisely 
the number of eigenvalues $\mu_j (x; \lambda) \in \sigma (\mathcal{A} (x; \lambda))$
that approach a finite limit as $x \to b^-$.
\end{lemma}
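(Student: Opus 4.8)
The plan is to reduce the computation of $m_a(\lambda)$ to a question about the limiting behavior, as $x \to a^+$, of the monotone family of Hermitian forms $v \mapsto (\mathcal{A}(x;\lambda)v,v)$. First I would parametrize solutions by their value at the base point: every solution of (\ref{linear-hammy}) has the form $y(x) = \Phi(x;\lambda)y_0$ with $y_0 = y(c)$, and $y_0 \mapsto y$ is a linear isomorphism onto the solution space. Writing $Q_x(y_0) := (\mathcal{A}(x;\lambda)y_0,y_0)$ and integrating (\ref{A-prime}), I obtain for $a < x < c$
\[
\int_x^c (B_1(s)y(s),y(s))\,ds = Q_c(y_0) - Q_x(y_0),
\]
so that $y$ lies left in $(a,b)$ if and only if $\lim_{x\to a^+}Q_x(y_0)$ is finite. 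Hence, with $D_a := \{y_0 : \lim_{x\to a^+}Q_x(y_0) > -\infty\}$, I will have $m_a(\lambda) = \dim D_a$, and the lemma reduces to showing that $\dim D_a$ equals the number $k_a$ of eigenvalues $\mu_j(x;\lambda)$ with a finite limit at $a$.

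Next I would record the monotone structure. By (\ref{A-prime}) and Assumption {\bf (B)}, $Q_{x_1} \le Q_{x_2}$ as forms whenever $a < x_1 < x_2$, so for each fixed $y_0$ the quantity $Q_x(y_0)$ is nonincreasing as $x\to a^+$ and $Q_a(y_0) := \lim_{x\to a^+}Q_x(y_0) = \inf_x Q_x(y_0) \in [-\infty,\infty)$. Using the parallelogram identity $Q_x(u+w) + Q_x(u-w) = 2Q_x(u)+2Q_x(w)$ together with the uniform upper bound $Q_x(u-w) \le Q_{x_0}(u-w)$, I would check that $D_a$ is a linear subspace. By the ordering (\ref{eig-A-order}) and the monotonicity discussion preceding the lemma, the divergent eigenvalues are exactly the bottom $\ell_a := 2n - k_a$ of them, $\mu_1,\dots,\mu_{\ell_a} \to -\infty$, while $\mu_{\ell_a+1},\dots,\mu_{2n}$ have finite limits.

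The two inequalities $\dim D_a \le k_a$ and $\dim D_a \ge k_a$ I would prove by combining the min-max characterization of eigenvalues at each fixed $x$ with compactness of the Grassmannian, rather than by tracking eigenvectors. For the upper bound, given any subspace $V \subseteq D_a$, the limiting form $Q_a$ is a finite-valued Hermitian form on the finite-dimensional space $V$ (it satisfies the parallelogram law and is everywhere finite), hence bounded below there, say $Q_a(v) \ge -C|v|^2$ on $V$; since $Q_x \ge Q_a$ pointwise, $Q_x \ge -C$ on $V$ for every $x$, so by min-max $\dim V$ is at most the number of eigenvalues of $\mathcal{A}(x;\lambda)$ that are $\ge -C$, and for $x$ near $a$ this number is at most $k_a$ (the bottom $\ell_a$ eigenvalues having dropped below $-C$). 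For the lower bound, let $G_x$ be the span of the top $k_a$ eigenvectors of $\mathcal{A}(x;\lambda)$; on $G_x$ one has $Q_x \ge \mu_{\ell_a+1}(x)\,|v|^2 \ge \mu_{\ell_a+1}^a|v|^2 =: -M|v|^2$. Choosing $x_m \to a^+$ with $G_{x_m} \to G_*$ in the Grassmannian and approximating $v \in G_*$ by $v_m \in G_{x_m}$, the monotonicity $Q_{x'}(v_m) \ge Q_{x_m}(v_m) \ge -M$ for $x_m < x'$ passes to the limit to give $Q_{x'}(v) \ge -M$ for every $x'$, whence $Q_a(v) \ge -M$ and $G_* \subseteq D_a$. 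Thus $\dim D_a = k_a$, and the statement for $m_b(\lambda)$ follows by the symmetric argument with $x\to b^-$, in which the roles of the top and bottom eigenvalues are interchanged.

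I expect the main obstacle to be precisely the interaction flagged above: as $x \to a^+$ the eigenvalues of $\mathcal{A}(x;\lambda)$ that diverge do so while their eigenvectors rotate, so the domain of the limiting form cannot be read off eigenvalue-by-eigenvalue. The device that resolves this is to avoid limits of eigenvectors altogether, using uniform form bounds on a \emph{fixed} subspace together with the min-max count at each fixed $x$ for the upper bound, and Grassmannian compactness of the top eigenspaces for the lower bound.
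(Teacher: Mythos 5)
Your proposal is correct, and it reaches the conclusion by a route that differs from the paper's in its key steps, even though both arguments grow out of the same two facts: the integrated identity $\mathcal{A}(x;\lambda) = \tfrac{1}{2\mathrm{Im}\,\lambda}(J/i) + \int_c^x \Phi^* B_1 \Phi\, d\xi$ and the form-monotonicity supplied by Assumption {\bf (B)}. The paper works with eigenvectors: it fixes orthonormal eigenvectors $v_j(x;\lambda)$ for the finite-limit eigenvalues, uses compactness of the unit sphere to extract limit vectors $v_j^b(\lambda)$ along a sequence $x_k \to b^-$, and then shows by a contradiction argument (using positivity of $\int (B_1 u, u)$ over subintervals) that each $\Phi(\cdot;\lambda)v_j^b(\lambda)$ lies right; orthonormality of the limits gives linear independence, hence $m_b \ge \tilde m_b$, and the reverse inequality is disposed of rather tersely by asserting that the limit vectors attached to the divergent eigenvalues span solutions that do not lie right. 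You instead recast $m_a(\lambda)$ as $\dim D_a$ for the finiteness domain of the monotone limit form, and prove the two inequalities without ever selecting eigenvectors: the upper bound $\dim D_a \le k_a$ comes from Courant--Fischer (a $d$-dimensional subspace on which all the forms $Q_x$ are uniformly bounded below by $-C$ forces at least $d$ eigenvalues of $\mathcal{A}(x;\lambda)$ to sit above $-C$ for every $x$, which is impossible near $a$ once the bottom $\ell_a$ eigenvalues have dropped below $-C$ unless $d \le k_a$), and the lower bound uses compactness of the Grassmannian applied to the top spectral subspaces, with the uniform bound passing to the limit subspace by monotonicity. Your lower bound is morally the paper's compactness argument repackaged at the level of subspaces, but your min-max upper bound is a genuine improvement in rigor over the corresponding direction in the paper, and working with spectral subspaces sidesteps any issues with eigenvalue degeneracies and discontinuous eigenvector selections. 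The one thing the paper's proof buys that yours does not: it explicitly constructs the limit vectors $v_j^b(\lambda)$ (and $v_j^a(\lambda)$), and these objects are not incidental --- they are reused throughout Section \ref{niessen-section} to build the Niessen subspaces and the frames $\mathbf{R}^b(\lambda)$, so in the paper this lemma's proof does double duty. If your argument were adopted, that construction would still have to be carried out separately.
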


\begin{proof} 
We will carry out the proof for $m_b (\lambda)$; the proof for 
$m_a (\lambda)$ is similar. Integrating (\ref{A-prime}), 
we see that $\mathcal{A} (x;\lambda)$ can alternatively be 
expressed as 
\begin{equation} \label{A-alternative}
\mathcal{A} (x; \lambda) = \frac{1}{2 \rm{Im }\lambda} (J/i) 
+ \int_c^x \Phi (\xi; \lambda)^* B_1 (\xi) \Phi (\xi; \lambda) d\xi. 
\end{equation} 
We temporarily let $\tilde{m}_b (\lambda)$ denote the 
number of eigenvalues of $\mathcal{A} (x; \lambda)$ that have a finite limit as 
$x \to b^-$; precisely, this will be the set 
$\{\mu_j (x; \lambda)\}_{j=1}^{\tilde{m}_b (\lambda)}$.
Let $\{v_j (x; \lambda)\}_{j=1}^{\tilde{m}_b (\lambda)}$ denote an orthonormal 
basis of eigenvectors associated with these eigenvalues, noting that
these elements may not be continuous in $x$. We can 
take any element $v_j (x; \lambda)$ from this collection and multiply 
(\ref{A-alternative}) on the left by $v_j(x; \lambda)^*$ and on 
the right by $v_j(x; \lambda)$ to obtain 
\begin{equation} \label{ensures-boundedness} 
v_j (x; \lambda)^* \{\mathcal{A} (x; \lambda) -  \frac{1}{2 \rm{Im }\lambda} (J/i) \} v_j (x; \lambda)
= \int_c^x v_j(x; \lambda)^* \Phi (\xi; \lambda)^* B_1 (\xi) \Phi (\xi; \lambda) v_j (x; \lambda) d\xi.
\end{equation}
The left-hand side of this last relation is 
\begin{equation*}
   \mu_j (x; \lambda) -  \frac{1}{2i \rm{Im } \lambda} v_j (x; \lambda)^* J v_j (x; \lambda),
\end{equation*}
and so is bounded above for all $x \in (c, b)$. Now, consider
any sequence of values $\{x_k\}_{k=1}^{\infty}$ so that 
$x_k$ increases to $b$ as $k \to \infty$. The corresponding sequence 
$\{v_j (x_k; \lambda)\}_{k=1}^{\infty}$ lies on the unit sphere
in $\mathbb{C}^{2n}$ (a compact set), so there exists a 
subsequence $\{x_{k_i}\}_{i=1}^{\infty}$ so that 
$\{v_j (x_{k_i}; \lambda)\}_{i=1}^{\infty}$ converges to 
some $v^b_j (\lambda)$ on the unit sphere
in $\mathbb{C}^{2n}$. We claim that it follows 
that the functions $\{\Phi (x; \lambda) v^b_j (\lambda)\}_{j=1}^{\tilde{m}_b (\lambda)}$
lie right in $(a, b)$. To see this, we assume to the contrary 
that for some $j \in \{1, 2, \dots, \tilde{m}_b (\lambda)\}$,
\begin{equation*}
    \int_c^b v_j^b(\lambda)^* \Phi (\xi; \lambda)^* B_1 (\xi) \Phi (\xi; \lambda) v_j^b (\lambda) d\xi
    = \infty.
\end{equation*}
In this case, if we are given any constant $K > 0$, we can take $b' \in (c, b)$
sufficiently close to $b$ (sufficiently large if $b = \infty$) so that 
\begin{equation} \label{temp-int}
    \int_c^{b'} v_j^b(\lambda)^* \Phi (\xi; \lambda)^* B_1 (\xi) \Phi (\xi; \lambda) v_j^b (\lambda) d\xi
    > K.
\end{equation}
By a straightforward calculation, we can check that by taking 
$x_{k_i}$ sufficiently close to $b$ (sufficiently large if $b = \infty$),
we can make 
\begin{equation*}
    \int_c^{b'} v_j(x_{k_i}; \lambda)^* \Phi (\xi; \lambda)^* B_1 (\xi) \Phi (\xi; \lambda) v_j (x_{k_i}; \lambda) d\xi
\end{equation*}
as close as we like to the integral in (\ref{temp-int}). In particular, 
we can find a positive integer $N$ sufficiently large so that 
for all $i \ge N$, we have 
\begin{equation*}
    \int_c^{b'} v_j(x_{k_i}; \lambda)^* \Phi (\xi; \lambda)^* B_1 (\xi) \Phi (\xi; \lambda) v_j (x_{k_i}; \lambda) d\xi
    \ge K.
\end{equation*}
Possibly by taking $N$ even larger, we can ensure that 
$x_{k_i} > b'$, and it follows from our Assumption {\bf (B)}
that 
\begin{equation*}
    \begin{aligned}
    \int_c^{x_{k_i}} &v_j(x_{k_i}; \lambda)^* \Phi (\xi; \lambda)^* B_1 (\xi) \Phi (\xi; \lambda) v_j (x_{k_i}; \lambda) d\xi \\
    &> \int_c^{b'} v_j(x_{k_i}; \lambda)^* \Phi (\xi; \lambda)^* B_1 (\xi) \Phi (\xi; \lambda) v_j (x_{k_i}; \lambda) d\xi
    \ge K,
    \end{aligned}
\end{equation*}
Since $K$ can be taken as large as we like, this contradicts the 
boundedness ensured by (\ref{ensures-boundedness}). 

The set $\{v^b_j (\lambda)\}_{j=1}^{\tilde{m}_b (\lambda)}$
retains orthonormality in the limit, ensuring that the 
functions $\{\Phi (x; \lambda) v^b_j (\lambda)\}_{j=1}^{\tilde{m}_b (\lambda)}$
are linearly independent as solutions of (\ref{linear-hammy}). We 
conclude that this set comprises a basis for the $\tilde{m}_b (\lambda)$-dimensional 
subspace of solutions to (\ref{linear-hammy}) that lie right in $(a, b)$.  
In particular, we see that $\tilde{m}_b (\lambda) = m_b (\lambda)$.

If we allow $\{v_j (x; \lambda)\}_{j=m_b (\lambda)+1}^{2n}$ to denote 
an orthonormal basis of eigenvectors associated with the eigenvalues 
of $\mathcal{A} (x; \lambda)$ that do not have finite limits as 
$x \to b^-$, then we find that the functions 
$\{\Phi (x; \lambda) v^b_j (\lambda)\}_{m_b (\lambda)+1}^{2n}$ form 
a basis for a $(2n-m_b (\lambda))$-dimensional subspace of solutions
of (\ref{linear-hammy}) that do not lie right in $(a, b)$.
\end{proof}

Lemma \ref{subspace-dimensions-lemma} suggests that we need to 
better understand the nature of the eigenvalues of $\mathcal{A} (x; \lambda)$.
As a starting point, we observe the relation 
\begin{equation} \label{conj-no-conj}
\Phi(x; \bar{\lambda})^* (J/i) \Phi (x; \lambda) = (J/i), 
\end{equation}
for all $x \in (a, b)$, which can be verified by showing
that the quantity on the left is independent of $x$
(its derivative is zero) and evaluating at $x = c$, 
where $\Phi (c; \lambda) = I_{2n}$. (Although we are currently
working with the case $\textrm{Im }\lambda \ne 0$, 
(\ref{conj-no-conj}) holds for $\lambda \in \mathbb{R}$
as well.) Since $(J/i)$ is 
self-adjoint, we likewise have (by taking an adjoint
on both sides of (\ref{conj-no-conj}))
\begin{equation} \label{no-conj-conj}
\Phi(x; \lambda)^* (J/i) \Phi (x; \bar{\lambda}) = (J/i), 
\end{equation}
and this relation allows us to write
\begin{equation*}
\Phi (x; \bar{\lambda}) = (J/i) (\Phi(x; \lambda)^*)^{-1} (J/i).
\end{equation*}
In this way, we see that we can write 
\begin{equation*}
\begin{aligned}
\mathcal{A} (x; \bar{\lambda}) &= 
- \frac{1}{2 \rm{Im }\lambda} 
\Phi (x; \bar{\lambda})^* (J/i) \Phi (x; \bar{\lambda}) \\
&= - \frac{1}{2 \rm{Im }\lambda} 
(J/i) (\Phi(x; \lambda))^{-1} (J/i) (J/i) (J/i)
(\Phi(x; \lambda)^*)^{-1} (J/i) \\
&= - \frac{1}{(2 \rm{Im }\lambda)^2} 
(J/i) \mathcal{A} (x; \lambda)^{-1} (J/i). 
\end{aligned}
\end{equation*}
Upon subtracting a term $\rho I$ from both sides of 
this last relation (for any $\rho \in \mathbb{R}$), 
we obtain the relation 
\begin{equation} \label{eig-relation}
\mathcal{A} (x; \bar{\lambda}) - \rho I
= - \rho (J/i) \mathcal{A} (x; \lambda)^{-1} 
\{\mathcal{A} (x; \lambda) + \frac{1}{\rho (2 \rm{Im }\lambda)^2} I \} (J/i).
\end{equation}

These considerations allow us to conclude the following lemma, 
adapted from Theorem VI.2.1 of \cite{Krall2002}.

\begin{lemma} \label{cal-A-eigs}
Let Assumption {\bf (A)} hold (not necessarily Assumption {\bf (B)}). 
A value $\rho \in \mathbb{R}$ is an eigenvalue of 
$\mathcal{A} (x; \bar{\lambda})$ if and only if 
the value $-\frac{1}{\rho (2 \rm{Im }\lambda)^2}$
is an eigenvalue of $\mathcal{A} (x; \lambda)$. It 
follows immediately that if we order the eigenvalues
of $\mathcal{A} (x; \lambda)$ according to (\ref{eig-A-order}),
and order the eigenvalues of $\mathcal{A} (x; \bar{\lambda})$
similarly, then we have 
\begin{equation*}
\begin{aligned}
\mu_j (x; \bar{\lambda}) &= - \frac{1}{(2 {\rm Im }\lambda)^2 \mu_{n+j} (x; \lambda)};
\quad j = 1, 2, \dots, n; \\
\mu_j (x; \bar{\lambda}) &= - \frac{1}{(2 {\rm Im }\lambda)^2 \mu_{j-n} (x; \lambda)};
\quad j = n+1, n+2, \dots, 2n. 
\end{aligned}
\end{equation*}
Moreover, for $j = 1, 2, \dots, n$, if $v_j (x; \bar{\lambda})$ is an 
eigenvector of $\mathcal{A} (x; \bar{\lambda})$ associated with 
eigenvalue $\mu_j (x; \bar{\lambda})$, then 
\begin{equation*}
v_{n+j} (x; \lambda) = (J/i) v_j (x; \bar{\lambda})
\end{equation*} 
is an eigenvector of $\mathcal{A} (x; \lambda)$ associated with 
eigenvalue $\mu_{n+j} (x; \lambda)$. Likewise, for
$j = n+1, n+2, \dots, 2n$, if $v_j (x; \bar{\lambda})$ is an 
eigenvector of $\mathcal{A} (x; \bar{\lambda})$ associated with 
eigenvalue $\mu_j (x; \bar{\lambda})$, then 
\begin{equation*}
v_{j-n} (x; \lambda) = (J/i) v_j (x; \bar{\lambda})
\end{equation*} 
is an eigenvector of $\mathcal{A} (x; \lambda)$ associated with 
eigenvalue $\mu_{j-n} (x; \lambda)$. 
\end{lemma}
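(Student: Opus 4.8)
The plan is to extract the entire lemma from the factored identity (\ref{eig-relation}), using only that $(J/i)$ and $\mathcal{A}(x;\lambda)$ are invertible. First I would note that $(J/i)^2 = -J^2 = I$, so $J/i$ is an involution and in particular invertible; since $\mathcal{A}(x;\lambda)$ is invertible (as already observed, because $\Phi$ is), every factor on the right-hand side of (\ref{eig-relation}) is invertible apart from the bracketed term, and the scalar $-\rho$ is nonzero because $\mathcal{A}(x;\bar\lambda)$ has no zero eigenvalue. Reading (\ref{eig-relation}) off directly then shows that $\mathcal{A}(x;\bar\lambda) - \rho I$ is singular precisely when $\mathcal{A}(x;\lambda) + \frac{1}{\rho(2\,\mathrm{Im}\,\lambda)^2}I$ is singular; this is exactly the claim that $\rho \in \sigma(\mathcal{A}(x;\bar\lambda))$ if and only if $-\frac{1}{\rho(2\,\mathrm{Im}\,\lambda)^2} \in \sigma(\mathcal{A}(x;\lambda))$, which is the first assertion.

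For the explicit ordering I would study the map $g(\rho) := -\frac{1}{\rho(2\,\mathrm{Im}\,\lambda)^2}$. Since $(2\,\mathrm{Im}\,\lambda)^2 > 0$, $g$ is a strictly increasing bijection of $(0,\infty)$ onto $(-\infty,0)$ and of $(-\infty,0)$ onto $(0,\infty)$. By (\ref{eig-A-order}), both $\mathcal{A}(x;\lambda)$ and $\mathcal{A}(x;\bar\lambda)$ have exactly $n$ negative and $n$ positive eigenvalues. Because $g$ is \emph{order-preserving} on each half-line, the positive eigenvalues $\mu_{n+1}(x;\lambda) \le \dots \le \mu_{2n}(x;\lambda)$ are carried, in order, onto the negative eigenvalues $\mu_1(x;\bar\lambda) \le \dots \le \mu_n(x;\bar\lambda)$, yielding the first displayed formula; symmetrically, the negative eigenvalues of $\mathcal{A}(x;\lambda)$ are carried onto the positive eigenvalues of $\mathcal{A}(x;\bar\lambda)$, yielding the second.

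For the eigenvector correspondence I would conjugate, starting from the relation $\mathcal{A}(x;\bar\lambda) = -\frac{1}{(2\,\mathrm{Im}\,\lambda)^2}(J/i)\mathcal{A}(x;\lambda)^{-1}(J/i)$ derived just above (\ref{eig-relation}). Applying this to an eigenvector $v_j(x;\bar\lambda)$ and multiplying on the left by $(J/i)$, using $(J/i)^2 = I$, I would get
\begin{equation*}
\mathcal{A}(x;\lambda)^{-1}\big[(J/i)v_j(x;\bar\lambda)\big]
= -(2\,\mathrm{Im}\,\lambda)^2\,\mu_j(x;\bar\lambda)\,\big[(J/i)v_j(x;\bar\lambda)\big],
\end{equation*}
so $(J/i)v_j(x;\bar\lambda)$ is an eigenvector of $\mathcal{A}(x;\lambda)$ with eigenvalue $-1/\big((2\,\mathrm{Im}\,\lambda)^2 \mu_j(x;\bar\lambda)\big)$; substituting the formula for $\mu_j(x;\bar\lambda)$ just established identifies this eigenvalue as $\mu_{n+j}(x;\lambda)$ for $j \in \{1,\dots,n\}$. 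The reverse assignment for $j \in \{n+1,\dots,2n\}$ is the same computation with $\lambda$ and $\bar\lambda$ exchanged.

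I expect the only genuine subtlety to be the bookkeeping in the second paragraph: one must verify that $g$ is order-preserving rather than order-reversing on each half-line, so that the index shift by $n$ is the correct one and the smallest positive eigenvalue of $\mathcal{A}(x;\lambda)$ is matched with the most negative eigenvalue of $\mathcal{A}(x;\bar\lambda)$. Everything else is direct linear algebra resting on the invertibility of $J/i$ and of $\mathcal{A}$.
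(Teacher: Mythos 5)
Your proposal is correct and follows essentially the same route as the paper: the paper's "proof" consists of deriving the factored identity (\ref{eig-relation}) and declaring that the lemma follows from these considerations (citing Theorem VI.2.1 of Krall), and your three steps --- reading off the spectral correspondence from invertibility of the outer factors $-\rho$, $(J/i)$, and $\mathcal{A}(x;\lambda)^{-1}$, the order-preserving bookkeeping of $\rho \mapsto -1/\bigl(\rho(2\,\mathrm{Im}\,\lambda)^2\bigr)$ on each half-line, and the eigenvector transfer by left-multiplying with $J/i$ and using $(J/i)^2 = I$ --- are precisely the details left implicit there. The one point worth flagging is that, since the lemma explicitly drops Assumption \textbf{(B)}, the $(n,n)$ signature underlying the ordering (\ref{eig-A-order}) should be justified without it, which is immediate from Sylvester's law of inertia applied to the congruence $\mathcal{A}(x;\lambda) = \frac{1}{2\,\mathrm{Im}\,\lambda}\,\Phi(x;\lambda)^*(J/i)\Phi(x;\lambda)$ rather than from the monotonicity argument the paper used to establish (\ref{eig-A-order}).
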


Similarly as in the proof of Lemma \ref{subspace-dimensions-lemma}, 
we can use compactness of the unit sphere in $\mathbb{C}^{2n}$
to associate limiting vectors $\{v_j^b (\lambda)\}_{i=1}^{2n}$
and $\{v_j^b (\bar{\lambda})\}_{i=1}^{2n}$ respectively 
with the eigenvectors $\{v_j (x; \lambda)\}_{i=1}^{2n}$
and $\{v_j (x; \bar{\lambda})\}_{i=1}^{2n}$. These limiting
vectors naturally inherit both orthonormality and 
the relations of Lemma \ref{cal-A-eigs},
\begin{equation} \label{v-bar-relation}
\begin{aligned}
v_{n+j}^b (\lambda) &= (J/i) v_j^b (\bar{\lambda}); \quad j=1,2,\dots,n \\
v_{j-n}^b (\lambda) &= (J/i) v_j^b (\bar{\lambda}); \quad j=n+1, n+2, \dots, 2n,
\end{aligned}
\end{equation} 
with precisely the same statements holding for the limit $x \to a^+$ with the 
superscript $b$ replaced by the superscript $a$. 

We note for later use that for any indices
$j \in \{1, 2, \dots, n\}$,
$k \in \{1, 2, \dots, 2n\}$,
we can use (\ref{v-bar-relation}) to see that 
\begin{equation} \label{niessen-relations1}
\begin{aligned}
v_j^b (\bar{\lambda})^* J v_k^b (\lambda) 
&= ((J/i) v_{n+j}^b (\lambda))^* J v_k^b (\lambda) 
= v_{n+j}^b (\lambda)^* (J/i) J v_k^b (\lambda) \\
&= i v_{n+j}^b (\lambda)^* v_k^b (\lambda)  
= i \delta_{n+j}^k,
\end{aligned}
\end{equation}
where $\delta_{n+j}^k$ is a Kroenecker delta function, 
and the final equivalence is due to orthonormality.
Likewise, for any indices
$j \in \{n+1, n+2, \dots, 2n\}$,
$k \in \{1, 2, \dots, 2n\}$, we see from (\ref{v-bar-relation}) that 
\begin{equation} \label{niessen-relations2}
\begin{aligned}
v_j^b (\bar{\lambda})^* J v_k^b (\lambda) 
&= ((J/i) v_{n+j}^b (\lambda))^* J v_k^b (\lambda) 
= v_{j-n}^b (\lambda)^* (J/i) J v_k^b (\lambda) \\
&= i v_{j-n}^b (\lambda)^* v_k^b (\lambda)  
= i \delta_{j-n}^k.
\end{aligned}
\end{equation}

For $j = 1, 2, \dots, n$, we set 
\begin{equation} \label{niessen-components}
\begin{aligned}
y_j^b (x; \lambda) &= \Phi (x; \lambda) v_j^b (\lambda) \\
z_j^b (x; \lambda) &= \Phi (x; \lambda) v_{n+j}^b (\lambda). 
\end{aligned}
\end{equation} 
It's clear from our construction that 
$y_j^b (\cdot; \lambda)$ lies right in $(a, b)$
for each $j \in \{1, 2, \dots, n\}$, while 
$z_j^b (\cdot; \lambda)$ lies right in $(a, b)$
if and only if $\mu_{n+j}^b (\lambda)$ is finite. 
We have seen that the total number of the values 
$\{\mu_j^b (\lambda)\}_{j=1}^{2n}$ that are finite
is $m_b (\lambda)$, and we will also find it convenient
to introduce the value $r_b (\lambda) := m_b (\lambda) - n$. 
Following 
\cite{Niessen70, Niessen71, Niessen72}, for each $j \in \{1, 2, \dots, n\}$,
we define the two-dimensional space
\begin{equation} \label{niessen-spaces}
N_j^b (\lambda) := \Span \{y_j^b (\cdot; \lambda), z_j^b (\cdot; \lambda)\},
\end{equation}
and following \cite{Krall2002} we refer to the collection 
$\{N_j^b (\lambda)\}_{j=1}^n$ as the {\it Niessen subspaces} at $b$.
According to our labeling convention, the Niessen spaces
$\{N_j^b (\lambda)\}_{j=1}^{r_b (\lambda)}$ all satisfy 
$\dim N_j^b (\lambda) \cap L^2_{B_1} ((c, b), \mathbb{C}^{2n}) = 2$,
while the remaining Niessen spaces $\{N_j^b (\lambda)\}_{r_b (\lambda) + 1}^n$ 
satisfy $\dim N_j^b (\lambda) \cap L^2_{B_1} ((c, b), \mathbb{C}^{2n}) = 1$. 
(Here, $c$ continues to be any value $c \in (a, b)$.)

We see from Lemma \ref{cal-A-eigs} that as $x$ increases to $b$, we will 
have $\mu_j (x; \bar{\lambda}) \to + \infty$ if and only if 
$\mu_{j-n} (x; \lambda) \to 0$. In this way, the values 
$m_b (\lambda)$ and $m_b (\bar{\lambda})$ are both determined
by the eigenvalues of $\mathcal{A} (x; \lambda)$ as 
$x \to b^-$. A similar statement holds at $x = a$. We emphasize, however,
that the values $m_b (\lambda)$ and $m_b (\bar{\lambda})$ do 
not necessarily agree. This is precisely why we need 
our consistency Assumption {\bf (C)}. As noted in the 
Introduction, under Assumption {\bf (C)} we will denote 
the mutual value of $m_b (\lambda)$ and $m_b (\bar{\lambda})$ by $m_b$,
and we will also denote the mutual value of 
$r_b (\lambda)$ and $r_b (\bar{\lambda})$ by $r_b$. 

\begin{remark} \label{assumption-c-remark}
We note that if the matrices
$B_0 (x)$ and $B_1 (x)$ have real entries so that 
$\overline{B_0 (x) + \lambda B_1 (x)} = B_0 (x) + \bar{\lambda} B_1 (x)$, 
then we will have $\overline{\Phi (x; \lambda)} = \Phi (x; \bar{\lambda})$,
and correspondingly $\overline{\mathcal{A} (x; \lambda)} = \mathcal{A} (x; \bar{\lambda})$.
In this case, for each $j \in \{1, 2, \dots, 2n\}$, 
\begin{equation}
\mu_j (x; \lambda) = \overline{\mu_j (x; \lambda)} = \mu_j (x; \bar{\lambda}).
\end{equation}
In particular, $m_a (\lambda) = m_a (\bar{\lambda})$ and 
$m_b (\lambda) = m_b (\bar{\lambda})$, and so our 
Assumption {\bf (C)} will hold.
\end{remark}  

In the next part of our development, the ratios 
$\{\mu_j (x; \lambda)/\mu_{n+j} (x; \lambda)\}_{j=1}^n$
will have an important role, and we emphasize that 
Assumption {\bf (C)} becomes crucial at this point. 
To see this, we first observe from Lemma \ref{cal-A-eigs}
the relation
\begin{equation} \label{ratios-with-x}
    \frac{\mu_j (x; \bar{\lambda})}{\mu_{n+j} (x; \bar{\lambda})}
    = - \frac{\frac{1}{(2 \mathrm{Im }\lambda)^2 \mu_{n+j} (x; \lambda)}}
    {\frac{1}{(2 \mathrm{Im }\lambda)^2 \mu_{j} (x; \lambda)}}
    = \frac{\mu_j (x; \lambda)}{\mu_{n+j} (x; \lambda)}.
\end{equation}
For $j = r_b (\lambda) + 1, \dots, n$, we have 
\begin{equation*}
    \lim_{x \to b^-} \mu_{n+j} (x; \lambda) = \infty;
    \quad \implies \quad
    \lim_{x \to b^-} \mu_j (x; \bar{\lambda}) = 0,
\end{equation*}
and so both sides of (\ref{ratios-with-x}) approach 0
as $x \to b^-$. On the other hand, for $j = 1, \dots, r_b (\lambda)$,
we have 
\begin{equation*}
    \lim_{x \to b^-} \mu_{n+j} (x; \lambda) = \mu_{n+j}^b (\lambda);
    \quad \implies \quad
    \lim_{x \to b^-} \mu_j (x; \bar{\lambda}) = \mu_j^b (\bar{\lambda}),
\end{equation*}
where the values $\mu_{n+j}^b (\lambda)$ and $\mu_j^b (\bar{\lambda})$ 
are both non-zero real numbers, and so do not fully determine the limits of 
(\ref{ratios-with-x}) as $x \to b^-$. In particular, in order to determine
these limits, we require either the limit of $\mu_{n+j} (x; \bar{\lambda})$
or the limit of $\mu_j (x; b)$ as $x \to b^-$.  Precisely the same statements
hold with $\lambda$ replaced by $\bar{\lambda}$, so for  
$j = 1, \dots, r_b (\bar{\lambda})$,
we have 
\begin{equation*}
    \lim_{x \to b^-} \mu_{n+j} (x; \bar{\lambda}) = \mu_{n+j}^b (\bar{\lambda});
    \quad \implies \quad
    \lim_{x \to b^-} \mu_j (x; \lambda) = \mu_j^b (\lambda),
\end{equation*}
where the values $\mu_{n+j}^b (\bar{\lambda})$ and $\mu_j^b (\lambda)$ 
are both non-zero real numbers. We can conclude that if $r_b (\lambda) = r_b (\bar{\lambda})$,
then the ratios 
$\{\mu_j (x; \lambda)/\mu_{n+j} (x; \lambda)\}_{j=1}^{r_b (\lambda)}$ 
will all have real non-zero limits as $x \to b^-$.  

Working now under Assumption {\bf (C)},  we choose $n$ 
solutions of (\ref{linear-hammy}) that lie right in 
$(a, b)$, taking precisely 
one from each Niessen subspace $N_j^b (\lambda)$ in the following way. 
First, for each $j \in \{1, 2, \dots, r_b\}$,
we let $\beta_j (\lambda)$ be any complex
number on the circle 
\begin{equation*}
    |\beta_j^b (\lambda)| = \sqrt{-\mu_j^b (\lambda)/\mu_{n+j}^b (\lambda)},
\end{equation*}
where as described just above, these ratios cannot be 0, and we set 
\begin{equation*}
    u^b_j (x; \lambda) = y^b_j (x; \lambda) + \beta_j^b (\lambda) z^b_j (x; \lambda). 
\end{equation*}
Next, for each $j \in \{r_b + 1, r_b + 2, \dots, n\}$,
we set 
\begin{equation*}
    u^b_j (x; \lambda) = y^b_j (x; \lambda). 
\end{equation*}
Correspondingly, we will denote by $\{r_j^b (\lambda)\}_{j=1}^n$ 
the vectors specified so that $u^b_j (x; \lambda) = \Phi (x; \lambda) r_j^b (\lambda)$
for each $j \in \{1, 2, \dots, n\}$. Precisely, this means that 
\begin{equation*}
    \begin{aligned}
    r^b_j (\lambda) &= v_j^b (\lambda) + \beta^b_j (\lambda) v_{n+j}^b (\lambda),
    \quad j \in \{1, 2, \dots, r_b\}, \\
    r^b_j (\lambda) &= v_j^b (\lambda), 
    \quad \quad \quad \quad \quad \quad  j \in \{r_b + 1, r_b + 2, \dots, n\}. 
    \end{aligned}
\end{equation*}
We can now collect the vectors $\{r_j^b (\lambda)\}_{j=1}^n$ 
into a frame 
\begin{equation} \label{b-frame}
\mathbf{R}^b (\lambda) =
\begin{pmatrix}
r_1^b (\lambda) & r_2^b (\lambda) & \dots & r_n^b (\lambda)
\end{pmatrix}.
\end{equation}

In addition to the above specifications, for the Niessen spaces 
$\{N_j^b (\lambda)\}_{j=1}^{r_b}$, it will be useful to introduce
notation for elements linearly independent to the 
$\{u_j^b (\lambda)\}_{j = 1}^{r_b}$. For each 
$j \in \{1, 2, \dots, r_b\}$, we take any 
complex number $\gamma_j (\lambda)$ so that 
$|\gamma_j (\lambda)| = |\beta_j (\lambda)|$ 
but  $\gamma_j (\lambda) \ne \beta_j (\lambda)$,
and we define the Niessen complement to $u_j^b (x; \lambda)$
to be 
\begin{equation} \label{niessen-complements}
v_j^b (x; \lambda) 
= y^b_j (x; \lambda) + \gamma_j^b (\lambda) z^b_j (x; \lambda). 
\end{equation}

With this notation in place, we can adapt Theorem VI.3.1
from \cite{Krall2002} to the current setting.

\begin{lemma} \label{krall-niessen-lemma} Let 
Assumptions {\bf (A)}, {\bf (B)} and {\bf (C)} hold, and 
let the Niessen elements $\{u_j^b (x; \lambda)\}_{j=1}^{n}$
and the Niessen complements $\{v_j^b (x; \lambda)\}_{j=1}^{r_b}$
be specified as above. Then the following hold: 

\medskip
(i) For each $j, k \in \{1, 2, \dots, n\}$, 
\begin{equation*}
    (J u_j^b (\cdot; \lambda), u_k^b (\cdot; \lambda))_b = 0.
\end{equation*}

\medskip
(ii) For each $j, k \in (1, 2, \dots, r_b)$, 
\begin{equation*}
    (J u_j^b (\cdot; \lambda), v_k^b (\cdot; \lambda))_b 
    = \begin{cases}
    0 & j \ne k \\
    \kappa_j^b = 2 i \mathrm{Im }\lambda (\mu_j^b (\lambda) 
    + \gamma^b_j (\lambda) \beta^b_j (\lambda) \mu_{n+j}^b (\lambda)) \ne 0 & j = k. 
    \end{cases}
\end{equation*}
\end{lemma}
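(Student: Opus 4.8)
The plan is to turn every bracket in the statement into a limit of quadratic forms built from $\mathcal{A}(x;\lambda)$ and then to evaluate those limits. The one identity that drives everything is $\Phi(x;\lambda)^* J \Phi(x;\lambda) = 2i\,\mathrm{Im}\,\lambda\,\mathcal{A}(x;\lambda)$, immediate from the definition of $\mathcal{A}$. Writing $u_j^b(x;\lambda) = \Phi(x;\lambda) r_j^b(\lambda)$ and, for the complements, $v_k^b(x;\lambda) = \Phi(x;\lambda) s_k^b(\lambda)$ with $s_k^b(\lambda) = v_k^b(\lambda) + \gamma_k^b(\lambda) v_{n+k}^b(\lambda)$, this yields (up to the conjugation fixed by the convention for $(\cdot,\cdot)$)
\begin{equation*}
(J u_j^b(\cdot;\lambda), u_k^b(\cdot;\lambda))_b = 2i\,\mathrm{Im}\,\lambda \lim_{x \to b^-} r_k^b(\lambda)^* \mathcal{A}(x;\lambda)\, r_j^b(\lambda),
\end{equation*}
and the analogue with $r_k^b$ replaced by $s_k^b$ for part (ii). Expanding $r_j^b, r_k^b, s_k^b$ in the $v_\ell^b(\lambda)$ and using sesquilinearity, the whole computation reduces to the building-block limits
\begin{equation*}
L_{pq} := \lim_{x \to b^-} v_p^b(\lambda)^* \mathcal{A}(x;\lambda)\, v_q^b(\lambda), \qquad p,q \in \{1,\dots,n\}\cup\{n+1,\dots,n+r_b\}.
\end{equation*}

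The heart of the proof is to show $L_{pq} = \mu_p^b(\lambda)\,\delta_{pq}$. Existence of each limit is clean: integrating (\ref{A-prime}) gives $v_p^b{}^*\mathcal{A}(x)v_q^b = v_p^b{}^*\mathcal{A}(c)v_q^b + \int_c^x (\Phi v_p^b)^* B_1 (\Phi v_q^b)\,d\xi$, and every $v_\ell^b$ occurring in $r_j^b, s_j^b$ indexes a solution $\Phi v_\ell^b$ that lies right (automatic for $\ell\le n$, and for $n+1\le \ell\le n+r_b$ because $\mu_\ell^b$ is finite), so the tail integral converges by Cauchy--Schwarz in $L^2_{B_1}((c,b),\mathbb{C}^{2n})$. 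To identify the value I pass, exactly as in the proof of Lemma \ref{subspace-dimensions-lemma}, to a subsequence $x_{k_i}\to b^-$ along which all $2n$ eigenvectors $v_\ell(x_{k_i};\lambda)$ converge, and expand $\mathcal{A}(x)=\sum_\ell \mu_\ell(x) v_\ell(x) v_\ell(x)^*$. The terms with $\mu_\ell(x)\to\mu_\ell^b$ finite converge to $\mu_p^b\delta_{pq}$ by orthonormality of the limiting eigenvectors; \emph{the terms with $\mu_\ell(x)\to+\infty$ are the main obstacle}, and the crux of the argument is to show their total contribution vanishes. Here one uses that boundedness of $v_q^b{}^*\mathcal{A}(x)v_q^b$ keeps $\sum_{\mu_\ell\to\infty}\mu_\ell(x)\,|v_\ell(x)^* v_q^b|^2$ bounded, and then the lying-right estimate (the device in Lemma \ref{subspace-dimensions-lemma}, following Theorem VI.3.1 of \cite{Krall2002}) drives it to zero. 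A decisive simplification is that, under Assumption {\bf (C)}, the relations recorded just before the lemma give $\mu_j^b(\lambda)=0$ for $j\in\{r_b+1,\dots,n\}$, so $L_{jj}=0$ on these pure-$y$ directions — precisely what part (i) requires there.

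Granting $L_{pq}=\mu_p^b\delta_{pq}$, the rest is algebra. For part (i) with $j,k\le r_b$, expanding $r_k^b{}^*\mathcal{A}r_j^b$ annihilates all off-diagonal Kronecker deltas and leaves, for $j=k$, the combination $\mu_j^b + |\beta_j^b|^2\mu_{n+j}^b$, which is zero by the defining choice $|\beta_j^b|^2 = -\mu_j^b/\mu_{n+j}^b$; the mixed cases ($j$ or $k>r_b$) and all off-diagonal cases vanish outright because there $r_\ell^b=v_\ell^b$ and $\mu_\ell^b=0$ for $\ell>r_b$. For part (ii) the same expansion with $s_k^b$ gives $0$ when $j\ne k$ and, when $j=k$, the value
\begin{equation*}
\kappa_j^b = 2i\,\mathrm{Im}\,\lambda\,\big(\mu_j^b(\lambda) + \overline{\gamma_j^b(\lambda)}\,\beta_j^b(\lambda)\,\mu_{n+j}^b(\lambda)\big),
\end{equation*}
the conjugate on $\gamma_j^b$ being dictated by the inner-product convention. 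Finally $\kappa_j^b\ne0$: if it vanished, then $\overline{\gamma_j^b}\beta_j^b = -\mu_j^b/\mu_{n+j}^b = |\beta_j^b|^2 = \overline{\beta_j^b}\beta_j^b$, whence $\gamma_j^b=\beta_j^b$ (dividing by $\beta_j^b\ne0$), contradicting the choice $\gamma_j^b\ne\beta_j^b$ with $|\gamma_j^b|=|\beta_j^b|$.
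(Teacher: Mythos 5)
Your route is necessarily different from the paper's, since the paper's entire proof of Lemma \ref{krall-niessen-lemma} is a deferral to Theorem VI.3.1 of \cite{Krall2002}; you attempt a self-contained argument, and most of it is sound. The reduction of every bracket to the limits $L_{pq}=\lim_{x\to b^-}v_p^b(\lambda)^*\mathcal{A}(x;\lambda)v_q^b(\lambda)$ via $\Phi^*J\Phi=2i\,\mathrm{Im}\,\lambda\,\mathcal{A}$ is correct; so are the existence of these limits, the algebra deducing (i) and (ii) from $L_{pq}=\mu_p^b(\lambda)\delta_{pq}$, the (genuinely subtle) observation that part (i) on the indices $j\in\{r_b+1,\dots,n\}$ forces one to know $\mu_j^b(\lambda)=0$ — which is where Assumption {\bf (C)} enters already for part (i) — and the non-vanishing argument for $\kappa_j^b$. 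Your conjugate $\overline{\gamma_j^b}$ is in fact the version consistent with the paper's inner-product convention (pinned down by (\ref{green2}), which requires the first slot to be the linear one), and it is exactly what makes $\kappa_j^b\ne0$ hold for non-real admissible choices of $\beta_j^b,\gamma_j^b$; the formula displayed in the lemma, read literally, is only correct for real choices such as those made in Claim \ref{niessen-claim1}.

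The gap is at the step you yourself flag as the crux: showing that the divergent-eigenvalue contribution $S_q(x):=\sum_{\ell:\,\mu_\ell\to\infty}\mu_\ell(x;\lambda)\,|v_\ell(x;\lambda)^*v_q^b(\lambda)|^2$ tends to $0$. What you offer — boundedness of $v_q^b(\lambda)^*\mathcal{A}(x;\lambda)v_q^b(\lambda)$ plus the assertion that ``the lying-right estimate drives it to zero'' — is not an argument: boundedness shows only that $S_q$ converges (along the subsequence) to the nonnegative finite number $L_{qq}-\mu_q^b(\lambda)$, and the device in the proof of Lemma \ref{subspace-dimensions-lemma} as it stands produces precisely boundedness/integrability (the lying-right property), never the identification of a limit. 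What is missing is the reverse inequality $L_{qq}\le\mu_q^b(\lambda)$. It follows by running that same device two-sidedly, Fatou style: for fixed $b'<b$, using continuity of the form $w\mapsto w^*\big(\mathcal{A}(b';\lambda)-\mathcal{A}(c;\lambda)\big)w$ together with Assumption {\bf (B)} applied to the solutions $\Phi(\cdot;\lambda)v_q(x_{k_i};\lambda)$ on $[b',x_{k_i}]$, one gets
\begin{equation*}
\int_c^{b'}\big(B_1\Phi v_q^b(\lambda),\Phi v_q^b(\lambda)\big)\,d\xi
\;\le\;\liminf_{i\to\infty}\Big(\mu_q(x_{k_i};\lambda)-v_q(x_{k_i};\lambda)^*\mathcal{A}(c;\lambda)v_q(x_{k_i};\lambda)\Big)
\;=\;\mu_q^b(\lambda)-v_q^b(\lambda)^*\mathcal{A}(c;\lambda)v_q^b(\lambda),
\end{equation*}
and letting $b'\to b^-$ turns the left-hand side into $L_{qq}-v_q^b(\lambda)^*\mathcal{A}(c;\lambda)v_q^b(\lambda)$, whence $L_{qq}\le\mu_q^b(\lambda)$. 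Combined with your lower bound (the divergent $\mu_\ell$ are positive, so $S_q\ge0$ and $L_{qq}\ge\mu_q^b(\lambda)$), this forces $L_{qq}=\mu_q^b(\lambda)$ and $S_q\to0$; Cauchy--Schwarz over the divergent indices, $\big|\sum_\ell\mu_\ell\,(v_p^b{}^*v_\ell)(v_\ell^*v_q^b)\big|\le\sqrt{S_p\,S_q}$, then disposes of the off-diagonal terms. With this inequality inserted your proof closes; without it, the central claim $L_{pq}=\mu_p^b(\lambda)\delta_{pq}$ is asserted rather than proved.
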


\begin{proof}
See Theorem VI.3.1 in \cite{Krall2002}. We note here only two key points: (1) 
We require Assumption {\bf (C)} in order to ensure that $\kappa_j^b \ne 0$;
and (2) in anticipation of Lemma \ref{green-lemma}, we are introducing the notation 
\begin{equation*}
    (Ju, v)_b := \lim_{x \to b^-} (J u(x), v(x)).
\end{equation*}
\end{proof}

\begin{claim} \label{niessen-claim1}
Let Assumptions {\bf (A)}, {\bf (B)}, and {\bf (C)} hold, and suppose
the Niessen elements for (\ref{linear-hammy}) are chosen to be
\begin{equation*}
    \begin{aligned}
    u_j^b (x; \lambda) &= \Phi (x; \lambda) (v_j^b (\lambda) + \beta_j^b (\lambda) v^b_{n+j} (\lambda)),
    \quad j \in \{1, 2, \dots, r_b\} \\
    v_j^b (x; \lambda) &= \Phi (x; \lambda) (v_j^b (\lambda) + \gamma_j^b (\lambda) v^b_{n+j} (\lambda)),
    \quad j \in \{1, 2, \dots, r_b\} \\
    u_j^b (x; \lambda) &= \Phi (x; \lambda) v_j^b (\lambda), 
    \quad \quad \quad \quad \quad \quad j \in \{r_b + 1, r_b + 2, \dots, n\},
    \end{aligned}
\end{equation*}
with $\beta_j^b (\lambda)$ and $\gamma_j^b (\lambda)$ specified just above (in particular, 
as real non-zero values).
Then the Niessen elements for (\ref{linear-hammy}) with $\lambda$ 
replaced by $\bar{\lambda}$ can be chosen to be
\begin{equation*}
    \begin{aligned}
    u_j^b (x; \bar{\lambda}) &= \Phi (x; \bar{\lambda}) (v_j^b (\bar{\lambda}) + \beta_j^b (\bar{\lambda}) v^b_{n+j} (\bar{\lambda})),
    \quad j \in \{1, 2, \dots, r_b\} \\
    v_j^b (x; \bar{\lambda}) &= \Phi (x; \bar{\lambda}) (v_j^b (\bar{\lambda}) + \gamma_j^b (\bar{\lambda}) v^b_{n+j} (\bar{\lambda})),
    \quad j \in \{1, 2, \dots, r_b\} \\
    u_j^b (x; \bar{\lambda}) &= \Phi (x; \bar{\lambda}) v_j^b (\bar{\lambda}), 
    \quad \quad \quad \quad \quad \quad j \in \{r_b + 1, r_b + 2, \dots, n\},
    \end{aligned}
\end{equation*}
with $\beta_j^b (\bar{\lambda}) =  - \overline{\beta_j^b (\lambda)}$
and $\gamma_j^b (\bar{\lambda}) = - \overline{\gamma_j^b (\lambda)}$ 
for all $j \in \{1, 2, \dots r_b\}$.
\end{claim}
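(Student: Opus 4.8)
The plan is to reduce the claim to two ingredients already in hand: the eigenvalue/eigenvector correspondence of Lemma \ref{cal-A-eigs}, passed to the limit $x \to b^-$, together with the ratio identity (\ref{ratios-with-x}). The observations in the paragraph leading into the claim show that the expressions written down for $\bar\lambda$ are nothing other than the Niessen construction of Section \ref{niessen-section} applied to $\bar\lambda$, provided (a) the eigenvectors appearing in them are the correct limiting eigenvectors of $\mathcal{A}(x;\bar\lambda)$, and (b) the scalars $\beta_j^b(\bar\lambda) = -\overline{\beta_j^b(\lambda)}$ and $\gamma_j^b(\bar\lambda) = -\overline{\gamma_j^b(\lambda)}$ are admissible selections. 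So the whole content of the claim is to verify (a) and (b).

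For (a), I would fix the eigenvectors $\{v_k^b(\lambda)\}_{k=1}^{2n}$ used in the $\lambda$-construction; by the discussion following (\ref{v-bar-relation}), the limiting eigenvectors $\{v_k^b(\bar\lambda)\}_{k=1}^{2n}$ are then pinned down through the inherited relations, which read $v_j^b(\bar\lambda) = (J/i) v_{n+j}^b(\lambda)$ and $v_{n+j}^b(\bar\lambda) = (J/i) v_j^b(\lambda)$ for $j = 1, \dots, n$ (using that $(J/i)$ is an involution). These are exactly the eigenvectors entering the proposed $\bar\lambda$-expressions, so (a) is immediate; for the indices $j \in \{r_b+1, \dots, n\}$ no scalar is involved and the identification $u_j^b(x;\bar\lambda) = \Phi(x;\bar\lambda) v_j^b(\bar\lambda)$ is automatic.

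For (b), the key computation is to pass Lemma \ref{cal-A-eigs} to the limit, obtaining for $j = 1, \dots, r_b$
\begin{equation*}
\mu_j^b(\bar\lambda) = -\frac{1}{(2\,\mathrm{Im}\,\lambda)^2\,\mu_{n+j}^b(\lambda)}, \qquad
\mu_{n+j}^b(\bar\lambda) = -\frac{1}{(2\,\mathrm{Im}\,\lambda)^2\,\mu_j^b(\lambda)},
\end{equation*}
where, by the discussion around (\ref{ratios-with-x}), all four quantities are finite and nonzero. Taking the quotient gives $-\mu_j^b(\bar\lambda)/\mu_{n+j}^b(\bar\lambda) = -\mu_j^b(\lambda)/\mu_{n+j}^b(\lambda)$, so the circle defining $\beta_j^b(\bar\lambda)$ has the same (positive) radius as the one defining $\beta_j^b(\lambda)$. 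Since $\beta_j^b(\lambda)$ was taken real, $|{-}\overline{\beta_j^b(\lambda)}| = |\beta_j^b(\lambda)|$ lies on that circle, so $\beta_j^b(\bar\lambda) = -\overline{\beta_j^b(\lambda)}$ is a legitimate choice; the identical argument, using $|\gamma_j^b(\lambda)| = |\beta_j^b(\lambda)|$, handles $\gamma_j^b(\bar\lambda) = -\overline{\gamma_j^b(\lambda)}$. The complement condition $\gamma_j^b(\bar\lambda) \ne \beta_j^b(\bar\lambda)$ follows by applying $-\overline{(\cdot)}$ to $\gamma_j^b(\lambda) \ne \beta_j^b(\lambda)$, and nonvanishing is clear since the radius is positive.

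I expect the main obstacle to be bookkeeping rather than analysis. One must check that the ordering convention (\ref{eig-A-order}) is applied consistently to the $\lambda$- and $\bar\lambda$-problems, so that the index $j$ genuinely matches on both sides of the eigenvalue correspondence, and that it is precisely the first $r_b$ lower eigenvalues $\mu_j^b(\lambda)$ that stay strictly negative (the rest tending to $0$), which is what makes the ratios above finite and nonzero. This is where Assumption {\bf (C)} does the real work, since it forces $r_b(\lambda) = r_b(\bar\lambda) = r_b$ and hence makes the index ranges $j \in \{1,\dots,r_b\}$ and $j \in \{r_b+1,\dots,n\}$ common to both constructions; I would isolate this as a short preliminary observation before carrying out the otherwise routine substitutions.
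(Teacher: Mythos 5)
Your proposal is correct and follows essentially the same route as the paper: the paper likewise dismisses the structural part as a consequence of the labeling conventions (your part (a), via the inherited relations (\ref{v-bar-relation})), and its only explicit computation is precisely your part (b) — passing (\ref{ratios-with-x}) to the limit $x \to b^-$ to get $|\beta_j^b(\bar{\lambda})| = |\beta_j^b(\lambda)|$, so that $-\overline{\beta_j^b(\lambda)}$ (and then $-\overline{\gamma_j^b(\lambda)}$) is an admissible selection on the prescribed circle. Your added remarks on the role of Assumption {\bf (C)} in making $r_b(\lambda) = r_b(\bar{\lambda})$ and the ratios finite and nonzero are exactly the content of the paper's paragraph leading into the claim, so nothing is missing.
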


\begin{proof}
This statement follows almost entirely from our labeling conventions, 
and the only part that we will explicitly check is the final 
assertion that we can take $\beta_j^b (\bar{\lambda}) = - \overline{\beta_j^b (\lambda)}$
and $\gamma_j^b (\bar{\lambda}) = - \overline{\gamma_j^b (\lambda)}$. For this, 
we observe from (\ref{ratios-with-x}) that 
\begin{equation*}
    \frac{\mu_j^b (\bar{\lambda})}{\mu_{n+j}^b (\bar{\lambda})}
    = - \frac{\frac{1}{(2 \mathrm{Im }\lambda)^2 \mu_{n+j}^b (\lambda)}}
    {\frac{1}{(2 \mathrm{Im }\lambda)^2 \mu_{j}^b (\lambda)}}
    = \frac{\mu_j^b (\lambda)}{\mu_{n+j}^b (\lambda)},
\end{equation*}
and consequently
\begin{equation*}
    |\beta_j^b (\bar{\lambda})|
    = \sqrt{-\mu_j^b (\bar{\lambda})/\mu_{n+j}^b (\bar{\lambda})}
    = |\beta_j^b (\lambda)|.
\end{equation*}
Since we can take $\beta_j^b (\bar{\lambda})$ to be any complex 
number with this norm, we can set $\beta_j^b (\bar{\lambda}) = - \overline{\beta_j^b (\lambda)}$,
and subsequently we're justified in choosing 
$\gamma_j^b (\bar{\lambda}) = - \overline{\gamma_j^b (\lambda)}$.
\end{proof}

\begin{claim} \label{niessen-claim2}
Let the Assumptions and notation of Claim \ref{niessen-claim1} hold,
and let $\mathbf{R}^b (\lambda)$ denote the matrix defined in (\ref{b-frame}).
If $\mathbf{R}^b (\bar{\lambda})$ denotes the matrix defined in 
(\ref{b-frame}) with $\lambda$ replaced by $\bar{\lambda}$
and the Niessen elements described in Claim \ref{niessen-claim1},
then 
\begin{equation*}
    \mathbf{R}^b (\bar{\lambda})^* J \mathbf{R}^b (\lambda) = 0.
\end{equation*}
\end{claim}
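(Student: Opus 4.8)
The plan is to prove the identity entrywise. Writing $\mathbf{R}^b(\lambda)$ and $\mathbf{R}^b(\bar\lambda)$ in terms of their columns, the $(j,k)$ entry of $\mathbf{R}^b(\bar\lambda)^* J \mathbf{R}^b(\lambda)$ is the scalar $r_j^b(\bar\lambda)^* J\, r_k^b(\lambda)$, so it suffices to show that each of these $n^2$ scalars vanishes. The entire argument is a bilinear expansion evaluated against the orthonormality-type relations (\ref{niessen-relations1}) and (\ref{niessen-relations2}), together with the conjugation rule $\beta_j^b(\bar\lambda) = -\overline{\beta_j^b(\lambda)}$ furnished by Claim \ref{niessen-claim1}.

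First I would substitute the explicit column forms. For $j \le r_b$ we have $r_j^b(\bar\lambda) = v_j^b(\bar\lambda) + \beta_j^b(\bar\lambda) v_{n+j}^b(\bar\lambda)$, while for $j > r_b$ simply $r_j^b(\bar\lambda) = v_j^b(\bar\lambda)$; analogously for $r_k^b(\lambda)$ with $\lambda$. Taking the adjoint turns the scalar coefficient $\beta_j^b(\bar\lambda)$ into $\overline{\beta_j^b(\bar\lambda)}$, which I would record as $-\beta_j^b(\lambda)$ using Claim \ref{niessen-claim1}. After expanding, every term has the shape (scalar coefficient) times $v_p^b(\bar\lambda)^* J\, v_q^b(\lambda)$ with $p,q \in \{1,\dots,2n\}$, and each such factor is evaluated by (\ref{niessen-relations1})--(\ref{niessen-relations2}) to be $i$ times a Kronecker delta.

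Then I would organize the evaluation into the four index blocks determined by whether each of $j,k$ is $\le r_b$ or $> r_b$. In the three blocks where at least one of $j,k$ exceeds $r_b$, each surviving delta forces an index equality that cannot hold: a factor $i\delta_{n+j}^k$ vanishes because $k \le n < n+j$, and a factor $i\delta_j^k$ vanishes because the two indices then lie in the disjoint ranges $\{1,\dots,r_b\}$ and $\{r_b+1,\dots,n\}$; hence every entry in these blocks is zero. The only block requiring care is $j,k \in \{1,\dots,r_b\}$. There the two ``pure'' terms drop by the same delta mechanism, and the two cross terms contribute $i\,\beta_k^b(\lambda)\,\delta_j^k$ and $i\,\overline{\beta_j^b(\bar\lambda)}\,\delta_j^k$. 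For $j \ne k$ both deltas vanish; for $j=k$ the two contributions are $i\,\beta_j^b(\lambda)$ and $i\,\overline{\beta_j^b(\bar\lambda)} = -i\,\beta_j^b(\lambda)$, which cancel.

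The step I expect to be the genuine (if modest) obstacle is the bookkeeping of the adjoint-induced conjugation: it is precisely the pairing of $\overline{\beta_j^b(\bar\lambda)}$ with the relation $\beta_j^b(\bar\lambda) = -\overline{\beta_j^b(\lambda)}$ from Claim \ref{niessen-claim1} that produces the sign needed for the diagonal cancellation, and keeping the complex conjugates in their correct positions is where an error would most easily slip in. Everything else reduces to the mechanical application of (\ref{niessen-relations1}) and (\ref{niessen-relations2}), so once the four-block structure and this one cancellation are in hand, the claim follows.
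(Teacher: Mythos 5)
Your proposal is correct and follows essentially the same route as the paper's proof: expand each entry $r_j^b(\bar\lambda)^* J\, r_k^b(\lambda)$ bilinearly, reduce every term to $i$ times a Kronecker delta via (\ref{niessen-relations1})--(\ref{niessen-relations2}), and use $\overline{\beta_j^b(\bar\lambda)} = -\beta_j^b(\lambda)$ from Claim \ref{niessen-claim1} to cancel the diagonal contributions in the block $j,k \le r_b$. The paper likewise treats the case $j,k \in \{1,\dots,r_b\}$ and one mixed block explicitly and dispatches the remaining blocks as similar, exactly as your four-block bookkeeping does.
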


\begin{proof}
First, for $j, k \in \{1, 2, \dots, r_b (\lambda)\}$, we have 
\begin{equation*}
    \begin{aligned}
    r^b_j (\bar{\lambda})^* J r^b_k (\lambda) 
    &= (v_j^b (\bar{\lambda})^* + \overline{\beta_j^b (\bar{\lambda})} v^b_{n+j} (\bar{\lambda})^*)
    J (v_k^b (\lambda) + \beta_k^b (\lambda) v^b_{n+k} (\lambda)) \\
    &= v_j^b (\bar{\lambda})^* J v_k^b (\lambda) 
    + \beta_k^b (\lambda) v_j^b (\bar{\lambda})^* J v^b_{n+k} (\lambda) \\
    &+ \overline{\beta_j^b (\bar{\lambda})} v^b_{n+j} (\bar{\lambda})^* J v_k^b (\lambda)
    + \overline{\beta_j^b (\bar{\lambda})} \beta_k^b (\lambda)  v^b_{n+j} (\bar{\lambda})^* v^b_{n+k} (\lambda) \\
    &= \begin{cases}
    0 & j \ne k \\
    i (\beta_k^b (\lambda) + \overline{\beta_k^b (\bar{\lambda})}) & j=k,
    \end{cases}
    \end{aligned}
\end{equation*}
where in obtaining the final inequality we've used the 
relations (\ref{niessen-relations1}) and (\ref{niessen-relations2}). Recalling 
our convention from Claim \ref{niessen-claim1}, we see that we in fact have
\begin{equation*}
 r^b_j (\bar{\lambda})^* J r^b_k (\lambda) = 0, 
 \quad \forall \, j, k \in \{1, 2, \dots, r_b (\lambda)\}. 
\end{equation*}
Next, for $j \in \{1, 2, \dots, r_b (\lambda)\}$, 
$k \in \{r_b (\lambda) + 1, r_b (\lambda) + 2, \dots, n\}$, 
we have 
\begin{equation*}
  r^b_j (\bar{\lambda})^* J r^b_k (\lambda)
  = (v_j^b (\bar{\lambda})^* + \overline{\beta_j^b (\bar{\lambda})} v^b_{n+j} (\bar{\lambda})^*)
  J v_k^b (\lambda) 
  = 0
\end{equation*}
where again we've used the 
relations (\ref{niessen-relations1}) and (\ref{niessen-relations2}). The 
cases  $j \in \{r_b (\lambda) + 1, r_b (\lambda) + 2, \dots, n\}$, 
$k \in \{1, 2, \dots, r_b (\lambda)\}$ and 
$j, k \in \{r_b (\lambda) + 1, r_b (\lambda) + 2, \dots, n\}$ can be 
handled similarly.
\end{proof}

With appropriate labeling, statements analogous to 
Lemma \ref{krall-niessen-lemma} and Claims 
\ref{niessen-claim1} and \ref{niessen-claim2}
can be established with $b$ replaced by $a$.

\subsection{Properties of $\mathcal{L}$ and $\mathcal{L}^{\alpha}$} 
\label{properties-section}

Turning now to consideration of the operators 
$\mathcal{L}$ and $\mathcal{L}^{\alpha}$, 
we will take as our starting point
the following formulation of Green's identity for 
our maximal operator $\mathcal{L}_M$. 

\begin{lemma}[Green's Identity] \label{green-lemma}
For any $y, z \in \mathcal{D}_M$,
we have 
\begin{equation} \label{green1}
    \langle \mathcal{L}_M y, z \rangle_{B_1} 
    - \langle  y, \mathcal{L}_M z \rangle_{B_1}
    = (Jy,z)_a^b,
\end{equation}
where 
\begin{equation*}
     (Jy,z)_a^b =  (Jy,z)_b -  (Jy,z)_a,
\end{equation*}
with 
\begin{equation*}
    \begin{aligned}
    (Jy,z)_a &:= \lim_{x \to a^+} (J y(x),z(x)), \\
    (Jy,z)_b &:= \lim_{x \to b^-} (J y(x),z(x))
    \end{aligned}
\end{equation*} 
(for which the limits are well-defined).
In particular, if $y$ and $z$ satisfy 
$\mathcal{L}_M y = \lambda y$ and $\mathcal{L}_M z = \lambda z$
then 
\begin{equation} \label{green2}
    2 i \mathrm{Im }\lambda \langle y, z \rangle_{B_1} = (Jy,z)_a^b.
\end{equation}
\end{lemma}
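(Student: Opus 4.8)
The plan is to unwind the definition of the maximal operator and reduce the identity to the fundamental theorem of calculus. Write $f := \mathcal{L}_M y$ and $g := \mathcal{L}_M z$, so that by Definition \ref{maximal-operator} we have $Jy' - B_0 y = B_1 f$ and $Jz' - B_0 z = B_1 g$ for a.e.\ $x \in (a,b)$. Then by definition of the weighted inner product, $\langle \mathcal{L}_M y, z \rangle_{B_1} = \int_a^b (B_1 f, z)\,dx = \int_a^b (Jy' - B_0 y, z)\,dx$, while, using that $B_1(x)$ is self-adjoint (Assumption {\bf (A)}), $\langle y, \mathcal{L}_M z \rangle_{B_1} = \int_a^b (B_1 y, g)\,dx = \int_a^b (y, B_1 g)\,dx = \int_a^b (y, Jz' - B_0 z)\,dx$.

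Subtracting these two expressions and using that $B_0(x)$ is self-adjoint for a.e.\ $x$ (so the contributions $(B_0 y, z)$ and $(y, B_0 z)$ cancel) leaves $\langle \mathcal{L}_M y, z \rangle_{B_1} - \langle y, \mathcal{L}_M z \rangle_{B_1} = \int_a^b \big[ (Jy', z) - (y, Jz') \big]\,dx$. Because $J$ is real and antisymmetric, $J^* = -J$, so $(Jy, z') = (y, J^* z') = -(y, Jz')$; combined with the product rule this identifies the integrand as an exact derivative, namely $\frac{d}{dx}(Jy, z) = (Jy', z) + (Jy, z') = (Jy', z) - (y, Jz')$. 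Since $y, z \in \AC_{\loc}((a,b), \mathbb{C}^{2n})$, the scalar function $x \mapsto (Jy(x), z(x))$ is locally absolutely continuous, so the fundamental theorem of calculus gives $\int_c^d \frac{d}{dx}(Jy, z)\,dx = (Jy,z)(d) - (Jy,z)(c)$ for any $[c,d] \subset (a,b)$; passing to the limits $c \to a^+$ and $d \to b^-$ yields (\ref{green1}).

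The one point that requires care — and the main obstacle — is justifying that the one-sided limits $(Jy,z)_a$ and $(Jy,z)_b$ exist individually (which the statement asserts), rather than merely that their difference is finite; this is nontrivial since $a$ or $b$ may be infinite and $B_1$ may be degenerate. The plan is to show that the integrand is absolutely integrable on all of $(a,b)$: rewriting $\frac{d}{dx}(Jy,z) = (B_1 f, z) - (y, B_1 g)$ and applying the Cauchy--Schwarz inequality for the positive semidefinite form $(B_1\,\cdot\,, \cdot)$ gives the pointwise bound $|(B_1 f, z)| \le (B_1 f, f)^{1/2} (B_1 z, z)^{1/2}$, whence $\int_a^b |(B_1 f, z)|\,dx \le \|f\|_{B_1} \|z\|_{B_1} < \infty$ because $f, z \in L^2_{B_1}$, and similarly for the term $(y, B_1 g)$. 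Absolute integrability of $\frac{d}{dx}(Jy,z)$ forces both $\lim_{d \to b^-}(Jy,z)(d)$ and $\lim_{c \to a^+}(Jy,z)(c)$ to exist, so the limits defining $(Jy,z)_b$ and $(Jy,z)_a$ are well-defined.

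Finally, (\ref{green2}) is an immediate specialization. If $\mathcal{L}_M y = \lambda y$ and $\mathcal{L}_M z = \lambda z$, then $f = \lambda y$ and $g = \lambda z$, so by sesquilinearity of $\langle \cdot, \cdot \rangle_{B_1}$ (linear in the first argument, conjugate-linear in the second) the left-hand side of (\ref{green1}) becomes $\langle \lambda y, z \rangle_{B_1} - \langle y, \lambda z \rangle_{B_1} = (\lambda - \bar{\lambda}) \langle y, z \rangle_{B_1} = 2 i\, \mathrm{Im}\,\lambda\, \langle y, z \rangle_{B_1}$, which equals $(Jy,z)_a^b$ by (\ref{green1}). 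I expect the algebra of the first two paragraphs to be routine; the only genuine content is the absolute-integrability argument ensuring the boundary limits exist.
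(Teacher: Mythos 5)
Your proposal is correct and follows essentially the same route as the paper's proof: both reduce the identity to the pointwise relation $\tfrac{d}{dx}(Jy,z) = (B_1 \mathcal{L}_M y, z) - (B_1 y, \mathcal{L}_M z)$ via self-adjointness of $B_0$ and $B_1$, integrate over compact subintervals, and pass to the endpoint limits, with (\ref{green2}) as an immediate specialization. The only difference is that you spell out, via Cauchy--Schwarz for the semidefinite form $(B_1\,\cdot\,,\cdot)$, why the integrands are absolutely integrable on $(a,b)$ and hence why the one-sided limits exist individually, a justification the paper compresses into the remark that $y,z$ (and implicitly $\mathcal{L}_M y, \mathcal{L}_M z$) lie in $L^2_{B_1}$.
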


\begin{proof}
To begin, we take any $y, z \in \mathcal{D}_M$, and we let $f, g \in L^2_{B_1} ((a, b), \mathbb{C}^{2n})$
respectively denote the uniquely defined functions so that $\mathcal{L}_M y = f$ and 
$\mathcal{L}_M z = g$. By definition of $\mathcal{D}_M$, this means that we have
the relations
\begin{equation*}
    \begin{aligned}
    J y' - B_0 (x)y &= B_1 (x) f \\
    J z' - B_0 (x)z &= B_1 (x) g,
    \end{aligned}
\end{equation*}
for a.e. $x \in (a, b)$. We compute the $\mathbb{C}^{2n}$ inner
product
\begin{equation*}
    (B_1 \mathcal{L}_M y, z) 
    = (B_1 f, z) 
    = (J y' - B_0 y, z) 
    = (Jy', z) - (y, B_0 z),
\end{equation*}
where in obtaining the final equality we have used our assumption that 
$B_0 (x)$ is self-adjoint for a.e. $x \in (a, b)$. Likewise, 
\begin{equation*}
    (B_1 y, \mathcal{L}_M z) 
    = (B_1 y, g) 
    = (y, B_1 g)
    = (y, J z' - B_0 (x) z)
    = (y, Jz') - (y, B_0 z).
\end{equation*}
Subtracting the latter of these relations from the former, 
we see that 
\begin{equation*}
    \frac{d}{d x} (Jy,z)
    =  (B_1 \mathcal{L}_M y, z)
    -  (B_1 y, \mathcal{L}_M z). 
\end{equation*}
For any $c, d \in (a, b)$, $c < d$, we can integrate this 
last relation to see that 
\begin{equation*}
    (J y(d), z(d)) - (Jy (c),z (c))
    = \int_c^d (B_1 (x) \mathcal{L}_M y (x), z (x)) dx
    - \int_c^d (B_1 (x) y (x), \mathcal{L}_M z (x)) dx.
\end{equation*}
If we allow $d$ to remain fixed, then since $y, z \in L^2_{B_1} ((a, b), \mathbb{C}^{2n})$
we see that the limit 
\begin{equation*}
    (Jy, z)_a := \lim_{c \to a^+} (Jy (c),z (c))
\end{equation*}
is well-defined. In particular, we can write 
\begin{equation*}
    (J y(d), z(d)) - (Jy,z)_a
    = \int_a^d (B_1 (x) \mathcal{L}_M y (x), z (x)) dx
    - \int_a^d (B_1 (x) y (x), \mathcal{L}_M z (x)) dx.
\end{equation*}
If we now take $d \to b_-$, we obtain precisely 
(\ref{green1}). Relation (\ref{green2})
is an immediately consequence of (\ref{green1}).
\end{proof}

We turn next to the identification of appropriate domains 
$\mathcal{D}$ and $\mathcal{D}^{\alpha}$ on which the 
respective restrictions of $\mathcal{L}_M$ are self-adjoint.
This development is adapted from Chapter 6 in \cite{Pearson1988}, 
and we begin by making some preliminary definitions. 
We set 
\begin{equation*}
\mathcal{D}_c := \{y \in \mathcal{D}_M: y \textrm{ has compact support in } (a,b)\},
\end{equation*}
and we denote by $\mathcal{L}_c$ the restriction of $\mathcal{L}_M$
to $\mathcal{D}_c$. We can show, as in Theorem 3.9 of \cite{Weidmann1987}
that $\mathcal{L}_c^* = \mathcal{L}_M$, and from Theorem 3.7 of 
that same reference (adapted to the current setting) we know that 
$\mathcal{D}_c$ is dense in $L^2_{B_1} ((a, b), \mathbb{C}^{2n})$. 

\begin{remark}
The {\it minimal operator} associated with $\mathcal{L}_M$ 
is the closure of $\mathcal{L}_c$. We know from Theorem 8.6
in \cite{Weidmann1980} that $\overline{\mathcal{L}_c}$ has 
a self-adjoint extension if and only if its {\it defect indices}
$\gamma_{\pm} (\overline{\mathcal{L}_c})$ agree, where
\begin{equation*}
    \gamma_{\pm} (\overline{\mathcal{L}_c})
    := \dim \ran (\overline{\mathcal{L}_c} \mp i I)^{\perp}
    = \dim \ker (\mathcal{L}_M \pm i I).
\end{equation*}
In addition, we know from Theorem 7.1 of \cite{Weidmann1987}
that 
\begin{equation*}
    \dim \ker (\mathcal{L}_M \pm i I)
    = m_a (\mp i) + m_b (\mp i) - 2n.
\end{equation*}
Our Assumption {\bf (C)} assures us that 
$m_a (i) = m_a (-i)$ and $m_b (i) = m_b (-i)$
so that 
$\gamma_{-} (\overline{\mathcal{L}_c}) = \gamma_{+} (\overline{\mathcal{L}_c})$.
I.e., under Assumption {\bf (C)} the defect indices 
agree, so $\overline{\mathcal{L}_c}$ has 
a self-adjoint extension. 
\end{remark}

For any $\lambda \in \mathbb{C} \backslash \mathbb{R}$, 
we let $\{u_j^b (x; \lambda)\}_{j=1}^n$ denote a selection 
of Niessen elements as described in Claim 
\ref{niessen-claim1}, and we denote by $U^b (x; \lambda)$
the $2n \times n$ matrix comprising the vectors 
$\{u_j^b (x; \lambda)\}_{j=1}^n$ as its columns. 
Likewise we let $\{u_j^a (x; \lambda)\}_{j=1}^n$
denote a collection of Niessen elements that can 
similarly be specified in association with $x = a$,
and we denote by $U^a (x; \lambda)$
the $2n \times n$ matrix comprising the vectors 
$\{u_j^a (x; \lambda)\}_{j=1}^n$ as its columns. 
Next, we introduce functions 
$\rho_a, \rho_b \in C^{\infty} ((a, b), \mathbb{R})$ so that 
\begin{equation*}
    \rho_a (x) 
    = \begin{cases}
    1 & \mathrm{near }\,\, x = a \\
    0 & \mathrm{near }\,\, x = b
    \end{cases}; \quad \quad
    \rho_b (x) 
    = \begin{cases}
    0 & \mathrm{near }\,\, x = a \\
    1 & \mathrm{near }\,\, x = b
    \end{cases}, 
\end{equation*}
and we define 
\begin{equation*}
   \begin{aligned}
       \tilde{u}_j^a (x; \lambda) &= \rho_a (x) u_j^a (x; \lambda), \\
       \tilde{u}_j^b (x; \lambda) &= \rho_b (x) u_j^b (x; \lambda).
   \end{aligned} 
\end{equation*}

For some fixed $\lambda_0 \in \mathbb{C}\backslash \mathbb{R}$, We specify the domain 
\begin{equation} \label{D1-def}
    \mathcal{D}_{\lambda_0} := \mathcal{D}_c 
    + \mathrm{Span}\,\Big{\{} \{\tilde{u}_j^a (\cdot; \lambda_0)\}_{j=1}^n, \{\tilde{u}_j^b (\cdot; \lambda_0)\}_{j=1}^n \Big{\}},
\end{equation}
and we denote by $\mathcal{L}_{\lambda_0}$ the restriction of $\mathcal{L}_M$ to $\mathcal{D}_{\lambda_0}$. 

\begin{theorem}
Let Assumptions {\bf (A)}, {\bf (B)} and {\bf (C)} hold. 
Then the operator $\mathcal{L}_{\lambda_0}$ is essentially self-adjoint, and 
so in particular, $\mathcal{L} := \overline{\mathcal{L}_{\lambda_0}} = \mathcal{L}_{\lambda_0}^*$ 
is self-adjoint. The domain $\mathcal{D}$ of $\mathcal{L}$
is 
\begin{equation}
    \mathcal{D} = \{y \in \mathcal{D}_M: \lim_{x \to a^+} U^a (x; \lambda_0)^* J y(x) = 0,
    \quad \lim_{x \to b^-} U^b (x; \lambda_0)^* J y(x) = 0\}.
\end{equation}
\end{theorem}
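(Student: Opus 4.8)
The plan is to identify the adjoint $\mathcal{L}_{\lambda_0}^*$ explicitly and to show that it is itself symmetric; self-adjointness of the closure and the description of its domain then both fall out formally. Since $\mathcal{D}_c$ is dense, $\mathcal{L}_{\lambda_0}$ is densely defined, and the inclusion $\mathcal{L}_c \subseteq \mathcal{L}_{\lambda_0}$ gives $\mathcal{L}_{\lambda_0}^* \subseteq \mathcal{L}_c^* = \mathcal{L}_M$, so every $y \in \mathcal{D}(\mathcal{L}_{\lambda_0}^*)$ automatically lies in $\mathcal{D}_M$. By Green's identity (Lemma \ref{green-lemma}), $y \in \mathcal{D}(\mathcal{L}_{\lambda_0}^*)$ if and only if $(Jz,y)_a^b = 0$ for every $z \in \mathcal{D}_{\lambda_0}$. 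Elements $z \in \mathcal{D}_c$ impose no condition, and since $\tilde u_j^a(\cdot;\lambda_0)$ vanishes near $b$ while $\tilde u_j^b(\cdot;\lambda_0)$ vanishes near $a$, the conditions that survive are exactly $(Ju_j^a,y)_a = 0$ and $(Ju_j^b,y)_b = 0$ for $j = 1,\dots,n$, i.e. $\lim_{x\to a^+} U^a(x;\lambda_0)^* J y(x) = 0$ and $\lim_{x\to b^-} U^b(x;\lambda_0)^* J y(x) = 0$. Thus $\mathcal{D}(\mathcal{L}_{\lambda_0}^*) = \mathcal{D}$, which is already the domain asserted in the statement.

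It remains to run the abstract machine. First I would confirm $\mathcal{L}_{\lambda_0}$ is symmetric: this requires $\tilde u_j^a, \tilde u_j^b \in \mathcal{D}_M$ (so that $\mathcal{D}_{\lambda_0} \subseteq \mathcal{D}_M$ and Green's identity applies), together with the vanishing of $(Jy,z)_a^b$ on $\mathcal{D}_{\lambda_0}$, which decouples at the two endpoints and vanishes directly by Lemma \ref{krall-niessen-lemma}(i). Granting symmetry of $\mathcal{L}_{\lambda_0}$, one has $\overline{\mathcal{L}_{\lambda_0}} = \mathcal{L}_{\lambda_0}^{**} \subseteq \mathcal{L}_{\lambda_0}^*$; if in addition $\mathcal{L}_{\lambda_0}^*$ is symmetric then $\mathcal{L}_{\lambda_0}^* \subseteq (\mathcal{L}_{\lambda_0}^*)^* = \overline{\mathcal{L}_{\lambda_0}}$, forcing $\overline{\mathcal{L}_{\lambda_0}} = \mathcal{L}_{\lambda_0}^*$ and hence a self-adjoint operator $\mathcal{L}$ with domain $\mathcal{D}$. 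So everything reduces to proving that $(Jy,z)_a^b = 0$ for all $y,z \in \mathcal{D}$.

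The heart of the argument is then the endpoint form $\omega_b(y,z) := (Jy,z)_b$ (and its mirror at $a$). I would show that $\{u_j^b(\cdot;\lambda_0)\}_{j=1}^n$ spans a maximal isotropic (Lagrangian) subspace for $\omega_b$: isotropy is precisely Lemma \ref{krall-niessen-lemma}(i), while maximality follows from Lemma \ref{krall-niessen-lemma}(ii), since the Niessen complements $\{v_j^b\}_{j=1}^{r_b}$ pair against $\{u_j^b\}_{j=1}^{r_b}$ through the nonzero numbers $\kappa_j^b$, exhibiting the nondegenerate part of $\omega_b$ as $2r_b$-dimensional with the excess elements $\{u_j^b\}_{j=r_b+1}^n$ in its radical. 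The conditions $(Ju_j^b,y)_b = 0$ defining membership in $\mathcal{D}$ then place the $\omega_b$-class of each such $y$ into the annihilator of this Lagrangian, which is the Lagrangian itself; with the classes of both $y$ and $z$ lying in a common isotropic subspace we get $\omega_b(y,z) = 0$. The same reasoning at $a$ yields $(Jy,z)_a = 0$, so $(Jy,z)_a^b = 0$ and $\mathcal{L}_{\lambda_0}^*$ is symmetric, completing the proof.

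I expect the principal obstacle to be this maximal-isotropy step, specifically the sharp rank count that $\omega_b$ has rank exactly $2r_b$ with radical spanned by the excess Niessen elements $y_j^b$, $j>r_b$, so that the $n$ scalar conditions $U^b(\cdot;\lambda_0)^* Jy \to 0$ collapse to exactly the $r_b$ constraints that confine $y$ to a Lagrangian. Establishing this cleanly is delicate, because several eigenvalues $\mu_j(x;\lambda_0)$ diverge as $x \to b^-$ and the limiting bilinear identities cannot be read off naively; this is the content I would import from Theorem VI.3.1 of \cite{Krall2002}. A secondary, purely technical nuisance is verifying that the cut-off solutions $\tilde u_j^a, \tilde u_j^b$ really belong to $\mathcal{D}_M$ when $B_1(x)$ is not invertible, i.e. that the remainders $J\rho_a'u_j^a$ and $J\rho_b'u_j^b$ produced by differentiating the cut-offs can be written as $B_1 f$ with $f \in L^2_{B_1}$; I would dispatch this following the treatment in Chapter 6 of \cite{Pearson1988}.
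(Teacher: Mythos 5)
Your overall architecture (compute $\mathcal{L}_{\lambda_0}^*$ first, show it is symmetric, conclude $\overline{\mathcal{L}_{\lambda_0}} = \mathcal{L}_{\lambda_0}^*$ abstractly) is a legitimate GKN-style alternative to the paper's argument, and your identification of $\dom(\mathcal{L}_{\lambda_0}^*) = \mathcal{D}$ via Green's identity is correct --- it is essentially the computation the paper performs at the end of its own proof. The abstract reduction (symmetric operator with symmetric adjoint is essentially self-adjoint) is also sound. However, there is a genuine gap at the step you yourself flag as the heart of the argument: the claim that for arbitrary $y, z \in \mathcal{D}$ one gets $(Jy,z)_b = 0$ because the classes of $y$ and $z$ land in the annihilator of the Lagrangian spanned by the $u_j^b$, ``which is the Lagrangian itself.'' That annihilator identity is only valid inside a finite-dimensional nondegenerate symplectic quotient, i.e., once you know that the endpoint form $\omega_b$ on the \emph{full maximal domain} $\mathcal{D}_M$ has rank exactly $2r_b$, with radical containing the excess Niessen elements and all minimal-domain contributions. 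This rank count is not what Theorem VI.3.1 of Krall (the paper's Lemma \ref{krall-niessen-lemma}) provides: that result only computes limiting pairings among the finitely many solutions of (\ref{linear-hammy}) at the single value $\lambda_0$ (Niessen elements and complements). A generic $y \in \mathcal{D}_M$ is not such a solution, so nothing available at this point in the paper relates its boundary pairings at $b$ to those of the $u_j^b$, $v_j^b$. Without the rank count, the annihilator of $\mathrm{span}\{[u_j^b]\}$ computed over all of $\mathcal{D}_M$ could be strictly larger than an isotropic subspace, and $\omega_b(y,z) = 0$ does not follow.

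To close this gap you would need genuine additional structure theory: the von Neumann/GKN decomposition of the maximal domain near $b$ into minimal domain plus deficiency solutions at $\lambda_0$ \emph{and} at $\bar{\lambda}_0$, the fact that minimal-domain elements have vanishing boundary pairings against all of $\mathcal{D}_M$, and the cross-pairings between solutions at $\lambda_0$ and $\bar{\lambda}_0$ --- this last being exactly Claim \ref{niessen-claim2} and relations (\ref{niessen-relations1})--(\ref{niessen-relations2}), which your proposal never invokes (and where Assumption {\bf (C)} enters crucially). The paper sidesteps all of this by a different mechanism: it proves essential self-adjointness through the range-density criterion, where any $u$ orthogonal to $\ran(\mathcal{L}_{\lambda_0} - \lambda I)$ or $\ran(\mathcal{L}_{\lambda_0} - \bar{\lambda} I)$ is automatically an eigenfunction of $\mathcal{L}_M$, hence an ODE solution; every boundary computation then takes place in the finite-dimensional solution spaces, where Lemma \ref{krall-niessen-lemma}, Claim \ref{niessen-claim2}, and Atkinson positivity ({\bf (B)}) suffice, and the vanishing of $(Jy,z)_a^b$ on $\mathcal{D} \times \mathcal{D}$ emerges as a consequence of self-adjointness rather than serving as an input.
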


\begin{proof}
First, let's check that $\mathcal{L}_{\lambda_0}$ is symmetric. Using (\ref{green1}),
we immediately see that for any $y, z \in \mathcal{D}_c$ we have 
\begin{equation*}
    \langle \mathcal{L}_{\lambda_0} y, z \rangle_{B_1} 
    - \langle  y, \mathcal{L}_{\lambda_0} z \rangle_{B_1}
    = (Jy,z)_a^b = 0.
\end{equation*}
and we can similarly use (\ref{green1}) along with the identities 
\begin{equation*}
    (J y, \tilde{u}_j^a)_a^b = 0,
    \quad (Jy, \tilde{u}_j^b)_a^b = 0,
    \quad (J \tilde{u}_j^a, \tilde{u}_k^b)_a^b = 0,
\end{equation*}
for all $j, k \in \{1, 2, \dots, n\}$ (following from support 
of the elements in all cases). It remains to show that 
\begin{equation}
    (J \tilde{u}_j^a, \tilde{u}_k^a)_a^b = 0,
    \quad (J \tilde{u}_j^b, \tilde{u}_k^b)_a^b = 0,
\end{equation}
but these identities are immediate from Lemma \ref{krall-niessen-lemma} (along
with the analogous statement associated with $x = a$),  
so symmetry is established. 

Next, we'll show that $\mathcal{L}_{\lambda_0}$ is essentially self-adjoint. 
According to Theorem 5.21 in \cite{Weidmann1997}, it suffices to 
show that for some (and hence for all) $\lambda \in \mathbb{C}\backslash \mathbb{R}$,
\begin{equation} \label{essential-sa-condition}
    \overline{\ran(\mathcal{L}_{\lambda_0} - \lambda)} = L^2_{B_1} ((a, b), \mathbb{C}^{2n}),
    \quad {\rm and} \quad \overline{\ran(\mathcal{L}_{\lambda_0} - \bar{\lambda})} = L^2_{B_1} ((a, b), \mathbb{C}^{2n}). 
\end{equation}
Since we can proceed with any $\lambda \in \mathbb{C} \backslash \mathbb{R}$, we can take
$\lambda_0$ from (\ref{D1-def}) as our choice. This is what we'll do, though for 
notational convenience we will denote this value by $\lambda$ for 
the rest of this proof.

We will show that 
\begin{equation} \label{ess-sa-cond}
    \ran(\mathcal{L}_{\lambda_0} - \lambda)^{\perp} = \{0\},
    \quad {\rm and} \quad \ran(\mathcal{L}_{\lambda_0} - \bar{\lambda})^{\perp} = \{0\}, 
\end{equation}
from which (\ref{essential-sa-condition}) is clear, since 
\begin{equation}
    L^2_{B_1} ((a, b), \mathbb{C}^{2n}) = 
    \ran(\mathcal{L}_{\lambda_0} - \lambda)^{\perp} \oplus  \overline{\ran(\mathcal{L}_{\lambda_0} - \lambda)},
\end{equation}
and likewise with $\lambda$ replaced by $\bar{\lambda}$.

Starting with the second relation in (\ref{ess-sa-cond}),
we suppose that for some $u \in L^2_{B_1} ((a, b), \mathbb{C}^{2n})$,
$\langle (\mathcal{L}_{\lambda_0} - \bar{\lambda}I) \psi, u \rangle_{B_1} = 0$
for all $\psi \in \mathcal{D}_{\lambda_0}$, and our goal is to show that 
this implies that $u = 0$. First, if we restrict to $\psi \in \mathcal{D}_c$, then we have
\begin{equation}
  \langle (\mathcal{L}_c - \bar{\lambda}I) \psi, u \rangle_{B_1} = 0, 
  \quad \forall \, \psi \in \mathcal{D}_c.   
\end{equation}
This relation implies that $u \in \dom ((\mathcal{L}_c - \bar{\lambda}I)^*) \,\, (= \mathcal{D}_M)$,
so we're justified in writing 
\begin{equation}
  \langle \psi, (\mathcal{L}_M - \lambda I) u \rangle_{B_1} = 0, 
  \quad \forall \, \psi \in \mathcal{D}_c.   
\end{equation}
Since $\mathcal{D}_c$ is dense in $L^2_{B_1} ((a, b), \mathbb{C}^{2n})$,
we can conclude that $u$ must satisfy $(\mathcal{L}_M - \lambda I) u = 0$. 

Next, we also have the relation 
\begin{equation}
  \langle (\mathcal{L}_{\lambda_0} - \bar{\lambda}I) \psi, u \rangle_{B_1} = 0, 
  \quad \forall \, \psi \in \mathrm{Span}\,\Big{\{} \{\tilde{u}_j^a\}_{j=1}^n, \{\tilde{u}_j^b\}_{j=1}^n \Big{\}}.   
\end{equation}
For each $j \in \{1, 2, \dots, n\}$, $\tilde{u}^b_j \in \mathcal{D}_M$, and 
we've already established that $u \in \mathcal{D}_M$, so we can apply 
Green's identity (\ref{green1}) to see that 
\begin{equation}
    \langle (\mathcal{L}_{\lambda_0} - \bar{\lambda}I) \tilde{u}_j^b, u \rangle_{B_1}
    = \langle \tilde{u}_j^b, (\mathcal{L}_M - \lambda I) u \rangle_{B_1}
    + (J \tilde{u}_j^b, u)_a^b.
\end{equation}
Since $(\mathcal{L}_M - \lambda I) u = 0$, we see that 
$(J \tilde{u}_j^b, u)_a^b = 0$. In addition, since 
$\tilde{u}_j^b$ is zero near $x=a$, we have
$(J \tilde{u}_j^b, u)_a = 0$, and consequently we can 
conclude $(J \tilde{u}_j^b, u)_b = 0$.
That is, 
\begin{equation*}
    \lim_{x \to b^-} u(x)^* J \tilde{u}_j^b (x; \lambda) = 0.
\end{equation*}
If we take the adjoint of this relation, and recall that 
$\tilde{u}_j^b$ is identical to $u_j^b$ for $x$ near
$b$, then we can express this limit in our preferred form 
\begin{equation*}
    \lim_{x \to b^-} u_j^b (x; \lambda)^* J u(x) = 0.
\end{equation*}
This last relation is true for all $j \in \{1, 2, \dots, n\}$,
and a similar relation holds near $x = a$. We can summarize these
observations with the following limits
\begin{equation}
    \begin{aligned}
        \lim_{x \to a^+} U^a (x; \lambda)^* J u(x) &= 0, \\ 
         \lim_{x \to b^-} U^b (x; \lambda)^* J u(x) &= 0.
    \end{aligned}
\end{equation}
We would like to show the following: the first of these relations 
ensures that $u$ can be expressed as a linear combination of the 
columns of $U^a (\cdot; \lambda)$, while the second ensures that $u$ can be 
expressed as a linear combination of the columns of $U^b (\cdot; \lambda)$.

Here, $u \in \mathcal{D}_M$ and $\mathcal{L}_M u = \lambda u$,
so $u$ must be a linear combination of the Niessen elements that lie
left in $(a, b)$, and at the same time, $u$ must be a linear combination
of the Niessen elements that lie right in $(a, b)$. If we focus on 
the case $x = b$, our labeling scheme sets 
$\{N_j^b (\lambda)\}_{j=1}^{r_b}$ to 
be the Niessen spaces satisfying 
$\dim N_j^b (\lambda) \cap L^2_{B_1} ((c, b), \mathbb{C}^{2n}) = 2$ and 
sets $\{N_j^b (\lambda)\}_{j=r_b+1}^n$ to be the Niessen spaces satisfying 
$\dim N_j^b (\lambda) \cap L^2_{B_1} ((c, b), \mathbb{C}^{2n}) = 1$. Here, we 
recall that $r_b = m_b - n$, where $m_b$ denotes the dimension
of the space of solutions to (\ref{linear-hammy}) that lie right
in $(a, b)$. 

The elements $\{u_j^b (x; \lambda)\}_{j=1}^{r_b}$
and $\{v_j^b (x; \lambda)\}_{j=1}^{r_b}$ are as described in 
Claim \ref{niessen-claim1}, and by construction, the collection 
$\{ \{u_j^b (x; \lambda)\}_{j=1}^n,  \{v_j^b (x; \lambda)\}_{j=1}^{r_b}\}$
is a basis for the space of solutions to (\ref{linear-hammy}) that 
lie right in $(a, b)$, so we can write 
\begin{equation*}
    u(x) = \sum_{j = 1}^n c_j (\lambda) u_j^b (x; \lambda)
    + \sum_{j=1}^{r_b} d_j (\lambda) v_j^b (x; \lambda),
\end{equation*}
for some appropriate scalar functions (of $\lambda$)
$\{c_j (\lambda)\}_{j=1}^n$, $\{d_j (\lambda\}_{j=1}^{r_b}$.
The boundary operator 
\begin{equation*}
    B_b (\lambda) u := \lim_{x \to b^-} U^b (x; \lambda)^* J u(x)
\end{equation*}
annihilates the elements $\{u_j^b (x; \lambda)\}_{j=1}^n$, so we 
immediately see that 
\begin{equation*}
    B_b (\lambda) u = \sum_{j=1}^{r_b} d_j (\lambda) B_b (\lambda) v_j^b (\cdot; \lambda).
\end{equation*}
According to Lemma \ref{krall-niessen-lemma},
we have 
\begin{equation*}
    (B_b (\lambda) v_j^b (\cdot; \lambda))_i = 
    \begin{cases}
    0 & i \ne j \\
    \kappa_j^b \ne 0 & i = j.
    \end{cases}
\end{equation*}
In this way, we see that 
\begin{equation*}
    B_b (\lambda) u = (d_1 (\lambda) \kappa_1 \,\, \dots \,\, d_{r_b} \kappa_{r_b} \,\,
    0 \,\, 0 \,\, \dots \,\, 0)^T,
\end{equation*}
and this can only be identically 0 if $d_j (\lambda) = 0$
for all $j \in \{1, 2, \dots, r_b\}$. We conclude
that there exists a $\zeta^b (\lambda) \in \mathbb{C}^n$
so that $u (x) = U^b (x; \lambda) \zeta^b (\lambda)$ for all 
$x \in (a, b)$, and similarly we can check that there exists a 
$\zeta^a (\lambda) \in \mathbb{C}^n$ so that $u(x) = U^a (x; \lambda) \zeta^a (\lambda)$ for all 
$x \in (a, b)$. This allows us to compute, using (\ref{green2}),
\begin{equation*}
    \begin{aligned}
    2i \textrm{Im } \lambda \|u\|_{B_1}^2
    &= (Ju, u)_a^b = (Ju, u)_b - (Ju, u)_a \\
    &= (J U^b \zeta^b, U^b \zeta^b)_b - (J U^a \zeta^a, U^a \zeta^a)_a = 0.
    \end{aligned}
\end{equation*}
We conclude from Atkinson positivity (i.e., Assumption {\bf (B)}) 
that $u=0$ in $L^2_{B_1} ((a, b), \mathbb{C}^{2n})$, and this establishes the first identity in (\ref{ess-sa-cond}). 

We now turn to the first condition in (\ref{ess-sa-cond}). For 
this, we suppose that for some $u \in L^2_{B_1} ((a, b), \mathbb{C}^{2n})$,
$\langle (\mathcal{L}_{\lambda_0} - \lambda I) \psi, u \rangle_{B_1} = 0$
for all $\psi \in \mathcal{D}_{\lambda_0}$, and our goal is to show that 
this implies that $u = 0$. Precisely as in the previous case, 
we can conclude that we must have $u \in \mathcal{D}_M$,
and $\mathcal{L}_M u = \bar{\lambda} u$, and
continuing as with the previous case, we next find that 
\begin{equation}
    \begin{aligned}
        \lim_{x \to a^+} U^a (x; \lambda)^* J u(x) &= 0, \\ 
         \lim_{x \to b^-} U^b (x; \lambda)^* J u(x) &= 0.
    \end{aligned}
\end{equation}
In this case, $u$ solves the ODE system
\begin{equation} \label{linear-hammy-bar}
    Ju' = (B_0 (x) + \bar{\lambda} B_1 (x)) u,
\end{equation}
so in particular there exists some vector $\zeta(\bar{\lambda}) \in \mathbb{C}^{2n}$
so that 
\begin{equation*}
    u(x) = \Phi (x; \bar{\lambda}) \zeta (\bar{\lambda}), 
\end{equation*}
where $\Phi (x; \bar{\lambda})$ denotes a fundamental 
solution to (\ref{linear-hammy-bar}) with 
$\Phi (c; \bar{\lambda}) = I_{2n}$.
Recalling that $U^b (x; \lambda) = \Phi (x; \lambda) \mathbf{R}^b (\lambda)$, 
this allows us to compute
\begin{equation*}
    U^b (x; \lambda)^* J u(x)
     = \mathbf{R}^b (\lambda)^* \Phi (x; \lambda)^* J \Phi (x; \bar{\lambda}) \zeta(\bar{\lambda})
     = \mathbf{R}^b (\lambda)^* J \zeta(\bar{\lambda}),
\end{equation*}
where we've used the relation 
\begin{equation*}
 \Phi (x; \lambda)^* J \Phi (x; \bar{\lambda}) = J.   
\end{equation*}
In this way, we see that we can only have 
\begin{equation*}
    \lim_{x \to b^-} U^b (x; \lambda)^* J u(x) = 0
\end{equation*}
if 
\begin{equation} \label{the-relation}
     \mathbf{R}^b (\lambda)^* J \zeta(\bar{\lambda}) = 0.
\end{equation}
The $n \times 2n$ matrix $\mathbf{R}^b (\lambda)^*$ has rank $n$, with 
corresponding nullity $n$, and we know from 
Claim \ref{niessen-claim2} that the kernel 
of $\mathbf{R}^b (\lambda)^*$ is spanned by the 
columns of $J \mathbf{R}^b (\bar{\lambda})$. We see
that (\ref{the-relation}) can only hold if 
$\zeta (\bar{\lambda}) \in \colspan \mathbf{R}^b (\bar{\lambda})$,
and in this case there exists a vector 
$\zeta^b (\bar{\lambda}) \in \mathbb{C}^n$ so that 
$\zeta (\bar{\lambda}) = \mathbf{R}^b (\bar{\lambda}) \zeta^b (\bar{\lambda})$,
and consequently 
$u (x) = \Phi (x; \bar{\lambda}) \zeta (\bar{\lambda}) = U^b (x; \bar{\lambda}) \zeta^b (\bar{\lambda})$. Likewise, 
we must have $u (x) = U^a (x; \bar{\lambda}) \zeta^a (\bar{\lambda})$
for some $\zeta^a (\bar{\lambda}) \in \mathbb{C}^n$.
Since $u \in \mathcal{D}_M$ satisfies 
$\mathcal{L}_M u = \bar{\lambda} u$, 
(\ref{green2}) becomes 
\begin{equation} \label{green2applied}
    \begin{aligned}
    - 2i \textrm{Im } \lambda \|u\|_{B_1}^2
    &= (Ju, u)_a^b \\
    &= (J U^b (\cdot; \bar{\lambda}) \zeta^b (\bar{\lambda}), U^b (\cdot; \bar{\lambda}) \zeta^b (\bar{\lambda}))_b 
    - (J U^a (\cdot; \bar{\lambda}) \zeta^a (\bar{\lambda}), U^a (\cdot; \bar{\lambda}) \zeta^a (\bar{\lambda}))_a.
    \end{aligned}
\end{equation}
By construction, the columns of $U^a (x; \bar{\lambda})$ are  
Niessen elements for (\ref{linear-hammy}) with $\lambda$ 
replaced by $\bar{\lambda}$, and similarly for $U^b (x; \bar{\lambda})$,
so we can conclude from Lemma \ref{krall-niessen-lemma} (applied with 
$\lambda$ replaced by $\bar{\lambda}$) that the two quantities
on the right-hand side of (\ref{green2applied}) are both 0. In this
way, we see that $\|u\|_{B_1} = 0$ and so $u = 0$ in 
$L^2_{B_1} ((a,b), \mathbb{C}^{2n})$. This establishes 
the second identity in (\ref{ess-sa-cond}).

Next, we characterize the operator $\mathcal{L}$, along with 
its domain $\mathcal{D} = \dom (\mathcal{L})$. First, we
have 
\begin{equation*}
    \mathcal{L}_c \subset \mathcal{L}_{\lambda_0}
    \implies \mathcal{L}_{\lambda_0}^* \subset \mathcal{L}_c^*,
\end{equation*}
and since $\mathcal{L}_{\lambda_0}^* = \mathcal{L}$ and 
$\mathcal{L}_c^* = \mathcal{L}_M$, we see that 
$\mathcal{L} \subset \mathcal{L}_M$. This leaves
only the question of what additional restrictions 
we have on $\mathcal{D}$ (in addition to the 
requirements of $\mathcal{D}_M$). Here, 
\begin{equation*}
    \begin{aligned}
    \mathcal{D} = \{u &\in \mathcal{D}_M: 
    \textrm{ there exists } v \in L^2_{B_1} ((a, b), \mathbb{C}^{2n}) \\
    &\textrm{ so that } \langle \mathcal{L}_{\lambda_0} \psi, u \rangle_{B_1}
    = \langle \psi, v \rangle_{B_1} \textrm{ for all } \psi \in \mathcal{D}_{\lambda_0}\}.
    \end{aligned}
\end{equation*}

Let $u \in \mathcal{D}_M$. For all $\psi \in \mathcal{D}_c$, we can immediately 
write 
\begin{equation*}
    \langle \mathcal{L}_{\lambda_0} \psi, u \rangle_{B_1} 
    = \langle \mathcal{L}_c \psi, u \rangle_{B_1} 
    = \langle \psi, \mathcal{L}_M u \rangle_{B_1}
    = \langle \psi, v \rangle_{B_1},
    \quad (v = \mathcal{L}_M u),
\end{equation*}
so in particular there are no additional restrictions on $\mathcal{D}$.
On the other hand, for any $j \in \{1, 2, \dots, n\}$, we have
Green's Identity
\begin{equation}
    \langle \mathcal{L}_{\lambda_0} \tilde{u}_j^a, u \rangle_{B_1}
     = \langle \tilde{u}_j^a, \mathcal{L}_M u \rangle_{B_1}
     - (J \tilde{u}_j^a, u)_a,
\end{equation}
where we've recalled that $\tilde{u}_j^a$ is 0 near 
$x = b$. We require $(J \tilde{u}_j^a, u)_a = 0$,
and since this must be true for all $j \in \{1, 2, \dots, n\}$,
we obtain the additional condition
\begin{equation*}
    \lim_{x \to a^+} U^a (x; \lambda)^* J u(x) = 0.
\end{equation*}
(Here, we're using the fact that $\mathcal{D} \subset \mathcal{D}_M$ 
to ensure that $\mathcal{L}_M u$ is the only candidate for $v$.) 
Proceeding similarly for $x = b$, we obtain additionally
\begin{equation*}
    \lim_{x \to b^-} U^b (x; \lambda)^* J u(x) = 0.
\end{equation*}
We've now exhausted the elements from $\mathcal{D}_{\lambda_0}$,
so these are the only possible additional constraints 
imposed on $\mathcal{D}$. This completes the proof.
\end{proof}

By essentially identical considerations, we can establish 
a similar theorem for $\mathcal{L}^{\alpha}$. In this 
case, we introduce solutions $\{u_j^{\alpha}\}_{j=1}^n$
to (\ref{linear-hammy}) initialized so that if 
$U^{\alpha} (x; \lambda)$ denotes the $2n \times n$
matrix comprising the elements $\{u_j^{\alpha}\}_{j=1}^n$
as its columns, then $U^{\alpha} (a; \lambda) = J \alpha^*$.
We now fix some $\lambda_0 \in \mathbb{C}\backslash \mathbb{R}$, 
and specify the domain 
\begin{equation} \label{D1-def-alpha}
    \mathcal{D}_{\lambda_0}^{\alpha} := \mathcal{D}_c 
    + \mathrm{Span}\,\Big{\{} \{\tilde{u}_j^{\alpha} (\cdot; \lambda_0)\}_{j=1}^n, \{\tilde{u}_j^b (\cdot; \lambda_0)\}_{j=1}^n \Big{\}}.
\end{equation}
We denote by $\mathcal{L}_{\lambda_0}^{\alpha}$ the restriction of $\mathcal{L}_M$ to $\mathcal{D}_{\lambda_0}^{\alpha}$. 

\begin{theorem} \label{L-alpha-theorem}
Let Assumptions {\bf (A)}, {\bf (A)$^\prime$}, {\bf (B)},
and {\bf (C)} hold. 
Then the operator $\mathcal{L}_{\lambda_0}^{\alpha}$ is essentially self-adjoint, and 
so in particular, $\mathcal{L}^{\alpha} := \overline{\mathcal{L}_{\lambda_0}^{\alpha}} = \mathcal{L}_{\lambda_0}^{\alpha \, *}$ 
is self-adjoint. The domain $\mathcal{D}^{\alpha}$ of $\mathcal{L}^{\alpha}$
is 
\begin{equation}
    \mathcal{D}^{\alpha} = \{y \in \mathcal{D}_M: \alpha y(a) = 0,
    \quad \lim_{x \to b^-} U^b (x; \lambda_0)^* J y(x) = 0\}.
\end{equation}
\end{theorem}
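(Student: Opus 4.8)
The plan is to mirror the proof of the corresponding theorem for $\mathcal{L}$, since the right endpoint $x=b$ is treated identically; the only genuinely new work concerns the regular endpoint $x=a$, where Assumption {\bf (A)$^\prime$} places us in the limit circle case ($m_a = 2n$) and the singular boundary form is replaced by the pointwise condition $\alpha y(a) = 0$. Throughout I would carry $\lambda_0$ through the argument, writing it as $\lambda$ for brevity as in the previous proof.

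First I would establish symmetry of $\mathcal{L}_{\lambda_0}^{\alpha}$ via Green's identity (\ref{green1}). As before, every boundary term involving a compactly supported factor vanishes, and $(J\tilde u_j^b, \tilde u_k^b)_a^b = 0$ follows from Lemma \ref{krall-niessen-lemma}. The new point is the contribution at $a$: since $U^{\alpha}(a;\lambda_0) = J\alpha^*$, a direct computation gives
\[
U^{\alpha}(a;\lambda_0)^* J\, U^{\alpha}(a;\lambda_0) = (J\alpha^*)^* J (J\alpha^*) = \alpha J^* J J \alpha^* = \alpha J \alpha^* = 0,
\]
using $J^* = -J$ and $J^2 = -I_{2n}$; hence $(J\tilde u_j^{\alpha}, \tilde u_k^{\alpha})_a^b = 0$ for all $j,k$, and symmetry is established.

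Next, for essential self-adjointness I would verify the range conditions (\ref{essential-sa-condition}) with $\lambda = \lambda_0$, exactly as in the singular case. Testing against $\mathcal{D}_c$ forces any $u$ orthogonal to $\ran(\mathcal{L}_{\lambda_0}^{\alpha} - \bar\lambda)$ to lie in $\mathcal{D}_M$ with $\mathcal{L}_M u = \lambda u$; testing against the spanning elements $\tilde u_j^b$ reproduces $\lim_{x\to b^-} U^b(x;\lambda)^* J u(x) = 0$ and hence, via Claim \ref{niessen-claim2} and Lemma \ref{krall-niessen-lemma}, that $u$ is a linear combination of the columns of $U^b(\cdot;\lambda)$. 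The essential modification is at $a$: testing against $\tilde u_j^{\alpha}$ and using that $\tilde u_j^{\alpha}$ vanishes near $b$ yields $(J\tilde u_j^{\alpha}, u)_a = 0$, which, after taking adjoints and evaluating at the regular point $x=a$ using $U^{\alpha}(a;\lambda) = J\alpha^*$, translates into $\alpha\, u(a) = 0$. I would then apply (\ref{green2}): the $b$-term of $(Ju,u)_a^b$ vanishes by Lemma \ref{krall-niessen-lemma}, while the $a$-term $(Ju,u)_a = u(a)^* J u(a)$ vanishes because $\ker\alpha$ is a Lagrangian subspace (a consequence of $\rank\alpha = n$ and $\alpha J\alpha^* = 0$). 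Thus $2i\,\mathrm{Im}\,\lambda\,\|u\|_{B_1}^2 = 0$, and Assumption {\bf (B)} forces $u = 0$; the companion condition for $\bar\lambda$ is handled by the same argument with $\lambda$ and $\bar\lambda$ interchanged, now using the $\Phi(x;\bar\lambda)$ representation and Claim \ref{niessen-claim2} at $b$ together with $\alpha u(a) = 0$ at $a$.

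Finally, I would characterize $\mathcal{D}^{\alpha}$ as in the previous proof: the inclusion $\mathcal{L}^{\alpha} \subset \mathcal{L}_M$ comes from $\mathcal{L}_c \subset \mathcal{L}_{\lambda_0}^{\alpha}$ upon taking adjoints, testing against $\mathcal{D}_c$ imposes no constraint beyond membership in $\mathcal{D}_M$, testing against $\tilde u_j^b$ forces $\lim_{x\to b^-} U^b(x;\lambda_0)^* J y(x) = 0$, and testing against $\tilde u_j^{\alpha}$ forces $(J\tilde u_j^{\alpha}, y)_a = 0$, i.e.\ $\alpha y(a) = 0$. I expect the only subtle step to be the clean translation of the abstract boundary form $(J\tilde u_j^{\alpha}, u)_a$ into the pointwise condition $\alpha u(a) = 0$, together with the verification that $(Ju,u)_a$ vanishes on $\ker\alpha$; both rest on the initialization $U^{\alpha}(a;\lambda)=J\alpha^*$ and the Lagrangian property $\alpha J\alpha^* = 0$. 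Unlike the singular endpoint, the limits at $a$ are genuine evaluations at the regular point, which should make this step more transparent than its analogue in the previous theorem.
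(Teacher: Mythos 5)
Your proposal is correct and is precisely the argument the paper intends: the paper omits the proof of Theorem \ref{L-alpha-theorem} entirely, stating only that it follows ``by essentially identical considerations'' from the proof for $\mathcal{L}$, and your write-up supplies exactly those considerations. In particular, the three genuinely new ingredients you isolate at the regular endpoint --- the computation $(J\alpha^*)^* J (J\alpha^*) = \alpha J \alpha^* = 0$ for symmetry, the translation of $(J\tilde u_j^{\alpha}, u)_a = 0$ for all $j$ into the pointwise condition $\alpha u(a) = 0$ via the initialization $U^{\alpha}(a;\lambda_0) = J\alpha^*$, and the vanishing of $(Ju,u)_a$ because $\ker\alpha = \ran(J\alpha^*)$ is Lagrangian --- are all correct and are exactly what replaces the Niessen-space machinery at $x=a$ in the proof for $\mathcal{L}$.
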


\begin{remark} \label{selection-remark} In conjunction with Lemma 
\ref{self-adjoint-operator-lemma}, we summarize the developments of 
Sections \ref{niessen-section} and \ref{properties-section}. In order
to specify the operator $\mathcal{L}$, we make a selection of 
Niessen elements $\{u_j^a (x; \lambda)\}_{j=1}^n$ and 
$\{u_j^b (x; \lambda)\}_{j=1}^n$ as described in Claim 
\ref{niessen-claim1}, and we denote by $U^a (x; \lambda)$ the 
matrix comprising the vector functions $\{u_j^a (x; \lambda)\}_{j=1}^n$
as its columns, and by $U^b (x; \lambda)$ the 
matrix comprising the vector functions $\{u_j^b (x; \lambda)\}_{j=1}^n$
as its columns. Then $\mathcal{L}$ is obtained from the maximal operator 
$\mathcal{L}_M$ by imposing the boundary conditions 
\begin{equation*}
    \lim_{x \to a^+} U^a (x; \lambda)^* J y(x) = 0; 
    \quad {\rm and} \quad \lim_{x \to b^-} U^b (x; \lambda)^* J y(x) = 0,
\end{equation*}
and $\mathcal{L}^{\alpha}$ is obtained from the maximal operator 
$\mathcal{L}_M^{\alpha}$ by imposing the boundary conditions 
\begin{equation*}
    \alpha y(a) = 0; 
    \quad {\rm and} \quad \lim_{x \to b^-} U^b (x; \lambda)^* J y(x) = 0.
\end{equation*}

\end{remark}

\subsection{Continuation to $\mathbb{R}$}
\label{continuation-section}

In the preceding considerations, we fixed some 
$\lambda_0 \in \mathbb{C} \backslash \mathbb{R}$ 
and used this value to specify the self-adjoint
operators $\mathcal{L}$ and $\mathcal{L}^{\alpha}$.
With these operators in hand, we would next like 
to fix values $\lambda \in \mathbb{R}$ and 
construct solutions $u^a (x; \lambda)$ 
to $\mathcal{L} y = \lambda y$ that lie 
left in $(a, b)$, along with solutions 
$u^b (x; \lambda)$ to $\mathcal{L} y = \lambda y$ that lie 
right in $(a, b)$ (and similarly for 
$\mathcal{L}^{\alpha}$). One difficulty we 
face is that the matrix $\mathcal{A} (x; \lambda)$
is not defined for $\lambda \in \mathbb{R}$,
and so we cannot directly extend Niessen's 
development to this setting. (Though 
see Section \ref{applications-section} for 
a calculation along these lines.) Instead of 
extending Niessen's development directly, 
we'll take advantage of our assumption that 
$[\lambda_1, \lambda_2]$ does not intersect the
essential spectrum of our operator of interest, 
along with a standard theorem about self-adjoint
operators. 

As a starting point, we fix some $c \in (a, b)$ and consider 
(\ref{linear-hammy}) on $(c, b)$ with 
boundary conditions 
\begin{equation} \label{boundary-c}
    \gamma y(c) = 0,
\end{equation}
and 
\begin{equation} \label{boundary-b}
    \lim_{x \to b^-} U^b (x; \lambda_0)^* J y(x) = 0,
\end{equation}
where the boundary matrix $\gamma \in \mathbb{C}^{n \times 2n}$
satisfies 
\begin{equation} \label{boundary-matrix}
  \rank \gamma = n, 
    \, \textrm{ and } \, \gamma J \gamma^* = 0.  
\end{equation}

Similarly as in Section \ref{properties-section}, we can associate 
(\ref{linear-hammy})-(\ref{boundary-c})-(\ref{boundary-b})
with a self-adjoint operator $\mathcal{L}_{c, b}^{\gamma}$,
with domain 
\begin{equation*}
    \mathcal{D}_{c, b}^{\gamma} := \{y \in \mathcal{D}_{c, b, M}: 
    \gamma y (c) = 0, 
    \quad \lim_{x \to b^-} U^b (x; \lambda_0) J y(x) = 0 \}.
\end{equation*}
Here, $\mathcal{D}_{c, b, M}$ denotes the domain of the 
maximal operator associated with (\ref{linear-hammy}) on $(c, b)$.

We start with a lemma. 

\begin{lemma} \label{lagrangian-limit-lemma}
Let Assumptions {\bf (A)}, {\bf (B)}, and {\bf (C)} hold. 
For any fixed $\lambda \in \mathbb{C}$, suppose 
$u^b (x; \lambda), v^b (x; \lambda)$ denote any two solutions of (\ref{linear-hammy}) 
(if such solutions exist)
that lie right in $(c, b)$ and satisfy (\ref{boundary-b}).
Then 
\begin{equation*}
    (Ju^b (\cdot; \lambda),v^b (\cdot; \lambda))_b = 0.
\end{equation*}
\end{lemma}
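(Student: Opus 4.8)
The plan is to deduce the claim from the symmetry of the self-adjoint operator $\mathcal{L}_{c,b}^\gamma$ introduced just above, after multiplying the two solutions by a cutoff that trivializes their behavior at the regular endpoint $x=c$ while leaving them unchanged near the singular endpoint $x=b$. The mechanism is that the boundary condition (\ref{boundary-b}) is a \emph{self-adjoint} (Lagrangian) condition at $b$: it is exactly the condition appearing in the domain $\mathcal{D}_{c,b}^\gamma$, and it is inherited by any function that coincides with $u^b(\cdot;\lambda)$ or $v^b(\cdot;\lambda)$ on a left neighborhood of $b$. Crucially, no relation between $\lambda$ and the parameter $\lambda_0$ defining $U^b(\cdot;\lambda_0)$ is needed, since the symmetry of $\mathcal{L}_{c,b}^\gamma$ holds for all pairs of elements of its domain, irrespective of which spectral parameter's solutions are used to build them.

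Concretely, I would fix $\rho_b\in C^\infty((c,b),\mathbb{R})$ with $\rho_b\equiv 0$ near $x=c$ and $\rho_b\equiv 1$ near $x=b$, and set $\tilde u:=\rho_b\,u^b(\cdot;\lambda)$ and $\tilde v:=\rho_b\,v^b(\cdot;\lambda)$. The first step is to check $\tilde u,\tilde v\in\mathcal{D}_{c,b}^\gamma$. Membership in the maximal domain $\mathcal{D}_{c,b,M}$ follows exactly as for the cutoff Niessen elements $\tilde u_j^b=\rho_b u_j^b$ in Section \ref{properties-section}: each product lies in $\AC_{\loc}((c,b),\mathbb{C}^{2n})\cap L^2_{B_1}((c,b),\mathbb{C}^{2n})$ because $u^b,v^b$ lie right in $(c,b)$ and $\rho_b$ is smooth and bounded. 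For the boundary data, $\rho_b\equiv 0$ near $c$ forces $\tilde u(c)=\tilde v(c)=0$, hence $\gamma\tilde u(c)=\gamma\tilde v(c)=0$, while $\rho_b\equiv 1$ near $b$ gives $\tilde u=u^b(\cdot;\lambda)$ and $\tilde v=v^b(\cdot;\lambda)$ there, so both satisfy (\ref{boundary-b}) because $u^b,v^b$ do by hypothesis.

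With $\tilde u,\tilde v$ in the domain, I would apply the analogue of Green's identity (Lemma \ref{green-lemma}) on $(c,b)$,
\[
\langle \mathcal{L}_{c,b}^\gamma \tilde u,\tilde v\rangle_{B_1}-\langle \tilde u,\mathcal{L}_{c,b}^\gamma \tilde v\rangle_{B_1}=(J\tilde u,\tilde v)_b-(J\tilde u,\tilde v)_c,
\]
whose left-hand side vanishes because $\mathcal{L}_{c,b}^\gamma$ is self-adjoint, hence symmetric, on $\mathcal{D}_{c,b}^\gamma$. Since $\tilde u,\tilde v$ vanish near $c$ we have $(J\tilde u,\tilde v)_c=0$, and since they equal $u^b(\cdot;\lambda),v^b(\cdot;\lambda)$ near $b$ we have $(J\tilde u,\tilde v)_b=(Ju^b(\cdot;\lambda),v^b(\cdot;\lambda))_b$ (the limit existing because $\tilde u,\tilde v\in\mathcal{D}_{c,b,M}$). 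Combining gives $(Ju^b(\cdot;\lambda),v^b(\cdot;\lambda))_b=0$, as desired. I expect the only delicate point to be the domain-membership verification --- in particular confirming, as in Section \ref{properties-section}, that the cutoff products genuinely lie in the maximal domain on $(c,b)$ --- since once that is in hand the conclusion is a one-line consequence of symmetry together with the vanishing of the form at the regular endpoint.
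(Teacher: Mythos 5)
Your proposal is correct and follows essentially the same route as the paper's proof: truncating $u^b(\cdot;\lambda)$ and $v^b(\cdot;\lambda)$ with the cutoff $\rho_b$, verifying the truncations lie in $\mathcal{D}_{c,b}^{\gamma}$, and invoking self-adjointness of $\mathcal{L}_{c,b}^{\gamma}$ together with Green's identity to conclude $(J\tilde{u},\tilde{v})_b = 0$. Your added care about domain membership and the observation that no relation between $\lambda$ and $\lambda_0$ is needed are both consistent with, and slightly more explicit than, the paper's argument.
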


\begin{proof}
Since $u^b (x; \lambda), v^b (x; \lambda)$ lie right in $(c, b)$ 
and satisfy (\ref{boundary-b}), it's clear that the truncated
functions $\tilde{u}^b (x; \lambda), \tilde{v}^b (x; \lambda)$,
truncated with 
\begin{equation*}
     \rho_b (x) 
    = \begin{cases}
    0 & \mathrm{near }\,\, x = c \\
    1 & \mathrm{near }\,\, x = b
    \end{cases},
\end{equation*}
are contained in $\mathcal{D}_{c, b}^{\gamma}$. Using self-adjointness 
of $\mathcal{L}_{c, b}^{\gamma}$, we can write 
\begin{equation*}
    \begin{aligned}
    0 &= \langle \mathcal{L}_{c, b}^{\gamma} \tilde{u}^b (\cdot; \lambda), \tilde{v}^b (\cdot; \lambda) \rangle_{B_1}
    - \langle \tilde{u}^b (\cdot; \lambda), \mathcal{L}_{c, b}^{\gamma} \tilde{v}^b (\cdot; \lambda) \rangle_{B_1} \\
    &= (J \tilde{u}^b (\cdot; \lambda), \tilde{v}^b (\cdot; \lambda))_c^b 
    = (J \tilde{u}^b (\cdot; \lambda), \tilde{v}^b (\cdot; \lambda))_b. 
    \end{aligned}
\end{equation*}
Since  $\tilde{u}^b (x; \lambda), \tilde{v}^b (x; \lambda)$ are identical 
to $u^b (x; \lambda), v^b(x; \lambda)$ for $x$ near $b$, this gives the claim.
\end{proof}

\begin{lemma} \label{at-most-n}
Let Assumptions {\bf (A)}, {\bf (B)}, 
and {\bf (C)} hold. 
Then for any fixed $\lambda \in \mathbb{R}$, the space of solutions 
of (\ref{linear-hammy}) (if such solutions exist)
that lie right in $(c, b)$ and satisfy (\ref{boundary-b})
has dimension at most $n$. In the event that the dimension 
of this space is $n$, we let $\{u^b_j (x; \lambda)\}_{j=1}^n$
denote a choice of basis. Then for each $x \in (c, b)$ the 
vectors $\{u^b_j (x; \lambda)\}_{j=1}^n$ comprise the 
basis for a Lagrangian subspace of $\mathbb{C}^{2n}$.
\end{lemma}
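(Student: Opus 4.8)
The plan is to transfer both assertions to a single fibre $\mathbb{C}^{2n}$ and there invoke the standard symplectic bound: an isotropic subspace of $\mathbb{C}^{2n}$ for the form $(J\cdot,\cdot)$ has dimension at most $n$, and one of dimension exactly $n$ is Lagrangian in the sense of Definition \ref{lagrangian_subspace}. The bridge between the space of solutions and this pointwise picture is the constancy in $x$ of the bilinear quantity $(Jy(x),z(x))$ for two solutions sharing a real spectral parameter.

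First I would fix a point $x_0 \in (c,b)$ and restrict the evaluation map $E_{x_0}\colon y \mapsto y(x_0)$ to the space $S$ of solutions of (\ref{linear-hammy}) that lie right in $(c,b)$ and satisfy (\ref{boundary-b}). Since $S$ is a subspace of the $2n$-dimensional solution space it is automatically finite-dimensional, and because a nontrivial solution of the linear system cannot vanish at $x_0$ (uniqueness for the initial value problem), $E_{x_0}$ is injective on $S$. Hence $\dim S = \dim V$, where $V := E_{x_0}(S) \subseteq \mathbb{C}^{2n}$.

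Next I would show that $V$ is isotropic. For any $u,v \in S$, the computation carried out in the proof of Lemma \ref{green-lemma} gives $\tfrac{d}{dx}(Ju(x),v(x)) = (B_1 \mathcal{L}_M u, v) - (B_1 u, \mathcal{L}_M v)$, and since $\mathcal{L}_M u = \lambda u$, $\mathcal{L}_M v = \lambda v$ with $\lambda \in \mathbb{R}$, the right-hand side equals $(\lambda - \bar\lambda)(B_1 u, v) = 0$. Thus $(Ju(x),v(x))$ is independent of $x$. Applying Lemma \ref{lagrangian-limit-lemma} to the pair $u,v$ identifies this constant with its limit $(Ju,v)_b = 0$, so in particular $(Ju(x_0),v(x_0)) = 0$. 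As $u,v$ range over $S$, this is exactly the statement that $V$ is isotropic for $(J\cdot,\cdot)$.

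Finally, since $J$ is invertible the form $(J\cdot,\cdot)$ is nondegenerate, so denoting by $V^{\perp}$ its annihilator $\{w : (Jw,v)=0 \text{ for all } v \in V\}$, which has dimension $2n-\dim V$, isotropy gives $V \subseteq V^{\perp}$ and hence $\dim S = \dim V \le n$; this is the first claim. If $\dim S = n$, then $V$ is an $n$-dimensional isotropic subspace, hence Lagrangian by Definition \ref{lagrangian_subspace}, and since $E_{x_0}$ is injective any basis $\{u^b_j(\cdot;\lambda)\}_{j=1}^n$ of $S$ evaluates at $x_0$ to a basis of $V$, i.e.\ a frame for this Lagrangian subspace; as $x_0 \in (c,b)$ was arbitrary the conclusion holds at every $x$. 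The only point requiring care is the constancy step, which relies essentially on $\lambda$ being real so that the spectral term in Green's identity cancels, together with checking that Lemma \ref{lagrangian-limit-lemma} applies to \emph{every} pair drawn from $S$; the symplectic dimension bound itself is routine.
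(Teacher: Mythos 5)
Your proposal is correct and follows essentially the same route as the paper: both rest on the constancy in $x$ of $(Ju(x),v(x))$ for solutions at real $\lambda$, the vanishing of its limit at $b$ via Lemma \ref{lagrangian-limit-lemma}, and the standard bound that an isotropic subspace of $\mathbb{C}^{2n}$ has dimension at most $n$. The only differences are cosmetic: you phrase the dimension bound through the evaluation map and annihilator count (making explicit the injectivity of evaluation, which the paper uses implicitly), while the paper argues by contradiction with the maximality of Lagrangian subspaces.
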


\begin{proof}
Let $d$ denote the dimension of the space of solutions 
of (\ref{linear-hammy}) that lie right in $(c, b)$ and 
satisfy (\ref{boundary-b}), and suppose $d \ge n$. Let
$\{u_j^b (x; \lambda)\}_{j=1}^d$ denote a basis for 
this space, and notice that for any 
$j, k \in \{1, 2, \dots, d\}$ (and with $^{\prime}$ denoting
differentiation with respect to $x$), 
\begin{equation*}
    \begin{aligned}
    (u_j^b &(x; \lambda)^* J u^b_k (x; \lambda))'
    = u_j^{b \, \prime} (x; \lambda)^* J u^b_k (x; \lambda)
    + u^b_j (x; \lambda)^* J u_k^{b \, \prime} (x; \lambda) \\
    &= - (J u_j^{b \, \prime} (x; \lambda))^* u^b_k (x; \lambda)
    + u_j (x; \lambda)^* J u_k^{b \, \prime} (x; \lambda) \\
    &= - ((B_0 (x) + \lambda B_1 (x)) u_j^b (x; \lambda))^* u^b_k (x; \lambda)
    +  u_j (x; \lambda)^* ((B_0 (x) + \lambda B_1 (x)) u_k^b (x; \lambda) \\
    & - u_j (x; \lambda)^* ((B_0 (x) + \lambda B_1 (x)) u_k^b (x; \lambda)
    + u_j (x; \lambda)^* ((B_0 (x) + \lambda B_1 (x)) u_k^b (x; \lambda)
    = 0.
    \end{aligned}
\end{equation*}
We see that $u_j^b (x; \lambda)^* J u^b_k (x; \lambda)$ is constant for 
all $x \in (c, b)$. In addition, according to Lemma 
\ref{lagrangian-limit-lemma}, we have 
\begin{equation*}
    \lim_{x \to b^-} u_j^b (x; \lambda)^* J u^b_k (x; \lambda) = 0.
\end{equation*}
We conclude that $u_j^b (x; \lambda)^* J u^b_k (x; \lambda) = 0$ for 
all $x \in (c, b)$.

We see immediately that the first $n$ elements $\{u^b_j (x; \lambda)\}_{j=1}^n$
(or any other $n$ elements taken from $\{u_j^b (x; \lambda)\}_{j=1}^d$) 
form the basis for a Lagrangian subspace of $\mathbb{C}^{2n}$ for all
$x \in (c, b)$. If $d > n$, we get a contradiction to the maximality 
of Lagrangian subspaces, and so we can conclude that $d = n$ (recalling
that this is under the assumption that $d \ge n$). This, of course, 
leaves open the possibility that the dimension 
of the space of solutions 
of (\ref{linear-hammy}) that lie right in $(c, b)$ and 
satisfy (\ref{boundary-b}) is less than $n$.
\end{proof}

\begin{lemma} \label{boundary-matrix-lemma}
Let Assumptions {\bf (A)}, {\bf (B)}, and {\bf (C)} hold. 
Then for any fixed $\lambda \in \mathbb{R}$, 
there exists a matrix $\gamma \in \mathbb{C}^{n \times 2n}$ 
satisfying (\ref{boundary-matrix}) so that $\lambda$
is not an eigenvalue of $\mathcal{L}_{c, b}^{\gamma}$.
\end{lemma}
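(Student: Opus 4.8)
The plan is to reduce the statement to a transversality condition at the regular endpoint $x=c$ between two subspaces of $\mathbb{C}^{2n}$, and then to choose $\gamma$ so that transversality holds. First I would record the eigenvalue criterion. A value $\lambda \in \mathbb{R}$ is an eigenvalue of $\mathcal{L}_{c,b}^{\gamma}$ precisely when there is a nontrivial solution $y$ of (\ref{linear-hammy}) on $(c,b)$ that lies right in $(c,b)$, satisfies the singular condition (\ref{boundary-b}) at $x=b$, and satisfies $\gamma y(c) = 0$. Let $S_\lambda$ denote the space of solutions of (\ref{linear-hammy}) that lie right in $(c,b)$ and satisfy (\ref{boundary-b}), and let $V_\lambda := \{y(c) : y \in S_\lambda\} \subset \mathbb{C}^{2n}$ be the space of their traces at $c$. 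Since a solution of the linear system is determined by its value at $x=c$, the evaluation map $y \mapsto y(c)$ is injective on $S_\lambda$, so $\dim V_\lambda = \dim S_\lambda$. With this notation, $\lambda$ is an eigenvalue of $\mathcal{L}_{c,b}^{\gamma}$ if and only if $V_\lambda \cap \ker \gamma \neq \{0\}$.

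Next I would show that $V_\lambda$ is isotropic, hence of dimension at most $n$. For any $u, v \in S_\lambda$, the Wronskian-type quantity $u(x)^* J v(x)$ is constant in $x$ on $(c,b)$ (its derivative vanishes, exactly as in the proof of Lemma \ref{at-most-n}), while Lemma \ref{lagrangian-limit-lemma} gives $\lim_{x \to b^-} u(x)^* J v(x) = 0$. Hence $u(c)^* J v(c) = 0$ for all $u, v \in S_\lambda$, so $V_\lambda$ is isotropic with respect to $J$ and $\dim V_\lambda \le n$. I would then extend $V_\lambda$ to a Lagrangian subspace $L \supseteq V_\lambda$ (any isotropic subspace is contained in a Lagrangian one) and invoke the standard fact from symplectic linear algebra that $L$ admits a Lagrangian complement $L'$, i.e.\ a Lagrangian subspace with $L \oplus L' = \mathbb{C}^{2n}$; in particular $L' \cap V_\lambda \subseteq L' \cap L = \{0\}$.

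Finally I would realize $L'$ as $\ker \gamma$ for an admissible $\gamma$. Choosing a frame $\mathbf{X}$ for the Lagrangian subspace $L'$ (so that $\mathbf{X}^* J \mathbf{X} = 0$ and $\rank \mathbf{X} = n$) and setting $\gamma := \mathbf{X}^* J$, a direct computation using $J^* = -J$ and $J^2 = -I$ gives $\gamma J \gamma^* = -\mathbf{X}^* J^3 \mathbf{X} = \mathbf{X}^* J \mathbf{X} = 0$ together with $\rank \gamma = n$, so (\ref{boundary-matrix}) holds; moreover $\colspan \mathbf{X} \subseteq \ker \gamma$, and a dimension count yields $\ker \gamma = \colspan \mathbf{X} = L'$. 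Then $V_\lambda \cap \ker \gamma = V_\lambda \cap L' = \{0\}$, and by the criterion above $\lambda$ is not an eigenvalue of $\mathcal{L}_{c,b}^{\gamma}$.

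The main obstacle I anticipate is the symplectic-linear-algebra input, namely verifying that an isotropic subspace of the Hermitian symplectic space $(\mathbb{C}^{2n}, J)$ embeds in a Lagrangian subspace that in turn admits a Lagrangian complement; one can establish this either by the usual basis-extension argument adapted to the sesquilinear form $(Ju,v)$ or by exploiting the identification $\Lambda(n) \cong U(n)$. The remaining steps are either definitional (the eigenvalue criterion) or routine (the constancy of the Wronskian and the explicit construction and verification of $\gamma$).
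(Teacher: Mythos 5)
Your proposal is correct, and its skeleton is the same as the paper's: reduce the eigenvalue condition to an intersection at the regular endpoint $x=c$ between the trace space of solutions lying right and satisfying (\ref{boundary-b}) (which, since $\Phi(c;\lambda)=I_{2n}$, the paper writes as $\colspan \mathbf{R}^b(\lambda)$), use the constant-Wronskian computation together with Lemma \ref{lagrangian-limit-lemma} to see this space is isotropic (this is Lemma \ref{at-most-n}), pad it to a Lagrangian subspace when its dimension is below $n$, and then choose $\gamma$ whose kernel is transverse to it. The only genuine divergence is the last step: you invoke the abstract facts that an isotropic subspace of $(\mathbb{C}^{2n},(J\cdot,\cdot))$ extends to a Lagrangian subspace and that every Lagrangian subspace admits a Lagrangian complement, which you rightly flag as the one input needing verification; the paper sidesteps this entirely by the explicit choice $\gamma = \mathbf{R}^b(\lambda)^*$, for which $\gamma J \gamma^* = \mathbf{R}^b(\lambda)^* J \mathbf{R}^b(\lambda) = 0$ and $\gamma \mathbf{R}^b(\lambda) = \mathbf{R}^b(\lambda)^* \mathbf{R}^b(\lambda)$ is nonsingular. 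Note that this explicit $\gamma$ is a special case of your construction: $\ker \mathbf{R}^b(\lambda)^* = \colspan\bigl(J\mathbf{R}^b(\lambda)\bigr)$, i.e.\ the Lagrangian complement of a Lagrangian $L$ can always be taken to be $JL$, so the symplectic-linear-algebra obstacle you anticipated dissolves into a one-line computation. If you adopt that observation, your argument becomes essentially verbatim the paper's.
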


\begin{proof}
First, we recall that $\lambda$ is an eigenvalue of 
$\mathcal{L}_{c, b}^{\gamma}$ if and only if 
there exists a solution 
\begin{equation*}
    y (\cdot; \lambda) 
    \in \AC_{\loc} ([c, b), \mathbb{C}^{2n}) 
    \cap L^2_{B_1} ((c, b), \mathbb{C}^{2n}) 
\end{equation*}
to (\ref{linear-hammy}) so that (\ref{boundary-c})
and (\ref{boundary-b}) are both satisfied. 
Also, according to Lemma \ref{at-most-n}, 
the space of solutions of (\ref{linear-hammy})
that lie right in $(c, b)$ and satisfy (\ref{boundary-b})
has dimension at most $n$. We begin by assuming
that this space of solutions has dimension $n$, and we denote
a basis for the space by $\{u^b_j (x; \lambda)\}_{j=1}^n$.

As usual, we let $\Phi (x; \lambda)$ denote a fundamental 
matrix for (\ref{linear-hammy}), initialized 
by $\Phi (c; \lambda) = I_{2n}$. If $U^b (x; \lambda)$
denotes the matrix comprising $\{u^b_j (x; \lambda)\}_{j=1}^n$
as its columns, then there exists a $2n \times n$ 
matrix $\mathbf{R}^b (\lambda) = {R^b (\lambda) \choose S^b (\lambda)}$
so that 
\begin{equation*}
    U^b (x; \lambda) = \Phi (x; \lambda) \mathbf{R}^b (\lambda),
\end{equation*}
for all $x \in [c, b)$. Recalling the identity 
\begin{equation*}
    \Phi (x; \lambda)^* J \Phi (x; \lambda) = J
\end{equation*}
(i.e., (\ref{conj-no-conj}) with $\lambda \in \mathbb{R}$),
we can compute 
\begin{equation*}
    U^b (x; \lambda)^* J U^b (x; \lambda) = 
    \mathbf{R}^b (\lambda)^* \Phi (x; \lambda)^* J \Phi (x; \lambda) \mathbf{R}^b (\lambda)
     = \mathbf{R}^b (\lambda)^* J \mathbf{R}^b (\lambda).
\end{equation*}
We know from Lemma \ref{at-most-n} that $U^b (x; \lambda)$ is a frame 
for a Lagrangian subspace of $\mathbb{C}^{2n}$, and it follows 
immediately that the same is true for $\mathbf{R}^b (\lambda)$.

A value $\lambda \in \mathbb{R}$ will be an eigenvalue of 
$\mathcal{L}_{c, b}^{\gamma}$
if and only if there exists a vector 
$v \in \mathbb{C}^n$ so that 
$y(x; \lambda) = \Phi (x; \lambda) \mathbf{R}^b (\lambda) v$
satisfies 
\begin{equation*}
    \gamma y(c; \lambda) = 0,
\end{equation*}
which we can express (since $\Phi (c; \lambda) = I_{2n}$)
as $\gamma \mathbf{R}^b (\lambda) v = 0$. This relation will 
hold for a vector $v \ne 0$ if and only if the Lagrangian 
spaces with frames $J \gamma^*$ and $\mathbf{R}^b (\lambda)$
intersect. We choose $\gamma = \mathbf{R}^b (\lambda)^*$, noting
that in this case 
\begin{equation*}
    \gamma J \gamma^*
    = \mathbf{R}^b (\lambda)^* J \mathbf{R}^b (\lambda) = 0
\end{equation*}
(i.e., this is a valid choice for $\gamma$, satisfying (\ref{boundary-matrix})) but $\gamma \mathbf{R}^b (\lambda) =  \mathbf{R}^b (\lambda)^* \mathbf{R}^b (\lambda)$
is certainly non-singular, so $\lambda$ is not an 
eigenvalue of $\mathcal{L}_{c, b}^{\gamma}$.

In the event that the space of solutions of (\ref{linear-hammy})
that lie right in $(c, b)$ and satisfy (\ref{boundary-b})
has dimension less than $n$, the matrix $\mathbf{R}^b (\lambda)$
(as constructed just above) will have fewer than $n$ columns, but we can add columns 
(which don't correspond with solutions of (\ref{linear-hammy})
that lie right in $(c, b)$ and satisfy (\ref{boundary-b}))
to create the basis for a Lagrangian subspace of $\mathbb{C}^{2n}$.
We can then proceed precisely as before, and we conclude
that the Lagrangian subspace with frame $J \gamma^*$ does
not intersect the Lagrangian subspace with frame 
$\mathbf{R}^b (\lambda)$, certainly including the elements that
correspond with solutions of (\ref{linear-hammy})
that lie right in $(c, b)$ and satisfy (\ref{boundary-b}).
\end{proof}

\begin{lemma} \label{frames-lemma}
Let Assumptions {\bf (A)}, {\bf (B)}, and {\bf (C)} hold. 
Let $\lambda_1, \lambda_2 \in \mathbb{R}$, $\lambda_1 < \lambda_2$, 
and suppose $\sigma_{\ess} (\mathcal{L}) \cap [\lambda_1, \lambda_2] 
= \emptyset$. Then for each $\lambda \in [\lambda_1, \lambda_2]$, 
the space of solutions of (\ref{linear-hammy})
that lie right in $(c, b)$ and satisfy (\ref{boundary-b}) has 
dimension $n$. If we let $\{u^b_j (x; \lambda)\}_{j=1}^n$ denote
a basis for this space, then for each $x \in (c, b)$,
the vectors $\{u^b_j (x; \lambda)\}_{j=1}^n$ comprise a basis
for a Lagrangian subspace of $\mathbb{C}^{2n}$.
\end{lemma}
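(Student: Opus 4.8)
The plan is to prove the dimension count by sandwiching: Lemma~\ref{at-most-n} already gives that the space of right-lying solutions of (\ref{linear-hammy}) satisfying (\ref{boundary-b}) has dimension $d(\lambda)\le n$, and once $d(\lambda)=n$ is established the Lagrangian conclusion is immediate from that same lemma. So the entire content is the lower bound $d(\lambda)\ge n$. I would emphasize at the outset that $d(\lambda)$ is intrinsic to the endpoint $b$: it refers only to lying right in $(c,b)$ together with (\ref{boundary-b}), and makes no reference to any boundary condition imposed at $c$. This means it suffices, for each fixed $\lambda\in[\lambda_1,\lambda_2]$, to exhibit a single convenient regular boundary matrix $\gamma$ at $c$ for which the count can be read off; the conclusion then holds for that $\lambda$ irrespective of $\gamma$, and in particular at values of $\lambda$ that happen to be eigenvalues of $\mathcal{L}$ itself.

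Next I would reduce to a resolvent point. By the decomposition principle for the essential spectrum (splitting at the regular point $c$; see \cite{Weidmann1987}), the essential spectrum of any self-adjoint realization on $(c,b)$ is governed solely by the singular endpoint $b$, so that $\sigma_{\ess}(\mathcal{L}_{c,b}^{\gamma})\subseteq\sigma_{\ess}(\mathcal{L})$ for every admissible $\gamma$. Hence the hypothesis $\sigma_{\ess}(\mathcal{L})\cap[\lambda_1,\lambda_2]=\emptyset$ transfers to $\mathcal{L}_{c,b}^{\gamma}$. Fixing $\lambda\in[\lambda_1,\lambda_2]$, I would invoke Lemma~\ref{boundary-matrix-lemma} to choose $\gamma$ (satisfying (\ref{boundary-matrix})) for which $\lambda$ is not an eigenvalue of $\mathcal{L}_{c,b}^{\gamma}$. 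Since $\lambda$ also lies outside the essential spectrum, it is then a resolvent point, $\lambda\in\rho(\mathcal{L}_{c,b}^{\gamma})$.

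The crux is to convert $\lambda\in\rho(\mathcal{L}_{c,b}^{\gamma})$ into $d(\lambda)\ge n$. Here I would construct the resolvent $(\mathcal{L}_{c,b}^{\gamma}-\lambda)^{-1}$ explicitly as an integral operator by variation of parameters, using $n$ solutions meeting the regular condition $\gamma y(c)=0$ on the left and the right-lying solutions satisfying (\ref{boundary-b}) on the right. For a fixed interior point $\xi$, write $K_\xi\subseteq\mathbb{C}^{2n}$ for the $n$-dimensional space of values at $\xi$ of homogeneous solutions with $\gamma y(c)=0$, and $V_\xi$ for the $d$-dimensional space of values at $\xi$ of right-lying solutions satisfying (\ref{boundary-b}); by Lemma~\ref{lagrangian-limit-lemma} together with the constancy computation in the proof of Lemma~\ref{at-most-n}, both are isotropic, and $K_\xi\cap V_\xi=\{0\}$ is exactly the statement that $\lambda$ is not an eigenvalue of $\mathcal{L}_{c,b}^{\gamma}$. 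Boundedness and everywhere-definedness of the resolvent force the jump-matching at $\xi$ to be solvable for every admissible source direction, i.e. $K_\xi+V_\xi=\mathbb{C}^{2n}$; since $\dim K_\xi=n$, this yields $d=\dim V_\xi\ge n$, and with Lemma~\ref{at-most-n} we conclude $d(\lambda)=n$. Because $d(\lambda)$ does not depend on $\gamma$, the count holds for every $\lambda\in[\lambda_1,\lambda_2]$, and the Lagrangian property of any basis $\{u^b_j(x;\lambda)\}_{j=1}^n$ again follows from Lemma~\ref{at-most-n}.

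The main obstacle I anticipate is precisely the spanning step $K_\xi+V_\xi=\mathbb{C}^{2n}$ in the weighted setting, where $B_1(x)$ need not be invertible. Because the inhomogeneity enters as $B_1 f$, the achievable source directions a priori lie only in $J^{-1}\ran B_1(\xi)$, so I would need to argue --- using Atkinson positivity (Assumption~{\bf (B)}) and working in the quotient space $L^2_{B_1}$ --- that this degeneracy is exactly absorbed by the identification modulo $\mathcal{Z}_{B_1}$, so that surjectivity of the resolvent still forces the full $n$-dimensional complement at the $b$-endpoint. This is the point at which the standard self-adjoint resolvent theory must be matched carefully to the weighted kernel construction; I would lean on the treatments in \cite{Weidmann1987, Krall2002} and verify the weighted bookkeeping directly.
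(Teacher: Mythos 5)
Your reduction to a resolvent point is exactly the paper's: fix $\lambda\in[\lambda_1,\lambda_2]$, use Lemma \ref{boundary-matrix-lemma} to choose $\gamma$ with $\lambda\notin\sigma_p(\mathcal{L}_{c,b}^{\gamma})$, and use the decomposition principle $\sigma_{\ess}(\mathcal{L}_{c,b}^{\gamma})\subset\sigma_{\ess}(\mathcal{L})$ (Theorem 11.5 of \cite{Weidmann1987}) to conclude $\lambda\in\rho(\mathcal{L}_{c,b}^{\gamma})$. You diverge in how that inclusion is converted into the count. The paper cites Theorem 7.1 of \cite{Weidmann1987}: the dimension of the space of solutions lying right in $(c,b)$ and satisfying (\ref{boundary-b}) is the \emph{same} for all $\lambda\in\rho(\mathcal{L}_{c,b}^{\gamma})$, and this constant is anchored at $\lambda_0$, where the Niessen construction produces exactly $n$ such solutions (the columns of $U^b(\cdot;\lambda_0)$). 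Your route never anchors at $\lambda_0$; it tries to manufacture $n$ solutions at $\lambda$ itself directly from the existence of a bounded, everywhere-defined resolvent, via a Green's-kernel transversality argument.

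That decisive step is where your proposal has a genuine gap, which you flag but do not close. The pointwise jump-matching argument really does fail when $B_1$ is singular: the delta-matching condition a resolvent kernel must satisfy across the diagonal reads $[V(\xi)B(\xi)-U(\xi)A(\xi)]\,B_1(\xi)=-J\,B_1(\xi)$, which constrains the kernel only on $\ran B_1(\xi)$; thus surjectivity of the resolvent yields only $J\,\ran B_1(\xi)\subseteq K_\xi+V_\xi$. Since $\rank B_1(\xi)$ can be as small as $1$ (it equals $1$ in both applications of Section \ref{applications-section}), nothing like $\dim V_\xi\ge n$ follows, and passing to the quotient $L^2_{B_1}$ cannot repair this by itself: the quotient identifies functions agreeing $B_1$-a.e., whereas your argument needs control of solution values at the single point $\xi$, which the quotient does not see. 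The step can be repaired, but only by an integrated version of your argument. Take $f$ supported in $(c,\xi]$; then $y=(\mathcal{L}_{c,b}^{\gamma}-\lambda)^{-1}f$ restricted to $(\xi,b)$ is a homogeneous solution lying right and satisfying (\ref{boundary-b}), so $y(\xi)\in V_\xi$, while on $(c,\xi)$ one has $y=y_h+y_p$ with $y_h(\xi)\in K_\xi$ and $y_p(\xi)=\Phi(\xi)\int_c^{\xi}\Phi(s)^{-1}(-J)B_1(s)f(s)\,ds$; hence every achievable $y_p(\xi)$ lies in $K_\xi+V_\xi$. If $w$ is orthogonal to all such $y_p(\xi)$, choose $f(s)=\Phi(s)J\Phi(\xi)^*w$: by the identity $\Phi(s)^{-*}=-J\Phi(s)J$ (a consequence of (\ref{conj-no-conj}) with $\lambda$ real), this $f$ is precisely the adjoint-kernel vector appearing in $w^*y_p(\xi)$, and it is itself a solution of (\ref{linear-hammy}); then $w^*y_p(\xi)=\int_c^{\xi}(B_1(s)f(s),f(s))\,ds=0$, and Assumption {\bf (B)} forces $f\equiv 0$, hence $w=0$. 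This gives $K_\xi+V_\xi=\mathbb{C}^{2n}$, so $\dim V_\xi\ge 2n-\dim K_\xi=n$, and Lemma \ref{at-most-n} finishes the proof. So your instinct that Atkinson positivity is what absorbs the degeneracy is correct, but as written, ``the degeneracy is exactly absorbed by the identification modulo $\mathcal{Z}_{B_1}$'' is an assertion standing in for the heart of the proof; either carry out an argument of the above type, or take the paper's shortcut through Weidmann's constancy theorem.
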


\begin{proof}
We fix any $\lambda \in [\lambda_1, \lambda_2]$, and observe 
from Lemma \ref{boundary-matrix-lemma} that we can select
$\gamma \in \mathbb{C}^{n \times 2n}$ satisfying  
(\ref{boundary-matrix}) so that $\lambda$ is not 
an eigenvalue of $\mathcal{L}_{c, b}^{\gamma}$.
In addition, we know from Theorem 11.5 in \cite{Weidmann1987},
appropriately adapted to our setting, 
that $\sigma_{\ess} (\mathcal{L}_{c, b}^{\gamma}) \subset \sigma_{\ess} (\mathcal{L})$,
so we can conclude (using our assumption  
$\sigma_{\ess} (\mathcal{L}) \cap [\lambda_1, \lambda_2] 
= \emptyset$) that, in fact, $\lambda \in \rho (\mathcal{L}_{c, b}^{\gamma})$.
This last inclusion allows us to apply Theorem 7.1 in 
\cite{Weidmann1987}, which asserts (among other things) that 
the space of solutions of (\ref{linear-hammy}) that lie 
right in $(c, b)$ and satisfy (\ref{boundary-b}) has
the same dimension for each  
$\lambda \in \rho (\mathcal{L}_{c, b}^{\gamma})$. We know by 
construction that for $\lambda_0$ this dimension is 
precisely $n$, and so we can conclude that it must be
$n$ for our fixed value $\lambda \in [\lambda_1, \lambda_2]$ 
as well. We can now conclude from 
Lemma \ref{at-most-n} that this space must be 
a Lagrangian subspace of $\mathbb{C}^{2n}$ for 
each $x \in (c, b)$.
\end{proof}

\begin{lemma} \label{continuation-lemma}
Let Assumptions {\bf (A)}, {\bf (B)}, and {\bf (C)} hold, and  
suppose $\lambda_1, \lambda_2 \in \mathbb{R}$, $\lambda_1 < \lambda_2$
are such that $\sigma_{\ess} (\mathcal{L}) \cap [\lambda_1, \lambda_2] 
= \emptyset$. For some fixed $\lambda_* \in [\lambda_1, \lambda_2]$,
let $\{u^b_j (x; \lambda_*)\}_{j=1}^n$ denote a basis for the 
$n$-dimensional space of solutions of (\ref{linear-hammy})
that lie right in $(c, b)$ and satisfy (\ref{boundary-b})
(guaranteed to exist by Lemma \ref{frames-lemma}). Then 
there exists a constant $r > 0$, depending on $\lambda_*$ 
and $\mathcal{L}_{c, b}^{\gamma}$ (including the choice of $\gamma$) so that the elements 
$\{u^b_j (x; \lambda_*)\}_{j=1}^n$ can be analytically 
extended in $\lambda$ to the ball $B (\lambda_*; r)$.
Moreover, the analytic extensions $\{u^b_j (x; \lambda)\}_{j=1}^n$
comprise a basis for the space of solutions 
of (\ref{linear-hammy}) contained in $\mathcal{D}_{c, b}^{\gamma}$.
In particular, these elements lie right in $(c, b)$ and 
satisfy (\ref{boundary-b}). 
\end{lemma}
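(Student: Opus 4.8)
The plan is to realize the right-lying solutions satisfying (\ref{boundary-b}) as outputs of the resolvent $R_\lambda := (\mathcal{L}_{c,b}^{\gamma} - \lambda)^{-1}$ and to transfer the analyticity of $\lambda \mapsto R_\lambda$ to the solutions themselves. First I would invoke Lemma \ref{boundary-matrix-lemma} to fix a matrix $\gamma$ satisfying (\ref{boundary-matrix}) for which $\lambda_*$ is not an eigenvalue of $\mathcal{L}_{c,b}^{\gamma}$, so that $\lambda_* \in \rho(\mathcal{L}_{c,b}^{\gamma})$. Since the resolvent set is open and $\lambda \mapsto R_\lambda$ is analytic (in operator norm) on it, I may pick $r > 0$ with $B(\lambda_*; r) \subset \rho(\mathcal{L}_{c,b}^{\gamma})$. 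Writing $V^b(\lambda)$ for the space of solutions of (\ref{linear-hammy}) that lie right in $(c,b)$ and satisfy (\ref{boundary-b}), Lemma \ref{frames-lemma} together with the constancy of this dimension on $\rho(\mathcal{L}_{c,b}^{\gamma})$ (Theorem 7.1 of \cite{Weidmann1987}) shows $\dim V^b(\lambda) = n$ for every $\lambda \in B(\lambda_*; r)$; moreover, for such $\lambda$, $V^b(\lambda)$ meets the space $\{y : \gamma y(c) = 0\}$ only trivially, since a common element would be an eigenfunction.

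The extraction mechanism is as follows. Fix $x_0 \in (c,b)$. For $f \in L^2_{B_1}((c,b),\mathbb{C}^{2n})$ supported in $(c,x_0)$, the function $u = R_\lambda f$ lies in $\mathcal{D}_{c,b}^{\gamma}$, hence lies right in $(c,b)$ and satisfies (\ref{boundary-b}); and on $(x_0,b)$, where $f$ vanishes, it solves $\mathcal{L}_M u = \lambda u$. Extending $u|_{(x_0,b)}$ leftward via $\tilde{w}(x;\lambda) := \Phi(x;\lambda)\,\Phi(x_0;\lambda)^{-1} u(x_0;\lambda)$ produces a genuine solution on all of $(c,b)$ agreeing with $u$ near $b$, so $\tilde{w}(\cdot;\lambda) \in V^b(\lambda)$. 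The key analytic input is that for each fixed $x$ to the right of $\mathrm{supp}\, f$ the value $(R_\lambda f)(x)$ is analytic in $\lambda$; I would obtain this from the explicit variation-of-parameters form of $R_\lambda$ for the regular--singular problem on $(c,b)$, built from the entire-in-$\lambda$ solution matrix $\Theta(\cdot;\lambda)$ with columns satisfying $\gamma y(c)=0$ and from a frame $\Psi(\cdot;\lambda)$ for $V^b(\lambda)$, glued by a connection matrix that is invertible on $\rho(\mathcal{L}_{c,b}^{\gamma})$. Since $\Phi$ and $\Phi^{-1}$ are also analytic in $\lambda$, the extension $\tilde{w}(\cdot;\lambda)$ is analytic on $B(\lambda_*; r)$.

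Next I would produce a full basis. From the same representation, the extended solution attached to $f$ has the form $\Psi(\cdot;\lambda_*)\,C(\lambda_*)\int_c^{x_0}\Theta(\xi;\lambda_*)^* B_1(\xi) f(\xi)\,d\xi$ on $(x_0,b)$, with $C(\lambda_*)$ invertible. If the coefficient vector failed to exhaust $\mathbb{C}^n$ as $f$ varies, there would be $0 \ne \eta \in \mathbb{C}^n$ with $B_1(\xi)\Theta(\xi;\lambda_*)\eta = 0$ for a.e.\ $\xi \in (c,x_0)$; then $\Theta(\cdot;\lambda_*)\eta$ would be a nontrivial solution with $\int_{c'}^{x_0}(B_1 \Theta\eta,\Theta\eta)\,d\xi = 0$ for every $c' \in (c,x_0)$, contradicting Atkinson positivity (Assumption {\bf (B)}). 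Hence I may choose $f_1,\dots,f_n$ whose extensions $\tilde{w}_1(\cdot;\lambda_*),\dots,\tilde{w}_n(\cdot;\lambda_*)$ form a basis of $V^b(\lambda_*)$. The determinant measuring their independence (say the determinant of their coordinates in a fixed frame for $V^b(\lambda)$) is analytic in $\lambda$ and nonzero at $\lambda_*$, hence nonzero on a possibly smaller ball $B(\lambda_*; r)$, where the $\tilde{w}_j(\cdot;\lambda)$ therefore form a basis of the $n$-dimensional space $V^b(\lambda)$. To match the prescribed data, I write the given basis as $[\tilde{w}_1,\dots,\tilde{w}_n](\cdot;\lambda_*)\,A$ for a constant invertible $A$ and define the analytic extensions by $[u_1^b,\dots,u_n^b](\cdot;\lambda) := [\tilde{w}_1,\dots,\tilde{w}_n](\cdot;\lambda)\,A$.

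I expect the main obstacle to be establishing the \emph{pointwise} analytic dependence of the extracted solutions on $\lambda$ (rather than merely analyticity in the $L^2_{B_1}$ norm, where evaluation is not continuous). This is exactly why I would lean on the explicit variation-of-parameters representation of the resolvent and the invertibility of its connection matrix on $\rho(\mathcal{L}_{c,b}^{\gamma})$; once that representation is in hand, the surjectivity needed for a full basis and the persistence of independence follow cleanly from Atkinson positivity and analytic nonvanishing of a determinant, respectively.
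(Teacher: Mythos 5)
Your overall architecture---fix $\gamma$ via Lemma \ref{boundary-matrix-lemma} so that $\lambda_* \in \rho(\mathcal{L}_{c,b}^{\gamma})$, work inside a ball contained in the resolvent set, realize elements of the solution space $V^b(\lambda)$ as resolvent outputs applied to data supported near $c$, and finish with the Atkinson-positivity surjectivity argument and an analytic-determinant argument---is reasonable, and several of its pieces (the leftward extension by the fundamental matrix, the surjectivity argument, the matching matrix $A$) are sound. But the step you yourself flag as the crux is circular as proposed. You obtain pointwise analyticity of $(R_\lambda f)(x)$ from a variation-of-parameters representation of $R_\lambda$ ``built from \dots a frame $\Psi(\cdot;\lambda)$ for $V^b(\lambda)$, glued by a connection matrix.'' For that formula to yield analyticity in $\lambda$ of anything, the ingredients $\Psi(x;\lambda)$ and the connection matrix (or at least their product, which is the Green's function) must themselves depend analytically on $\lambda$---and the existence of a frame for $V^b(\lambda)$ depending analytically on $\lambda$ is precisely the statement of Lemma \ref{continuation-lemma}. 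At a singular endpoint there is no canonical a priori analytic choice of $\Psi$; indeed, in the paper the Green's function for a problem with singular right endpoint (Section \ref{green-section}) is constructed only \emph{after} Lemmas \ref{continuation-lemma} and \ref{continuity-lemma} supply the analytically varying frame $\mathbf{R}^b(\lambda)$, not the other way around. So, as written, your key analytic input assumes the conclusion.

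The gap is repairable, but the natural repair essentially turns your proof into the paper's. The paper never uses $R_\lambda$ for varying $\lambda$; it uses only the single operator $R_{\lambda_*} = (\mathcal{L}_{c,b}^{\gamma} - \lambda_* I)^{-1}$ and solves the fixed-point equation
\begin{equation*}
\bigl(I - (\lambda - \lambda_*)\,R_{\lambda_*}\bigr)\,u_j^b(\cdot;\lambda) = u_j^b(\cdot;\lambda_*)
\end{equation*}
by a Neumann series, valid for $|\lambda - \lambda_*| < r := 1/C$ with $\|R_{\lambda_*}\| \le C$. This gives $u_j^b(\cdot;\lambda)$ analytic as an $L^2_{B_1}$-valued function with no Green's function needed, and the fixed-point equation itself upgrades the equivalence class to an honest $\AC_{\loc}$ solution of (\ref{linear-hammy}) lying in $\mathcal{D}_{c,b}^{\gamma}$, since the right-hand side $u_j^b(\cdot;\lambda_*) + (\lambda-\lambda_*)R_{\lambda_*}u_j^b(\cdot;\lambda)$ is a sum of an actual solution and an element of the domain; linear independence of the extensions is inherited from $\lambda_*$ because $I - (\lambda-\lambda_*)R_{\lambda_*}$ is boundedly invertible, and Lemma \ref{at-most-n} then makes them a basis. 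Alternatively, you could keep your varying-$\lambda$ setup and derive pointwise analyticity non-circularly from the first resolvent identity $R_\lambda f = R_{\lambda_*}f + (\lambda - \lambda_*)R_{\lambda_*}R_\lambda f$ together with boundedness of the evaluation functional $g \mapsto (R_{\lambda_*}g)(x)$; the latter requires a Green's function only at the single point $\lambda_*$, which can be built from an arbitrary, non-analytic frame for $V^b(\lambda_*)$. But at that point you are running the paper's argument with extra steps.
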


\begin{proof}
Let $\lambda_* \in [\lambda_1, \lambda_2]$ be fixed, and use 
Lemma \ref{boundary-matrix-lemma} to find a boundary matrix
$\gamma$ so that $\lambda_* \in \rho (\mathcal{L}_{c, b}^{\gamma})$.
Our extensions $\{u^b_j (x; \lambda)\}_{j=1}^n$ will 
satisfy the equation 
\begin{equation} \label{lambda-equation}
    J u_j^{b\, \prime} = (B_0 (x) + \lambda B_1 (x)) u_j^b,
\end{equation}
which we can re-write as 
\begin{equation} \label{extension-inhomogeneous}
    J u_j^{b\, \prime} - (B_0 (x) + \lambda_* B_1 (x)) u_j^b 
    = (\lambda - \lambda_*) B_1 (x) u_j^b.
\end{equation}
If a solution to (\ref{extension-inhomogeneous}) exists and 
is contained in $\mathcal{D}_{c, b}^{\gamma}$, then 
we can express it as 
\begin{equation*}
    F_j^b (x; \lambda_*, \lambda)
    = (\lambda - \lambda_*) (\mathcal{L}_{c, b}^{\gamma} - \lambda_* I)^{-1} u_j^b (\cdot; \lambda).
\end{equation*}
Here, the resolvent 
\begin{equation*}
    \mathcal{R} (\mathcal{L}_{c, b}^{\gamma}; \lambda_*) 
    := (\mathcal{L}_{c, b}^{\gamma} - \lambda_* I)^{-1}
\end{equation*}
maps elements of $L^2_{B_1} ((c, b), \mathbb{C}^{2n})$ 
into $\mathcal{D}_{c, b}^{\gamma}$, so in particular 
$F_j^b (x; \lambda_*, \lambda)$ lies right in 
$(c, b)$ and satisfies (\ref{boundary-b}). 

Clearly, $F_j^b (x; \lambda_*, \lambda_*) = 0$, 
so in order to identify an analytic extenson 
of $u^b_j (x; \lambda_*)$, we look for solutions
of (\ref{lambda-equation}) of the form 
\begin{equation} \label{look-for}
    u^b_j (x; \lambda) = u^b_j (x; \lambda_*)
    + (\lambda - \lambda_*) \mathcal{R} (\mathcal{L}_{c, b}^{\gamma}; \lambda_*) u^b_j (\cdot; \lambda).
\end{equation}
Rearranging terms, we can express this relation as 
\begin{equation} \label{neumann-type}
    (I - (\lambda - \lambda_*) \mathcal{R} (\mathcal{L}_{c, b}^{\gamma}; \lambda_*)) u^b_j (\cdot; \lambda)
    = u^b_j (\cdot; \lambda_*).
\end{equation}
By the standard theory of Neumann series (for example, the 
discussion of Example 4.9 on p. 32 of \cite{Kato}), 
if 
\begin{equation*}
 \| (\lambda - \lambda_*) \mathcal{R} (\mathcal{L}_{c, b}^{\gamma}; \lambda_*) \| < 1,   
\end{equation*}
then we can solve (\ref{neumann-type}) with 
\begin{equation} \label{neumann-series}
    u^b_j (\cdot; \lambda)
    =  (I - (\lambda - \lambda_*) \mathcal{R} (\mathcal{L}_{c, b}^{\gamma}; \lambda_*))^{-1} u^b_j (\cdot; \lambda_*).
\end{equation}
Here, $u^b_j (\cdot; \lambda) \in L^2_{B_1} ((a, b), \mathbb{C}^{2n})$ is 
analytic in $\lambda$.

Since $\lambda_* \in \rho (\mathcal{L}_{c, b}^{\gamma})$, there exists
a constant $C > 0$, depending on $\lambda_*$ and 
$\mathcal{L}_{c, b}^{\gamma}$ so that 
\begin{equation*}
    \|\mathcal{R} (\mathcal{L}_{c, b}^{\gamma}; \lambda_*)\| \le C.
\end{equation*}
In this way, we see that we can use 
(\ref{neumann-series}) so long as $|\lambda - \lambda_*| < r := 1/C$.
We conclude that (\ref{look-for}) has a unique
solution $u^b_j (\cdot; \lambda) \in L^2_{B_1} ((a, b), \mathbb{C}^{2n})$.
We've already noted that $F_j^b (x; \lambda_*, \lambda)$ is 
contained in $\mathcal{D}_{c, b}^{\gamma}$, and the same
holds for $u^b_j (\cdot; \lambda_*)$. 
We can conclude that $u^b_j (x; \lambda)$ is a solution of 
(\ref{lambda-equation}) contained in $\mathcal{D}_{c, b}^{\gamma}$. 
Proceeding similarly for
each $j \in \{1, 2, \dots, n\}$, we obtain a collection 
of extensions $\{u^b_j (x; \lambda)\}_{j=1}^n$.

In addition, by virtue of (\ref{neumann-type})-(\ref{neumann-series}),
we see that $\{u^b_j (x; \lambda)\}_{j=1}^n$ inherits 
linear independence from the set $\{u^b_j (x; \lambda_*)\}_{j=1}^n$.
We conclude from Lemma \ref{at-most-n} that 
the set $\{u^b_j (x; \lambda)\}_{j=1}^n$ comprises a 
basis for the space of solutions 
of (\ref{linear-hammy}) that lie right in $(c, b)$ and 
satisfy (\ref{boundary-b}), and additionally
that for each $x \in (c, b)$ the vectors
$\{u^b_j (x; \lambda)\}_{j=1}^n$ comprise the basis
of a Lagrangian subspace of $\mathbb{C}^{2n}$. 
\end{proof}

\begin{lemma} \label{continuity-lemma}
Let Assumptions {\bf (A)}, {\bf (B)}, and {\bf (C)} hold, and  
suppose $\lambda_1, \lambda_2 \in \mathbb{R}$, $\lambda_1 < \lambda_2$
are such that $\sigma_{\ess} (\mathcal{L}) \cap [\lambda_1, \lambda_2] 
= \emptyset$. In addition, for each $\lambda \in [\lambda_1, \lambda_2]$, 
let $\ell_b (x; \lambda)$ denote the path of Lagrangian subspaces
$\ell_b (\cdot; \lambda): (c, b) \to \Lambda (n)$ associated with 
the basis $\{u^b_j (x; \lambda)\}_{j=1}^n$ constructed in 
Lemma \ref{frames-lemma}. 
Then $\ell_b: (c, b) \times [\lambda_1, \lambda_2] \to \Lambda (n)$
is continuous. 
\end{lemma}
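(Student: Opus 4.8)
The plan is to reduce the assertion to the joint continuity of a conveniently chosen frame, and then to trace the $\lambda$-dependence back to initial data at $x = c$. By definition of the metric $d$, continuity of $\ell_b$ is continuity of the associated orthogonal projection $\mathcal{P}_b(x;\lambda) = U^b(x;\lambda)(U^b(x;\lambda)^* U^b(x;\lambda))^{-1} U^b(x;\lambda)^*$, where $U^b(x;\lambda)$ is any frame for $\ell_b(x;\lambda)$. Since $\mathcal{P}_b$ depends only on the span $\ell_b(x;\lambda)$ — which by Lemma \ref{frames-lemma} is canonically determined by $\lambda$, and then by $x$ — it is enough to produce, in a neighborhood of each fixed $\lambda_* \in [\lambda_1,\lambda_2]$, a frame $U^b(x;\lambda)$ that is jointly continuous and of full rank $n$; the matrix inverse in $\mathcal{P}_b$ is then continuous because the Gram matrix $U^{b*}U^b$ is invertible. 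Writing $U^b(x;\lambda) = \Phi(x;\lambda)\mathbf{R}^b(\lambda)$ with $\mathbf{R}^b(\lambda) := U^b(c;\lambda)$, and recalling that $\Phi$ is jointly continuous in $(x,\lambda)$, the whole problem collapses to showing that the initial data $\mathbf{R}^b(\lambda)$ can be chosen continuous in $\lambda$ near $\lambda_*$.

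For this I would invoke Lemma \ref{continuation-lemma}: near $\lambda_*$ it furnishes a basis $\{u_j^b(\cdot;\lambda)\}_{j=1}^n$ that is analytic, hence continuous, in $\lambda$ as a family of $L^2_{B_1}((c,b),\mathbb{C}^{2n})$ elements, and I write $u_j^b(x;\lambda) = \Phi(x;\lambda)\zeta_j(\lambda)$ with $\zeta_j(\lambda) = u_j^b(c;\lambda)$. Fix any $d \in (c,b)$ and set $Q_d(\lambda) := \int_c^d \Phi(x;\lambda)^* B_1(x)\Phi(x;\lambda)\,dx$. By Assumption {\bf (B)}, $\zeta^* Q_d(\lambda)\zeta = \int_c^d (B_1 \Phi\zeta,\Phi\zeta)\,dx > 0$ for every $\zeta \neq 0$ (as $\Phi\zeta$ is a nontrivial solution), so $Q_d(\lambda)$ is positive definite; being also continuous in $\lambda$, its smallest eigenvalue is bounded below by some $\delta > 0$ uniformly on a small closed ball about $\lambda_*$. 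Since $\|u_j^b(\cdot;\lambda)\|_{B_1}^2 \geq \zeta_j(\lambda)^* Q_d(\lambda)\zeta_j(\lambda) \geq \delta\|\zeta_j(\lambda)\|^2$ and the left-hand norms converge as $\lambda \to \lambda_*$, the vectors $\zeta_j(\lambda)$ stay bounded near $\lambda_*$.

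The main step — and the main obstacle — is to upgrade the $L^2_{B_1}$-convergence $u_j^b(\cdot;\lambda) \to u_j^b(\cdot;\lambda_*)$ to convergence of the initial data $\zeta_j(\lambda) \to \zeta_j(\lambda_*)$; the difficulty is precisely that $B_1$ may be degenerate, so the weighted norm does not by itself control pointwise values. I would decompose, on $[c,d]$, $F_j(x) := u_j^b(x;\lambda) - u_j^b(x;\lambda_*) = \Phi(x;\lambda_*)\,\Delta\zeta_j + E_j(x)$, where $\Delta\zeta_j := \zeta_j(\lambda) - \zeta_j(\lambda_*)$ and $E_j(x) := (\Phi(x;\lambda) - \Phi(x;\lambda_*))\zeta_j(\lambda)$. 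By joint continuity of $\Phi$ and the boundedness of $\zeta_j(\lambda)$, $E_j \to 0$ uniformly on $[c,d]$, whence $\|E_j\|_{L^2_{B_1}((c,d))} \to 0$; combined with $\|F_j\|_{L^2_{B_1}((c,d))} \leq \|F_j\|_{B_1} \to 0$, the triangle inequality gives $\|\Phi(\cdot;\lambda_*)\Delta\zeta_j\|_{L^2_{B_1}((c,d))}^2 = (\Delta\zeta_j)^* Q_d(\lambda_*)\,\Delta\zeta_j \to 0$. Since $Q_d(\lambda_*)$ is positive definite, $\Delta\zeta_j \to 0$, i.e. each $\zeta_j$ is continuous at $\lambda_*$. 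Assembling the $\zeta_j(\lambda)$ into $\mathbf{R}^b(\lambda)$ and running the reductions of the first paragraph in reverse then yields joint continuity of $\mathcal{P}_b$, hence of $\ell_b$, on $(c,b)\times[\lambda_1,\lambda_2]$. The crux throughout is Atkinson positivity, which converts the possibly degenerate weighted norm into the nondegenerate, locally $\lambda$-uniform quadratic form $Q_d$.
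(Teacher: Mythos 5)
Your proposal is correct, and it reaches the conclusion by a genuinely different route at the decisive step, although both arguments rest on the same key input, Lemma \ref{continuation-lemma}. The paper's proof is a covering-and-patching argument: it invokes Lemma \ref{continuation-lemma} to produce a locally analytic family of bases near each $\lambda_* \in [\lambda_1, \lambda_2]$, extracts a finite subcover by compactness, and then walks across the interval, switching at overlap points from one analytic family to the next; since on overlaps the two families span the same (canonically determined) subspace, the switch does not disturb the path of subspaces. What the paper leaves implicit is precisely what you isolate as the main obstacle: Lemma \ref{continuation-lemma} gives analyticity of $\lambda \mapsto u_j^b(\cdot;\lambda)$ only as an $L^2_{B_1}$-valued map, and since $B_1$ may be degenerate this does not by itself control the pointwise values $u_j^b(x;\lambda)$ whose spans define $\ell_b(x;\lambda)$. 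Your reduction---pass to orthogonal projections (which makes basis-independence automatic), factor the frame as $\Phi(x;\lambda)\mathbf{R}^b(\lambda)$, and recover convergence of the initial data $\mathbf{R}^b(\lambda)$ from $L^2_{B_1}$-convergence via the positive-definite, $\lambda$-continuous Gram matrix $Q_d(\lambda) = \int_c^d \Phi(x;\lambda)^* B_1(x) \Phi(x;\lambda)\,dx$---supplies exactly this missing bridge, with Assumption {\bf (B)} doing the work of converting the possibly degenerate weighted norm into Euclidean control of initial data. So the paper's proof is shorter but schematic at this point, while yours is more explicit and in effect fills a gap. One small caveat: your inequality $\|F_j\|_{L^2_{B_1}((c,d))} \le \|F_j\|_{B_1}$ is applied to $F_j$, which is a difference of solutions at two different values of $\lambda$ and hence not itself a solution, so it requires $(B_1(x)v,v) \ge 0$ for a.e.\ $x$ and all $v \in \mathbb{C}^{2n}$, not merely the integrated positivity of Assumption {\bf (B)}; this pointwise nonnegativity is implicit throughout the paper's framework (it is what makes $\|\cdot\|_{B_1}$ a norm and $\langle \cdot, \cdot \rangle_{B_1}$ an inner product), so the step is legitimate, but it deserves to be stated explicitly.
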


\begin{proof}
First, for each fixed $\lambda_* \in [\lambda_1, \lambda_2]$,
we can use Lemma \ref{continuation-lemma} to obtain a locally
analytic family of bases $\{u^b_j (x; \lambda)\}_{j=1}^n$,
for all $|\lambda - \lambda_*| < r_*$, where $r_* > 0$
is a constant depending on $\lambda_*$ (and $\mathcal{L}_{c, b}^{\gamma}$, including the boundary matrix $\gamma$). 
This process creates an open cover of $[\lambda_1, \lambda_2]$, 
created by the union of all of these disks. Next, we use 
compactness of the interval $[\lambda_1, \lambda_2]$ to extract 
a finite subcover, which we denote $\{B (\lambda_*^j; r_*^j)\}_{j=1}^N$,
where for notational convenience, we can select the values 
$\{\lambda_*^j\}_{j=1}^N$ so that 
\begin{equation*}
    \lambda_1 =: \lambda_*^1 < \lambda_*^2 < \dots < \lambda_*^N := \lambda_2,
\end{equation*}
and where the values $r_*^j > 0$ are constants respectively 
associated with the values $\lambda_*^j$ in our construction of the family of 
disks. 

Starting at $\lambda_1$, we can take $\{u^b_j (x; \lambda_1)\}_{j=1}^n$
to be a basis for the Lagrangian subspace $\ell_b (x; \lambda_1)$. As
$\lambda$ increases from $\lambda_1$, the analytic extensions
$\{u^{b, \lambda_1}_j (x; \lambda)\}_{j=1}^n$ in $B (\lambda_1, r_*^1)$ 
comprise bases for the Lagrangian paths $\ell_b (x; \lambda)$. 
By construction, the set 
$B (\lambda_1; r_*^1) \cap B (\lambda_*^2; r_*^2)$ must 
be non-empty. We take any $\lambda_*^{1, 2}$ in this intersection,
and we note that at this value of $\lambda$ the 
analytic extensions
$\{u^{b, \lambda_1}_j (x; \lambda_*^{1, 2})\}_{j=1}^n$ in $B (\lambda_1, r_*^1)$ 
serve as a basis for the same Lagrangian subspace as 
the analytic extensions 
$\{u^{b, \lambda_*^2}_j (x; \lambda_*^{1, 2})\}_{j=1}^n$ in $B (\lambda_*^2, r_*^2)$. 
This allows us to continuously switch from the
frame $\{u^{b, \lambda_1}_j (x; \lambda_*^{1, 2})\}_{j=1}^n$ to the 
frame $\{u^{b, \lambda_*^2}_j (x; \lambda_*^{1, 2})\}_{j=1}^n$. 

We now allow $\lambda$ to increase from $\lambda_*^{1, 2}$, 
and the elements $\{u^{b, \lambda_*^2}_j (x; \lambda)\}_{j=1}^n$
serve as bases for the Lagrangian subspaces $\ell_b (x; \lambda)$. 
Continuing in this way, we see that 
$\ell_b: (c, b) \times [\lambda_1, \lambda_2] \to \Lambda (n)$ 
is continuous. 
\end{proof}

\begin{remark} \label{frames-remark}
We observe that during the course of this construction, we 
have set notation for the frames associated with $\ell_b (x; \lambda)$
as $\lambda$ varies from $\lambda_1$ to $\lambda_2$. In 
particular, the interval $[\lambda_1, \lambda_2]$ has been 
partitioned into values 
\begin{equation*}
    \lambda_1 =: \lambda_*^{0, 1} < \lambda_*^{1, 2} < \lambda_*^{2, 3} 
    < \dots < \lambda_*^{N-1, N} < \lambda_*^{N, N+1} := \lambda_2,
\end{equation*}
and we use the frame $\{u_j^{b, \lambda_*^k} (x; \lambda)\}_{j=1}^n$ on 
the interval $[\lambda_*^{k-1, k}, \lambda_*^{k, k+1}]$ for all 
$k = 1, 2, \dots, N$. It's clear from the construction that 
for each $j \in \{1, 2, \dots, n\}$, $u_j^{b, \lambda_*^k} (x; \lambda)$
is analytic on $(\lambda_*^{k-1, k}, \lambda_*^{k, k+1})$. 
\end{remark}

Lemmas \ref{lagrangian-limit-lemma}--\ref{continuity-lemma}
can be stated with $(c, b)$ replaced by $(a, c)$, 
$\{u_j^b\}_{j=1}^n$ replaced by $\{u_j^a\}_{j=1}^n$,
and $\mathcal{L}_{c, b}^{\gamma}$ replaced with 
$\mathcal{L}_{a, c}^{\gamma}$, specified with 
domain 
\begin{equation*}
    \mathcal{D}_{a, c}^{\gamma} := \{y \in \mathcal{D}_{a, c, M}: 
    \lim_{x \to a^+} U^a (x; \lambda_0) J y(x) = 0, 
    \quad \gamma y (c) = 0\}.
\end{equation*}
Here, $\mathcal{D}_{a, c, M}$ denotes the domain of the 
maximal operator for (\ref{linear-hammy}) on $(a, c)$.

Under the additional assumption {\bf (A)$^\prime$}, Lemmas 
\ref{frames-lemma}, \ref{continuation-lemma}, and 
\ref{continuity-lemma} hold with $\mathcal{L}$ replaced by 
$\mathcal{L}^{\alpha}$.

\subsection{The Green's Function}
\label{green-section}

During the proof of Theorem \ref{regular-singular-theorem},
we will make brief use of a relevant Green's function, and 
for completeness we include in the current section
a full construction of this Green's 
function. 
Precisely, assuming as usual that 
$[\lambda_1, \lambda_2] \cap \sigma_{\ess} (\mathcal{L}^{\alpha}) = \emptyset$,
we fix $\lambda \in [\lambda_1, \lambda_2] \backslash \sigma_p (\mathcal{L}^{\alpha})$ 
(so, in particular, $\lambda \in \rho (\mathcal{L}^{\alpha})$),
and we construct the Green's function $G^{\alpha} (x, \xi; \lambda)$ for the equation
\begin{equation} \label{resolvent-equation}
(\mathcal{L}^{\alpha} - \lambda I) y = f.
\end{equation}
(In fact, we will only use the case $\lambda = \lambda_2$.) 
This will allow us to express the action of the 
resolvent operator
\begin{equation*}
    \mathcal{R} (\mathcal{L}^{\alpha}; \lambda)
    = (\mathcal{L}^{\alpha} - \lambda I)^{-1}
\end{equation*}
as 
\begin{equation*}
   \mathcal{R} (\mathcal{L}^{\alpha}; \lambda)f
   = \int_a^b G^{\alpha} (x, \xi; \lambda) B_1 (\xi) f(\xi) d\xi.
\end{equation*}

Equation (\ref{resolvent-equation}) is equivalent to the ODE
\begin{equation} \label{resolvent-equation-equivalent}
J y' - (B_0 (x) + \lambda B_1 (x)) y = B_1 (x) f,
\quad y \in \mathcal{D}^{\alpha},
\end{equation}
which we can solve with variation of parameters. 
For this, we let $\Phi (x; \lambda)$ denote a fundamental 
matrix for (\ref{linear-hammy}), initialized 
by $\Phi (a; \lambda) = I_{2n}$, and we look for 
solutions to (\ref{resolvent-equation-equivalent})
of the form $y(x; \lambda) = \Phi (x; \lambda) v(x; \lambda)$, 
where $v (x; \lambda)$ is a vector function to be determined.
Computing directly, we find that 
this leads to the relation $J \Phi v' = B_1 f$. 
Recalling (\ref{conj-no-conj}) (with $\lambda \in \mathbb{R}$), we see that 
\begin{equation*}
(J \Phi (x; \lambda))^{-1} = - J \Phi (x; \lambda)^*,
\end{equation*}
allowing us to write 
\begin{equation*}
v' (x; \lambda) = - J \Phi (x; \lambda)^* B_1 (x) f(x).
\end{equation*}
Upon integration, we find that 
\begin{equation*}
v(x; \lambda) = - \int_a^x J \Phi (\xi; \lambda)^* B_1 (\xi) f(\xi) d\xi
+ k (\lambda),
\end{equation*}
for some vector $k(\lambda)$ independent of $x$, and we conclude 
\begin{equation} \label{vp-solution}
y(x; \lambda) = - \Phi (x; \lambda) \int_a^x J \Phi (\xi; \lambda)^* B_1 (\xi) f(\xi) d\xi
+ \Phi (x; \lambda) k (\lambda).
\end{equation} 

In order to identify $k(\lambda)$, we impose the boundary conditions 
associated with $\mathcal{D}^{\alpha}$. First, 
for the boundary condition at $x=a$, we set $x=a$ in (\ref{vp-solution})
to see that $\alpha y(a) = 0$ becomes $\alpha k (\lambda) = 0$, which 
we can express as 
\begin{equation} \label{green-boundary1}
    (J \alpha^*)^* J k (\lambda) = 0.
\end{equation}
For the boundary condition at $b$, we have
\begin{equation} \label{boundary-b-equation}
    \lim_{x \to b^-} U^b (x; \lambda_0)^* J y (x) = 0.
\end{equation}
We see from Lemma \ref{lagrangian-limit-lemma} that 
if $y$ lies right in $(a, b)$ and satisfies 
(\ref{boundary-b-equation}) then for any 
$\lambda \in \mathbb{C}$ for which 
\begin{equation*}
    \lim_{x \to b^-} U^b (x; \lambda_0)^* J U^b (x; \lambda) = 0,
\end{equation*}
we have
\begin{equation} \label{boundary-b-equation-lambda}
    \lim_{x \to b^-} U^b (x; \lambda)^* J y (x) = 0.
\end{equation}
Here, $U^b (x; \lambda)$ is the $2n \times n$ 
matrix comprising as its columns the basis 
elements $\{u_j^b (x; \lambda)\}_{j=1}^n$ described in 
Lemma \ref{continuity-lemma}. Since these columns 
are necessarily linearly independent, there must 
exist a rank-$n$ $2n \times n$ matrix $\mathbf{R}^b (\lambda)$
so that $U^b (x; \lambda) = \Phi (x; \lambda) \mathbf{R}^b (\lambda)$.
We know from Lemma \ref{frames-lemma} that 
for $\lambda \in [\lambda_1, \lambda_2]$, 
solutions $y$ of (\ref{linear-hammy}) that lie 
right in $(a, b)$ satisfy (\ref{boundary-b-equation-lambda})
if and only if they satisfy (\ref{boundary-b-equation}). 
This allows us to work with (\ref{boundary-b-equation-lambda})
as our boundary condition at $x=b$ rather than 
(\ref{boundary-b-equation}).

We proceed now by multiplying (\ref{vp-solution})
on the left by $U^b (x; \lambda)^* J$, 
giving 
\begin{equation*}
\begin{aligned}
U^b (x; \lambda)^* J y(x; \lambda) 
&= - U^b (x; \lambda)^* J \Phi (x; \lambda) \int_a^x J \Phi (\xi; \lambda)^* B_1 (\xi) f(\xi) d\xi \\
&\quad \quad + U^b (x; \lambda)^* J \Phi (x; \lambda) k (\lambda) \\
&= \int_a^x \mathbf{R}^b (\lambda)^* \Phi (\xi; \lambda)^* B_1 (\xi) f(\xi) d\xi
+  \mathbf{R}^b (\lambda)^* J k(\lambda),
\end{aligned}
\end{equation*}
where we've used the identity (\ref{conj-no-conj}). By construction, 
$\Phi (\xi; \lambda) \mathbf{R}^b (\lambda) \in L^2_{B_1} ((a, b), \mathbb{C}^{2n})$,
so in the limit as $x \to b^-$, we obtain the relation
\begin{equation} \label{green-boundary2}
\int_a^b \mathbf{R}^b (\lambda)^* \Phi (\xi; \lambda)^* B_1 (\xi) f(\xi) d\xi
+  \mathbf{R}^b (\lambda)^* J k(\lambda) = 0.
\end{equation} 
Combining (\ref{green-boundary1}) and (\ref{green-boundary2}),
we obtain the system 
\begin{equation} \label{system-for-k}
    \begin{pmatrix}
    (J \alpha^*)^* \\ \mathbf{R}^b (\lambda)^*
    \end{pmatrix} 
    J k (\lambda)
    = \begin{pmatrix}
    0 \\ - \int_a^b \mathbf{R}^b (\lambda)^* \Phi (\xi; \lambda)^* B_1 (\xi) f(\xi) d\xi
    \end{pmatrix}. 
\end{equation}

We set 
\begin{equation*}
    \mathcal{E} (\lambda) 
    := \begin{pmatrix}
    J \alpha^* & \mathbf{R}^b (\lambda)
    \end{pmatrix},  
\end{equation*}
and we observe that if $\lambda \notin \sigma_p (\mathcal{L}^{\alpha})$
then $\mathcal{E} (\lambda)$ is invertible. This is because
$U^a (x; \lambda) = \Phi (x; \lambda) J \alpha^*$ and 
$U^b (x; \lambda) = \Phi (x; \lambda) \mathbf{R}^b (\lambda)$, so 
that 
\begin{equation*}
    U^a (x; \lambda)^* J U^b (x; \lambda)
    = (J \alpha^*)^* J \mathbf{R}^b (\lambda).
\end{equation*}
The left-hand side of this last relation is non-singular 
if and only if $\lambda \notin \sigma_p (\mathcal{L}^{\alpha})$
(because in that case the Lagrangian subspaces with 
frames $U^a (x; \lambda)$ and $U^b (x; \lambda)$ do 
not intersect), and the right-hand side of this last relation is 
non-singular if and only if $\mathcal{E} (\lambda)$
is non-singular. Accordingly, we can solve (\ref{system-for-k})
with 
\begin{equation*}
    k(\lambda) = J (\mathcal{E} (\lambda)^*)^{-1}
    \int_a^b \begin{pmatrix}
    0 & \mathbf{R}^b (\lambda) 
    \end{pmatrix}^* \Phi (\xi; \lambda)^* B_1 (\xi) f(\xi) d\xi. 
\end{equation*}
Upon substitution back into (\ref{vp-solution}),
we obtain 
\begin{equation*}
\begin{aligned}
y(x; \lambda) &= - \Phi (x; \lambda) \int_a^x J \Phi (\xi; \lambda)^* B_1 (\xi) f(\xi) d\xi \\
&+ \Phi (x; \lambda) J (\mathcal{E} (\lambda)^*)^{-1}
    \int_a^b \begin{pmatrix}
    0 & \mathbf{R}^b (\lambda)
    \end{pmatrix}^* \Phi (\xi; \lambda)^* B_1 (\xi) f(\xi) d\xi \\
&= - \Phi (x; \lambda) J (\mathcal{E} (\lambda)^*)^{-1} \mathcal{E} (\lambda)^* \int_a^x \Phi (\xi; \lambda)^* B_1 (\xi) f(\xi) d\xi \\
&+ \Phi (x; \lambda) J (\mathcal{E} (\lambda)^*)^{-1}
    \int_a^b \begin{pmatrix}
    0 & \mathbf{R}^b (\lambda)
    \end{pmatrix}^* \Phi (\xi; \lambda)^* B_1 (\xi) f(\xi) d\xi.
\end{aligned}
\end{equation*} 
Continuing with this calculation, we next see that 
\begin{equation*}
    \begin{aligned}
    y(x; \lambda) &= - \Phi (x; \lambda) J (\mathcal{E} (\lambda)^*)^{-1} 
    \begin{pmatrix}
    J \alpha^* & 0
    \end{pmatrix}^*
    \int_a^x \Phi (\xi; \lambda)^* B_1 (\xi) f(\xi) d\xi \\
    &- \Phi (x; \lambda) J (\mathcal{E} (\lambda)^*)^{-1} 
    \begin{pmatrix}
    0 & \mathbf{R}^b (\lambda)
    \end{pmatrix}^*
    \int_a^x \Phi (\xi; \lambda)^* B_1 (\xi) f(\xi) d\xi \\
    &+ \Phi (x; \lambda) J (\mathcal{E} (\lambda)^*)^{-1}
    \begin{pmatrix}
    0 & \mathbf{R}^b (\lambda)
    \end{pmatrix}^*
    \int_a^b \Phi (\xi; \lambda)^* B_1 (\xi) f(\xi) d\xi \\
    &= - \Phi (x; \lambda) J (\mathcal{E} (\lambda)^*)^{-1} 
    \begin{pmatrix}
    J \alpha^* & 0
    \end{pmatrix}^*
    \int_a^x \Phi (\xi; \lambda)^* B_1 (\xi) f(\xi) d\xi \\
    &+ \Phi (x; \lambda) J (\mathcal{E} (\lambda)^*)^{-1}
    \begin{pmatrix}
    0 & \mathbf{R}^b (\lambda)
    \end{pmatrix}^*
    \int_x^b \Phi (\xi; \lambda)^* B_1 (\xi) f(\xi) d\xi. 
    \end{aligned}
\end{equation*}

We see by inspection that 
\begin{equation*}
    G^{\alpha} (x, \xi; \lambda)
    = \begin{cases}
    - \Phi (x; \lambda) J (\mathcal{E} (\lambda)^*)^{-1} 
    \begin{pmatrix}
    J \alpha^* & 0
    \end{pmatrix}^*
    \Phi (\xi; \lambda)^* & a < \xi < x < b \\
    \Phi (x; \lambda) J (\mathcal{E} (\lambda)^*)^{-1}
    \begin{pmatrix}
    0 & \mathbf{R}^b (\lambda)
    \end{pmatrix}^*
    \Phi (\xi; \lambda)^* & a < x < \xi < b.
    \end{cases}
\end{equation*}

We can express $G^{\alpha} (x, \xi; \lambda)$ in a more symmetric form. 
To see this, we first observe that 
\begin{equation*}
    \begin{aligned}
    \mathcal{E} (\lambda)^* J \mathcal{E} (\lambda)
    &= \begin{pmatrix}
    - \alpha J \\ \mathbf{R}^b (\lambda)^*
    \end{pmatrix} J
    \begin{pmatrix}
    J \alpha^* & \mathbf{R}^b (\lambda)
    \end{pmatrix} \\
    &= \begin{pmatrix}
    \alpha J \alpha^* & \alpha \mathbf{R}^b (\lambda) \\
    - \mathbf{R}^b (\lambda)^* \alpha^* & \mathbf{R}^b (\lambda)^* J \mathbf{R}^b (\lambda)
    \end{pmatrix}
    = \begin{pmatrix}
    0 & \alpha \mathbf{R}^b (\lambda) \\
    - (\alpha \mathbf{R}^b (\lambda))^*  & 0
    \end{pmatrix},
    \end{aligned}
\end{equation*}
where we've used the observations that $J \alpha^*$ and 
$\mathbf{R}^b (\lambda)$ are frames for Lagrangian subspaces of 
$\mathbb{C}^{2n}$. Here, $\alpha \mathbf{R}^b (\lambda) = (J \alpha^*)^* J \mathbf{R}^b (\lambda)$,
and we've already seen that this matrix is non-singular so long
as $\lambda \notin \sigma_p (\mathcal{L}^{\alpha})$. This allows 
us to write 
\begin{equation} \label{inverse-matrix}
   (\mathcal{E} (\lambda)^* J \mathcal{E} (\lambda))^{-1}
   = \begin{pmatrix}
    0 & - ((\alpha \mathbf{R}^b (\lambda))^*)^{-1} \\
    (\alpha \mathbf{R}^b (\lambda))^{-1}  & 0
    \end{pmatrix}.
\end{equation}
It follows that 
\begin{equation*}
    \begin{aligned}
    - &\begin{pmatrix}
    J \alpha^* & 0
    \end{pmatrix}
    \mathcal{E} (\lambda)^{-1} J (\mathcal{E} (\lambda)^*)^{-1} 
    \begin{pmatrix}
    0 & \mathbf{R}^b (\lambda)
    \end{pmatrix}^* \\
    & = \begin{pmatrix}
    J \alpha^* & 0
    \end{pmatrix}
    \begin{pmatrix}
    0 & - ((\alpha \mathbf{R}^b (\lambda))^*)^{-1} \\
    (\alpha \mathbf{R}^b (\lambda))^{-1}  & 0
    \end{pmatrix}
    \begin{pmatrix}
    0 \\ \mathbf{R}^b (\lambda)^*
    \end{pmatrix} \\
    &= - \begin{pmatrix}
    J \alpha^* & 0
    \end{pmatrix}
    \begin{pmatrix}
    ((\alpha \mathbf{R}^b (\lambda))^*)^{-1} \mathbf{R}^b (\lambda)^* \\ 0
    \end{pmatrix}
    = - (J \alpha^*) (\alpha \mathbf{R}^b (\lambda)^*)^{-1} \mathbf{R}^b (\lambda)^*.
    \end{aligned}
\end{equation*}

On the other hand, (\ref{inverse-matrix}) also 
allows us to write 
\begin{equation*}
    (\mathcal{E} (\lambda)^*)^{-1}
    = J \mathcal{E} (\lambda)
    \begin{pmatrix}
    0 & - ((\alpha \mathbf{R}^b (\lambda))^*)^{-1} \\
    (\alpha \mathbf{R}^b (\lambda))^{-1}  & 0
    \end{pmatrix},
\end{equation*}
from which we see that 
\begin{equation*}
\begin{aligned}
(\mathcal{E} (\lambda)^*)^{-1}  
    &\begin{pmatrix}
    0 & \mathbf{R}^b (\lambda)
    \end{pmatrix}^* 
= J \mathcal{E} (\lambda)
    \begin{pmatrix}
    0 & - ((\alpha \mathbf{R}^b (\lambda))^*)^{-1} \\
    (\alpha \mathbf{R}^b (\lambda))^{-1}  & 0
    \end{pmatrix}
    \begin{pmatrix}
    0 \\ \mathbf{R}^b (\lambda)^*
    \end{pmatrix} \\
    &= J 
    \begin{pmatrix}
    J \alpha^* & \mathbf{R}^b (\lambda)
    \end{pmatrix}
    \begin{pmatrix}
    - ((\alpha \mathbf{R}^b (\lambda))^*)^{-1} \mathbf{R}^b (\lambda)^* \\ 0
    \end{pmatrix}
    = \alpha^* ((\alpha \mathbf{R}^b (\lambda))^*)^{-1} \mathbf{R}^b (\lambda)^*.
\end{aligned}
\end{equation*}
In this way, we see that 
\begin{equation*}
    J (\mathcal{E} (\lambda)^*)^{-1}  
    \begin{pmatrix}
    0 & \mathbf{R}^b (\lambda)
    \end{pmatrix}^* 
    =  \begin{pmatrix}
    J \alpha^* & 0
    \end{pmatrix} 
    \mathcal{E} (\lambda)^{-1} 
    J (\mathcal{E} (\lambda)^*)^{-1}  
    \begin{pmatrix}
    0 & \mathbf{R}^b (\lambda)
    \end{pmatrix}^*.
\end{equation*}
We will set 
\begin{equation*}
    \mathcal{M} (\lambda) := \mathcal{E} (\lambda)^{-1} J (\mathcal{E} (\lambda)^*)^{-1}, 
\end{equation*}
from which we observe that 
\begin{equation*}
    \mathcal{M} (\lambda)^* = - \mathcal{M} (\lambda).
\end{equation*}
For $a < x < \xi < b$, we will re-write 
$G^{\alpha} (x, \xi; \lambda)$ by using the relation 
\begin{equation*}
    J (\mathcal{E} (\lambda)^*)^{-1}  
    \begin{pmatrix}
    0 & \mathbf{R}^b (\lambda)
    \end{pmatrix}^* 
    = \begin{pmatrix}
    J \alpha^* & 0
    \end{pmatrix} 
    \mathcal{M} (\lambda)
    \begin{pmatrix}
    0 & \mathbf{R}^b (\lambda)
    \end{pmatrix}^*.
\end{equation*}

Proceeding similarly for $a < \xi < x < b$, we find  
\begin{equation*}
    J (\mathcal{E} (\lambda)^*)^{-1}  
    \begin{pmatrix}
    J \alpha^* & 0
    \end{pmatrix}^* 
    = \begin{pmatrix}
    0 & \mathbf{R}^b (\lambda)
    \end{pmatrix} 
    \mathcal{M} (\lambda)
    \begin{pmatrix}
    J \alpha^* & 0
    \end{pmatrix}^*.
\end{equation*}
These relations allow us to express $G^{\alpha} (x, \xi; \lambda)$
as 
\begin{equation*}
    G^{\alpha} (x, \xi; \lambda)
    = \begin{cases}
    - \Phi (x; \lambda) 
    \begin{pmatrix}
    0 & \mathbf{R}^b (\lambda)
    \end{pmatrix} 
    \mathcal{M} (\lambda)
    \begin{pmatrix}
    J \alpha^* & 0
    \end{pmatrix}^*
    \Phi (\xi; \lambda)^* & a < \xi < x < b \\
    \Phi (x; \lambda) 
    \begin{pmatrix}
    J \alpha^* & 0
    \end{pmatrix} 
    \mathcal{M} (\lambda)
    \begin{pmatrix}
    0 & \mathbf{R}^b (\lambda)
    \end{pmatrix}^*
    \Phi (\xi; \lambda)^* & a < x < \xi < b.
    \end{cases}
\end{equation*}

\section{The Maslov Index} \label{maslov-section}

Our framework for computing the Maslov index is adapted from 
Section 2 of \cite{HS2}, and we briefly sketch the main ideas
here. Given any pair of Lagrangian subspaces $\ell_1$ and 
$\ell_2$ with respective frames $\mathbf{X}_1 = {X_1 \choose Y_1}$
and $\mathbf{X}_2 = {X_2 \choose Y_2}$, we consider the matrix
\begin{equation} \label{tildeW}
\tilde{W} := - (X_1 + iY_1)(X_1-iY_1)^{-1} (X_2 - iY_2)(X_2+iY_2)^{-1}. 
\end{equation}
In \cite{HS2}, the authors establish: (1) the inverses 
appearing in (\ref{tildeW}) exist; (2) $\tilde{W}$ is independent
of the specific frames $\mathbf{X}_1$ and $\mathbf{X}_2$ (as long
as these are indeed frames for $\ell_1$ and $\ell_2$); (3) $\tilde{W}$ is 
unitary; and (4) the identity 
\begin{equation} \label{key}
\dim (\ell_1 \cap \ell_2) = \dim (\ker (\tilde{W} + I)).
\end{equation}
Given two continuous paths of Lagrangian subspaces 
$\ell_i: [0, 1] \to \Lambda (n)$, $i = 1, 2$, with 
respective frames $\mathbf{X}_i: [0,1] \to \mathbb{C}^{2n \times n}$,
relation (\ref{key}) allows us to compute the Maslov 
index $\mas (\ell_1, \ell_2; [0,1])$ as a spectral flow
through $-1$ for the path of matrices 
\begin{equation} 
\tilde{W} (t) := - (X_1 (t) + iY_1 (t))(X_1 (t)-iY_1 (t))^{-1} 
(X_2 (t) - iY_2 (t))(X_2 (t)+iY_2 (t))^{-1}. 
\end{equation}

In \cite{HS2}, the authors provide a rigorous definition 
of the Maslov index based on the spectral flow developed 
in \cite{P96}. Here, rather, we give only an intuitive 
discussion. As a starting point, 
if $-1 \in \sigma (\tilde{W} (t_*))$ for some $t_* \in [0, 1]$, 
then we refer to $t_*$ as a conjugate point, and its 
multiplicity is taken to be $\dim (\ell_1 (t_*) \cap \ell_2 (t_*))$, 
which by virtue of (\ref{key}) is equivalent to its 
multiplicity as an eigenvalue of $\tilde{W} (t_*)$. 
We compute the Maslov index $\mas (\ell_1, \ell_2; [0, 1])$ 
by allowing $t$ to increase from $0$ to $1$ and incrementing 
the index whenever an eigenvalue crosses $-1$ in the 
counterclockwise direction, while decrementing the index
whenever an eigenvalue crosses $-1$ in the clockwise
direction. These increments/decrements are counted with 
multiplicity, so for example, if a pair of eigenvalues 
crosses $-1$ together in the counterclockwise direction, 
then a net amount of $+2$ is added to the index. Regarding
behavior at the endpoints, if an eigenvalue of $\tilde{W}$
rotates away from $-1$ in the clockwise direction as $t$ increases
from $0$, then the Maslov index decrements (according to 
multiplicity), while if an eigenvalue of $\tilde{W}$
rotates away from $-1$ in the counterclockwise direction as $t$ increases
from $0$, then the Maslov index does not change. Likewise, 
if an eigenvalue of $\tilde{W}$ rotates into $-1$ in the 
counterclockwise direction as $t$ increases
to $1$, then the Maslov index increments (according to 
multiplicity), while if an eigenvalue of $\tilde{W}$
rotates into $-1$ in the clockwise direction as $t$ increases
to $1$, then the Maslov index does not change. Finally, 
it's possible that an eigenvalue of $\tilde{W}$ will arrive 
at $-1$ for $t = t_*$ and remain at $-1$ as $t$ traverses
an interval. In these cases, the 
Maslov index only increments/decrements upon arrival or 
departure, and the increments/decrements are determined 
as for the endpoints (departures determined as with $t=0$,
arrivals determined as with $t = 1$).

One of the most important features of the Maslov index is homotopy invariance, 
for which we need to consider continuously varying families of Lagrangian 
paths. To set some notation, we denote by $\mathcal{P} (\mathcal{I})$ the collection 
of all paths $\mathcal{L} (t) = (\ell_1 (t), \ell_2 (t))$, where 
$\ell_1, \ell_2: \mathcal{I} \to \Lambda (n)$ are continuous paths in the 
Lagrangian--Grassmannian. We say that two paths 
$\mathcal{L}, \mathcal{M} \in \mathcal{P} (\mathcal{I})$ are homotopic provided 
there exists a family $\mathcal{H}_s$ so that 
$\mathcal{H}_0 = \mathcal{L}$, $\mathcal{H}_1 = \mathcal{M}$, 
and $\mathcal{H}_s (t)$ is continuous as a map from $(t,s) \in \mathcal{I} \times [0,1]$
into $\Lambda (n) \times \Lambda (n)$. 

The Maslov index has the following properties. 

\medskip
\noindent
{\bf (P1)} (Path Additivity) If $\mathcal{L} \in \mathcal{P} (\mathcal{I})$
and $a, b, c \in \mathcal{I}$, with $a < b < c$, then 
\begin{equation*}
\mas (\mathcal{L};[a, c]) = \mas (\mathcal{L};[a, b]) + \mas (\mathcal{L}; [b, c]).
\end{equation*}

\medskip
\noindent
{\bf (P2)} (Homotopy Invariance) If $\mathcal{L}, \mathcal{M} \in \mathcal{P} (\mathcal{I})$ 
are homotopic, with $\mathcal{L} (a) = \mathcal{M} (a)$ and  
$\mathcal{L} (b) = \mathcal{M} (b)$ (i.e., if $\mathcal{L}, \mathcal{M}$
are homotopic with fixed endpoints) then 
\begin{equation*}
\mas (\mathcal{L};[a, b]) = \mas (\mathcal{M};[a, b]).
\end{equation*} 

Straightforward proofs of these properties appear in \cite{HLS1}
for Lagrangian subspaces of $\mathbb{R}^{2n}$, and proofs in the current setting of 
Lagrangian subspaces of $\mathbb{C}^{2n}$ are essentially identical.

\subsection{Direction of Rotation} \label{rotation_section}

As noted previously, the direction we associate with a 
conjugate point is determined by the direction in which eigenvalues
of $\tilde{W}$ rotate through $-1$ (counterclockwise is positive, 
while clockwise is negative). In this subsection, we review
the framework developed in \cite{HS2} for analyzing this direction.
Our starting point is the following lemma from \cite{HS2}. 

\begin{lemma} \label{monotonicity1}
Suppose $\ell_1, \ell_2: \mathcal{I} \to \Lambda (n)$ denote paths of 
Lagrangian subspaces of $\mathbb{C}^{2n}$ with absolutely 
continuous frames $\mathbf{X}_1 = {X_1 \choose Y_1}$
and $\mathbf{X}_2 = {X_2 \choose Y_2}$ (respectively). If there exists 
$\delta > 0$ so that the matrices 
\begin{equation*}
- \mathbf{X}_1 (t)^* J \mathbf{X}_1' (t) = X_1 (t)^* Y_1' (t) - Y_1 (t)^* X_1'(t)
\end{equation*}
and (noting the sign change)
\begin{equation*}
\mathbf{X}_2 (t)^* J \mathbf{X}_2' (t) = - (X_2 (t)^* Y_2' (t) - Y_2 (t)^* X_2'(t))
\end{equation*}
are both a.e.-non-negative in $(t_0-\delta,t_0+\delta)$, and at least one is 
a.e.-positive definite in $(t_0-\delta,t_0+\delta)$ then the eigenvalues of 
$\tilde{W} (t)$ rotate in the counterclockwise direction as $t$ increases through $t_0$. 
Likewise, if both of these matrices are a.e.-non-positive, and at least one is 
a.e.-negative definite, 
then the eigenvalues of $\tilde{W} (t)$ rotate in the clockwise direction as 
$t$ increases through $t_0$.
\end{lemma}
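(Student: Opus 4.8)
The plan is to reduce the statement to a first-order eigenvalue computation for the unitary path $\tilde{W}(t)$ and then to match the sign of the relevant generator against the two crossing forms in the hypothesis. For $j=1,2$ set $W_j := (X_j+iY_j)(X_j-iY_j)^{-1}$; these are the unitaries attached to $\ell_1,\ell_2$ by the Souriau-type map, and rearranging (\ref{tildeW}) gives $\tilde{W} = -W_1 W_2^{*}$. Since $\tilde{W}(t)$ is unitary, its eigenvalues are $e^{i\theta_k(t)}$, and a conjugate point at $t_0$ is a $t$ at which some $\theta_k \equiv \pi \pmod{2\pi}$. Because ``counterclockwise'' means increasing argument, the direction of rotation through $-1$ is the sign of $\theta_k'(t_0)$. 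By the standard perturbation formula for eigenvalues of a smooth unitary path, if $\tilde{W}(t_0)v=-v$ with $\|v\|=1$ then $\theta_k'(t_0)=\langle H(t_0)v,v\rangle$, where $H := -i\,\tilde{W}^{*}\tilde{W}'$ is the Hermitian generator; for a degenerate eigenvalue the rates are the eigenvalues of $H$ compressed to $\ker(\tilde{W}(t_0)+I)$, so it suffices to control the sign of $\langle Hv,v\rangle$ there (we will in fact control it on all of $\mathbb{C}^n$).

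The first computation is to put $H$ in factored form. Differentiating $\tilde{W}=-W_1W_2^{*}$ and repeatedly using $W_j^{*}W_j=I$ cancels the mixed terms, and with the Hermitian generators $\Omega_j := -i\,W_j^{*}W_j'$ one obtains
\begin{equation*}
H = -i\,\tilde{W}^{*}\tilde{W}' = W_2(\Omega_1-\Omega_2)W_2^{*}.
\end{equation*}
Hence $\langle Hv,v\rangle = \langle(\Omega_1-\Omega_2)w,w\rangle$ with $w:=W_2^{*}v$, and as $v$ ranges over the nonzero vectors so does $w$. Everything now rests on the sign of $\Omega_1-\Omega_2$.

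The crux is the identity tying each generator to its crossing form. Writing $M_j := X_j-iY_j$, which is invertible because $M_j^{*}M_j = X_j^{*}X_j+Y_j^{*}Y_j$ is the (positive-definite) Gram matrix of the frame, and $C_j := X_j^{*}Y_j'-Y_j^{*}X_j'$, I would expand $\Omega_j$ from $W_j=(X_j+iY_j)M_j^{-1}$. The Lagrangian condition $X_j^{*}Y_j=Y_j^{*}X_j$ enters twice: once so that $(X_j+iY_j)^{*}(X_j+iY_j)=M_j^{*}M_j$, which collapses the indefinite diagonal contribution, and once in differentiated form to show $C_j$ is Hermitian. This yields $\Omega_j = 2\,(M_j^{*})^{-1}C_j M_j^{-1}$. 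Since $C_1 = -\mathbf{X}_1^{*}J\mathbf{X}_1'$ is precisely the first hypothesis matrix and $-C_2 = \mathbf{X}_2^{*}J\mathbf{X}_2'$ is precisely the second (this is the source of the sign change noted in the statement),
\begin{equation*}
\Omega_1-\Omega_2 = 2\,(M_1^{*})^{-1}\bigl(-\mathbf{X}_1^{*}J\mathbf{X}_1'\bigr)M_1^{-1} + 2\,(M_2^{*})^{-1}\bigl(\mathbf{X}_2^{*}J\mathbf{X}_2'\bigr)M_2^{-1}.
\end{equation*}
Each summand is a congruence (by the invertible $M_j^{-1}$) of one of the hypothesis matrices, hence positive semidefinite when that matrix is, and positive definite when that matrix is. Under the first set of hypotheses $\Omega_1-\Omega_2\ge 0$ with strict definiteness supplied by whichever form is definite, so $\langle Hv,v\rangle>0$ for every nonzero $v$ and the eigenvalues rotate counterclockwise; the non-positive/negative-definite case is identical with all signs reversed.

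I expect the main obstacle to be twofold. The substantive point is the generator identity $\Omega_j=2(M_j^{*})^{-1}C_jM_j^{-1}$, where the Lagrangian relation and its derivative must be used carefully to cancel the indefinite diagonal pieces and to certify that $C_j$ is Hermitian; getting the two signs to align so that $\Omega_1-\Omega_2$ becomes a \emph{sum} of semidefinite congruences (rather than a difference) is exactly where the sign change in the hypothesis is consumed. The technical point is that the frames are only absolutely continuous, so $\theta_k'$ exists only a.e. and eigenvalues may be degenerate; I would handle this by noting that the sign condition holds a.e. on all of $(t_0-\delta,t_0+\delta)$, so each argument $\theta_k$ is monotone there, and invoke Kato's perturbation theory to account for the multiplicity $\dim(\ell_1(t_0)\cap\ell_2(t_0))$ of the crossing via (\ref{key}).
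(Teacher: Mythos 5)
Your proof is correct and is essentially the argument this paper relies on: the paper does not prove Lemma \ref{monotonicity1} itself but quotes it from \cite{HS2}, where the proof proceeds exactly as you propose — writing $\tilde{W} = -W_1 W_2^*$, reducing the rotation rate to the Hermitian generator $-i\tilde{W}^*\tilde{W}' = W_2(\Omega_1 - \Omega_2)W_2^*$, and identifying each $\Omega_j = 2(M_j^*)^{-1}\bigl(X_j^* Y_j' - Y_j^* X_j'\bigr)M_j^{-1}$ as a congruence of the corresponding crossing form. Your sign bookkeeping (namely $C_1 = -\mathbf{X}_1^* J \mathbf{X}_1'$ and $-C_2 = \mathbf{X}_2^* J \mathbf{X}_2'$, so that $\Omega_1 - \Omega_2$ becomes a \emph{sum} of semidefinite congruences) and your treatment of the a.e.-differentiability and multiplicity issues via the compression of $H$ to $\ker(\tilde{W}(t_0)+I)$ match the cited treatment.
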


\begin{remark} The corresponding statement Lemma 4.2 in \cite{HLS1} is stated
in the slightly more restrictive case in which the frames are continuously
differentiable. 
\end{remark}

For our applications to linear Hamiltonian systems, Lemma \ref{monotonicity1} is 
generally all we need to establish monotonicity in the spectral parameter. However, 
for monotonicity as the independent variable varies, we typically require 
additional information. 

Our primary interest is with solutions of (\ref{linear-hammy}), so (suppressing the spectral
parameter for the moment) let 
$\ell_1 (t)$ and $\ell_2 (t)$ denote Lagrangian subspaces 
with respective frames 
\begin{equation*}
\mathbf{X}_1 (t) = {X_1 (t) \choose Y_1 (t)}; \quad \mathbf{X}_2 (t) = {X_2 (t) \choose Y_2 (t)}, 
\end{equation*}
satisfying 
\begin{equation*}
\begin{aligned}
J \mathbf{X}_1' &= \mathbb{B}_1 (t) \mathbf{X}_1 \\
J \mathbf{X}_2' &= \mathbb{B}_2 (t) \mathbf{X}_2, 
\end{aligned}
\end{equation*}
where $\mathbb{B}_1, \mathbb{B}_2 \in L^1 ((a,b);\mathbb{C}^{2n \times 2n})$, $a < b$,
are paths of self-adjoint matrices. 

In this setting, we have the following lemma from \cite{HS2}. 

\begin{lemma} \label{rotation_lemma}
Suppose $\mathbb{B}_1, \mathbb{B}_2 \in L^1 ((a,b);\mathbb{C}^{2n \times 2n})$,
with $\mathbb{B}_1 (t), \mathbb{B}_2 (t)$ self-adjoint for a.e. $t \in (a, b)$,
and let $\ell_1 (t)$ and $\ell_2 (t)$ be Lagrangian subspaces with 
respective frames $\mathbf{X}_1 (t)$ and $\mathbf{X}_2 (t)$ satisfying 
\begin{equation*}
J \mathbf{X}_i' = \mathbb{B}_i (t) \mathbf{X}_i (t); \quad t \in [a, b], 
\quad i = 1, 2.
\end{equation*}
Let $t_* \in [a,b]$ be a conjugate point for $\ell_1 (t)$ and $\ell_2 (t)$
so that $\dim (\ell_1 (t_*) \cap \ell_2 (t_*)) = m \in \mathbb{N}$, and let $\mathbb{P}_*$
denote projection onto $\ell_1 (t_*) \cap \ell_2 (t_*)$. Fix $\delta_0 > 0$
sufficiently small so that $t_*$ is the only conjugate point for 
$\ell_1 (t)$ and $\ell_2 (t)$ on $(t_* - \delta_0, t_* + \delta_0)$. If 
there exists $0 < \delta < \delta_0$ so that 
$\mathbb{P}_* (\mathbb{B}_2 (t) - \mathbb{B}_1 (t)) \mathbb{P}_*$
has $m_-$ a.e.-negative eigenvalues on $(t_* - \delta, t_*+\delta) \cap [a,b]$,
and $m_+$ a.e.-positive eigenvalues on $(t_* - \delta, t_*+\delta) \cap [a,b]$,
and if in addition $m_- + m_+ = m$, then the following hold: 

\smallskip
\noindent
(i) if $t_* \in (a,b)$,  
\begin{equation*}
\mas (\ell_1, \ell_2; [t_* - \delta, t_* + \delta]) = m_+ - m_-;
\end{equation*}  

\noindent
(ii) If $t_*=a$, then 
\begin{equation*}
\mas (\ell_1, \ell_2; [a, a+ \delta]) = - m_-;
\end{equation*}

\noindent
(iii) If $t_* = b$, then 
\begin{equation*}
\mas (\ell_1, \ell_2; [b - \delta, b]) = m_+.  
\end{equation*}
\end{lemma}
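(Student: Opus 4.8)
The plan is to localize the computation at the single conjugate point $t_*$ and to identify the net rotation of the eigenvalues of $\tilde{W}(t)$ through $-1$ with the signature of a crossing form that I will show coincides with $\mathbb{P}_* (\mathbb{B}_2 - \mathbb{B}_1) \mathbb{P}_*$. By path additivity (property (P1)) together with the assumption that $t_*$ is the only conjugate point in $(t_* - \delta_0, t_* + \delta_0)$, the Maslov index over $[t_* - \delta, t_* + \delta] \cap [a,b]$ equals the net signed count of eigenvalues of $\tilde{W}$ that cross $-1$ at $t_*$. Thus it suffices to decide, for each direction in $\ell_1 (t_*) \cap \ell_2 (t_*)$, whether the associated eigenvalue of $\tilde{W}$ rotates clockwise or counterclockwise.

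The next step is to construct and evaluate the crossing form. Any nonzero $v \in \ell_1 (t_*) \cap \ell_2 (t_*)$ may be written simultaneously as $v = \mathbf{X}_1 (t_*) c_1 = \mathbf{X}_2 (t_*) c_2$ for suitable $c_1, c_2 \in \mathbb{C}^n$. Mirroring Lemma \ref{monotonicity1}, whose two monotonicity quantities are $-\mathbf{X}_1^* J \mathbf{X}_1'$ and $\mathbf{X}_2^* J \mathbf{X}_2'$ (both non-negative forcing counterclockwise rotation), I would define on the intersection the combined crossing form
\begin{equation*}
Q(v) := c_1^* \big(- \mathbf{X}_1 (t_*)^* J \mathbf{X}_1' (t_*)\big) c_1 + c_2^* \big(\mathbf{X}_2 (t_*)^* J \mathbf{X}_2' (t_*)\big) c_2 .
\end{equation*}
Using the governing equations $J \mathbf{X}_i' = \mathbb{B}_i \mathbf{X}_i$, each block simplifies via $\mathbf{X}_i^* J \mathbf{X}_i' = \mathbf{X}_i^* \mathbb{B}_i \mathbf{X}_i$, so that $c_i^* \mathbf{X}_i (t_*)^* \mathbb{B}_i (t_*) \mathbf{X}_i (t_*) c_i = v^* \mathbb{B}_i (t_*) v$, whence $Q(v) = v^* (\mathbb{B}_2 (t_*) - \mathbb{B}_1 (t_*)) v$. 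This is precisely the quadratic form of $\mathbb{P}_* (\mathbb{B}_2 - \mathbb{B}_1) \mathbb{P}_*$ restricted to $\ell_1 (t_*) \cap \ell_2 (t_*)$, and in particular it is independent of the chosen frames.

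With the crossing form identified, I would split $\ell_1 (t_*) \cap \ell_2 (t_*)$ into the directions on which $\mathbb{P}_* (\mathbb{B}_2 - \mathbb{B}_1) \mathbb{P}_*$ is positive (an $m_+$-dimensional piece) and negative (an $m_-$-dimensional piece); the hypothesis $m_+ + m_- = m$ guarantees no null direction, i.e., that the crossing is regular. Since $Q$ positive corresponds exactly to the non-negativity configuration of Lemma \ref{monotonicity1}, the monotonicity content of that lemma—applied to the sub-paths carrying these directions—shows that the $m_+$ directions send the corresponding eigenvalues of $\tilde{W}$ through $-1$ counterclockwise, while the $m_-$ directions send them clockwise. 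Counting with the conventions of Section \ref{maslov-section} then gives, for an interior crossing, $m_+ - m_-$. At $t_* = a$ only rightward departures are seen, so counterclockwise directions contribute nothing and each clockwise direction decrements, yielding $-m_-$; at $t_* = b$ only leftward arrivals are seen, so counterclockwise arrivals increment and clockwise ones contribute nothing, yielding $m_+$.

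The main obstacle is the last step under the stated low regularity. Because $\mathbb{B}_1, \mathbb{B}_2 \in L^1$, the form $\mathbb{P}_* (\mathbb{B}_2 - \mathbb{B}_1) \mathbb{P}_*$ need not have a pointwise value at $t_*$, which is exactly why the hypotheses are cast as a.e.-sign conditions with stable counts $m_\pm$ on a neighborhood rather than as nondegeneracy of a single pointwise crossing form. Converting these averaged conditions into a genuine directional splitting of the paths near $t_*$, to which Lemma \ref{monotonicity1} applies directionally, is delicate since the positive and negative eigenspaces may themselves rotate with $t$. I would control this by restricting $\tilde{W}(t)$ to its spectral subspace near $-1$, reducing to a finite-dimensional spectral-flow computation there, and invoking homotopy invariance (property (P2)) to replace the given paths near $t_*$ by model paths carrying the same crossing data.
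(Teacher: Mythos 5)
A preliminary remark: the paper itself does not prove Lemma \ref{rotation_lemma} --- it is quoted from \cite{HS2} --- so your proposal can only be measured against the argument that reference supplies. Your opening moves are sound and standard: localization via path additivity, and the frame-independent computation showing that the combined crossing form satisfies $Q(v) = v^*\big(\mathbb{B}_2(t_*)-\mathbb{B}_1(t_*)\big)v$, i.e.\ that it is the quadratic form of $\mathbb{P}_*(\mathbb{B}_2-\mathbb{B}_1)\mathbb{P}_*$ on $\ell_1(t_*)\cap\ell_2(t_*)$; your interior and endpoint counting conventions also match Section \ref{maslov-section}.

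The genuine gap is the step that carries all of the content, and it is twofold. First, your assertion that ``$Q$ positive corresponds exactly to the non-negativity configuration of Lemma \ref{monotonicity1},'' so that the monotonicity lemma can be applied ``to the sub-paths carrying these directions,'' is not a legitimate use of that lemma even for smooth coefficients: Lemma \ref{monotonicity1} requires the matrices $-\mathbf{X}_1^*J\mathbf{X}_1'$ and $\mathbf{X}_2^*J\mathbf{X}_2'$ to be \emph{separately} a.e.-semidefinite, whereas $Q(v)>0$ permits the two terms to have opposite signs; moreover it is a statement about full Lagrangian frames, not about individual ($t$-dependent) directions, which do not constitute frames of Lagrangian paths solving Hamiltonian systems. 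A correct treatment must be genuinely relative, e.g.\ via the a.e.\ identity
\begin{equation*}
\frac{d}{dt}\big(u_1(t)^*Ju_2(t)\big) = u_1(t)^*\big(\mathbb{B}_2(t)-\mathbb{B}_1(t)\big)u_2(t),
\quad\text{so that}\quad
u_1(t)^*Ju_2(t) = \int_{t_*}^{t} u_1^*(\mathbb{B}_2-\mathbb{B}_1)u_2\,ds,
\end{equation*}
for $u_i(t)=\mathbf{X}_i(t)c_i$ with $u_1(t_*)=u_2(t_*)=v$; crossing directions are then read off from this \emph{integrated} Wronskian. Second --- the obstacle you flag but do not overcome --- in the $L^1$ setting the sign hypotheses are a.e.\ statements about a $t$-dependent form, and a.e.\ eigenvalue counts do not control the signature of its integral, because the eigenframes can rotate: the Hermitian family $M(s)$ with rows $\big(1,\ (s-t_*)^{i\omega}\big)$ and $\big((s-t_*)^{-i\omega},\ 1-\epsilon_0\big)$ has one positive and one negative eigenvalue for every $s\neq t_*$, yet $\int_{t_*}^t M\,ds$ is positive definite for all $t>t_*$ once $\omega$ is large relative to $\epsilon_0$. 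Feeding this into the system (take $\mathbb{B}_1\equiv 0$, $\mathbb{B}_2=\operatorname{diag}(M,0)$, $\mathbf{X}_1\equiv{I\choose 0}$, $\mathbf{X}_2={I\choose Y_2}$ with $Y_2=-\int_{t_*}^t M\,ds$) yields exactly one conjugate point, $m_+=m_-=1$, and \emph{two} counterclockwise crossings there; so the dichotomy you assert ($m_+$ counterclockwise, $m_-$ clockwise) cannot be derived from the literal a.e.\ counts by any pointwise or directional argument --- the hypotheses must be exploited in integrated form (equivalently, read in the stronger sense in which they are actually used in \cite{HS2} and in the applications of this paper, where $\mathbb{B}_2-\mathbb{B}_1=(\lambda_2-\lambda_1)B_1$). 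Finally, your fallback --- restricting $\tilde{W}$ to its spectral subspace near $-1$ and homotoping to ``model paths carrying the same crossing data'' --- is a plan rather than a proof: invoking (P2) requires exhibiting a fixed-endpoint homotopy along which no crossings are created or destroyed, and identifying ``the same crossing data'' is precisely the assertion that remains unproved.
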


\begin{remark}
We emphasize the assumption in Lemma \ref{rotation_lemma} 
that $\mathbb{B}_1$ and $\mathbb{B}_2$ are in the space 
$L^1 ((a, b), \mathbb{C}^{2n \times 2n})$,
rather than $L^1_{\loc} ((a, b), \mathbb{C}^{2n \times 2n})$. In the current 
setting, this means that the lemma can be applied on subintervals
$[c, d] \subset (a, b)$.
\end{remark}

\section{Proofs of the Main Theorems} \label{theorems-section}

In this section, we use our Maslov index framework to prove Theorems 
\ref{regular-singular-theorem} and \ref{singular-theorem}.

\subsection{Proof of Theorem \ref{regular-singular-theorem}} 
\label{regular-singular-section}

Fix any pair $\lambda_1, \lambda_2 \in \mathbb{R}$, $\lambda_1 < \lambda_2$,
so that $\sigma_{\ess} (\mathcal{L}^{\alpha}) \cap [\lambda_1, \lambda_2] = \emptyset$,
and let $\ell_{\alpha} (x; \lambda)$ denote the map of Lagrangian subspaces
associated with the frames $\mathbf{X}_{\alpha} (x; \lambda)$ specified 
in (\ref{frame-alpha}). Keeping in mind that $\lambda_2$ is fixed, let 
$\ell_b (x; \lambda_2)$ denote the map of Lagrangian subspaces
associated with the frames $\mathbf{X}_b (x; \lambda_2)$ specified 
in (\ref{frame-b}). We emphasize that since $\lambda_2$ is fixed we
don't yet require Lemma \ref{continuity-lemma} to extend the 
frame $\mathbf{X}_b (x; \lambda_2)$ to additional values
$\lambda \in [\lambda_1, \lambda_2]$. 
We will establish Theorem \ref{regular-singular-theorem} by 
considering the Maslov index for $\ell_{\alpha} (x; \lambda)$ and 
$\ell_b (x; \lambda_2)$ along a path designated as the 
{\it Maslov box} in the next paragraph. As described in 
Section \ref{maslov-section}, this Maslov index is computed as
a spectral flow for the matrix 
\begin{equation} \label{tildeW-bc1}
\begin{aligned}
\tilde{W} (x; \lambda) &= - (X_{\alpha} (x; \lambda) + i Y_{\alpha} (x; \lambda))
(X_{\alpha} (x; \lambda) - i Y_{\alpha} (x; \lambda))^{-1} \\
& \times (X_b (x; \lambda_2) - i Y_b (x; \lambda_2))
(X_b (x; \lambda_2) + i Y_b (x; \lambda_2))^{-1}.
\end{aligned}
\end{equation}

By Maslov Box, in this case we mean the following sequence of contours, 
specified for some value $c \in (a, b)$ to be chosen sufficiently  
close to $b$ during the analysis (sufficiently large if $b = + \infty$):
(1) fix $x = a$ and let $\lambda$ increase from $\lambda_1$ to $\lambda_2$ 
(the {\it bottom shelf}); 
(2) fix $\lambda = \lambda_2$ and let $x$ increase from $a$ to $c$ 
(the {\it right shelf}); (3) fix $x = c$ and let $\lambda$
decrease from $\lambda_2$ to $\lambda_1$ (the {\it top shelf}); and (4) fix
$\lambda = \lambda_1$ and let $x$ decrease from $c$ to $a$ (the 
{\it left shelf}). 

{\it Right shelf.}
We begin our analysis with the right shelf, for which $\mathbf{X}_{\alpha}$ 
and $\mathbf{X}_b$ are both evaluated at $\lambda_2$. By construction, 
$\ell_{\alpha} (\cdot; \lambda_2)$ will intersect $\ell_b (\cdot; \lambda_2)$
at some $x$ (and so for all $x \in [a, c]$) with dimension $m$ if and only if $\lambda_2$ is an 
eigenvalue of $\mathcal{L}^{\alpha}$ with multiplicity $m$. In the event that $\lambda_2$
is not an eigenvalue of $\mathcal{L}^{\alpha}$, there will be no 
conjugate points along the right shelf. On the other hand, if
$\lambda_2$ is an eigenvalue of $\mathcal{L}^{\alpha}$ with multiplicity
$m$, then $\tilde{W} (x; \lambda_2)$ will have $-1$ as an eigenvalue
with multiplicity $m$ for all $x \in [a, c]$. In either case,
\begin{equation} 
\mas (\ell_{\alpha} (\cdot; \lambda_2), \ell_b (\cdot; \lambda_2); [a, c]) = 0.
\end{equation} 

{\it Bottom shelf.} For the bottom shelf, $\ell_{\alpha} (a; \lambda)$ is fixed, 
independent of $\lambda$, so in particular $\ell_{\alpha} (a; \lambda) = \ell_{\alpha} (a; \lambda_2)$ 
for all $\lambda \in [\lambda_1, \lambda_2]$. In this way, $\tilde{W} (a; \lambda)$
is actually independent of $\lambda$, and so we certainly have 
\begin{equation} 
\mas (\ell_{\alpha} (a; \cdot), \ell_b (a; \lambda_2); [\lambda_1, \lambda_2]) = 0.
\end{equation} 
Moreover, $\ell_{\alpha} (a; \lambda)$ 
will intersect $\ell_b (a; \lambda_2)$ with intersection dimension $m$ if and only 
if $\lambda_2$ is an eigenvalue of $\mathcal{L}^{\alpha}$ with multiplicity $m$. In the event 
that $\lambda_2$ is not an eigenvalue of $\mathcal{L}^{\alpha}$, there will be no 
conjugate points along the bottom shelf. On the other hand, if
$\lambda_2$ is an eigenvalue of $\mathcal{L}^{\alpha}$ with multiplicity
$m$, then $\tilde{W} (a; \lambda)$ will have $-1$ as an eigenvalue
with multiplicity $m$ for all $\lambda \in [\lambda_1, \lambda_2]$. 

{\it Top shelf.} For the top shelf, $\tilde{W} (c; \lambda)$ detects intersections
between $\ell_{\alpha} (c; \lambda)$ and $\ell_b (c; \lambda_2)$ as $\lambda$
decreases from $\lambda_2$ to $\lambda_1$. In this way, intersections
correspond precisely with eigenvalues of the finite-interval 
(or {\it truncated}) operator $\mathcal{L}_{a, c}^{\alpha}$, 
with domain 
\begin{equation*}
    \mathcal{D}_{a, c}^{\alpha} := 
    \{y \in \mathcal{D}_{a, c, M}: \alpha y(a) = 0, 
    \quad \mathbf{X}_b (c; \lambda_2)^* J y (c) = 0 \},
\end{equation*}
where $\mathcal{D}_{a, c, M}$ denotes the domain of the maximal
operator specified as in Definition \ref{maximal-operator}, 
except on $(a, c)$.
Similarly as in Section \ref{operator-section}, we can check that 
$\mathcal{L}_{a, c}^{\alpha}$ is a self-adjoint operator. (In fact,
since $\mathcal{L}_{a, c}^{\alpha}$ is posed on a bounded interval
$(a, c)$ with $B_0, B_1 \in L^1 ((a, c), \mathbb{C}^{2n \times 2n})$, 
self-adjointness can be established by more routine considerations.)

We know from Lemma \ref{monotonicity1} that monotonicity
in $\lambda$ is determined by 
$- \mathbf{X}_{\alpha} (c; \lambda)^* J \partial_{\lambda} \mathbf{X}_{\alpha} (c; \lambda)$, 
and we readily compute 
\begin{equation*}
\begin{aligned}
\frac{\partial}{\partial x} \mathbf{X}_{\alpha}^* &(x; \lambda) J \partial_{\lambda} \mathbf{X}_{\alpha} (x; \lambda)
= \mathbf{X}_{\alpha}^{\prime} (x; \lambda)^* J \partial_{\lambda} \mathbf{X}_{\alpha} (x; \lambda) 
+ \mathbf{X}_{\alpha} (x; \lambda)^* J \partial_{\lambda} \mathbf{X}_{\alpha}^{\prime} (x; \lambda) \\
&= - \mathbf{X}_{\alpha}^{\prime}(x; \lambda)^* J^* \partial_{\lambda} \mathbf{X}_{\alpha} (x; \lambda) 
+ \mathbf{X}_{\alpha} (x; \lambda)^* \partial_{\lambda} J \mathbf{X}_{\alpha}^{\prime} (x; \lambda) \\
&= - \mathbf{X}_{\alpha} (x; \lambda)^* (B_0 (x) + \lambda B_1 (x)) \partial_{\lambda} \mathbf{X}_{\alpha} (x; \lambda) 
+ \mathbf{X}_{\alpha} (x; \lambda)^* (B_0 (x) + \lambda B_1 (x)) \partial_{\lambda} \mathbf{X}_{\alpha} (x; \lambda) \\
&+ \mathbf{X}_{\alpha}^* \partial_{\lambda} (B_0 (x) + \lambda B_1 (x)) \mathbf{X}_{\alpha} (x; \lambda) 
= \mathbf{X}_{\alpha} (x; \lambda)^* B_1 (x) \mathbf{X}_{\alpha} (x; \lambda).
\end{aligned}
\end{equation*}
Integrating on $[a,x]$, and noting that $\partial_{\lambda} \mathbf{X}_{\alpha} (a;\lambda) = 0$,
we see that   
\begin{equation*}
\mathbf{X}_{\alpha} (x; \lambda)^* J \partial_{\lambda} \mathbf{X}_{\alpha} (x; \lambda)
= \int_a^x \mathbf{X}_{\alpha} (y;\lambda)^* B_1 (y) \mathbf{X}_{\alpha} (y; \lambda) dy. 
\end{equation*}
Monotonicity along the top shelf follows by setting $x = c$ and appealing 
to Assumption {\bf (B)}. 
In this way, we see that Assumption {\bf (B)} ensures that 
as $\lambda$ increases the eigenvalues of $\tilde{W} (c; \lambda)$ will 
rotate monotonically in the clockwise direction. Since each crossing along the top shelf
corresponds with an eigenvalue of $\mathcal{L}_{a, c}^{\alpha}$, we can conclude that 
\begin{equation} \label{truncated-count-alpha}
\mathcal{N}_{a, c}^{\alpha} ([\lambda_1, \lambda_2)) = 
- \mas (\ell_{\alpha} (c; \cdot), \ell_b (c; \lambda_2); [\lambda_1, \lambda_2]),
\end{equation}
where $\mathcal{N}_{a, c}^{\alpha} ([\lambda_1, \lambda_2))$ denotes a count, including
multiplicities, of the eigenvalues of $\mathcal{L}_{a, c}^{\alpha}$ on 
$[\lambda_1, \lambda_2)$.
We note that $\lambda_1$ is included in the count, because in the event
that $(c, \lambda_1)$ is conjugate, eigenvalues of 
$\tilde{W} (c; \lambda)$ will rotate away from $-1$ in the clockwise 
direction as $\lambda$ increases from $\lambda_1$ (thus decrementing the 
Maslov index). Likewise, $\lambda_2$ is not included in the count, because in the event
that $(c, \lambda_2)$ is conjugate, eigenvalues of 
$\tilde{W} (c; \lambda)$ will rotate into $-1$ in the clockwise 
direction as $\lambda$ increases to $\lambda_2$ (thus leaving the 
Maslov index unchanged). 

\begin{remark} We note that monotonicity in $\lambda$ at any
shelf $x \in (a,c]$ also follows from Assumption {\bf (B)}, 
and indeed this fact is important in the proof of 
Theorem \ref{nullity-theorem} (see \cite{HS2}).
\end{remark}

{\it Left shelf.} 
Our analysis so far leaves only the left shelf to consider, and 
we observe that it can be expressed as 
\begin{equation*}
- \mas (\ell_{\alpha} (\cdot; \lambda_1), \ell_b (\cdot; \lambda_2); [a, c]),
\end{equation*} 
which is part of the Maslov index that appears in the statement of 
Theorem \ref{regular-singular-theorem}.
Using path additivity and homotopy invariance, we can sum the Maslov
indices on each shelf of the Maslov Box to arrive at the relation 
\begin{equation} \label{box-sum-alpha}
\mathcal{N}_{a, c}^{\alpha} ([\lambda_1, \lambda_2)) 
= \mas (\ell_{\alpha} (\cdot; \lambda_1), \ell_b (\cdot; \lambda_2); [a, c]).
\end{equation}

In order to obtain a statement about $\mathcal{N}^{\alpha} ([\lambda_1, \lambda_2))$, 
we observe that eigenvalues of $\mathcal{L}^{\alpha}$ correspond precisely 
with intersections of $\ell_{\alpha} (c; \lambda)$
and $\ell_b (c; \lambda)$. (We emphasize that in this last statement, 
$\ell_b$ is evaluated at $\lambda$, not $\lambda_2$, and so we 
are using Lemma \ref{continuity-lemma} ). Employing a monotonicity
argument similar to the one above, we can conclude that 
\begin{equation} \label{full-count-alpha}
\mathcal{N}^{\alpha} ([\lambda_1, \lambda_2)) = 
- \mas (\ell_{\alpha} (c; \cdot), \ell_b (c; \cdot); [\lambda_1, \lambda_2]).
\end{equation}

\begin{claim} \label{triangle-claim-alpha}
Under the assumptions of Theorem \ref{regular-singular-theorem}, and for any $c \in (a, b)$,
\begin{equation*}
\begin{aligned}
\mas (\ell_{\alpha} (c; \cdot), \ell_b (c; \cdot); [\lambda_1, \lambda_2])
& =
\mas (\ell_{\alpha} (c; \lambda_1), \ell_b (c; \cdot); [\lambda_1, \lambda_2]) \\
&+ \mas (\ell_{\alpha} (c; \cdot), \ell_b (c; \lambda_2); [\lambda_1, \lambda_2]). 
\end{aligned}
\end{equation*}
\end{claim}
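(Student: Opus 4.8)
The plan is to read all three Maslov indices off a single two-parameter family of Lagrangian pairs over the square $[\lambda_1,\lambda_2]^2$, and then let convexity of that square together with homotopy invariance do the work. Introduce coordinates $(s,t)\in[\lambda_1,\lambda_2]^2$, where $s$ is the spectral parameter entering $\ell_\alpha$ and $t$ the one entering $\ell_b$, and define
\begin{equation*}
\mathcal{H}(s,t) := \big(\ell_\alpha(c; s),\, \ell_b(c; t)\big) \in \Lambda(n) \times \Lambda(n).
\end{equation*}
The first step is to check that $\mathcal{H}$ is jointly continuous on the square. Continuity in $t$ is precisely Lemma \ref{continuity-lemma} applied at the fixed station $x=c$ (continuity of $\ell_b(c;\cdot)$ in its spectral parameter), while continuity in $s$ follows from the analytic, hence continuous, dependence of the frame $\mathbf{X}_\alpha(c;\cdot)$ on $\lambda$ recorded just after (\ref{frame-alpha}). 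Since the two factors depend on the two variables separately, joint continuity is immediate.

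With $\mathcal{H}$ in hand, observe that the left-hand side of the claim is the Maslov index of the \emph{diagonal} path $\lambda \mapsto \mathcal{H}(\lambda,\lambda)$, running from the corner $(\lambda_1,\lambda_1)$ to the opposite corner $(\lambda_2,\lambda_2)$. The right-hand side, by contrast, is the Maslov index of the \emph{L-shaped} path that first traverses the edge $s=\lambda_1$, $t:\lambda_1\to\lambda_2$, and then the edge $t=\lambda_2$, $s:\lambda_1\to\lambda_2$: if I parametrize this concatenation over $[\lambda_1,\lambda_2]$ with the corner $(\lambda_1,\lambda_2)$ reached at the midpoint $\mu:=(\lambda_1+\lambda_2)/2$, then path additivity {\bf (P1)}, split at $\mu$, expresses its Maslov index as the sum of the two edge contributions, and each of those equals the corresponding summand on the right after an orientation-preserving reparametrization (a standard property, itself a consequence of {\bf (P2)}). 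The concatenation is continuous at the shared corner because both pieces agree there, so the L-path is a genuine continuous path in $\Lambda(n)\times\Lambda(n)$.

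It remains to equate the diagonal and the L-path. Both are continuous paths in the convex square $[\lambda_1,\lambda_2]^2$ sharing the endpoints $(\lambda_1,\lambda_1)$ and $(\lambda_2,\lambda_2)$, so the straight-line homotopy in parameter space deforms one into the other while holding those two corners fixed; composing this parameter-space homotopy with the continuous map $\mathcal{H}$ yields a fixed-endpoint homotopy of Lagrangian pairs, and homotopy invariance {\bf (P2)} then forces the two Maslov indices to coincide. This completes the argument. The only genuine points requiring care are the joint continuity of $\mathcal{H}$ and the continuity of the concatenated L-path at its corner, both of which are routine given Lemma \ref{continuity-lemma} and the analyticity of $\mathbf{X}_\alpha$; the conceptual heart is merely the recognition that the claim is the ``two adjacent sides versus diagonal'' decomposition of a path across a simply connected square.
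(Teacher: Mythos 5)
Your proposal is correct and is essentially the paper's own argument: the paper also works with the two-parameter family over $[\lambda_1,\lambda_2]\times[\lambda_1,\lambda_2]$ (via the unitary $\tilde{W}_c(\lambda,\mu)$ built from the frames of $\ell_\alpha(c;\lambda)$ and $\ell_b(c;\mu)$) and deduces the claim from path additivity {\bf (P1)} and homotopy invariance {\bf (P2)} applied to the triangular circuit consisting of the edge $\lambda=\lambda_1$, the edge $\mu=\lambda_2$, and the reversed diagonal. Your ``L-shaped path versus diagonal'' fixed-endpoint homotopy in the convex square is just a repackaging of that same triangle argument, with the joint-continuity and reparametrization details made explicit.
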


\begin{proof} With $c \in (a, b)$ fixed, we consider 
$\ell_{\alpha} (c; \cdot), \ell_b (c; \cdot): [\lambda_1, \lambda_2] \to \Lambda (n)$
and set 
\begin{equation*}
\begin{aligned}
\tilde{W}_c (\lambda, \mu) &:= 
- (X_{\alpha} (c; \lambda) + i Y_{\alpha} (c; \lambda)) (X_{\alpha} (c; \lambda) - i Y_{\alpha} (c; \lambda))^{-1} \\
& \times (X_b (c; \mu) - i Y_b (c; \mu)) (X_b (c; \mu) + i Y_b (c; \mu))^{-1}. 
\end{aligned}
\end{equation*}
We now compute the Maslov index associated with $\tilde{W}_c (\lambda, \mu)$ along
the triangular path in $[\lambda_1, \lambda_2] \times [\lambda_1, \lambda_2]$ comprising 
the following three paths: (1) fix $\lambda = \lambda_1$ and let $\mu$ increase from 
$\lambda_1$ to $\lambda_2$; (2) fix $\mu = \lambda_2$ and let $\lambda$ increase 
from $\lambda_1$ to $\lambda_2$; and (3) let $\lambda$ and $\mu$ decrease together 
(i.e., with $\lambda = \mu$) from $\lambda_2$ to $\lambda_1$. 
(See Figure \ref{Maslov-triangle}.)
The claim follows from path additivity and homotopy invariance.
\end{proof}

\begin{figure}[ht] \label{Maslov-triangle}  
\begin{center}
\begin{tikzpicture}
\draw[<-, thick] (-5,0) -- (1,0);	
\draw[->, thick] (.5,.5) -- (.5,-5);	
%
\draw[->, thick] (-4,-4) -- (-4,-2.5);
\draw[thick] (-4,-2.5) -- (-4,-1);
\draw[->, thick] (-4,-1) -- (-2.5,-1);
\draw[thick] (-2.5, -1) -- (-1, -1);
\draw[->, thick] (-1,-1) -- (-2.5,-2.5);
\draw[thick] (-2.5,-2.5) -- (-4,-4);
%
\node at (-5, .5) {$\lambda$};
\node at (-4,.5) {$\lambda_1$};
\draw[thick] (-4,.1) -- (-4,-.1);
\node at (-1,.5) {$\lambda_2$};
\draw[thick] (-1,.1) -- (-1,-.1);
\node at (1,-1) {$\lambda_2$};
\draw[thick] (.4,-1) -- (.6,-1);
\node at (1,-4) {$\lambda_1$};
\draw[thick] (.4,-4) -- (.6,-4);
\node at (1,-5) {$\mu$};

%
%
%
%
%
%
%
\end{tikzpicture}
\end{center}
\caption{Triangular path in the $(\lambda, \mu)$-plane for Claim \ref{triangle-claim-alpha}}

\end{figure}
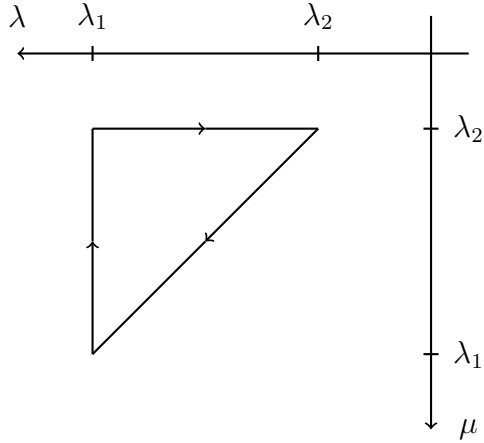

We can conclude from (\ref{truncated-count-alpha}), (\ref{full-count-alpha}), and Claim \ref{triangle-claim-alpha} 
that 
\begin{equation} \label{N-Nc-alpha}
\mathcal{N}^{\alpha} ([\lambda_1, \lambda_2)) = \mathcal{N}_{a, c}^{\alpha} ([\lambda_1, \lambda_2))
- \mas (\ell_{\alpha} (c; \lambda_1), \ell_b (c; \cdot); [\lambda_1, \lambda_2]). 
\end{equation}
By monotonicity, 
\begin{equation*}
\mas (\ell_{\alpha} (c; \lambda_1), \ell_b (c; \cdot); [\lambda_1, \lambda_2]) \le 0,
\end{equation*}
and we can conclude that 
\begin{equation*}
\mathcal{N}^{\alpha} ([\lambda_1, \lambda_2)) 
\ge \mathcal{N}_{a, c}^{\alpha} ([\lambda_1, \lambda_2)). 
\end{equation*}
In light of (\ref{box-sum-alpha}), this gives 
\begin{equation} \label{count-inequality-alpha}
  \mathcal{N}^{\alpha} ([\lambda_1, \lambda_2)) 
\ge \mas (\ell_{\alpha} (\cdot; \lambda_1), \ell_b (\cdot; \lambda_2); [a, c]).  
\end{equation}
Here, we emphasize that under our assumption that 
$\sigma_{\ess} (\mathcal{L}^{\alpha}) \cap [\lambda_1, \lambda_2] = \emptyset$,
the count $\mathcal{N}^{\alpha} ([\lambda_1, \lambda_2))$ must be
finite. 

The Maslov index on the right-hand side of this last expression
increases monotonically with $c$, as described in the following claim 
from \cite{HS2}. 

\begin{claim} \label{crossing-claim}
Let the assumptions of Theorem \ref{regular-singular-theorem} hold,  
and let $x_* \in [a,c]$ 
be a conjugate point along the left shelf. If $x_* \in (a,c]$, then no eigenvalue 
of $\tilde{W} (\cdot; \lambda_1)$ can arrive at $-1$ moving in the clockwise 
direction as $x$ increases to $x_*$. If $x_* = a$, then no eigenvalue 
of $\tilde{W} (\cdot; \lambda_1)$ can rotate away from $-1$ moving in the 
clockwise direction as $x$ increases from $a$.  
\end{claim}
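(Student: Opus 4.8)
The plan is to read Claim \ref{crossing-claim} as a statement about the \emph{direction} of rotation at each conjugate point along the left shelf and to extract that direction from Lemma \ref{rotation_lemma}. On the left shelf we hold $\lambda = \lambda_1$ fixed while $x$ varies, comparing the path $\ell_\alpha(\cdot;\lambda_1)$ against the fixed path $\ell_b(\cdot;\lambda_2)$. Matching the frames in (\ref{tildeW-bc1}) against the template (\ref{tildeW}), I would identify $\ell_1 = \ell_\alpha(\cdot;\lambda_1)$ and $\ell_2 = \ell_b(\cdot;\lambda_2)$, so that in the notation of Lemma \ref{rotation_lemma} the governing coefficient matrices are $\mathbb{B}_1 = B_0(x) + \lambda_1 B_1(x)$ and $\mathbb{B}_2 = B_0(x) + \lambda_2 B_1(x)$. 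Both assertions of the claim --- no clockwise arrival at a conjugate point $x_* \in (a,c]$ as $x$ increases to $x_*$, and no clockwise departure at $x_* = a$ as $x$ increases from $a$ --- reduce to the single statement that the integer $m_-$ of Lemma \ref{rotation_lemma}, which counts the directions along which eigenvalues of $\tilde{W}$ rotate clockwise through $-1$, vanishes at every such $x_*$; recall that the contribution to the index is $m_+ - m_-$ at an interior point and $-m_-$ at $x_* = a$.

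The decisive computation is that the $B_0$ term cancels in the difference, leaving
\begin{equation*}
\mathbb{B}_2 - \mathbb{B}_1 = (\lambda_2 - \lambda_1) B_1(x), \qquad \lambda_2 - \lambda_1 > 0.
\end{equation*}
I would then use that Assumption {\bf (B)} forces $B_1(x) \ge 0$ for a.e.\ $x \in (a,b)$ --- the same positivity underlying the monotone ordering (\ref{eig-A-order}) --- obtained by applying {\bf (B)} to the solution through a fixed vector and differentiating the resulting monotone integral to get non-negativity at Lebesgue points. Letting $\mathbb{P}_*$ denote the projection onto $\ell_\alpha(x_*;\lambda_1) \cap \ell_b(x_*;\lambda_2)$, it follows that
\begin{equation*}
\mathbb{P}_*(\mathbb{B}_2 - \mathbb{B}_1)\mathbb{P}_* = (\lambda_2 - \lambda_1)\,\mathbb{P}_* B_1(x)\,\mathbb{P}_* \ge 0
\end{equation*}
a.e.\ on a neighborhood of $x_*$, so this crossing form has no a.e.-negative eigenvalues and $m_- = 0$. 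Since {\bf (A)$^\prime$} gives $B_0, B_1 \in L^1((a,c),\mathbb{C}^{2n \times 2n})$, Lemma \ref{rotation_lemma} applies on the compact shelf $[a,c]$: parts (i) and (iii) cover $x_* \in (a,c)$ and $x_* = c$, while part (ii) covers $x_* = a$, in each case yielding the asserted absence of clockwise motion.

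The one delicate point --- and the main obstacle --- is the possibility that $\mathbb{P}_* B_1(x)\mathbb{P}_*$ is only positive \emph{semi}definite, so that $m_+ + m_- < \dim(\ell_\alpha(x_*;\lambda_1) \cap \ell_b(x_*;\lambda_2))$ and the nondegeneracy hypothesis $m_+ + m_- = m$ of Lemma \ref{rotation_lemma} fails. I would resolve this by observing that the claim demands only a one-sided (directional) conclusion: a positive-semidefinite crossing form precludes any first-order clockwise rotation of the eigenvalues of $\tilde{W}$ through $-1$, so the degenerate directions either remain at $-1$ or rotate counterclockwise, and in no case arrive or depart clockwise. This is exactly the carry-over of Claims 4.1 and 4.2 of \cite{HS2} to the present setting, and it suffices for Claim \ref{crossing-claim} as stated.
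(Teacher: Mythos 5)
Your reduction to Lemma \ref{rotation_lemma} is set up correctly: with $\mathbb{B}_1 = B_0 + \lambda_1 B_1$ and $\mathbb{B}_2 = B_0 + \lambda_2 B_1$ the crossing form is $(\lambda_2-\lambda_1)\,\mathbb{P}_* B_1(x)\mathbb{P}_*$, Assumption {\bf (B)} does give $B_1(x) \ge 0$ a.e.\ (by the Lebesgue-point argument you sketch, which is also implicit in the paper's treatment of $\mathcal{A}'$), and so $m_- = 0$ whenever the nondegeneracy hypothesis $m_+ + m_- = m$ of that lemma holds. The gap is exactly where you place it, but your resolution of it fails. Because {\bf (B)} is an integral (Atkinson-type) condition, $\mathbb{P}_* B_1(x)\mathbb{P}_*$ is in general only semidefinite and can even vanish identically on part of the intersection (in the applications of Section \ref{applications-section}, $B_1$ has rank one), so the degenerate directions are the generic situation, not an edge case. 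For those directions the first-order crossing form says nothing about the direction of motion: an eigenvalue angle behaving like $\theta(x) = \pi + (x-x_*)^2$ arrives at $-1$ \emph{clockwise} as $x \uparrow x_*$ with vanishing first-order velocity, perfectly compatible with a semidefinite (indeed zero) crossing form. Your assertion that ``a positive-semidefinite crossing form precludes any first-order clockwise rotation, so the degenerate directions either remain at $-1$ or rotate counterclockwise'' is therefore false as a general principle; moreover, the motion of an eigenvalue \emph{before} it reaches $-1$ is governed by the full rotation matrix of $\tilde{W}$, which for two evolving frames involves $B_0$ and has no sign — only its compression to the intersection at the crossing is signed, which is precisely why Lemma \ref{rotation_lemma} needs $m_+ + m_- = m$.

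This degenerate case is not a delicate point to be waved through; it is the entire content of the claim, and it is why the paper does not argue via the pointwise crossing form at all. The paper's ``proof'' is a citation: the claim is Claims 4.1 and 4.2 of \cite{HS2}, transplanted from $(0,1)$ to $(a,b)$, and the proofs there exploit the strict \emph{integrated} positivity furnished by {\bf (B)} (positive definiteness of $\int \mathbf{X}^* B_1 \mathbf{X}\,dy$ over intervals for matrices of solutions, hence strict monotonicity in the spectral parameter) rather than any pointwise definiteness of $B_1$. Your final sentence appeals to those same claims of \cite{HS2}, but at that point the appeal is circular: the statement you are trying to prove \emph{is} those claims carried over to the present interval, so they cannot be invoked to patch the one step your argument cannot supply. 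To repair the proposal you must either cite \cite{HS2} for the whole claim (which is what the paper does) or reproduce its integral-positivity argument for arrivals through degenerate crossings; the semidefiniteness of the pointwise form, plus Lemma \ref{rotation_lemma}, is not enough.
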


From this claim, we see that there can be at most a finite number
of conjugate points for $\ell_{\alpha} (\cdot; \lambda_1)$ 
and $\ell_b (\cdot; \lambda_2)$ on $[a, b)$. It follows that 
the limit as $c \to b^-$ of the right-hand side of 
(\ref{count-inequality-alpha}) is well-defined. Since the left-hand side 
of (\ref{count-inequality-alpha}) is independent of 
$c$, we can take the limit as $c \to b^-$
on both sides to obtain the inequality claimed 
in Theorem \ref{regular-singular-theorem}.

For the second assertion of Theorem \ref{regular-singular-theorem}
we additionally assume that 
$\lambda_1, \lambda_2 \notin \sigma_p (\mathcal{L}^{\alpha})$, and 
we will closely follow the approach taken in \cite{GZ2017}. We emphasize 
that while we are using almost precisely the same argument 
as in \cite{GZ2017}, our result is not limited to the 
limit-point case (as assumed in \cite{GZ2017}).
Since $\lambda_2 \notin \sigma_p (\mathcal{L}^{\alpha})$, we are 
justified in working with the resolvent operator 
\begin{equation*}
    \mathcal{R} (\mathcal{L}^{\alpha}; \lambda_2)
    := (\mathcal{L}^{\alpha} - \lambda_2 I)^{-1},
\end{equation*}
which we can specify in terms of the Green's function
$G^{\alpha} (x, \xi; \lambda_2)$ constructed in 
Section \ref{green-section}. In particular, for any 
$f \in L^2_{B_1} ((a, b), \mathbb{C}^{2n})$ 
we can write 
\begin{equation*}
 \mathcal{R} (\mathcal{L}^{\alpha}; \lambda_2) f
 = \int_a^b G^{\alpha} (x, \xi; \lambda_2) B_1 (\xi) f(\xi) d\xi.
\end{equation*}

Turning to the operator $\mathcal{L}^{\alpha}_{a, c}$ 
specified above with domain $\mathcal{D}^{\alpha}_{a, c}$,
we first note that by virtue of the appearance of 
$\lambda_2$ in the boundary condition at $x = c$,
$\lambda_2$ is an eigenvalue of $\mathcal{L}^{\alpha}_{a, c}$
if and only if it is an eigenvalue of $\mathcal{L}^{\alpha}$.
We are assuming $\lambda_2 \notin \sigma_p (\mathcal{L}^{\alpha})$,
so we can conclude that $\lambda_2 \notin \sigma_p (\mathcal{L}^{\alpha}_{a, c})$,
and this allows us to work with the resolvent operator
\begin{equation*}
    \mathcal{R} (\mathcal{L}^{\alpha}_{a, c}; \lambda_2)
    := (\mathcal{L}^{\alpha}_{a, c} - \lambda_2 I)^{-1},
\end{equation*}
which we can specify in terms of a Green's function
$G^{\alpha}_{a, c} (x, \xi; \lambda_2)$. In particular, for any 
$f \in L^2_{B_1} ((a, c), \mathbb{C}^{2n})$ 
we can write 
\begin{equation*}
 \mathcal{R} (\mathcal{L}^{\alpha}_{a, c}; \lambda_2) f
 = \int_a^c G^{\alpha}_{a, c} (x, \xi; \lambda_2) B_1 (\xi) f(\xi) d\xi.
\end{equation*}
Proceeding with a construction similar to that for 
$G^{\alpha} (x, \xi; \lambda_2)$ in Section \ref{green-section}, we find that 
$G^{\alpha}_{a, c} (x, \xi; \lambda_2)$ can be 
expressed as 
\begin{equation*}
    G^{\alpha}_{a, c} (x, \xi; \lambda_2) 
    = G^{\alpha} (x, \xi; \lambda_2), 
    \quad \forall \, x, \xi \in (a, c).
\end{equation*}

According to Lemma 2 in Section 4 of Chapter XIII in \cite{RSIV}
(also, Theorem 2.3 in \cite{EE1987}), we can express the 
spectrum of $\mathcal{R} (\mathcal{L}^{\alpha}; \lambda_2)$
as 
\begin{equation*}
    \sigma (\mathcal{R} (\mathcal{L}^{\alpha}; \lambda_2))
    \backslash \{0\}
    = \Big{\{} \frac{1}{\lambda - \lambda_2}: \lambda \in \sigma (\mathcal{L}^{\alpha}) \Big{\}}.
\end{equation*}
In particular, we see that $\mathcal{L}^{\alpha}$ has an eigenvalue on the interval
$(\lambda_1, \lambda_2)$ if and only if $\mathcal{R} (\mathcal{L}^{\alpha}; \lambda_2)$
has an eigenvalue on the interval $(- \infty, (\lambda_1 - \lambda_2)^{-1})$, with corresponding algebraic and geometric multiplicities as well. We can express this as 
\begin{equation} \label{spectral-mapping-equation-alpha}
    \mathcal{N}^{\alpha} ((\lambda_1, \lambda_2))
    = \mathcal{N}^{\alpha, \mathcal{R}} ((- \infty, \frac{1}{\lambda_1 - \lambda_2})),
\end{equation}
where the right-hand side of (\ref{spectral-mapping-equation-alpha})
denotes a count, including multiplicities, of the eigenvalues of 
$\mathcal{R} (\mathcal{L}^{\alpha}; \lambda_2)$ on the 
interval $(-\infty, (\lambda_1 - \lambda_2)^{-1})$. 
Likewise, 
\begin{equation} \label{spectral-mapping-equation-ac}
    \mathcal{N}^{\alpha}_{a, c} ((\lambda_1, \lambda_2))
    = \mathcal{N}^{\alpha, \mathcal{R}}_{a, c} ((- \infty, \frac{1}{\lambda_1 - \lambda_2})),
\end{equation}
where the right-hand side of (\ref{spectral-mapping-equation-ac})
denotes a count, including multiplicities, of the eigenvalues of 
$\mathcal{R} (\mathcal{L}^{\alpha}_{a, c}; \lambda_2)$ on the 
interval $(-\infty, (\lambda_1 - \lambda_2)^{-1})$. 

For ease of notation, we will denote by 
$\Pi_{a, c}: L^2_{B_1} ((a, b), \mathbb{C}^{2n})
 \to L^2_{B_1} ((a, c), \mathbb{C}^{2n})$ the 
restriction operator
\begin{equation*}
    \Pi_{a, c} f = f \Big{|}_{(a, c)},
\end{equation*}
and we will denote by 
$\mathcal{P}_{a, c}: L^2_{B_1} ((a, b), \mathbb{C}^{2n})
\to L^2_{B_1} ((a, b), \mathbb{C}^{2n})$ the 
truncation operator 
\begin{equation*}
 \mathcal{P}_{a, c} f
 = \begin{cases}
f & \textrm{ in } (a, c) \\
0 & \textrm{ in } (c, b).
 \end{cases}
\end{equation*}
With this notation, we can write (exploiting our Green's function 
associated with $\mathcal{L}^{\alpha}$)
\begin{equation*}
    \mathcal{R} (\mathcal{L}^{\alpha}_{a, c}; \lambda_2) \Pi_{a, c} f
    = \Pi_{a, c} \mathcal{R} (\mathcal{L}^{\alpha}; \lambda_2) \mathcal{P}_{a, c} f,
\end{equation*}
for all $f \in  L^2_{B_1} ((a, b), \mathbb{C}^{2n})$. If 
we express $L^2_{B_1} ((a, b), \mathbb{C}^{2n})$ as a 
direct sum
\begin{equation} \label{direct-sum}
    L^2_{B_1} ((a, b), \mathbb{C}^{2n})
    = \Pi_{a, c} L^2_{B_1} ((a, b), \mathbb{C}^{2n})
    \oplus (I - \Pi_{a, c}) L^2_{B_1} ((a, b), \mathbb{C}^{2n}),
\end{equation}
then we can write 
\begin{equation} \label{oplus-relation}
    \begin{aligned}
    (\mathcal{R} (\mathcal{L}^{\alpha}_{a, c}; \lambda_2) \oplus 0 ) f
    &= \Big(\mathcal{R} (\mathcal{L}^{\alpha}_{a, c}; \lambda_2) \Pi_{a, c} f\Big) \oplus 0 \\
    &= \Big( \Pi_{a, c} \mathcal{R} (\mathcal{L}^{\alpha}; \lambda_2) \mathcal{P}_{a, c} f \Big) \oplus 0
    = \mathcal{P}_{a, c} \mathcal{R} (\mathcal{L}^{\alpha}; \lambda_2) \mathcal{P}_{a, c} f.
    \end{aligned}
\end{equation}
(Cf. Corollary 3.3 in \cite{GZ2017}.)

\begin{claim} \label{strong-convergence-claim}
For each $f \in L^2_{B_1} ((a, b), \mathbb{C}^{2n})$,
\begin{equation*}
    \mathcal{P}_{a, c} \mathcal{R} (\mathcal{L}^{\alpha}; \lambda_2) \mathcal{P}_{a, c} f
    \overset{c \to b^-}{\longrightarrow} \mathcal{R} (\mathcal{L}^{\alpha}; \lambda_2) f,
\end{equation*}
in $L^2_{B_1} ((a, b), \mathbb{C}^{2n})$. I.e., 
$\mathcal{P}_{a, c} \mathcal{R} (\mathcal{L}^{\alpha}; \lambda_2) \mathcal{P}_{a, c}$
converges to $\mathcal{R} (\mathcal{L}^{\alpha}; \lambda_2)$ in the 
strong sense as $c \to b^-$. 
\end{claim}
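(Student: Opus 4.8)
The plan is to reduce the statement to two elementary facts: that $\mathcal{R} := \mathcal{R}(\mathcal{L}^{\alpha}; \lambda_2)$ is a bounded operator on $L^2_{B_1}((a,b), \mathbb{C}^{2n})$, and that the truncation operators $\mathcal{P}_{a,c}$ are contractions converging strongly to the identity as $c \to b^-$. Under the standing assumptions (and the hypothesis $\lambda_2 \notin \sigma_p (\mathcal{L}^{\alpha})$ carried over from the second part of Theorem \ref{regular-singular-theorem}), we have $\lambda_2 \in \rho (\mathcal{L}^{\alpha})$, so $\mathcal{R}$ is bounded; I will write $M := \|\mathcal{R}\|$.

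First I would record that each $\mathcal{P}_{a,c}$ is an orthogonal projection with respect to $\langle \cdot, \cdot \rangle_{B_1}$, and in particular a contraction. This is immediate from the definition, since for any $h \in L^2_{B_1}((a,b), \mathbb{C}^{2n})$,
\begin{equation*}
\|\mathcal{P}_{a,c} h\|_{B_1}^2 = \int_a^c (B_1 (x) h(x), h(x)) \, dx \le \|h\|_{B_1}^2 .
\end{equation*}
Next I would establish the strong convergence $\mathcal{P}_{a,c} \to I$: for fixed $h$,
\begin{equation*}
\|h - \mathcal{P}_{a,c} h\|_{B_1}^2 = \int_c^b (B_1 (x) h(x), h(x)) \, dx \xrightarrow{\;c \to b^-\;} 0,
\end{equation*}
because the full integral equals $\|h\|_{B_1}^2 < \infty$, so its tail vanishes. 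Thus $\mathcal{P}_{a,c} h \to h$ in $\|\cdot\|_{B_1}$ for every $h$.

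Finally I would combine these ingredients through the decomposition
\begin{equation*}
\mathcal{P}_{a,c} \mathcal{R} \mathcal{P}_{a,c} f - \mathcal{R} f = \mathcal{P}_{a,c} \mathcal{R} (\mathcal{P}_{a,c} f - f) + (\mathcal{P}_{a,c} \mathcal{R} f - \mathcal{R} f).
\end{equation*}
The first term is controlled in norm by $\|\mathcal{P}_{a,c}\| \, M \, \|\mathcal{P}_{a,c} f - f\|_{B_1} \le M \|\mathcal{P}_{a,c} f - f\|_{B_1}$, which tends to $0$ by the strong convergence applied to $h = f$. The second term is $\mathcal{P}_{a,c}(\mathcal{R} f) - \mathcal{R} f$, which tends to $0$ by the strong convergence applied to $h = \mathcal{R} f \in L^2_{B_1}((a,b), \mathbb{C}^{2n})$. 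Letting $c \to b^-$ yields the claim.

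I do not expect a serious obstacle here: this is the standard \emph{strongly convergent sandwich} estimate, and the uniform bound $\|\mathcal{P}_{a,c}\| \le 1$ means no uniform-boundedness subtlety arises. The only ingredient genuinely specific to our setting is the strong convergence $\mathcal{P}_{a,c} \to I$, which rests entirely on the finiteness of the weighted norm $\|h\|_{B_1}$ and hence on the vanishing of its tail — a direct consequence of the definition of $L^2_{B_1}((a,b), \mathbb{C}^{2n})$.
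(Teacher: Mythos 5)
Your proof is correct and follows essentially the same route as the paper's: the identical decomposition $\mathcal{P}_{a,c}\mathcal{R}\mathcal{P}_{a,c}f - \mathcal{R}f = \mathcal{P}_{a,c}\mathcal{R}(\mathcal{P}_{a,c}-I)f + (\mathcal{P}_{a,c}-I)\mathcal{R}f$, with the first term controlled by boundedness of the resolvent and the second by the vanishing tail of the weighted integral. Your explicit observation that $\|\mathcal{P}_{a,c}\| \le 1$ is a minor refinement of the paper's statement that $\|\mathcal{P}_{a,c}\mathcal{R}(\mathcal{L}^{\alpha};\lambda_2)\|$ is bounded, but the substance is the same.
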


\begin{proof}
Writing $I = \mathcal{P}_{a, c} + (I - \mathcal{P}_{a, c})$, we can compute 
\begin{equation*}
    \begin{aligned}
    \|\mathcal{P}_{a, c} &\mathcal{R} (\mathcal{L}^{\alpha}; \lambda_2) \mathcal{P}_{a, c} f
    - \mathcal{R} (\mathcal{L}^{\alpha}; \lambda_2) f\|_{B_1} \\
    &= \|\mathcal{P}_{a, c} \mathcal{R} (\mathcal{L}^{\alpha}; \lambda_2) \mathcal{P}_{a, c} f
    - \mathcal{P}_{a, c} \mathcal{R} (\mathcal{L}^{\alpha}; \lambda_2) f 
    - (I - \mathcal{P}_{a, c}) \mathcal{R} (\mathcal{L}^{\alpha}; \lambda_2) f\|_{B_1} \\
    &\le \| \mathcal{P}_{a, c} \mathcal{R} (\mathcal{L}^{\alpha}; \lambda_2) \mathcal{P}_{a, c} f
    - \mathcal{P}_{a, c} \mathcal{R} (\mathcal{L}^{\alpha}; \lambda_2) f  \|_{B_1} 
    + \| (I - \mathcal{P}_{a, c}) \mathcal{R} (\mathcal{L}^{\alpha}; \lambda_2) f \|_{B_1} \\
    &= \| \mathcal{P}_{a, c} \mathcal{R} (\mathcal{L}^{\alpha}; \lambda_2) (\mathcal{P}_{a, c} - I) f \|_{B_1}
    + \| (I - \mathcal{P}_{a, c}) \mathcal{R} (\mathcal{L}^{\alpha}; \lambda_2) f \|_{B_1}.
    \end{aligned}
\end{equation*}
For the first of these last two summands, we can write 
\begin{equation*}
    \| \mathcal{P}_{a, c} \mathcal{R} (\mathcal{L}^{\alpha}; \lambda_2) (\mathcal{P}_{a, c} - I) f \|_{B_1}
    \le \| \mathcal{P}_{a, c} \mathcal{R} (\mathcal{L}^{\alpha}; \lambda_2) \|
    \| (\mathcal{P}_{a, c} - I) f \|_{B_1}.
\end{equation*}
Since $\lambda_2 \in \rho (\mathcal{L}^{\alpha})$, 
$\| \mathcal{P}_{a, c} \mathcal{R} (\mathcal{L}^{\alpha}; \lambda_2) \|$
is bounded. Also, 
\begin{equation*}
  \| (\mathcal{P}_{a, c} - I) f \|_{B_1}^2
  = \int_c^b (B_1 (x) f(x), f(x)) dx.
\end{equation*}
Here, $(B_1 (\cdot) f(\cdot), f(\cdot)) \in L^1 ((a, b), \mathbb{C}^{2n})$
and we can conclude that 
\begin{equation*}
    \lim_{c \to b^-} \| (\mathcal{P}_{a, c} - I) f \|_{B_1} = 0.
\end{equation*}
The summand $\| (I - \mathcal{P}_{a, c}) \mathcal{R} (\mathcal{L}^{\alpha}; \lambda_2) f \|_{B_1}$
can be handled similarly with $\mathcal{R} (\mathcal{L}^{\alpha}; \lambda_2) f$
(which is in $L^2 ((a, b), \mathbb{C}^{2n})$) replacing 
$f$. 
\end{proof}

As noted in \cite{GZ2017} (during the proof of Theorem 3.6), we can 
use a slight restatement of Lemma 5.2 from \cite{GST1996}, along with 
the strong convergence established in Claim \ref{strong-convergence-claim}
just above, to conclude that 
\begin{equation} \label{swiped-from-gz2017}
    \mathcal{N}^{\alpha, \mathcal{R}} ((-\infty, \frac{1}{\lambda_1 - \lambda_2}))
    \le \liminf_{c \to b^-} \mathcal{N}^{\alpha, \mathcal{R}}_c ((-\infty, \frac{1}{\lambda_1 - \lambda_2})) 
\end{equation}
where the count on the right-hand side of (\ref{swiped-from-gz2017})  
corresponds with the number of eigenvalues, counted with multiplicity, that 
$\mathcal{P}_{a, c} \mathcal{R} (\mathcal{L}^{\alpha}; \lambda_2) \mathcal{P}_{a, c}$ 
has on the interval $(-\infty, (\lambda_1 - \lambda_2)^{-1})$.

\begin{claim} \label{resolvent-spectrum-claim}
For each $c \in (a, b)$, 
\begin{equation*}
    \sigma (\mathcal{R}(\mathcal{L}^{\alpha}_{a, c}; \lambda_2) \oplus 0)
    = \sigma (\mathcal{R}(\mathcal{L}^{\alpha}_{a, c}; \lambda_2)),
\end{equation*}
and so by virtue of (\ref{oplus-relation}) 
\begin{equation*}
    \sigma (\mathcal{P}_{a, c} \mathcal{R} (\mathcal{L}^{\alpha}; \lambda_2) \mathcal{P}_{a, c})
    = \sigma (\mathcal{R} (\mathcal{L}^{\alpha}_{a, c}; \lambda_2)).
\end{equation*}
In particular, 
\begin{equation*}
    \mathcal{N}_c^{\alpha, \mathcal{R}} ((-\infty, \frac{1}{\lambda_1 - \lambda_2}))
    = \mathcal{N}_{a, c}^{\alpha, \mathcal{R}} ((-\infty, \frac{1}{\lambda_1 - \lambda_2})). 
\end{equation*}
\end{claim}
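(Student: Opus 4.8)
The plan is to reduce the claim to two elementary facts: that the spectrum of a direct sum is the union of the spectra of the summands, and that the resolvent of the truncated operator is compact. First I would observe that $\mathcal{L}^{\alpha}_{a, c}$ is a self-adjoint operator posed on the bounded interval $(a, c)$, on which $B_0, B_1 \in L^1 ((a, c), \mathbb{C}^{2n \times 2n})$. Such an operator has purely discrete spectrum, and consequently its resolvent $\mathcal{R} (\mathcal{L}^{\alpha}_{a, c}; \lambda_2)$ is a compact operator on the (infinite-dimensional) space $L^2_{B_1} ((a, c), \mathbb{C}^{2n})$. Since $0$ always belongs to the spectrum of a compact operator on an infinite-dimensional space, we have $0 \in \sigma (\mathcal{R} (\mathcal{L}^{\alpha}_{a, c}; \lambda_2))$.

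Next I would invoke the standard fact that for bounded operators the spectrum of a direct sum satisfies $\sigma (A \oplus B) = \sigma (A) \cup \sigma (B)$. Applying this with $A = \mathcal{R} (\mathcal{L}^{\alpha}_{a, c}; \lambda_2)$ and with $B = 0$ acting on the nontrivial complementary summand $(I - \Pi_{a, c}) L^2_{B_1} ((a, b), \mathbb{C}^{2n})$ appearing in the decomposition (\ref{direct-sum}), and noting $\sigma (0) = \{0\}$ on that infinite-dimensional summand, we obtain
\begin{equation*}
\sigma (\mathcal{R} (\mathcal{L}^{\alpha}_{a, c}; \lambda_2) \oplus 0)
= \sigma (\mathcal{R} (\mathcal{L}^{\alpha}_{a, c}; \lambda_2)) \cup \{0\}
= \sigma (\mathcal{R} (\mathcal{L}^{\alpha}_{a, c}; \lambda_2)),
\end{equation*}
where the last equality uses the first step. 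The second displayed identity in the claim is then immediate from the operator identity (\ref{oplus-relation}), which states that $\mathcal{P}_{a, c} \mathcal{R} (\mathcal{L}^{\alpha}; \lambda_2) \mathcal{P}_{a, c}$ and $\mathcal{R} (\mathcal{L}^{\alpha}_{a, c}; \lambda_2) \oplus 0$ coincide as operators, and equal operators have equal spectra.

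Finally, for the eigenvalue count I would use that $\lambda_1 < \lambda_2$ forces $(\lambda_1 - \lambda_2)^{-1} < 0$, so the counting interval $(-\infty, (\lambda_1 - \lambda_2)^{-1})$ lies strictly in the negative half-line and in particular excludes $0$. The only spectral discrepancy between $\mathcal{R} (\mathcal{L}^{\alpha}_{a, c}; \lambda_2) \oplus 0$ and $\mathcal{R} (\mathcal{L}^{\alpha}_{a, c}; \lambda_2)$ is the eigenvalue $0$ contributed (with infinite multiplicity) by the zero summand; for any $\mu \neq 0$ the $\mu$-eigenspace of the direct sum lies entirely in the first summand, so geometric and algebraic multiplicities agree there. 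Since every point of the counting interval is nonzero, the eigenvalues of the two operators on $(-\infty, (\lambda_1 - \lambda_2)^{-1})$ coincide with multiplicity, yielding $\mathcal{N}_c^{\alpha, \mathcal{R}} = \mathcal{N}_{a, c}^{\alpha, \mathcal{R}}$. I do not anticipate a genuine obstacle; the single point warranting explicit justification is the compactness of $\mathcal{R} (\mathcal{L}^{\alpha}_{a, c}; \lambda_2)$, since it is precisely compactness that guarantees $0 \in \sigma (\mathcal{R} (\mathcal{L}^{\alpha}_{a, c}; \lambda_2))$ and thereby collapses the two spectra into one.
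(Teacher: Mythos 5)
Your proposal is correct, but it reaches the conclusion by a genuinely different route than the paper. The paper's proof is hands-on: it first matches eigenfunctions ($\Pi_{a,c}\phi$ is an eigenfunction of $\mathcal{R}(\mathcal{L}^{\alpha}_{a,c};\lambda_2)$ with eigenvalue $\mu$ iff $\mathcal{P}_{a,c}\phi$ is one for $\mathcal{R}(\mathcal{L}^{\alpha}_{a,c};\lambda_2)\oplus 0$), then uses discreteness of $\sigma(\mathcal{L}^{\alpha}_{a,c})$ to reduce to $\mu\in\rho(\mathcal{R}(\mathcal{L}^{\alpha}_{a,c};\lambda_2))$, and for such $\mu$ it proves surjectivity of $(\mathcal{R}(\mathcal{L}^{\alpha}_{a,c};\lambda_2)\oplus 0)-\mu I$ by explicitly constructing a preimage piecewise: $\psi=\phi$ on $(a,c)$ and $\psi=-\frac{1}{\mu}f$ on $(c,b)$. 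You instead invoke two soft facts: $\sigma(A\oplus B)=\sigma(A)\cup\sigma(B)$ for the decomposition (\ref{direct-sum}), and compactness of $\mathcal{R}(\mathcal{L}^{\alpha}_{a,c};\lambda_2)$ (equivalent, for a self-adjoint operator, to the discreteness the paper also asserts) to guarantee $0\in\sigma(\mathcal{R}(\mathcal{L}^{\alpha}_{a,c};\lambda_2))$, so that the extra point $\{0\}$ from the zero summand is absorbed. Your route is shorter and cleaner at the price of quoting more machinery; the paper's is more elementary and self-contained, constructing the inverse by hand. Two small remarks: compactness is sufficient but not necessary for your key point -- the paper gets $0\in\sigma(\mathcal{R}(\mathcal{L}^{\alpha}_{a,c};\lambda_2))\setminus\sigma_p(\mathcal{R}(\mathcal{L}^{\alpha}_{a,c};\lambda_2))$ directly from unboundedness of $\mathcal{L}^{\alpha}_{a,c}$, which is lighter than your compactness step; and you should note (even in one line) that the complementary summand $(I-\Pi_{a,c})L^2_{B_1}((a,b),\mathbb{C}^{2n})$ is nontrivial, since otherwise $\sigma(0)$ on it would be empty rather than $\{0\}$ -- though in that degenerate case the claim would be trivial anyway. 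Incidentally, your bookkeeping at $\mu=0$ is actually slightly more careful than the paper's, which asserts equality of point spectra even though $0$ is an eigenvalue of the direct sum but not of $\mathcal{R}(\mathcal{L}^{\alpha}_{a,c};\lambda_2)$; as in your write-up, this discrepancy is harmless because the counting interval $(-\infty,(\lambda_1-\lambda_2)^{-1})$ lies strictly to the left of $0$.
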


\begin{proof}
First, we check that 
\begin{equation*}
    \sigma_p (\mathcal{R}(\mathcal{L}^{\alpha}_{a, c}; \lambda_2) \oplus 0)
    = \sigma_p (\mathcal{R}(\mathcal{L}^{\alpha}_{a, c}; \lambda_2)).
\end{equation*}
For this, we observe that 
\begin{equation}
\mathcal{R}(\mathcal{L}^{\alpha}_{a, c}; \lambda_2) \Pi_{a, c} \phi = \mu \Pi_{a, c} \phi    
\end{equation}
for some $\phi \in L^2_{B_1} ((a, b), \mathbb{C}^{2n})$
if and only if 
\begin{equation}
(\mathcal{R}(\mathcal{L}^{\alpha}_{a, c}; \lambda_2) \oplus 0) \mathcal{P}_{a, c} \phi 
= \mu \mathcal{P}_{a, c} \phi,    
\end{equation}
from which its clear that $\Pi_{a, c} \phi$ is an eigenfunction for 
$\mathcal{R}(\mathcal{L}^{\alpha}_{a, c}; \lambda_2)$ with 
eigenvalue $\mu$ if and only if $\mathcal{P}_{a, c} \phi$
is an eigenfunction for $\mathcal{R}(\mathcal{L}^{\alpha}_{a, c}; \lambda_2) \oplus 0$
with eigenvalue $\mu$. 

Next, since $\mathcal{L}^{\alpha}_{a, c}$ is regular at both endpoints, 
its spectrum is entirely discrete. In particular, this means that if 
$\mu \notin \sigma_p (\mathcal{R} (\mathcal{L}^{\alpha}_{a, c}; \lambda_2)) \cup \{0\}$
then $\mu \in \rho (\mathcal{R} (\mathcal{L}^{\alpha}_{a, c}; \lambda_2))$. (Since 
$\mathcal{L}^{\alpha}_{a, c}$ is unbounded, 
$0 \in \sigma (\mathcal{R} (\mathcal{L}^{\alpha}_{a, c}; \lambda_2) 
\backslash \sigma_p (\mathcal{R} (\mathcal{L}^{\alpha}_{a, c}; \lambda_2))$.)

For  $\mu \in \rho (\mathcal{R} (\mathcal{L}^{\alpha}_{a, c}; \lambda_2))$, 
the operator 
\begin{equation*}
     \mathcal{R} (\mathcal{L}^{\alpha}_{a, c}; \lambda_2) - \mu I_{L^2_{B_1} ((a, c), \mathbb{C}^{2n})}
\end{equation*}
maps $L^2_{B_1} ((a, c), \mathbb{C}^{2n})$ onto $L^2_{B_1} ((a, c), \mathbb{C}^{2n})$. We claim
that it follows that 
\begin{equation*}
     (\mathcal{R} (\mathcal{L}^{\alpha}_{a, c}; \lambda_2) \oplus 0) - \mu I_{L^2_{B_1} ((a, b), \mathbb{C}^{2n})}
\end{equation*}
maps $L^2_{B_1} ((a, b), \mathbb{C}^{2n})$ onto $L^2_{B_1} ((a, b), \mathbb{C}^{2n})$. To see 
this, we take any $f \in L^2_{B_1} ((a, b), \mathbb{C}^{2n})$, and we will 
identify $\psi \in L^2_{B_1} ((a, b), \mathbb{C}^{2n})$ so that 
\begin{equation} \label{psi-equation}
  \Big(  (\mathcal{R} (\mathcal{L}^{\alpha}_{a, c}; \lambda_2) \oplus 0) - \mu I_{L^2_{B_1} ((a, b), \mathbb{C}^{2n})} \Big)  
  \psi = f.
\end{equation}
Since $\mathcal{R} (\mathcal{L}^{\alpha}_{a, c}; \lambda_2) - \mu I_{L^2_{B_1} ((a, c), \mathbb{C}^{2n})}$
maps $L^2_{B_1} ((a, c), \mathbb{C}^{2n})$ onto $L^2_{B_1} ((a, c), \mathbb{C}^{2n})$, we can 
find $\phi \in L^2_{B_1} ((a, c), \mathbb{C}^{2n})$ so that 
\begin{equation*}
     \Big(\mathcal{R} (\mathcal{L}^{\alpha}_{a, c}; \lambda_2) - \mu I_{L^2_{B_1} ((a, c), \mathbb{C}^{2n})}\Big) \phi
     = \Pi_{a, c} f. 
\end{equation*}
It follows that 
\begin{equation*}
    \psi := 
    \begin{cases}
        \phi & \textrm{ in } (a, c) \\
        -\frac{1}{\mu} f & \textrm{ in } (c, b)
    \end{cases}
\end{equation*}
satisfies (\ref{psi-equation}). This gives the claim.
\end{proof}

Using (respectively) (\ref{spectral-mapping-equation-alpha}), 
(\ref{swiped-from-gz2017}), Claim \ref{resolvent-spectrum-claim}, 
(\ref{spectral-mapping-equation-ac}), and (\ref{box-sum-alpha}) for
the first five relations below, we can now compute as follows: 
\begin{equation*}
    \begin{aligned}
    \mathcal{N}^{\alpha} ((\lambda_1, \lambda_2)) 
    &= \mathcal{N}^{\alpha, \mathcal{R}} ((- \infty, \frac{1}{\lambda_1 - \lambda_2})) \\
    &\le \liminf_{c \to b^-} \mathcal{N}^{\alpha, \mathcal{R}}_c ((-\infty, \frac{1}{\lambda_1 - \lambda_2})) \\
    &= \liminf_{c \to b^-} \mathcal{N}^{\alpha, \mathcal{R}}_{a, c} ((-\infty, \frac{1}{\lambda_1 - \lambda_2})) \\
    &=  \liminf_{c \to b^-} \mathcal{N}^{\alpha}_{a, c} ((\lambda_1, \lambda_2)) \\
    &= \liminf_{c \to b^-} \mas (\ell_{\alpha} (\cdot; \lambda_1), \ell_b (\cdot; \lambda_2); [a, c]) \\
    &= \mas (\ell_{\alpha} (\cdot; \lambda_1), \ell_b (\cdot; \lambda_2); [a, b)).
    \end{aligned}
\end{equation*}
We conclude that 
\begin{equation*}
 \mathcal{N}^{\alpha} ((\lambda_1, \lambda_2)) 
 \le \mas (\ell_{\alpha} (\cdot; \lambda_1), \ell_b (\cdot; \lambda_2); [a, b)),
\end{equation*}
and this gives the claim of equality in Theorem \ref{regular-singular-theorem}.
For this final observation, we note that since $\lambda_2 \notin \sigma_p (\mathcal{L}^{\alpha})$,
we cannot have a conjugate point at $x = a$ (cf. remarks about the bottom shelf 
above), and so the interval $[a, b)$ can be replaced by $(a, b)$.
\hfill $\square$

\begin{remark} \label{regular-singular-remark}
We see from the preceding discussion (especially (\ref{N-Nc-alpha})) 
that we have equality in 
Theorem \ref{regular-singular-theorem} if and only if 
\begin{equation} \label{large-c}
\mas (\ell_{\alpha} (c; \lambda_1), \ell_b (c; \cdot); [\lambda_1, \lambda_2]) = 0,
\end{equation}
for all $c \in (a, b)$ sufficiently close to $b$ (sufficiently 
large if $b = + \infty$). In making this observation, we've
used the fact that for each $c \in (a, b)$, 
$\mas (\ell_{\alpha} (c; \lambda_1), \ell_b (c; \cdot); [\lambda_1, \lambda_2])$
is a non-negative integer, so we can only have 
\begin{equation*}
    \lim_{c \to b^-} \mas (\ell_{\alpha} (c; \lambda_1), \ell_b (c; \cdot); [\lambda_1, \lambda_2])
    = 0
\end{equation*}
if (\ref{large-c}) holds as described.
By monotonicity as $\lambda$ varies, this last relation is true if 
and only if 
\begin{equation} \label{intersection-condition1}
\ell_{\alpha} (c; \lambda_1) \cap \ell_b (c; \lambda) = \{0\}, \quad
\forall\, \lambda \in [\lambda_1, \lambda_2),
\end{equation}
for all $c \in (a, b)$ sufficiently close to $b$ (sufficiently 
large if $b = + \infty$). Here, the rotation is clockwise, so 
$\lambda_2$ is excluded, since a conjugate arrival as 
$\lambda$ increases to $\lambda_2$ would not affect 
the Maslov index. 
\end{remark}

\subsection{Proof of Theorem \ref{singular-theorem}} 
\label{singular-theorem-section}

Similarly as in the proof of Theorem \ref{regular-singular-theorem}, we fix any pair 
$\lambda_1, \lambda_2 \in \mathbb{R}$, $\lambda_1 < \lambda_2$ 
for which $\sigma_{\ess} (\mathcal{L}) \cap [\lambda_1, \lambda_2] = \emptyset$. 
For the proof of Theorem \ref{singular-theorem}, we let $\ell_b (x; \lambda_2)$
be as in the proof of Theorem \ref{regular-singular-theorem}, and we 
let $\ell_a (x; \lambda)$ denote the map 
of Lagrangian subspaces associated with the frames $\mathbf{X}_a (x; \lambda)$
constructed as in Lemma \ref{continuity-lemma}, except for the operator 
$\mathcal{L}_{a, c}$. 
We will establish Theorem \ref{singular-theorem} by considering the Maslov
index for $\ell_a (x; \lambda)$ and $\ell_b (x; \lambda_2)$
along the Maslov box designated just below. As described in 
Section \ref{maslov-section}, this Maslov index is computed as
a spectral flow for the matrix 
\begin{equation} \label{singular-tildeW}
\begin{aligned}
\tilde{W} (x; \lambda) &= - (X_a (x; \lambda) + i Y_a (x; \lambda))
(X_a (x; \lambda) - i Y_a (x; \lambda))^{-1} \\
& \times (X_b (x; \lambda_2) - i Y_b (x; \lambda_2))
(X_b (x; \lambda_2) + i Y_b (x; \lambda_2))^{-1}
\end{aligned}
\end{equation}
(re-defined from Section \ref{regular-singular-section}). 

In this case, the Maslov Box will consist of the following sequence of contours, 
specified for some values $c_1, c_2 \in (a, b)$, $c_1 < c_2$ to be 
chosen sufficiently close to $a$ and $b$ (respectively) during the analysis:
(1) fix $x = c_1$ and let $\lambda$ increase from $\lambda_1$ to $\lambda_2$ 
(the {\it bottom shelf}); 
(2) fix $\lambda = \lambda_2$ and let $x$ increase from $c_1$ to $c_2$ 
(the {\it right shelf}); (3) fix $x = c_2$ and let $\lambda$
decrease from $\lambda_2$ to $\lambda_1$ (the {\it top shelf}); and (4) fix
$\lambda = \lambda_1$ and let $x$ decrease from $c_2$ to $c_1$ 
(the {\it left shelf}). 

{\it Right shelf.} 
In this case, our calculation along the right shelf detects intersections 
between $\ell_a (x; \lambda_2)$ and $\ell_b (x; \lambda_2)$ as $x$ increases
from $c_1$ to $c_2$. By construction, 
$\ell_a (\cdot; \lambda_2)$ will intersect $\ell_b (\cdot; \lambda_2)$
at some value $x$ with dimension $m$ if and only if $\lambda_2$ is an 
eigenvalue of $\mathcal{L}$ with multiplicity $m$. In the event that $\lambda_2$
is not an eigenvalue of $\mathcal{L}$, there will be no 
conjugate points along the right shelf. On the other hand, if
$\lambda_2$ is an eigenvalue of $\mathcal{L}$ with multiplicity
$m$, then $\tilde{W} (x; \lambda_2)$ will have $-1$ as an eigenvalue
with multiplicity $m$ for all $x \in [c_1, c_2]$. In either case,
\begin{equation} \label{right-shelf-singular}
\mas (\ell_a (\cdot; \lambda_2), \ell_b (\cdot; \lambda_2); [c_1, c_2]) = 0.
\end{equation} 

{\it Bottom shelf.} For the bottom shelf, we're looking for 
intersections between $\ell_a (c_1;\lambda)$ and 
$\ell_b (c_1; \lambda_2)$ as $\lambda$ increases from 
$\lambda_1$ to $\lambda_2$. Since $\ell_a (x; \lambda)$
corresponds with solutions that lie left in $(a, b)$, 
this leads to a calculation similar to the calculation of 
\begin{equation*}
\mas (\ell_{\alpha} (c; \cdot), \ell_b (c; \lambda_2); [\lambda_1, \lambda_2]),
\end{equation*}  
which arose in our analysis of the top shelf for the proof
of Theorem \ref{regular-singular-theorem}. For the moment, 
the only thing we will note about this quantity is that due 
to monotonicity in $\lambda$, we have
the inequality 
\begin{equation*}
\mas (\ell_a (c_1; \lambda_1), \ell_b (c_1; \cdot); [\lambda_1, \lambda_2]) \le 0.
\end{equation*}  

{\it Top shelf.} For the top shelf, $\tilde{W} (c_2; \lambda)$ detects intersections
between $\ell_a (c_2; \lambda)$ and $\ell_b (c_2; \lambda_2)$ as $\lambda$
decreases from $\lambda_2$ to $\lambda_1$. In this way, intersections
correspond precisely with eigenvalues of the restriction
$\mathcal{L}_{a, c_2}$ of the maximal operator associated with
(\ref{linear-hammy}) on $(a, c_2)$ to the domain 
\begin{equation*}
    \mathcal{D}_{a, c_2} := 
    \{y \in \mathcal{D}_{a, c_2, M}: \lim_{x \to a^+} U^a (x; \lambda_0)^* J y (x) = 0, 
    \quad \mathbf{X}_b (c_2; \lambda_2)^* J y (c_2) = 0 \}.
\end{equation*}
Similarly as in Section \ref{operator-section}, we can check that 
$\mathcal{L}_{a, c_2}$ is a self-adjoint operator. 

We can verify monotonicity along the top shelf almost precisely as 
in the proof of Theorem \ref{regular-singular-theorem}, and we can conclude from 
this that 
\begin{equation} \label{truncated-count}
\mathcal{N}_{a, c_2} ([\lambda_1, \lambda_2)) = 
- \mas (\ell_a (c_2; \cdot), \ell_b (c_2; \lambda_2); [\lambda_1, \lambda_2]),
\end{equation}
where $\mathcal{N}_{a, c_2} ([\lambda_1, \lambda_2))$ denotes a count 
of the number of eigenvalues that $\mathcal{L}_{a, c_2}$ has on 
the interval $[\lambda_1, \lambda_2)$. (The inclusion of $\lambda_1$ and exclusion 
of $\lambda_2$ are precisely as discussed in the proof of 
Theorem \ref{regular-singular-theorem}.)

Similarly as with Claim \ref{triangle-claim-alpha}, we obtain the relation 
\begin{equation} \label{triangle-claim}
\begin{aligned}
\mas (\ell_a (c_2; \cdot), \ell_b (c_2; \cdot); [\lambda_1, \lambda_2])
&= 
\mas (\ell_a (c_2; \lambda_1), \ell_b (c_2; \cdot); [\lambda_1, \lambda_2]) \\
&+ \mas (\ell_a (c_2; \cdot), \ell_b (c_2; \lambda_2); [\lambda_1, \lambda_2]).
\end{aligned}
\end{equation}
Recalling that $\mathcal{N} ([\lambda_1, \lambda_2))$ denotes 
the number of eigenvalues that $\mathcal{L}$
has on the interval $[\lambda_1, \lambda_2)$, we can write
\begin{equation*}
\begin{aligned}
\mathcal{N}([\lambda_1, \lambda_2)) &= - \mas (\ell_a (c_2; \cdot), \ell_b (c_2; \cdot); [\lambda_1, \lambda_2]) \\
&= - \mas (\ell_a (c_2; \lambda_1), \ell_b (c_2; \cdot); [\lambda_1, \lambda_2])
- \mas (\ell_a (c_2; \cdot), \ell_b (c_2; \lambda_2); [\lambda_1, \lambda_2]) \\
&= \mathcal{N}_{a, c_2} ([\lambda_1, \lambda_2)) - \mas (\ell_a (c_2; \lambda_1), \ell_b (c_2; \cdot); [\lambda_1, \lambda_2]).   
\end{aligned}
\end{equation*}

{\it Left shelf.} 
Our analysis so far leaves only the left shelf to consider, and 
we observe that it can be expressed as 
\begin{equation*}
- \mas (\ell_a (\cdot; \lambda_1), \ell_b (\cdot; \lambda_2); [c_1, c_2]),
\end{equation*} 
which is part of the Maslov index that appears in the
statement of Theorem \ref{singular-theorem}.
Using path additivity and homotopy invariance, we can sum the Maslov
indices on each shelf of the Maslov Box to arrive at the relation 
\begin{equation} \label{box-sum-equation}
\mathcal{N}_{a, c_2} ([\lambda_1, \lambda_2)) 
= \mas (\ell_a (\cdot; \lambda_1), \ell_b (\cdot; \lambda_2); [c_1, c_2])
- \mas (\ell_a (c_1;\cdot), \ell_b (c_1; \lambda_2); [\lambda_1, \lambda_2]).
\end{equation}
We can now write 
\begin{equation} \label{full-count-equation}
\begin{aligned}
\mathcal{N} ([\lambda_1, \lambda_2)) &=
\mathcal{N}_{a, c_2} ([\lambda_1, \lambda_2)) - \mas (\ell_a (c_2; \lambda_1), \ell_b (c_2; \cdot); [\lambda_1, \lambda_2]) \\
&= 
\mas (\ell_a (\cdot; \lambda_1), \ell_b (\cdot; \lambda_2); [c_1, c_2])
- \mas (\ell_a (c_1; \cdot), \ell_b (c_1; \lambda_2); [\lambda_1, \lambda_2]) \\
& - \mas (\ell_a (c_2; \lambda_1), \ell_b (c_2; \cdot); [\lambda_1, \lambda_2]).
\end{aligned}
\end{equation}

Recalling the monotonicity relations, 
\begin{equation*}
\begin{aligned}
\mas (\ell_a (c_1; \cdot), \ell_b (c_1; \lambda_2); [\lambda_1, \lambda_2]) &\le 0, \\
\mas (\ell_a (c_2; \lambda_1), \ell_b (c_2; \cdot); [\lambda_1, \lambda_2]) &\le 0,
\end{aligned}
\end{equation*}
we can conclude the inequality 
\begin{equation} \label{count-inequality} 
\mathcal{N} ([\lambda_1, \lambda_2)) 
\ge \mas (\ell_a (\cdot; \lambda_1), \ell_b (\cdot; \lambda_2); [c_1, c_2]).
\end{equation}
Using again Claim \ref{crossing-claim} from the proof of 
Theorem \ref{regular-singular-theorem}, we see that there can be at most a finite number
of conjugate points for $\ell_a (\cdot; \lambda_1)$ 
and $\ell_b (\cdot; \lambda_2)$ on $(a, b)$. It follows that 
the limit as $c_1 \to a^+$ of the right-hand side of 
(\ref{count-inequality}) is well-defined, as is the subsequent
limit as $c_2 \to b^-$ . Since the left-hand side 
of (\ref{count-inequality}) is independent of 
$c_1$ and $c_2$, we can take the pair of limits 
on both sides to obtain the inequality claimed in 
Theorem \ref{singular-theorem}.

For the second assertion of Theorem \ref{regular-singular-theorem}
we additionally assume that 
$\lambda_1, \lambda_2 \notin \sigma_p (\mathcal{L})$.
Our goal is to show that 
\begin{equation} \label{opposite-inequality-equation}
    \mathcal{N} ((\lambda_1, \lambda_2))
    \le \mas (\ell_a (\cdot; \lambda_1), \ell_b (\cdot; \lambda_2); (a, b)),
\end{equation}
and we note from (\ref{full-count-equation}) that this is implied 
if {\it both} of the following two conditions hold: 
\begin{equation} \label{intersection-condition-a}
\ell_a (c_1; \lambda) \cap \ell_b (c_1; \lambda_2)) = \{0\}, \quad
\forall \,\lambda \in [\lambda_1, \lambda_2),
\end{equation}
for all $c_1 \in (a, b)$ sufficiently close to $a$ (sufficiently 
negative if $a = - \infty$), and 
\begin{equation} \label{intersection-condition-b}
\ell_a (c_2; \lambda_1) \cap \ell_b (c_2; \lambda) = \{0\}, \quad
\forall \,\lambda \in [\lambda_1, \lambda_2),
\end{equation}
for all $c_2 \in (a, b)$ sufficiently close to $b$ (sufficiently 
large if $b = + \infty$).
(The inclusion of $\lambda_1$ in the intervals and exclusion 
of $\lambda_2$ is discussed in Remark \ref{regular-singular-remark}.)

We proceed by dividing the analysis into two half-interval 
problems. For this, we first fix any $c \in (a, b)$,  
and we introduce a new operator $\mathcal{L}_{c, b}$
as the restriction of $\mathcal{L}_{c, b, M}$ to the 
domain 
\begin{equation*}
    \mathcal{D}_{c, b} 
    := \{y \in \mathcal{D}_{c, b, M}: \mathbf{X}_a (c; \lambda_1)^* J y(c) = 0,
    \quad \lim_{x \to b^-} U^b (x; \lambda_0)^* J y(x) = 0 \}. 
\end{equation*}
We can view $\mathcal{L}_{c, b}$ as a special case of the operator 
$\mathcal{L}^{\alpha}_{a, b}$ analyzed in Section \ref{regular-singular-section}, 
with $a$ replaced by $c$ and $\alpha$ replaced by $\mathbf{X}_a (c; \lambda_1)^* J$. 
It follows that $\ell_{\alpha} (x; \lambda_1)$ from 
Section \ref{regular-singular-section} is replaced by 
$\ell_a (x; \lambda_1)$, so that by virtue of Remark \ref{regular-singular-remark}, 
we can conclude that
\begin{equation*} 
\ell_a (c_2; \lambda_1) \cap \ell_b (c_2; \lambda)) = \{0\}, \quad
\forall \,\lambda \in [\lambda_1, \lambda_2),
\end{equation*}
for all $c_2 \in (a, b)$ sufficiently close to $b$ (sufficiently 
large if $b = + \infty$). This is precisely (\ref{intersection-condition-b}).

Likewise, we introduce an operator $\mathcal{L}_{a, c}$
as the restriction of $\mathcal{L}_{a, c, M}$ to the 
domain 
\begin{equation*}
    \mathcal{D}_{a, c} 
    := \{y \in \mathcal{D}_{c, b, M}: \lim_{x \to a^+} U^a (x; \lambda_0)^* J y(x) = 0,
    \quad \mathbf{X}_b (c; \lambda_2)^* J y(c) = 0 \}. 
\end{equation*}
Proceeding similarly as in Section \ref{regular-singular-section},
we find that in this case 
\begin{equation*}
\ell_a (c_1; \lambda) \cap \ell_b (c_1; \lambda_2)) = \{0\}, \quad
\forall \,\lambda \in [\lambda_1, \lambda_2),
\end{equation*}
for all $c_1 \in (a, b)$ sufficiently close to $a$ (sufficiently 
negative if $a = - \infty$). This is precisely 
(\ref{intersection-condition-a}). 

As already noted, (\ref{intersection-condition-a}) and 
(\ref{intersection-condition-b}) together imply 
(\ref{opposite-inequality-equation}), and this completes 
the proof of Theorem \ref{singular-theorem}.
\hfill $\square$

\section{Applications} \label{applications-section}

In this section, we will discuss two specific applications of our main results, 
though we first need to make one further observation 
associated with Niessen's approach. We recall that 
the key element in Niessen's approach is an emphasis 
on the matrix 
\begin{equation*}
    \mathcal{A} (x; \lambda)
    = \frac{1}{2 {\rm Im}\,\lambda} \Phi (x; \lambda)^* (J/i) \Phi(x; \lambda),
\end{equation*}
where $\Phi (x; \lambda)$ denotes a fundamental matrix for 
(\ref{linear-hammy}), and we clearly require 
${\rm Im }\,\lambda \ne 0$. We saw in Section \ref{operator-section} that 
if $\{\mu_j (x; \lambda)\}_{j=1}^{2n}$ denote the eigenvalues
of $\mathcal{A} (x; \lambda)$, then the number of solutions of 
(\ref{linear-hammy}) that lie left in $(a, b)$ is precisely 
the number of these eigenvalues with a finite limit as 
$x$ approaches $a$, while the number of solutions of 
(\ref{linear-hammy}) that lie right in $(a, b)$ is precisely 
the number of these eigenvalues with a finite limit as 
$x$ approaches $b$. Since this number does not vary as $\lambda$
varies in the upper half-plane (or, alternatively, in the 
lower half-plane), we can categorize the limit-case 
(i.e., limit-point, limit-circle, or limit-$m$) of 
(\ref{linear-hammy}) by fixing some $\lambda \in \mathbb{C}$
with ${\rm Im }\,\lambda > 0$ and computing the values 
$\{\mu_j (x; \lambda)\}_{j=1}^{2n}$ as $x$ tends to $a$
and as $x$ tends to $b$. (This is precisely what we will do
in our examples below.) Furthermore, we have additionally seen in Section \ref{operator-section} 
that for each $\mu_j (x; \lambda)$ (with or without a finite 
limit), we can associate a sequence of eigenvectors 
$\{v_j (x_k; \lambda)\}_{k=1}^{\infty}$
that converges, as $x_k \to a^+$, to some $v_j^a (\lambda)$ that lies on the 
unit circle in $\mathbb{C}^{2n}$, and similarly for a 
sequence $x_k \to b^-$. If $\mu_j (x; \lambda)$
has a finite limit as $x \to a^+$, then $\Phi (x; \lambda) v_j^a (\lambda)$
will lie left in $(a, b)$, while if $\mu_j (x; \lambda)$
has a finite limit as $x \to b^-$, then $\Phi (x; \lambda) v_j^b (\lambda)$
will lie right in $(a, b)$.

In practice, we would like to extend these ideas to values
$\lambda \in \mathbb{R}$, and for this, we replace 
$\mathcal{A} (x; \lambda)$ with 
\begin{equation} \label{mathcal-B}
    \mathcal{B} (x; \lambda) := \Phi(x; \lambda)^* J \partial_{\lambda} \Phi (x; \lambda).
\end{equation}
If we differentiate (\ref{mathcal-B}) with respect to $x$, we find that 
\begin{equation} \label{mathcal-B-prime}
    \mathcal{B}' (x; \lambda) 
    = \Phi(x; \lambda)^* B_1 (x) \Phi (x; \lambda), 
\end{equation}
and upon integrating we see that we can alternatively express 
$\mathcal{B} (x; \lambda)$ as 
\begin{equation} \label{mathcal-B-integrated}
    \mathcal{B} (x; \lambda) 
    = \int_c^x \Phi (\xi; \lambda)^* B_1 (\xi) \Phi (\xi;\lambda)d\xi,
\end{equation}
where we've observed that since $\Phi (c;\lambda) = I_{2n}$,
we have $\mathcal{B} (c; \lambda) = 0$.
Recalling that $B_1 (x)$ is self-adjoint for a.e. $x \in (a, b)$,
we see from this relation that $\mathcal{B} (x; \lambda)$ is 
self-adjoint for all $x \in (a, b)$. 
Consequently, the eigenvalues of $\mathcal{B} (x; \lambda)$
must be real-valued, and we denote these values $\{\nu_j (x; \lambda)\}_{j=1}^{2n}$.
Since $\mathcal{B} (c; \lambda) = 0$, we can conclude that 
$\nu_j (c; \lambda) = 0$ for all $j \in \{1, 2, \dots, 2n\}$,
and all $\lambda \in \mathbb{R}$. In addition, according to 
(\ref{mathcal-B-prime}), along with Condition {\bf (B)}, 
for each fixed $\lambda \in \mathbb{R}$,
the eigenvalues $\{\nu_j (x; \lambda)\}_{j=1}^{2n}$ will 
be non-decreasing as $x$ increases. As $x \to b^-$, each 
eigenvalue $\nu_j (x; \lambda)$ will either approach $+ \infty$
or a finite limit. In the latter case, we set 
\begin{equation*}
    \nu_j^b (\lambda) := \lim_{x \to b^-} \nu_j (x; \lambda).
\end{equation*}
Likewise, as $x \to a^+$, each eigenvalue $\nu_j (x; \lambda)$
will either approach $- \infty$ or a finite limit. In the latter
case, we set 
\begin{equation*}
    \nu_j^a (\lambda) := \lim_{x \to a^+} \nu_j (x; \lambda).
\end{equation*}

Comparing the relations (\ref{A-alternative}) and (\ref{mathcal-B-integrated}), 
we see that the proof of Lemma \ref{subspace-dimensions-lemma} can be adapted
with almost no changes to establish the following lemma.

\begin{lemma} \label{subspace-dimensions-lemma-real}
Let Assumptions {\bf (A)} and {\bf (B)} hold,
and let $\lambda \in [\lambda_1, \lambda_2]$ be fixed.
Then the dimension $m_a (\lambda)$ of the subspace of solutions to 
(\ref{linear-hammy}) that lie left in $(a, b)$ is precisely 
the number of eigenvalues $\nu_j (x; \lambda) \in \sigma (\mathcal{B} (x; \lambda))$
that approach a finite limit as $x \to a^+$. Likewise, 
the dimension $m_b (\lambda)$ of the subspace of solutions to 
(\ref{linear-hammy}) that lie right in $(a, b)$ is precisely 
the number of eigenvalues $\nu_j (x; \lambda) \in \sigma (\mathcal{B} (x; \lambda))$
that approach a finite limit as $x \to b^-$.
\end{lemma}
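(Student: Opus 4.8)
The plan is to adapt the proof of Lemma~\ref{subspace-dimensions-lemma} essentially verbatim, carrying out the argument for $m_b(\lambda)$ (the statement for $m_a(\lambda)$ being entirely symmetric, with $+\infty$ replaced by $-\infty$ as $x$ decreases to $a$). The essential observation is that the representation (\ref{mathcal-B-integrated}) for $\mathcal{B}(x;\lambda)$ is identical to the integral term in the representation (\ref{A-alternative}) for $\mathcal{A}(x;\lambda)$; the only structural difference is the absence of the additive constant $\frac{1}{2\mathrm{Im}\,\lambda}(J/i)$, which is precisely what allows the construction to be pushed to real $\lambda$ (where that term is undefined) and is consistent with $\mathcal{B}(c;\lambda)=0$. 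If anything, this simplifies the boundedness computation, since now the diagonal quadratic form is exactly the eigenvalue $\nu_j(x;\lambda)$ with no correction term.

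Concretely, I would let $\tilde m_b(\lambda)$ denote the number of eigenvalues $\nu_j(x;\lambda)$ with a finite limit as $x\to b^-$, ordered so that these are $\{\nu_j(x;\lambda)\}_{j=1}^{\tilde m_b(\lambda)}$, and choose an associated orthonormal family of eigenvectors $\{v_j(x;\lambda)\}_{j=1}^{\tilde m_b(\lambda)}$ (not necessarily continuous in $x$). For each such $j$ we have, directly from (\ref{mathcal-B-integrated}),
\begin{equation*}
\nu_j(x;\lambda) = v_j(x;\lambda)^*\mathcal{B}(x;\lambda)v_j(x;\lambda)
= \int_c^x v_j(x;\lambda)^*\Phi(\xi;\lambda)^* B_1(\xi)\Phi(\xi;\lambda)v_j(x;\lambda)\,d\xi,
\end{equation*}
which is bounded above on $(c,b)$ because $\nu_j(x;\lambda)$ is nondecreasing (by (\ref{mathcal-B-prime}) and Assumption~{\bf (B)}) with a finite limit. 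Passing to a subsequence along which $v_j(x_k;\lambda)$ converges on the unit sphere to a limit $v_j^b(\lambda)$, the same contradiction argument used for (\ref{temp-int}) --- approximating the $x$-dependent eigenvector by its limit inside the integral and invoking Atkinson positivity --- shows that $\int_c^b v_j^b(\lambda)^*\Phi^* B_1\Phi\, v_j^b(\lambda)\,d\xi<\infty$, so that $\Phi(\cdot;\lambda)v_j^b(\lambda)$ lies right in $(a,b)$. Orthonormality persists in the limit, so these $\tilde m_b(\lambda)$ solutions are linearly independent, giving $\tilde m_b(\lambda)\le m_b(\lambda)$.

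For the reverse inequality I would repeat the construction for the remaining eigenvectors, those whose eigenvalues satisfy $\nu_j(x;\lambda)\to+\infty$: the identical estimate (with the threshold $K$ taken arbitrarily large) shows that the corresponding limiting solutions $\Phi(\cdot;\lambda)v_j^b(\lambda)$ do \emph{not} lie right in $(a,b)$. Since the full limiting family $\{v_j^b(\lambda)\}_{j=1}^{2n}$ inherits orthonormality and hence is a basis of $\mathbb{C}^{2n}$, the solutions $\{\Phi(\cdot;\lambda)v_j^b(\lambda)\}_{j=1}^{2n}$ form a basis of the solution space in which exactly the first $\tilde m_b(\lambda)$ members lie right; this forces the right-lying subspace to have dimension precisely $\tilde m_b(\lambda)$, i.e. $m_b(\lambda)=\tilde m_b(\lambda)$, as claimed.

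The one point deserving care --- and the only place where the present real-$\lambda$ setting differs from Lemma~\ref{subspace-dimensions-lemma} --- is that the companion machinery built on $\mathcal{A}(x;\lambda)$ for nonreal $\lambda$, namely Lemma~\ref{cal-A-eigs}, the strict ordering (\ref{eig-A-order}), and the conjugate-eigenvector relations, is unavailable here. Fortunately none of it is used in the proof of Lemma~\ref{subspace-dimensions-lemma} itself: that argument rests only on the integral representation, monotonicity from Assumption~{\bf (B)}, compactness of the unit sphere, and the Atkinson-positivity contradiction, all of which transfer without change. The genuinely delicate step remains the interchange of the $x$-dependent limiting eigenvector with the integral in the divergence argument, exactly as in the estimate following (\ref{temp-int}); I expect this to be the main (and only real) obstacle, and it is dispatched by the same uniform-approximation argument given there.
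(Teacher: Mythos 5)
Your proposal is correct and takes essentially the same route as the paper, whose entire proof is the observation that comparing (\ref{A-alternative}) with (\ref{mathcal-B-integrated}) allows the argument of Lemma \ref{subspace-dimensions-lemma} to be repeated with almost no changes. You carry out exactly those steps (integral representation, monotone boundedness, compactness of the unit sphere, the Atkinson-positivity contradiction, persistence of orthonormality, and the concluding dimension count stated at the same level of detail as in the paper), correctly noting the one simplification that the correction term $\tfrac{1}{2i\,\mathrm{Im}\,\lambda}\,v_j^* J v_j$ is absent for real $\lambda$, so nothing further is needed.
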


\begin{remark}
We emphasize that as opposed to the case $\lambda \in \mathbb{C} \backslash \mathbb{R}$,
we cannot conclude from these considerations that $m_a (\lambda), m_b (\lambda) \ge n$.
Rather, in this case we conclude these inequalities for all $\lambda \in [\lambda_1, \lambda_2]$ 
from Lemma \ref{frames-lemma} (under assumptions {\bf (A)}, {\bf (B)}, 
and {\bf (C)}). Here, as usual, we are taking 
$[\lambda_1, \lambda_2] \cap \sigma_{\ess} (\mathcal{L}) = \emptyset$ (or, likewise,
$[\lambda_1, \lambda_2] \cap \sigma_{\ess} (\mathcal{L}^{\alpha}) = \emptyset$). 
\end{remark}

If, for each $x \in (a, b)$, we let $\{w_j (x; \lambda)\}_{j=1}^{2n}$ denote 
an orthonormal collection of eigenvectors associated with 
the eigenvalues $\{\nu_j (x; \lambda)\}_{j=1}^{2n}$, then as in the 
proof of Lemma \ref{subspace-dimensions-lemma}, we can find 
(for each $j \in \{1, 2, \dots, 2n\}$) a sequence $\{w_j (x_k; \lambda)\}_{k=1}^{\infty}$
that converges, as $x_k \to a^+$, to some $w_j^a (\lambda)$ on the unit circle in 
$\mathbb{C}^{2n}$, and likewise we can find 
a sequence $\{w_j (x_k; \lambda)\}_{k=1}^{\infty}$
that converges, as $x_k \to b^-$, to some $w_j^b (\lambda)$ on the unit circle in 
$\mathbb{C}^{2n}$. Moreover, if $\nu_j (x; \lambda)$
has a finite limit as $x \to a^+$, then $\Phi (x; \lambda) w_j^a (\lambda)$
will lie left in $(a, b)$, while if $\nu_j (x; \lambda)$
has a finite limit as $x \to b^-$, then $\Phi (x; \lambda) w_j^b (\lambda)$
will lie right in $(a, b)$.

These considerations provide a method for constructing the frames
$\mathbf{X}_a (x; \lambda)$ and $\mathbf{X}_b (x; \lambda)$ that 
we'll need in order to implement Theorems \ref{regular-singular-theorem}
and \ref{singular-theorem}. Most directly, if (\ref{linear-hammy})
is limit-point at $x = a$ (respectively, $x=b$), then the procedure 
described in the previous paragraph will provide precisely $n$ 
linearly independent solutions to (\ref{linear-hammy}) that 
lie left in $(a, b)$ (respectively, right in $(a, b)$), and 
these will necessarily comprise the columns of $\mathbf{X}_a (x; \lambda)$
(respectively, $\mathbf{X}_b (x; \lambda)$). 

More generally, Lemma \ref{subspace-dimensions-lemma} can be used 
to construct left and right lying solutions of (\ref{linear-hammy})
for some $\lambda_0 \in \mathbb{C} \backslash \mathbb{R}$,
and these can then be used to specify the Niessen elements described 
in the lead-in to Lemma \ref{krall-niessen-lemma}. I.e., the matrices
$U^a (x; \lambda_0)$ and $U^b (x; \lambda_0)$ discussed in 
Section \ref{operator-section} can be constructed in this way. 
Working, for example, with the solutions constructed above 
for $\lambda \in \mathbb{R}$ that life left in $(a, b)$, we 
can identify $n$ linearly independent solutions $\{u_j^a (x; \lambda)\}_{j=1}^n$
that satisfy 
\begin{equation*}
    \lim_{x \to a^+} U^a (x; \lambda_0)^* J u_j^a (x; \lambda) = 0. 
\end{equation*}
This collection $\{u_j^a (x; \lambda)\}_{j=1}^n$ will comprise 
the columns of $\mathbf{X}_a (x; \lambda)$, and we can proceed 
similarly for $x = b$. 

We now turn to our examples.

\subsection{Counting Eigenvalues in Spectral Gaps}
\label{gaps-section}

In this section, we discuss (single) Schr\"odinger equations
\begin{equation*}
    \begin{aligned}
    H \phi &:= - \phi'' + V(x) \phi = \lambda \phi, \quad \textrm{in } (0, \infty) \\
    \alpha_1 &\phi(0) + \alpha_2 \phi'(0) = 0,
    \end{aligned}
\end{equation*}
where $V(x)$ is a bounded, real-valued potential obtained by 
compactly perturbing a periodic potential $V_0 (x)$,
and $\alpha_1, \alpha_2 \in \mathbb{R}$ are not both 
0. 

It's well known (see, for example, \cite{Kuchment2016} and 
the references cited there) that if we set 
\begin{equation*}
       H_0 \phi := - \phi'' + V_0 (x) \phi = \lambda \phi,
       \quad \textrm{in } (0, \infty),
\end{equation*}
along with any self-adjoint boundary condition at $x = 0$,
then $\sigma_{\ess} (H_0)$ can be expressed as a union of 
closed intervals 
\begin{equation*}
    \sigma_{\ess} (H_0) = \bigcup_{j=1}^{\infty} [a_j, b_j],
\end{equation*}
or in some special cases as a similar finite union that 
includes an unbounded interval $[b_N, +\infty)$. The 
intervals $\{[a_j, b_j]\}_{j=1}^{\infty}$ are referred 
to as spectral bands for $H_0$, and the intervening intervals 
$[b_j, a_{j+1}]$ are referred to as spectral gaps. 
(It may be the case that $b_j = a_{j+1}$, leaving no 
gap.) In addition,
if $V_0 (x)$ is perturbed to a new potential 
$V(x) = V_0 (x) + V_1 (x)$, where $V_1 \in L^1 ((0, \infty), \mathbb{R})$,
then we will have $\sigma_{\ess} (H) = \sigma_{\ess} (H_0)$. 
(See, for example, Corollary XIII.4.2 in \cite{RSIV}.) 
However, it may be the case that $H$ has additional 
eigenvalues in the spectral gaps, including up to 
an infinite number accumulating at an endpoint 
of essential spectrum. 
Let $[b_j, a_{j+1}]$, $b_j < a_{j+1}$ denote some 
particular spectral gap. Then our approach allows us
to fix any interval $(\lambda_1, \lambda_2) \in [b_j, a_{j+1}]$,
$\lambda_1, \lambda_2 \notin \sigma (H)$
and determine the number of eigenvalues on this 
interval.

As a specific example, taken from \cite{AGM2006}, we consider
$H$ with 
\begin{equation*}
    V(x) = V_0 (x) + V_1 (x) = \sin(x) + \frac{60}{1+x^2},
    \quad \alpha_1 = \cos(\pi/8), \, \alpha_2 = \sin(\pi/8).
\end{equation*}
In \cite{AGM2006}, the authors identify the first two 
spectral gaps for $H_0$ as 
\begin{equation*}
    J_1 = (-\infty, -.3785), 
    \quad J_2 = (-.3477, .5948),
\end{equation*}
and they verify that $-.3477$ serves as an accumulation point
for eigenvalues of $H$ in the interval $J_2$. In addition,
the authors identify the 13 right-most eigenvalues of $H$ in 
this interval. (In these calculations, the authors proceed
with a higher degree of precision than given above; 
see \cite{AGM2006} for the full results.) 

In order to place this equation in our setting, we 
set $y = {y_1 \choose y_2} = {\phi \choose \phi'}$,
from which we arrive at (\ref{linear-hammy}) with 
\begin{equation*}
    B_0 (x) + \lambda B_1 (x)
    = \begin{pmatrix}
    - \sin(x) - \frac{60}{1+x^2} & 0 \\
    0 & 1
    \end{pmatrix}
    + \lambda \begin{pmatrix}
    1 & 0 \\
    0 & 0
    \end{pmatrix}.
\end{equation*}
With this choice of $\mathbb{B} (x; \lambda)$,
(\ref{linear-hammy}) is regular at $x = 0$ and of course 
singular at $x = +\infty$. (I.e., we are in the 
case in which {\bf (A)$^{\prime}$} holds.) In order to determine if
(\ref{linear-hammy}) is limit point or limit circle at $+\infty$, 
we fix $\lambda_0 = i$ (arbitrarily selected as an 
element $\lambda_0 \in \mathbb{C} \backslash \mathbb{R}$) 
and numerically generate 
the eigenvalues of $\mathcal{A} (x; \lambda_0)$ 
as $x$ increases. (In this case, we initialize 
the fundamental matrix $\Phi (x; \lambda_0)$ at 
$x = 0$.) We know from our general theory developed 
in Section \ref{operator-section} that 
the eigenvalues $\{\mu_j (x; \lambda_0)\}_{j=1}^2$ 
of  $\mathcal{A} (x; \lambda_0)$ will satisfy
(with our choice of indexing)
$\mu_1 (x; \lambda_0) < 0 < \mu_2 (x; \lambda_0)$
for all $x \in (0, \infty)$. 
As $x$ increases, these eigenvalues will both 
monotonically increase, and so $\mu_1 (x; \lambda_0)$
will certainly approach a finite limit (since it 
is bounded above by $0$). In this way, the limit case is determined
by whether $\mu_2 (x; \lambda_0)$ approaches a 
finite limit as $x$ tends to $+ \infty$. 
Computing numerically, we find 
$\mu_2 (5; \lambda_0) = 1.1543 \times 10^9$,
suggesting that $H$ is limit-point at $+ \infty$. 

\begin{remark} \label{computation-remark}
Throughout this section, our numerical calculations are 
intended only to illustrate the theory, and we make no 
effort to rigorously justify either the values we obtain
or the conclusions we draw from them. For example, 
in this last calculation, we have not attempted to 
find a rigorous error interval for the value of 
$\mu_2 (5; \lambda_0)$, and we offer no additional 
direct justification that $\mu_2 (x; \lambda_0)$ is indeed
tending to $+ \infty$ as $x$ tends to $+ \infty$. 
(It follows from Corollary 1 in Chapter 9 of \cite{CL1955}
that $H$ is indeed limit-point at $+\infty$, and from this
we can conclude that this limiting behavior must be
correct.) In all cases, the calculations are carried out with 
built-in MATLAB functions, primarily {\it ode45.m}.
\end{remark}

\begin{remark} \label{equivalence-remark1}
It's straightforward to check that $H$ and $\mathcal{L}^{\alpha}$ 
(the latter constructed as in Lemma \ref{self-adjoint-operator-lemma})
have precisely the same sets of essential spectrum, and 
also the same sets of discrete eigenvalues. Here, 
\begin{equation*}
    \begin{aligned}
    \dom(H) &= \{\phi \in L^2 ((0,\infty),\mathbb{C}): 
    \phi, \phi' \in \AC_{\loc} ([0,\infty),\mathbb{C}), \\
    & \quad \quad H\phi \in L^2 ((0,\infty),\mathbb{C}), \,
    \alpha_1 \phi(0) + \alpha_2 \phi'(0) = 0
    \}.
    \end{aligned}
\end{equation*}
\end{remark}

Since $H$ is regular at $x = 0$, we can find $\mathbf{X}_{\alpha} (x; \lambda_1)$
by solving the initial value problem 
\begin{equation*}
    J \mathbf{X}_{\alpha}' = (B_0 (x) + \lambda_1 B_1 (x)) \mathbf{X}_{\alpha}; 
    \quad \mathbf{X}_{\alpha} (0; \lambda_1) = {-\sin(\pi/8) \choose \cos(\pi/8)}.
\end{equation*}
For $\mathbf{X}_b (x; \lambda_2)$, our observation that $H$ is limit point 
at $+ \infty$ allows us to conclude 
that $\mathbf{X}_b (x; \lambda_2)$
must be the unique (up to constant multiple) solution of 
$ J \mathbf{X}_b' = (B_0 (x) + \lambda_1 B_1 (x)) \mathbf{X}_b$ that lies
right in $(a, b)$. In order to find $\mathbf{X}_b (x; \lambda_2)$, 
we compute the eigenvalues of $\mathcal{B} (x; \lambda_2)$ 
for (relatively) large values of $x$. Specifically, we will 
take $\lambda_2 = .2$, and for this value we find 
$\nu_1 (5; \lambda_2) = .0039$ and 
$\nu_2 (5; \lambda_2) = 1.0724 \times 10^{15}$. The unit eigenvector 
associated with $\nu_1 (5; \lambda_2)$ is 
\begin{equation*}
    w_1 (5; \lambda_2) 
    = {-.1287022477 \choose .9916832818}.
\end{equation*}
Regarding these values, our only justification for keeping so 
many decimal places is that the value of $w_1 (x; \lambda_2)$
remains consistent to this many places as we continue to increase
$x$ beyond $5$. We emphasize that while our general theory 
requires the selection of a convergent subsequence of eigenvectors,
the actual (numerically generated) sequence of eigenvectors 
converges quickly and with extraordinary consistency. 
According to our general theory, we can take
$\mathbf{X}_b (x; \lambda_2) = \Phi (x; \lambda_2) w_1^b (\lambda_2)$,
and we'll approximate the limit-obtained vector $w_1^b (\lambda_2)$
with $w_1 (5; \lambda_2)$. 

Equipped now with frames $\mathbf{X}_{\alpha} (x; \lambda_1)$ and 
$\mathbf{X}_b (x; \lambda_2)$, we can readily compute 
\begin{equation} \label{mas-app1}
    \mas (\ell_{\alpha} (\cdot; \lambda_1), \ell_b (\cdot; \lambda_2); (0, + \infty))
\end{equation}
as a spectral flow for the matrix $\tilde{W} (x; \lambda_1)$ as 
specified in (\ref{tildeW-bc1}). 

For this example, we have the advantage of knowing in advance 
accurate values for the 13 right-most eigenvalues of $H$ on the 
interval $J_2$. The right-most five of these are as 
follows:
\begin{equation*}
   -.3154, \quad - .2946, \quad -.2542, \quad -.1613, \quad .1332,
\end{equation*}
obtained from \cite{AGM2006}, in which the values are 
actually computed to substantially higher precision
than presented here. We will illustrate our approach 
by counting the right-most four eigenvalues, and also by providing 
the full Maslov box associated with this calculation.
For this, we will keep $\lambda_2 = .2$ as above, and 
set $\lambda_1 = -.3100$. Computing (\ref{mas-app1}) via a spectral flow 
for $\tilde{W} (x; \lambda_1)$, we identify conjugate points
at $14.5$, $20.2$, $26.8$, and $33.7$, after which 
$\tilde{W} (x; \lambda_1)$ begins to oscillate through 
values in the third quadrant of the complex plane. 
(These conjugate points can be obtained with much greater
precision, but there's no advantage in this.) We conclude 
that in this case
\begin{equation*}
    \mathcal{N}^{\alpha} ((\lambda_1, \lambda_2))
    = \mas (\ell_a (\cdot; \lambda_1), \ell_b (\cdot; \lambda_2); (0, +\infty))
    = 4,
\end{equation*}
as expected. This is the entirety of the necessary 
calculation associated with the number of eigenvalues that 
$H$ has on the interval $(-.31, .2)$, but in order to illustrate the idea, we provide
the full Maslov box associated with this calculation, along with 
the relevant spectral curves (see Figure \ref{maslov-box-figure}, 
created with MATLAB.) 
In this figure, we see clearly that each spectral curve
intersects the boundary of the Maslov box precisely twice, 
once along the left shelf and once along the top shelf. 
Intersections along the top shelf correspond with eigenvalues
of $H$, and so it is exactly this correspondence (via the 
spectral curves) that allows us to count conjugate points
along the left shelf rather than along the top shelf. We
emphasize that, strictly speaking, the top shelf should be 
associated with a limit as $x \to + \infty$, but the dynamics
are already thoroughly apparent for $x = 50$, as depicted. 
As discussed in \cite{HS2}, the monotonicity of the spectral 
curves in this figure is a general feature of 
renormalized oscillation theory, and  
follows from monotonicity in $\lambda$ along horizontal shelves and the 
monotonicity in $x$ of Claim \ref{crossing-claim}.

\begin{figure}[ht] \label{maslov-box-figure}  
\begin{center}\includegraphics[%
  width=14cm,
  height=10cm]{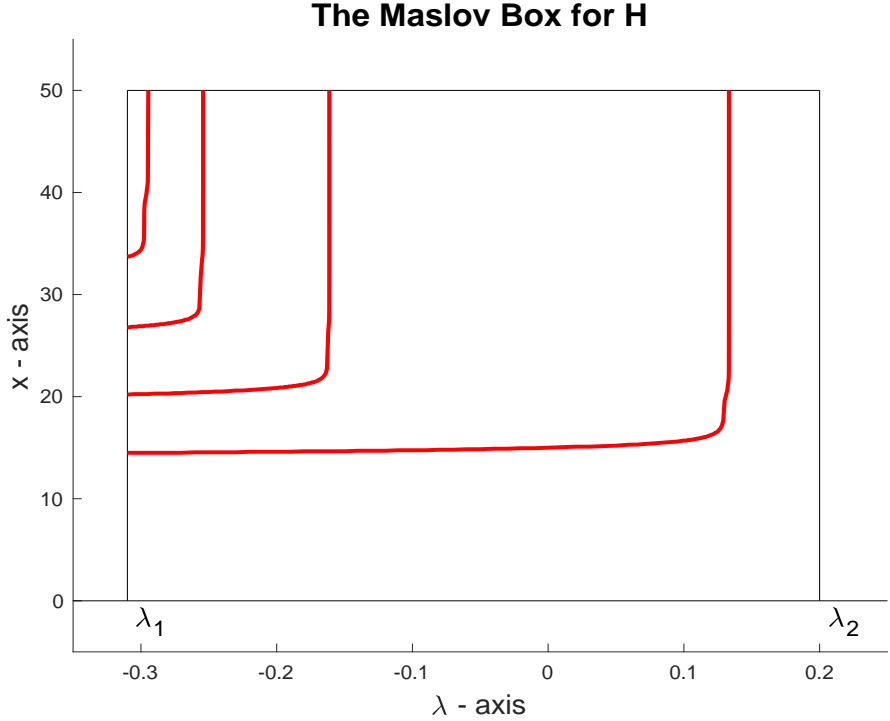}\end{center}
\caption{The Full Maslov Box for $H$ on $[-.31, .2]$.}
\end{figure}

\subsection{Energy Levels for the Hydrogen Atom}
\label{hydrogen-section}

When Schr\"odinger's equation for the hydrogen atom is 
expressed in spherical coordinates and analyzed by 
separation of variables, the resulting radial equation 
can be expressed in the form 
\begin{equation} \label{radial-equation}
    H \phi := 
    - \frac{1}{x^2} (x^2 \phi')' - \frac{\gamma}{x} \phi + \frac{\ell (\ell+1)}{x^2} \phi
    = \lambda \phi,
\end{equation}
where $\gamma > 0$ is a physical constant 
and $\ell$ is an integer associated with angular momentum
(see, e.g., Chapter 12 in \cite{Ga1974}). 
The natural domain for (\ref{radial-equation})
is $(0, \infty)$, and it's clear that $H$ is singular 
at both endpoints. In order to place this equation in our setting, we 
set $y = {y_1 \choose y_2} = {\phi \choose x^2 \phi'}$,
from which we arrive at (\ref{linear-hammy}) with 
\begin{equation*}
    B_0 (x) + \lambda B_1 (x)
    = \begin{pmatrix}
    \gamma x - \ell (\ell + 1) & 0 \\
    0 & \frac{1}{x^2}
    \end{pmatrix}
    + \lambda \begin{pmatrix}
    x^2 & 0 \\
    0 & 0
    \end{pmatrix}.
\end{equation*}
It's well-known that any self-adjoint extension of the 
minimal operator associated with $H$ has essential spectrum 
$[0, +\infty)$ (see, e.g., \cite{R1966}). 
The eigenvalues of $H$ are typically reported in physics
literature to be 
\begin{equation} \label{hydrogen-eigenvalues}
    \lambda_n = - (\frac{\gamma}{2n})^2, 
    \quad n = \ell + 1, \ell + 2, \dots
\end{equation}
(see, e.g., \cite{Ga1974}), and in this section we 
would like to understand how this relation should
be interpreted in our setting. (See Remark 
\ref{equivalence-remark2} below for a
formulation of $H$, including its precise domain.)
For computational purposes, we'll take 
$\gamma = 4$, and we'll focus on the case $\ell = 0$, 
which is particularly interesting from our point of view
because $H$ is limit-circle at $x = 0$ in this case, 
whereas it is limit-point at $x = 0$ for all 
$\ell \ge 1$. 

We begin by setting $\lambda_0 = i$ and verifying 
(numerically) that $H$ is limit-circle at $x=0$. 
In this case, we initialize the fundamental matrix
$\Phi (x; \lambda_0)$ at $x = 1$, and we compute 
the eigenvalues of $\mathcal{A} (x; \lambda_0)$, 
as $x$ tends toward $0$. At $x = 10^{-5}$, we find
$\mu_1 (10^{-5}; \lambda_0) = -.7478$ and 
$\mu_2 (10^{-5}; \lambda_0) = .3343$, with both 
values stable as $x$ continues to decrease, 
suggesting that $H$ is indeed limit-circle
at $x = 0$. Respectively,
we find the associated unit eigenvectors to be 
\begin{equation*}
    v_1 (10^{-5}; \lambda_0) = {.7834 \choose -.0001+.6216i},
    \quad v_2 (10^{-5}; \lambda_0) = {.0001 + .6216i \choose .7834},
\end{equation*}
and we take these vectors as approximations for the 
limit-obtained eigenvectors $v_1^a (\lambda_0)$
and $v_2^a (\lambda_0)$.
As discussed in Section \ref{operator-section}, 
there will be a single Niessen space for this 
problem, and it will be spanned by two elements that 
both lie left in $(0, + \infty)$, namely 
$y_1^a (x; \lambda_0) = \Phi (x; \lambda_0) v_1^a (\lambda_0)$
and $y_2^a (x; \lambda_0) = \Phi (x; \lambda_0) v_2^a (\lambda_0)$. 
In order to specify our boundary condition at 
$x = 0$, we also need to compute 
\begin{equation*}
    \rho = \sqrt{-\mu_1 (\lambda_0)/\mu_2(\lambda_0)}
    = 1.4956,
\end{equation*}
and select some $\beta \in \mathbb{C}$ with 
$|\beta| = \rho$. (See the discussion leading
into Lemma \ref{krall-niessen-lemma}.) 
Given this choice, we will specify 
our boundary condition via the element 
\begin{equation*}
    U^a (x; \lambda_0) 
    = \Phi (x; \lambda_0) (v_1^a (\lambda_0) + \beta v_2^a (\lambda_0)).
\end{equation*}
We emphasize that each choice of $\beta$ from the circle 
$|\beta| = \rho$ will correspond with 
a different boundary condition, and so for a different 
self-adjoint restriction of $H$. In order to fix a 
specific case, we will take $\beta$ to be the real 
value $\beta_1 = 1.4956$,
where the subscript anticipates that we will later consider
an alternative choice. 

Next, we fix $\lambda_1 = -5$, and construct a frame 
$\mathbf{X}_a (x; \lambda_1)$ satisfying 
\begin{equation} \label{Xsuba-app2}
    J \mathbf{X}_a' = (B_0 (x) + \lambda_1 B_1 (x)) \mathbf{X}_a; 
    \quad \lim_{x \to a^+} U^a (x; \lambda_0)^* J \mathbf{X}_a (x; \lambda_1) = 0. 
\end{equation}
In order to do this, we work with the matrix $\mathcal{B} (x; \lambda_1)$,
for which we compute the eigenvalues $\{\nu_j (x; \lambda_1)\}_{j=1}^2$ and
the associated eigenvectors $\{w_j (x; \lambda_1)\}_{j=1}^2$ as $x$
tends to $0$. Taking an approximation obtained by evaluating 
$\mathcal{B} (x; \lambda_1)$ at $x = 10^{-5}$, we obtain the 
approximate values $\nu_1^a (\lambda_1) = - .4205$, $\nu_2^a (\lambda_1) = -.1106$,
with associated approximate limit-obtained unit vectors 
\begin{equation*}
    w_1^a (\lambda_1) = {-.8615 \choose .5077},
    \quad w_2^a (\lambda_1) = {-.5077 \choose -.8615}.
\end{equation*}
We can now compute $\mathbf{X}_a (x; \lambda_1)$ as a 
linear combination 
\begin{equation*}
    \mathbf{X}_a (x; \lambda_1) = \Phi(x; \lambda_1) (c_1 w_1^a (\lambda_1) + c_2 w_2^a (\lambda_1)), 
\end{equation*}
for some appropriate constants $c_1$ and $c_2$. In particular, 
$c_1$ and $c_2$ are determined by the limit specified in 
(\ref{Xsuba-app2}). We can express this as 
\begin{equation*}
c_1 \lim_{x \to a^+} U^a (x; \lambda_0)^* J \Phi(x; \lambda_1) w_1^a (\lambda_1)    
+ c_2 \lim_{x \to a^+} U^a (x; \lambda_0)^* J \Phi(x; \lambda_1) w_2^a (\lambda_1)  
= 0.
\end{equation*}
We approximate the limits by evaluation at $x = 10^{-5}$ to obtain 
\begin{equation*}
    \begin{aligned}
    \lim_{x \to a^+} U^a (x; \lambda_0)^* J \Phi(x; \lambda_1) w_1^a (\lambda_1) 
    &\cong -1.2050 + 1.2050i \\
    \lim_{x \to a^+} U^a (x; \lambda_0)^* J \Phi(x; \lambda_1) w_2^a (\lambda_1) 
    &\cong -.6139 + .6139i.
    \end{aligned}
\end{equation*}
It follows immediately that we can choose $c_1$ and $c_2$ to be 
$c_1 = 1$, $c_2 = (-1.2050 + 1.2050i)/(-.6139 + .6139i) = - 1.9629$. We
conclude that 
\begin{equation*}
    \mathbf{X}_a (x; \lambda_1) 
    = \Phi (x; \lambda_1) w^a (\lambda_1);
    \quad w^a (\lambda_1) = {.0613 \choose .9981},
\end{equation*}
where $w^a (\lambda_1)$ has been normalized to have unit
length. 

We now turn to the right endpoint $b = + \infty$. If we 
evaluate $\mathcal{A} (x; i)$ at $x = 25$, we obtain 
eigenvalues $\mu_1 (25; i) = 1.9352 \times 10^{-22}$
and $\mu_2 (25; i) = 4.6925 \times 10^{11}$. This indicates
that $\mu_2 (x; i)$ is tending toward $+ \infty$ 
as $x$ increases to $+ \infty$, and we conclude that 
$H$ is limit-point at $b = + \infty$. This means that
no additional boundary condition is necessary at 
$b = + \infty$. We will denote by $H_{\beta_1}$ 
the operator obtained from $H$ by adding our choice of 
boundary condition taken above at the left endpoint. 

\begin{remark} \label{spectrum2-rigorously} Similarly 
as with our first application, these calculations have 
not been rigorously justified, but the limit-circle/point 
conclusions have been rigorously justified elsewhere. 
In particular, if we adopt the change of variables
$\phi = \psi/x$, then (\ref{radial-equation}) with 
$\ell = 0$ becomes
\begin{equation*}
    \mathcal{H} \psi := - \psi'' - \frac{\gamma}{x} \psi 
    = \lambda \psi,
\end{equation*}
which is known to be limit-circle at $x=0$ and limit-point
at $+ \infty$ (see, e.g., \cite{E2005}).
\end{remark}

In an effort to count the first three eigenvalues of 
$H$, we will set $\lambda_2 = -3/8$, and in order 
to compute $\mathbf{X}_b (x; \lambda_2)$, we will 
compute the eigenvalues and eigenvectors of 
$\mathcal{B} (x; \lambda_2)$ as $x$ tends toward 
$+ \infty$. Taking $x = 40$ in this case, we 
find $\nu_1 (40; -3/8) = 6.3054$ and 
$\nu_2 (40; -3/8) = 3.7724 \times 10^{11}$. 
The unit eigenvector associated with $\nu_1 (40; -3/8)$
is 
\begin{equation*}
    w_1 (40; -3/8) 
    = {-.3357895545 \choose .9419370335},
\end{equation*}
where similarly as with our previous application, the 
number of decimals given is simply an indication 
of the consistent values as $x$ continues to 
increase. We use $w_1 (40; -3/8)$ as an approximation
of $w_1^b (-3/8)$, and we set 
$\mathbf{X}_b (x; \lambda_2) = \Phi (x; \lambda_2) w_1^b (-3/8)$.

Equipped now with frames $\mathbf{X}_a (x; \lambda_1)$ and 
$\mathbf{X}_b (x; \lambda_2)$, we can readily compute 
\begin{equation} \label{mass-app2}
    \mas (\ell_a (\cdot; \lambda_1), \ell_b (\cdot; \lambda_2); (0, + \infty))
\end{equation}
as a spectral flow for the matrix $\tilde{W} (x; \lambda_1)$ as 
specified in (\ref{singular-tildeW}). We find conjugate points 
at approximately $x = 1.95$ and $x = 5.00$, after which the 
value of $\tilde{W} (x; \lambda_1)$ remains near $-1$, without
crossing, as $x$ continues to increase. We conclude that 
$H_{\beta_1}$ has two eigenvalues on the interval $[-5, -3/8]$. 

Naively, we might have expected to find three eigenvalues on the 
interval $[-5, -3/8]$ (namely, $-4$, $-1$, $-4/9$), but we recall
that the eigenvalues given in (\ref{hydrogen-eigenvalues}) correspond
with a particular choice of boundary condition (based on physical
considerations). In particular, the argument from physics goes
roughly as follows. For $\ell = 0$, equation (\ref{radial-equation})
has two linearly independent solutions, one of which is bounded
as $x$ approaches $0$, while the other is unbounded. (Both of 
which correspond via the above relation  
$y = {y_1 \choose y_2} = {\phi \choose x^2 \phi'}$ with 
functions that lie left in $(0,+\infty)$.) Based on 
physical arguments, the unbounded solution is generally 
eliminated, and this effectively selects a particular 
left-hand boundary condition. Precisely, this physical argument
asserts that we need to identify a fixed vector $w \in \mathbb{C}^2$ so that 
$\mathbf{X}_a (x; \lambda_1) = \Phi (x; \lambda_1) w$ 
remains bounded as $x$ approaches 0. By a straightforward 
minimization argument, we find $w = {.7121 \choose -.7020}$.
This solution corresponds with a particular choice of 
$\beta$. In particular, we can identify the value of 
$\beta \in \mathbb{C}$, $|\beta| = \rho$ so that 
\begin{equation*}
    \lim_{x \to 0^+} \Big(\Phi (x; \lambda_0) (v_1 (\lambda_0) + \beta v_2 (\lambda_0))\Big)^*
    J \Phi (x; \lambda_1) w = 0.
\end{equation*}
We can approximate $\beta$ by setting $x = 10^{-5}$ and computing 
\begin{equation*}
    \beta \cong - \frac{v_1 (\lambda_0)^* \Phi (x; \lambda_0)^* J \Phi (x; \lambda_1)w}
    {v_2 (\lambda_0)^* \Phi (x; \lambda_0)^* J \Phi (x; \lambda_1)w}
    = .2952 - 1.4663i.
\end{equation*}
Using this choice of $\beta$ leads to a new boundary condition, 
specified via $U^a (x; \lambda_0)$, and consequently to a new 
operator $H_{\beta_2}$. Computing (\ref{mass-app2}) in this case, 
we count three eigenvalues by virtue of conjugate points at 
$.68$, $2.00$, and $5.00$. 

We conclude with the following remark, addressing some details
that have been set aside during the discussion of this application. 

\begin{remark} \label{equivalence-remark2}
It's natural to view $H$ as an operator on a weighted Hilbert
space $L^2_{x^2} ((0, \infty), \mathbb{C})$ with inner product
\begin{equation*}
    \langle \phi, \psi \rangle_{x^2}
    = \int_0^{+\infty} x^2 \phi (x) \bar{\psi} (x) dx. 
\end{equation*}
With this specification, $H$ is self-adjoint on the domain 
\begin{equation*}
    \begin{aligned}
    \dom(H) &= \Big{\{}\phi \in L^2_{x^2} ((0,\infty),\mathbb{C}): 
    \phi, \phi' \in \AC_{\loc} ((0,\infty),\mathbb{C}), \\
    & \quad H\phi \in L^2_{x^2} ((0,\infty),\mathbb{C}), \,
    \lim_{x \to 0^+} \Big(\Phi (x; \lambda_0) (v_1 (\lambda_0) + \beta v_2 (\lambda_0)) \Big)^*
    J {\phi (x) \choose x^2 \phi' (x)}  = 0 
    \Big{\}}.
    \end{aligned}
\end{equation*}
Likewise, the operator $\mathcal{H}$ from Remark \ref{spectrum2-rigorously} is self-adjoint on the domain 
\begin{equation*}
    \begin{aligned}
    \dom(\mathcal{H}) &= \Big{\{}\psi \in L^2 ((0,\infty),\mathbb{C}): 
    \psi, \psi' \in \AC_{\loc} ((0,\infty),\mathbb{C}), \\
    & \quad \mathcal{H} \psi \in L^2 ((0,\infty),\mathbb{C}), \,
    \lim_{x \to 0^+} \Big(\Psi (x; \lambda_0) (v_1 (\lambda_0) + \beta v_2 (\lambda_0)) \Big)^*
    J {\psi (x) \choose \psi' (x)} = 0 
    \Big{\}},
    \end{aligned}
\end{equation*}
where $\Psi (x; \lambda)$ is a fundamental matrix associated with 
$\mathcal{H}$,
\begin{equation*}
    J \Psi' = \mathcal{B} (x; \lambda) \Psi; 
    \quad \Psi (1; \lambda) =
    \begin{pmatrix}
    1 & 0 \\
    1 & 1
    \end{pmatrix},
    \quad \mathcal{B} (x; \lambda) =
    \begin{pmatrix}
    \frac{\gamma}{x}+\lambda & 0 \\
    0 & 1
    \end{pmatrix}.
\end{equation*}

With these precise specifications, it's straightforward to verify 
that $H$ and $\mathcal{L}$ (the latter constructed as in 
Lemma \ref{self-adjoint-operator-lemma})
have precisely the same sets of essential spectrum, and 
also the same sets of discrete eigenvalues. In addition, 
these spectral sets also agree with their counterparts
for $\mathcal{H}$. 
\end{remark}

\bigskip
{\it Acknowledgments.} The authors are grateful to Yuri Latushkin for bringing 
\cite{GZ2017} to their attention, for suggesting that the problem could be 
approached with the Maslov index, and for several helpful conversations along
the way.

\end{document}